\newtheorem{theorem}{Theorem}[section]
\newtheorem{lemma}[theorem]{Lemma}
\newtheorem{prop}[theorem]{Proposition}
\newtheorem{corollary}[theorem]{Corollary}
\theoremstyle{definition}
\newtheorem{definition}[theorem]{Definition}
\newtheorem{example}{Example}[section]
\newenvironment{assumptionp}[1]{
  
  \assumptionalt
}{\endassumptionalt}
\theoremstyle{remark}
\newtheorem{remark}[theorem]{Remark}
\def\acknowledgementsname{Acknowledgments}
\newenvironment{acks}[1][\acknowledgementsname]{\noindent\textbf{#1.}\space\ignorespaces}{\par}
\definecolor{shadecolor}{named}{GreenYellow}
\newcommand{\pushright}[1]{\ifmeasuring@#1\else\omit\hfill$\displaystyle#1$\fi\ignorespaces}
\newcommand{\pushleft}[1]{\ifmeasuring@#1\else\omit$\displaystyle#1$\hfill\fi\ignorespaces}
\newcommand{\pla}{\mathbb P_\lambda}
\newcommand{\ela}{\mathbb E_{\lambda}}
\newcommand{\Pcal}{\mathcal P}
\newcommand{\Dcal}{\mathcal{D}}
\newcommand{\Lcal}{\mathcal{L}}
\newcommand{\Ical}{\mathcal{I}}
\newcommand{\Jcal}{\mathcal{J}}
\newcommand{\Kcal}{\mathcal{K}}
\newcommand{\Vcal}{\mathcal{V}}
\newcommand{\E}{\mathbb E}
\newcommand{\Ecal}{\mathcal E}
\newcommand{\Bcal}{\mathcal B}
\newcommand{\R}{\mathbb R}
\newcommand{\X}{\mathbb X}
\newcommand{\Xcal}{\mathcal X}
\newcommand{\Rd}{\mathbb R^d}
\newcommand{\Z}{\mathbb Z}
\newcommand{\N}{\mathbb N}
\newcommand{\HypTwo}{{\mathbb{H}^2}}
\newcommand{\HypDim}{{\mathbb{H}^d}}
\newcommand{\dd}{\mathrm{d}} 
\newcommand{\C}{\mathscr {C}}
\newcommand{\Complex}{\mathbb C}
\newcommand{\Leb}{{\rm Leb}}
\DeclareMathOperator*{\esssup}{ess\,sup}
\DeclareMathOperator*{\essinf}{ess\,inf}
\newcommand{\LandauBigO}[1]{\mathcal{O}\left(#1\right)}
\newcommand{\ConstantTriangle}{C_\Delta}
\newcommand{\cbar}[1]{\overline{c}_{#1}}
\DeclareMathOperator{\arcosh}{arcosh}
\DeclareMathOperator{\artanh}{artanh}
\newcommand{\conn}[3]{#1 \longleftrightarrow #2\:\textrm {in}\: #3}
\newcommand{\adja}[3]{#1 \sim #2\:\textrm{in}\: #3}
\newcommand{\tlam}{\tau_\lambda}
\newcommand{\trilam}{\triangle_\lambda}
\newcommand{\Greenlam}{g_\lambda}
\newcommand{\OpGreenlam}{\mathcal{G}_\lambda}
\newcommand{\Id}{\mathds 1}
\newcommand{\Optlam}{\mathcal{T}_\lambda}
\newcommand{\OptlamCritL}{\mathcal{T}_{\lambda_\mathrm{c}(L),L}}
\newcommand{\Opconnf}{\varPhi}
\DeclarePairedDelimiter\abs{\lvert}{\rvert}
\DeclarePairedDelimiter\norm{\lVert}{\rVert}
\DeclarePairedDelimiterX{\inner}[2]{\langle}{\rangle}{#1, #2}
\newcommand{\EssIm}[1]{{\rm ess.Im}\left(#1\right)}
\newcommand{\dist}[1]{\mathrm{dist}_{\HypDim}\left(#1\right)}
\newcommand{\HSNorm}[1]{\left\lVert#1\right\rVert_{\rm HS}}
\newcommand{\orig}{o}
\newcommand{\origin}[1]{{o_{#1}}}
\newcommand{\e}{\mathrm{e}}
\newcommand{\connf}{\varphi}
\definecolor{darkorange}{RGB}{255,165,0}
\definecolor{altviolet}{RGB}{139,0,139}
\definecolor{turquoise}{RGB}{64,224,208}
\definecolor{lblue}{RGB}{173,216,230}
\definecolor{violet}{RGB}{238,130,238}
\definecolor{darkgreen}{RGB}{0,100,0}
\definecolor{lgreen}{RGB}{144,238,144}
\numberwithin{equation}{section}
\title{Non-Uniqueness Phase in Hyperbolic Marked Random Connection Models using the Spherical Transform}
\author{Matthew Dickson\footnote{University of British Columbia, Department of Mathematics, Vancouver, BC, Canada, V6T 1Z2; Email: dickson@math.ubc.ca; \orcidlink{0000-0002-8629-4796}~https://orcid.org/0000-0002-8629-4796}}
\date{}
\begin{document}
\maketitle

\vspace{-1em}

{\centering{ \today}\par}

\vskip-3em

\begin{abstract}
A non-uniqueness phase for infinite clusters is proven for a class of marked random connection models on the $d$-dimensional hyperbolic space, $\HypDim$, in a high volume-scaling regime. The approach taken in this paper utilizes the spherical transform on $\HypDim$ to diagonalize convolution by the adjacency function and the two-point function and bound their $L^2\to L^2$ operator norms. Under some circumstances, this spherical transform approach also provides bounds on the triangle diagram that allows for a derivation of certain mean-field critical exponents. In particular, the results are applied to some Boolean and weight-dependent hyperbolic random connection models. While most of the paper is concerned with the high volume-scaling regime, the existence of the non-uniqueness phase is also proven without this scaling for some random connection models whose resulting graphs are almost surely not locally finite.
\end{abstract}

\noindent\emph{Mathematics Subject Classification (2020).} 
Primary: 82B43; Secondary: 60G55, 43A90.

\smallskip

\noindent\emph{Keywords and phrases.} Random connection models, continuum percolation, hyperbolic space, non-uniqueness phase, mean-field critical exponents, spherical transform.


\section{Introduction}

Random connection models (RCMs) are random graph models in which the vertex set is a Poisson point process on some ambient space (with an intensity parameter $\lambda>0$), and the edges then exist independently with a probability that is prescribed by an \emph{adjacency function}, $\connf$, that depends upon the positions of the two vertices in question. \emph{Marked} random connection models are RCMs for which the ambient space is the product of an infinite space (say Euclidean $\Rd$ or hyperbolic $\HypDim$) with a probability space $\Ecal$ of marks. The adjacency function naturally depends on these marks, and this ensures that not all vertices behave in the same way (as they might well in a purely very symmetric ambient space). The primary aim of this paper is to prove marked RCMs on $\HypDim\times \Ecal$ exhibit a \emph{non-uniqueness} phase, in which there are almost surely infinitely many infinite clusters. Specifically, we show in Theorem~\ref{thm:NonUniqueness} that in a certain ``stretched-out" regime the percolation critical intensity $\lambda_\mathrm{c}$ (above which infinite components exist) and the uniqueness critical intensity $\lambda_\mathrm{u}$ (above which there is a unique infinite component) satisfy
\begin{equation}
\label{eqn:non-uniqueness}
    \lambda_\mathrm{c} < \lambda_\mathrm{u}.
\end{equation}
Notably, this ``stretched-out" regime is not the ``spread-out" regime as seen in the lace expansion arguments of \cite{HarSla90,HeyHofLasMat19}.
As we shall see, the approach taken in this paper also helps to derive mean-field critical exponents for such models (see Theorem~\ref{thm:meanfield}). These results are also applied to Boolean disc models (with random radii) and weight-dependent RCMs on $\HypDim$. In particular, Proposition~\ref{prop:nonperturb} proves the existence of a non-uniqueness regime for some weight-dependent RCMs \emph{without} requiring the ``stretched-out" perturbation. Note that this proposition also proves a non-trivial property for some random graphs that are almost surely not locally finite.

While it is well-known that percolation models on amenable and Euclidean spaces have at most one infinite connected cluster (see \cite{Gri99,MeeRoy96} for example), by considering $\HypDim$ we can move beyond this and model non-amenable geometry. It is clear that Bernoulli bond percolation (BBP) on regular trees have a non-uniqueness phase, and \cite{benjamini1996percolation} conjectures that such a phase exists for BBP on any locally finite quasi-transitive non-amenable graph. This background is discussed further in Section~\ref{sec:background}.

There are three main ideas in this paper that allow us to show non-uniqueness. In a similar way to \cite{hutchcroft2019percolation} we first use the two-point (or connectedness) and adjacency functions to construct operators on $L^p$ spaces - in particular the Hilbert space $L^2$. This produces the critical intensity $\lambda_{2\to 2}$, at which the $L^2\to L^2$ operator norm of the two-point operator switches from being finite to infinite. Crucially, this operator critical intensity lies between the susceptibility critical intensity $\lambda_T$ ($=\lambda_{1\to 1}$ -- see Lemma~\ref{lem:generalOperatorBounds}) and the uniqueness critical intensity $\lambda_{\mathrm{u}}$:
\begin{equation}
    \lambda_{T}\leq \lambda_{2\to 2} \leq \lambda_{\mathrm{u}}.
\end{equation}
If one could show $\lambda_T=\lambda_{\mathrm{c}}$ and that one of these bounds is strict (it will be the lower bound in this argument), then a non-uniqueness phase would be proven. In practice we will find an upper bound on $\lambda_\mathrm{c}$ by comparing to a model on which the percolation and susceptibility thresholds are equal - we only prove $\lambda_T=\lambda_{\mathrm{c}}$ for models in which we also derive mean-field critical exponents. Furthermore, $\lambda_{2\to 2}$ can be bounded below by the reciprocal of the $L^2\to L^2$ operator norm of the adjacency operator. 

The second main idea is that this $L^2\to L^2$ operator norm of the adjacency operator can be written explicitly using the spherical transform of \cite{helgason1994geometric}. Specifically, in the case of \emph{unmarked} hyperbolic RCMs the  $L^2\to L^2$ operator norm of the adjacency operator is exactly
\begin{equation}
\label{eqn:SimpleSphericalTransform}
	\int_{\HypDim}\connf\left(\dist{x,\orig}\right)Q_d\left(\dist{x,\orig}\right) \mu\left(\dd x\right),
\end{equation}
where $\mathrm{dist}_{\HypDim}$ is the hyperbolic metric, $\mu$ is the hyperbolic measure, and the function $Q_d\colon \R_+\to \R_+$ is given by an explicit integral in \eqref{eqn:QdFunctionDefinition} (taking $\R_+=\left[0,\infty\right)$). By studying simple properties of $Q_d$, one can show that as $\connf$ becomes more stretched out, the integral \eqref{eqn:SimpleSphericalTransform} is much smaller than the total mass of $\connf$ (i.e. this same integral without $Q_d$). 

The third idea is that a geometric property of $\HypDim$ allows us to use a cluster of the RCM to dominate a branching process, which tells us that the susceptibility critical intensity is bounded above by a constant divided by the total mass of $\connf$. We will have therefore shown that if $\connf$ is sufficiently stretched out then the susceptibility critical intensity is strictly less than $\lambda_{2\to 2}$, which is less than or equal to the uniqueness critical intensity. The critical intensity $\lambda_{2\to 2}$ has the additional advantage that the triangle diagram can easily be shown to be finite for $\lambda<\lambda_{2\to 2}$, and standard methods then allow us to show mean field critical exponents hold.

\subsection{Outline of the Paper}

In Section~\ref{sec:RESULTS} we first present the model and main question more precisely, before giving the main non-uniqueness result of the paper in Theorem~\ref{thm:NonUniqueness}. Theorem~\ref{thm:meanfield} then gives conditions under which mean-field critical exponents can be proven. In Section~\ref{sec:SpecificModels} we apply these theorems to scaled Boolean disc models and weight-dependent hyperbolic RCMs.

Section~\ref{sec:prelims} contains some preliminary information, including more details on hyperbolic spaces and on constructing adjacency and two-point operators.

In Section~\ref{sec:Operators} we see what conclusions we can come to by using operators and operator norms without using significant properties of $\HypDim$. The main claim here is that we can bound $\lambda_{2\to 2}$ below using the $L^2\to L^2$ operator norm of the explicitly known adjacency operator. The $\lambda_{2\to 2}$ critical threshold also has a role in showing that the triangle diagram is finite.

In Section~\ref{sec:sphericaltransform} the spherical transform is used to evaluate the $L^2\to L^2$ operator norms. Here the main claim is that as $L\to\infty$ the spherical transform of the adjacency function and operator is dominated by the plain integral and the associated operator.

In Section~\ref{sec:NonUniqueness} it is demonstrated how the $L^2\to L^2$ critical threshold acts as a lower bound for the uniqueness threshold, while geometric considerations give an upper bound for the percolation critical threshold in terms of the plain integral of the adjacency function. For sufficiently large $L$, these bounds we have found are the required way around to demonstrate the existence of the non-uniqueness phase. Section~\ref{sec:ProofCritExponents} then proceeds to show that not only is the triangle diagram finite, but is small enough to use the results of \cite{caicedo2023criticalexponentsmarkedrandom}.

The applications of the main results for the Boolean disc model (in Section~\ref{sec:ProofBooleanModel}) and weight-dependent hyperbolic random connection models (in Section~\ref{sec:ProofWDRCM}) are proven in Section~\ref{sec:ProofSpecificModels}.

Interactions between scaling functions and adjacency functions are discussed in Appendix~\ref{app:ScalingFunctions}. The volume and length-linear scaling functions (introduced in Section~\ref{sec:PresentModelQuestion}) are shown to behave nicely for all adjacency functions, while all scaling functions are shown to behave nicely for some classes of adjacency function. However, some specific pairs of scaling and adjacency function are identified that behave unintuitively.

\section{Results}
\label{sec:RESULTS}
\subsection{Presenting the Model and the Questions}
\label{sec:PresentModelQuestion}

Let us now be more precise in our definition of marked hyperbolic RCMs.

\paragraph{Hyperbolic space.}
For $d\geq 2$, the $d$-dimensional hyperbolic space (denoted $\HypDim$) is the unique simply connected $d$-dimensional Riemannian manifold with constant sectional curvature $-1$. We will often use the Poincar{\'e} ball model (sometimes called \emph{conformal ball model}) of $\HypDim$. Let $\mathbb{B}=\left\{x\in\Rd\colon \abs*{x}<1\right\}$ be the open Euclidean radius ball in $\Rd$. The \emph{hyperbolic metric} on $\mathbb{B}$ then assigns length
\begin{equation}
    \ell\left(\gamma\right) := 2\int^1_0 \frac{\abs*{\gamma'(t)}}{1-\abs*{\gamma(t)}^2}\dd t
\end{equation}
to the curve $\gamma=\left\{\gamma(t)\right\}_{t\in\left[0,1\right]}$. In particular, this means that the origin in $\mathbb{B}$ (denoted $\orig$) and an arbitrary point $x\in \mathbb{B}$ are hyperbolic distance
\begin{equation}
\label{eqn:HypDistance}
    \dist{\orig,x} = 2\artanh \abs{x}
\end{equation}
apart, where $\abs{x}$ is the Euclidean distance between $\orig$ and $x$ in $\Rd$. For Borel measurable subsets $E\subset \mathbb{B}$, the \emph{hyperbolic measure} (denoted $\mu$) assigns mass
\begin{equation}
    \mu\left(E\right) := \int_E\frac{4}{\left(1-\abs*{x}^2\right)^2}\dd x.
\end{equation}
In Section~\ref{sec:prelims} below, the isometries of these spaces are described. In particular, they have a transitive family that means that the point identified by $\orig$ is arbitrary.

\paragraph{Marked hyperbolic random connection model.}
We first let $\Ecal$ be a Borel measure space with probability measure $\Pcal$. Then, given $\lambda>0$, the vertex set of the marked hyperbolic RCM, $\eta$, is distributed as a Poisson point process on $\HypDim\times \Ecal$ with intensity measure given by the product measure of the hyperbolic measure and the probability measure: $\lambda\nu := \lambda\mu\otimes\Pcal$. Two distinct vertices then form an edge independently of all other vertices and possible edges with a probability given by a measurable \emph{adjacency function} $\connf\colon \R_+\times\Ecal^2\to \left[0,1\right]$ that is symmetric under the transposition of the two mark arguments. Note that throughout we will assume that $\connf>0$ on a $\Leb\otimes \Pcal\otimes\Pcal$-positive set so that some edges do indeed occur. We write the event that two vertices $\mathbf{x},\mathbf{y}\in\eta\subset\HypDim\times\Ecal$ form an edge as $\mathbf{x}\sim\mathbf{y}$. Given $x,y\in\HypDim$ and $a,b\in\Ecal$,
\begin{equation}
\label{eqn:EdgeProbs}
    \mathbb{P}\left(\left(x,a\right)\sim\left(y,b\right)\right) = \connf\left(\dist{x,y};a,b\right).
\end{equation}
We then use $\xi$ to denote the whole random graph, and we use $\pla$ to denote the law of $\xi$ (and $\ela$ for the associated expectation). A more precise and complete description of construction of an RCM can be found in \cite{HeyHofLasMat19}, where the reader need only replace instances of $\Rd$ with the space $\HypDim\times \Ecal$ we use here.

Most of the results contained in this paper hold in a perturbative regime, where we make longer edges more likely. This is achieved by modifying the adjacency function with a scaling function. For all $L>0$ let $\sigma_L\colon\R_+\to \R_+$ be an increasing bijection such that
\begin{itemize}
    \item $\sigma_1(r)=r$,
    \item for all $r>0$, $L\mapsto \sigma_L(r)$ is increasing and $\lim_{L\to\infty}\sigma_L(r)=\infty$,
\end{itemize}
and define $\connf_L\colon\R_+\times\Ecal^2\to\left[0,1\right]$ by
    \begin{equation}
    \label{eqn:scaledAdjFnct}
        \connf_L\left(r;a,b\right) := \connf\left(\sigma_L^{-1}(r);a,b\right).
    \end{equation}
    We call $\sigma_L$ the scaling function and $\connf_L$ the scaled adjacency function. More generally, the presence of a subscript $L$ in our notation then indicates that the scaled adjacency function $\connf_L$ is being used in the place of the reference adjacency function $\connf$. 
    \begin{remark}
        Note that this is different to the concept of \emph{spread-out} models that appear in lace expansion arguments like \cite{HarSla90,HeyHofLasMat19}, because we do not multiply the adjacency function by a small factor in addition to changing the length scales. In these lace expansion arguments this small factor is required to ensure the lace expansion converges, but this is not required in the arguments here. If one included such a small factor (uniformly in the marks), then objects such as the operator norm $\norm*{\Opconnf}_{2\to 2}$ change in an exactly predictable way, and the monotonicity makes objects such as the triangle diagram $\trilam$ easy to control. This means that our argument can work with this small factor, but it is not at all necessary.
    \end{remark}

    We shall be particularly interested in a scaling function that linearly scales the volume of balls. For all dimensions $d\geq 1$ and radii $r\geq0$, define
    \begin{equation}
        \mathbf{V}_d(r):= \int^r_0\left(\sinh t\right)^{d-1}\dd t.
    \end{equation}
    Note that the function $\mathbf{V}_d\colon \R_+\to \R_+$ is a bijection. We can then define the \emph{volume-linear scaling function} to be
    \begin{equation}
        s_L(r) := \mathbf{V}_d^{-1}\left(L \mathbf{V}_d(r)\right)
    \end{equation}
    for all $d\geq 1$ and $r\geq 0$. The name arises because this scaling linearly transforms the volume of hyperbolic balls of any radius. Scaling functions are discussed further in Appendix~\ref{app:ScalingFunctions}. For example, the advantages of the volume-linear scaling function over the simpler \emph{length-linear} $\sigma_L(r)=Lr$ scaling function are identified, and some non-intuitive properties of scaling functions are described that arise from the space $\HypDim$.

\paragraph{Critical Behaviour.}
    
    Let $\orig$ designate the (arbitrary) origin of $\HypDim$ -- it will sometimes also be convenient to use the notation $\origin{a}=\left(\orig,a\right)\in\HypDim\times \Ecal$. For $\mathbf{x}\in\eta$ let $\C\left(\mathbf{x},\xi\right)$ denote the set of vertices in $\eta$ that are connected to $\mathbf{x}$ in $\xi$, and naturally $\#\C\left(\mathbf{x},\xi\right)$ then denotes the size of this set under the counting measure. The notation $\xi^\mathbf{x}$ indicates that $\xi$ has been augmented by a vertex at $\mathbf{x}$, and since $\eta$ is distributed as a Poisson point process this is equivalent to conditioning on $\mathbf{x}\in\eta$. More details of this procedure can be found in Section~\ref{sec:prelims}.

    For all $\lambda>0$ and $L>0$, we define the \emph{susceptibility} function $\chi_{\lambda,L}\colon \Ecal \to \left[0,\infty\right]$ and \emph{percolation probability} function $\theta_{\lambda,L}\colon \Ecal\to \left[0,1\right]$ by
\begin{align}
    \chi_{\lambda,L}(a)&:= \E_{\lambda,L}\left[\#\C\left(\origin{a},\xi^{\origin{a}}\right)\right],\\
    \theta_{\lambda,L}(a) &:= \mathbb{P}_{\lambda,L}\left(\#\C\left(\origin{a},\xi^{\origin{a}}\right) = \infty\right),
\end{align}
    respectively. We then define the \emph{susceptibility critical intensity} and \emph{percolation critical intensity} using the susceptibility and percolation probability respectively:
    \begin{align}
        \lambda_T(L) :=& \inf\left\{\lambda>0\colon \esssup_{a\in\Ecal}\chi_{\lambda,L}(a)=\infty\right\}, \label{eqn:criticalSusceptibility} \\
        \lambda_\mathrm{c}(L) :=& \inf\left\{\lambda>0\colon \esssup_{a\in\Ecal}\theta_{\lambda,L}(a)>0\right\}.
    \end{align}
    If we fix $L>0$, then we say that the critical exponents $\gamma=\gamma(L)$ and $\beta=\beta(L)$ exist in the bounded ratio sense if there exist $\lambda$-independent constants $C_1,C_2\in\left(0,\infty\right)$ and $\varepsilon>0$ such that
    \begin{equation}
        C_1\left(\lambda_{T}(L)-\lambda\right)^{-\gamma} \leq \norm*{\chi_{\lambda,L}}_p \leq C_2\left(\lambda_{T}(L)-\lambda\right)^{-\gamma}
    \end{equation}
    for all $\lambda\in\left(\lambda_{T}(L)-\varepsilon,\lambda_{T}(L)\right)$ and $p\in\left[1,\infty\right]$, and 
    \begin{equation}
        C_1\left(\lambda-\lambda_{\mathrm{c}}(L)\right)^\beta \leq \norm*{\theta_{\lambda,L}}_p\leq C_2\left(\lambda - \lambda_{\mathrm{c}}(L)\right)^\beta
    \end{equation}
    for all $\lambda\in\left(\lambda_{\mathrm{c}}(L),\lambda_{\mathrm{c}}(L)+\varepsilon\right)$ and $p\in\left[1,\infty\right]$. In these, $\norm*{\cdot}_p$ denotes the $L^p$ norm with respect to the measure $\Pcal$ on $\Ecal$. We also say that the cluster tail critical exponent $\delta=\delta(L)$ exists in the bounded ratio sense if there exist constants $C_1,C_2\in\left(0,\infty\right)$ such that 
    \begin{equation}
        C_1 n^{-\frac{1}{\delta}} \leq \mathbb{P}_{\lambda_\mathrm{c}(L),L}\left(\#\C\left(\origin{a},\xi^{\origin{a}}\right) \geq  n\right) \leq C_2 n^{-\frac{1}{\delta}}
    \end{equation}
    for all $n\in\N$ and $\Pcal$-almost every $a\in\Ecal$.

    In addition to the percolation critical intensity, we will discuss the \emph{uniqueness critical intensity} defined by
\begin{equation}
    \lambda_{\mathrm{u}}(L) := \inf\left\{\lambda>0\colon \mathbb{P}_{\lambda,L}\left(\exists! \text{ infinite cluster}\right)>0\right\}.
\end{equation}
If $\lambda<\lambda_\mathrm{c}(L)$, then there are almost-surely no infinite clusters, and therefore $\lambda_\mathrm{u}(L)\geq \lambda_\mathrm{c}(L)$. We will be interested in finding whether this inequality is strict. Note that the invariance of both the vertex process and adjacency function under the isometries means that all the distributions $\mathbb{P}_{\lambda,L}$ are invariant under the isometries. Therefore by standard ergodicity arguments $\mathbb{P}_{\lambda,L}\left(\exists! \text{ infinite cluster}\right)\in\left\{0,1\right\}$, and so $\lambda_{\mathrm{u}}(L)= \inf\left\{\lambda>0\colon \exists! \text{ infinite cluster } \mathbb{P}_{\lambda,L}\text{-almost surely}\right\}$.

\subsection{General Results}
\label{sec:GeneralResults}

We first state the two main general theorems, before applying them to notable specific cases in Section~\ref{sec:SpecificModels}. One of these general theorems proves that a non-uniqueness phase exists, and the other proves that critical exponents take their mean-field values. These will be proven in two types of regime. In one regime we require that there are only finitely many marks, which allows us more freedom in the scaling function chosen. In the other regime we allow for infinitely many marks, but to avoid excessive complications we require that the scaling function takes the nice volume-scaling form.

\begin{assumptionp}{F}[Finitely Many Marks]
\label{assump:finitelymany}
    There are only finitely many marks, and there exists $L_0>0$ such that $L\geq L_0$ implies that 
    \begin{equation}
        \label{eqn:pointwiseIntegrable}
        \max_{a,b\in\Ecal}\int^\infty_0\connf_L\left(r;a,b\right)r\exp\left(\frac{1}{2}\left(d-1\right) r\right)\dd r<\infty.
    \end{equation}
    Also assume that
    \begin{equation}
    \label{eqn:EigenValueRatio}
        \lim_{L\to\infty}\frac{\max_{a,b\in\Ecal}\int^R_0\connf_L\left(r;a,b\right) \left(\sinh r\right)^{d-1}\dd r}{\max_{a,b\in\Ecal}\int^\infty_0\connf_L\left(r;a,b\right) \left(\sinh r\right)^{d-1}\dd r} = 0
    \end{equation}
    for all $R<\infty$.
\end{assumptionp}
Note that under Assumption~\ref{assump:finitelymany}, without loss of generality we assume that $\Pcal\left(a\right)>0$ for all $a\in\Ecal$. In Appendix~\ref{app:ScalingFunctions}, it is shown that \eqref{eqn:EigenValueRatio} holds for all adjacency functions if the scaling function is volume-linear or length-linear, and that it holds for all scaling functions for some classes of adjacency function. However, Lemma~\ref{lem:BadExample} demonstrates that \eqref{eqn:EigenValueRatio} does not hold for a specific choice of scaling and adjacency function. Therefore \eqref{eqn:EigenValueRatio} is indeed necessary in Assumption~\ref{assump:finitelymany} for this proof to work.

\begin{assumptionp}{S}[Volume-Linear Scaling]
\label{assump:specialscale}
    The scaling function is volume-linear, 
    \begin{equation}
        \label{eqn:pointwiseIntegrableVolumeLinear}
        \int^\infty_0\connf\left(r;a,b\right)r\exp\left(\frac{1}{2}\left(d-1\right) r\right)\dd r<
    \infty
    \end{equation}
    for $\Pcal$-almost every $a,b\in\Ecal$, and
    \begin{equation}
    \label{eqn:L2normVolumeScale}
        \sup_{f\in L^2\left(\Ecal\right), f\ne 0}\frac{\int_\Ecal\abs*{\int_\Ecal\left(\int^\infty_0\connf\left(r;a,b\right)r\exp\left(\frac{1}{2}\left(d-1\right) r\right)\dd r\right) f(b)\Pcal\left(\dd b\right)}^2\Pcal\left(\dd a\right)}{\int_{\Ecal}\abs*{f(a)}^2\Pcal\left(\dd a\right)}<\infty.
    \end{equation}
\end{assumptionp}

The condition \eqref{eqn:pointwiseIntegrableVolumeLinear} corresponds to \eqref{eqn:pointwiseIntegrable}, taking into account the fact that the volume-linear scaling ensures that if \eqref{eqn:pointwiseIntegrableVolumeLinear} is finite for one scaling $L$ then it is finite for any $L>0$. In the operator notation we introduce in Section~\ref{sec:Operators}, the conditions \eqref{eqn:pointwiseIntegrable} and \eqref{eqn:L2normVolumeScale} ensure that the operator norms $\norm*{\Opconnf_L}_{2\to 2}<\infty$ for $L$ sufficiently large. The choice of the volume-linear scaling also means that no condition corresponding to \eqref{eqn:EigenValueRatio} is required (see Lemma~\ref{lem:VolumeLinearNice}).

\begin{theorem}\label{thm:NonUniqueness}
    If Assumption~\ref{assump:finitelymany} or Assumption~\ref{assump:specialscale} holds, then for sufficiently large parameter $L$ we have $\lambda_{\mathrm{u}}(L) > \lambda_{\mathrm{c}}(L)$.
\end{theorem}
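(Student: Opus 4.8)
The plan is to trap $\lambda_{\mathrm u}(L)$ and $\lambda_{\mathrm c}(L)$ on opposite sides of the operator threshold $\lambda_{2\to 2}(L)$: the spherical transform makes the $L^2\to L^2$ norm of the adjacency operator $\Opconnf_L$ \emph{much} smaller than its ``total $\connf_L$-mass'' once $L$ is large, while a branching comparison keeps $\lambda_{\mathrm c}(L)$ of the order of the reciprocal of that same mass. Concretely, writing $\mathfrak m(L)$ for the $L^2(\Ecal)\to L^2(\Ecal)$ operator norm of the integral operator with kernel $(a,b)\mapsto\int_{\HypDim}\connf_L(\dist{x,\orig};a,b)\,\mu(\dd x)$ --- comparable, for finitely many marks, to the denominator $\max_{a,b}\int_0^\infty\connf_L(r;a,b)(\sinh r)^{d-1}\dd r$ of \eqref{eqn:EigenValueRatio} and to the mean number of neighbours of a vertex --- I would establish, for all sufficiently large $L$,
\[
    \lambda_{\mathrm c}(L)\le\frac{C_d}{\mathfrak m(L)},\qquad
    \lambda_{\mathrm u}(L)\ge\lambda_{2\to 2}(L)\ge\frac{1}{\norm*{\Opconnf_L}_{2\to 2}},\qquad
    \lim_{L\to\infty}\frac{\norm*{\Opconnf_L}_{2\to 2}}{\mathfrak m(L)}=0 ,
\]
where $C_d$ depends only on $d$. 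Granting these, choose $L$ so large that $C_d\,\norm*{\Opconnf_L}_{2\to 2}<\mathfrak m(L)$; then $\lambda_{\mathrm c}(L)\le C_d/\mathfrak m(L)<\norm*{\Opconnf_L}_{2\to 2}^{-1}\le\lambda_{2\to 2}(L)\le\lambda_{\mathrm u}(L)$, which is the assertion. (Under Assumption~\ref{assump:specialscale} with infinitely many marks there is a degenerate sub-case $\mathfrak m(L)=\infty$ --- then the graph is a.s.\ not locally finite and $\lambda_{\mathrm c}(L)=0$, while $\lambda_{\mathrm u}(L)>0$ still holds because $\norm*{\Opconnf_L}_{2\to 2}<\infty$ by \eqref{eqn:L2normVolumeScale} --- so the conclusion is immediate there.)

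The middle chain is the operator input. That $\lambda_{2\to 2}(L)\le\lambda_{\mathrm u}(L)$ and $\lambda_{2\to 2}(L)\ge\norm*{\Opconnf_L}_{2\to 2}^{-1}$ is part of the operator framework of Section~\ref{sec:Operators} (a Neumann series for the two-point operator converges in $L^2$ below $\norm*{\Opconnf_L}_{2\to 2}^{-1}$); see Lemma~\ref{lem:generalOperatorBounds}. Finiteness of $\norm*{\Opconnf_L}_{2\to 2}$ for large $L$ is precisely what \eqref{eqn:pointwiseIntegrable} (Assumption~\ref{assump:finitelymany}) and \eqref{eqn:L2normVolumeScale} (Assumption~\ref{assump:specialscale}) are for. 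For the third limit I would diagonalise convolution by $\connf_L$ on $\HypDim$ via the spherical transform --- the marked version of the computation behind \eqref{eqn:SimpleSphericalTransform} --- so that $\norm*{\Opconnf_L}_{2\to 2}$ equals exactly the $L^2(\Ecal)$ operator norm of the integral operator with kernel $\int_{\HypDim}\connf_L(\dist{x,\orig};a,b)\,Q_d(\dist{x,\orig})\,\mu(\dd x)$ (the supremum of the fibre norms is attained at the bottom of the spectrum because $Q_d$ dominates $|\,$spherical function$\,|$ and $\connf_L\ge 0$).

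Since $Q_d$ is bounded with $Q_d(r)\to 0$ as $r\to\infty$ (read off from \eqref{eqn:QdFunctionDefinition}), splitting the radial integral at an arbitrary $R$ and using monotonicity of operator norms in a nonnegative kernel gives
$\norm*{\Opconnf_L}_{2\to 2}/\mathfrak m(L)\lesssim \norm*{Q_d}_\infty\cdot\bigl(\text{ratio in }\eqref{eqn:EigenValueRatio}\bigr)+\sup_{r\ge R}Q_d(r)$; letting $L\to\infty$ and then $R\to\infty$ yields the limit under Assumption~\ref{assump:finitelymany}, and Lemma~\ref{lem:VolumeLinearNice} supplies the analogue under Assumption~\ref{assump:specialscale}. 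The remaining bound $\lambda_{\mathrm c}(L)\le C_d/\mathfrak m(L)$ I would get by comparing the cluster of $\origin{a}$ to a branching process, for which the percolation and susceptibility thresholds coincide: exploring the cluster and using the exponential volume growth of $\HypDim$ --- so that the $\connf_L$-neighbourhoods of points along divergent geodesics are essentially disjoint and distinct branches of the exploration do not compete for the same Poisson points --- one couples the exploration from below with a (multitype) branching random walk on $\HypDim\times\Ecal$ whose mean-offspring operator dominates $c_d\lambda T_L$ for a dimensional $c_d>0$. A supercritical such process survives, so percolation occurs once $c_d\lambda\,\mathfrak m(L)>1$, giving $\lambda_{\mathrm c}(L)\le C_d/\mathfrak m(L)$ with $C_d:=1/c_d$, uniformly in $L$.

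I expect this last step to be the main obstacle: one must exhibit a branching process inside the cluster with mean offspring of order $\mathfrak m(L)$ \emph{and} a constant $C_d$ that does not deteriorate as $L\to\infty$, even though enlarging $L$ stretches every edge length-scale. This is exactly where hyperbolicity of $\HypDim$ is essential (by contrast on amenable $\Rd$ no such bound holds and there is no non-uniqueness phase, \cite{chebunin2024uniqueness}), and where the admissibility conditions on the scaling --- \eqref{eqn:EigenValueRatio} for finitely many marks, volume-linearity under Assumption~\ref{assump:specialscale} --- are calibrated so that ``spread out'' remains compatible with ``enough geometric room''. The other ingredients (the Neumann-series operator bounds, the boundedness and decay of $Q_d$, and the change of variables turning \eqref{eqn:SimpleSphericalTransform} into the marked expressions \eqref{eqn:pointwiseIntegrableVolumeLinear} and \eqref{eqn:L2normVolumeScale}) are comparatively routine.
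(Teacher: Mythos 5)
Your skeleton is the paper's own: trap $\lambda_{\mathrm c}(L)$ and $\lambda_{\mathrm u}(L)$ on opposite sides of $\lambda_{2\to2}(L)$, with $\lambda_{\mathrm u}\geq\lambda_{2\to2}\geq\norm*{\Opconnf_L}_{2\to2}^{-1}$ (the paper's Lemmata~\ref{lem:Uniquenesstoqtoq} and \ref{lem:qtoqintensity_bound}; note the first of these is \emph{not} contained in Lemma~\ref{lem:generalOperatorBounds} as you cite -- it needs the FKG bound $\tlam(\mathbf x,\mathbf y)\geq\theta_\lambda(\mathbf x)\theta_\lambda(\mathbf y)$ above uniqueness together with $\theta_\lambda$ being bounded below on a set of infinite $\nu$-measure), the identification $\norm*{\Opconnf_L}_{2\to2}=\norm*{\widetilde{\Opconnf}_L(0)}_{2\to2}$ via the spherical transform, the radial splitting using $Q_d(r)\to0$ (Lemmata~\ref{lem:TwoToTwoNormBound} and \ref{lem:RatioofNorms}), and a cone/tree lower bound for $\lambda_{\mathrm c}$ (Lemmata~\ref{lem:criticaltoOnetoOne-finite} and \ref{lem:criticaltoOnetoOne-SpecialScaling}). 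Under Assumption~\ref{assump:finitelymany}, and under Assumption~\ref{assump:specialscale} with $\norm*{D}_{2\to2}<\infty$, your plan is essentially the paper's proof.

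The genuine gap is your ``degenerate sub-case'' $\mathfrak m(L)=\infty$ under Assumption~\ref{assump:specialscale}, which you declare immediate: the implication ``$\norm*{D_L}_{2\to2}=\infty$ $\Rightarrow$ graph a.s.\ not locally finite $\Rightarrow\lambda_{\mathrm c}(L)=0$'' fails at the first step. Local finiteness is governed by the row integrals $\int_\Ecal D(a,b)\Pcal(\dd b)$, which can all be finite while the $2\to2$ norm is infinite; for instance $D(a,b)\asymp (ab)^{-\zeta}$ on $\Ecal=(0,1)$ with $\zeta\in[\tfrac12,1)$ gives finite expected degrees, is compatible with \eqref{eqn:L2normVolumeScale} (the $r\e^{(d-1)r/2}$-weighted kernel is then Hilbert--Schmidt), yet $\norm*{D}_{2\to2}=\infty$. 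In that situation $\lambda_{\mathrm c}(L)=0$ is unproven and your bound $\lambda_{\mathrm c}(L)\leq C_d/\mathfrak m(L)$ carries no information, so the comparison with $\norm*{\Opconnf_L}_{2\to2}^{-1}$ cannot be made. The paper closes exactly this case with two steps absent from your proposal: Lemma~\ref{lem:criticaltoOnetoOne-SpecialScaling}, the cone-tree argument run on fixed non-null mark sets $E,F$ with $\essinf_{E\times F}D>\varepsilon$, which uses volume-linearity ($D_L=LD$) to give $\lambda_{\mathrm c}(L)\leq C_d/L$; and Lemma~\ref{lem:NormSublinear}, a refinement of Lemma~\ref{lem:RatioofNorms} showing $\norm*{\Opconnf_L}_{2\to2}=o(L)$ by tracking $Q_d(s_L(r))$ via $s_L(r)\approx r+\tfrac{1}{d-1}\log L$. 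Without an $o(L)$ bound of this kind the required inequality $\lambda_{\mathrm c}(L)<\norm*{\Opconnf_L}_{2\to2}^{-1}$ does not follow when $\norm*{D}_{2\to2}=\infty$. A secondary caveat: your survival criterion ``percolation once $c_d\lambda\,\mathfrak m(L)>1$'' for a dominated multitype process needs an irreducibility hypothesis and must be fed only by the mass of $\connf_L$ beyond a fixed radius (the cones); the paper sidesteps both points by building a binary tree from the maximizing mark pair (Assumption~\ref{assump:finitelymany}, where \eqref{eqn:EigenValueRatio} guarantees half the mass survives the radial cutoff) or from the sets $E,F$, rather than invoking the full $2\to2$ norm of $D_L$ as a branching threshold.
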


The restriction of Assumptions~\ref{assump:finitelymany} and \ref{assump:specialscale} to \ref{assump:finitelymanyPlus} and \ref{assump:specialscalePlus} respectively in Theorem~\ref{thm:meanfield} results from inheriting conditions from \cite{caicedo2023criticalexponentsmarkedrandom} (recalled exactly in Section~\ref{sec:ProofCritExponents}). For succinctness, we introduce some notation. Given $a,b\in\Ecal$, $k\geq 1$, and $L>0$, let us define
\begin{align}
    D_L(a,b) &:= \int_{\HypDim}\connf_L\left(x;a,b\right)\mu\left(\dd x\right),\label{eqn:DegreeFunction}\\
    D_L^{(k)}(a,b) &:= \int_{\Ecal^{k-1}}\left(\prod^k_{j=1} D_L(c_{j-1},c_j)\right)\prod^{k-1}_{i=1}\Pcal\left(\dd c_i\right),\label{eqn:kDegreeFunction}
\end{align}
where $c_0=a$ and $c_k=b$. The functions $D(a,b)$ and $D^{(k)}(a,b)$ (with no subscript $L$) are constructed in exactly the same way with the reference function $\connf$ in the place of $\connf_L$.

\begin{assumptionp}{F+}\label{assump:finitelymanyPlus}
In addition to Assumption~\ref{assump:finitelymany}, the following inequalities hold:
    \begin{align}
        \limsup_{L\to\infty}\frac{\max_{a,b\in\Ecal} D_L(a,b)}{\min_{a\in\Ecal}\sum_{b\in\Ecal} D_L(a,b)\Pcal\left(b\right)}&<\infty,\label{eqn:maxmaxOverminsum}\\
        \liminf_{L\to\infty}\sup_{k\geq 1}\frac{\min_{a,b\in\Ecal} D^{(k)}_L(a,b)}{\left(\max_{a\in\Ecal}\sum_{b\in\Ecal} D_L(a,b)\Pcal\left(b\right)\right)^k} &>0.\label{eqn:kstepRatio}
    \end{align}
\end{assumptionp}

\begin{assumptionp}{S+}\label{assump:specialscalePlus}
In addition to Assumption~\ref{assump:specialscale}, the following inequalities hold for the reference adjacency function:
    \begin{align}
        \esssup_{a,b\in\Ecal} D(a,b)&<\infty,\label{eqn:bounddegreedensity}\\
        \essinf_{a\in\Ecal}\int_\Ecal D(a,b)\Pcal\left(\dd b\right)&>0\label{eqn:infIntegralDegree},\\
        \esssup_{a\in\Ecal}\sup_{k\geq 1}\essinf_{b\in\Ecal}D^{(k)}(a,b)&>0.\label{eqn:strongIrreducibilityCondition}
    \end{align}
\end{assumptionp}

\begin{theorem}
\label{thm:meanfield}
    If Assumption~\ref{assump:finitelymanyPlus} or Assumption~\ref{assump:specialscalePlus} holds, then for all $L$ sufficiently large the critical exponents exist and $\gamma(L)=1$, $\beta(L)=1$, and $\delta(L)=2$. Furthermore, $\lambda_T(L)=\lambda_\mathrm{c}(L)$.
\end{theorem}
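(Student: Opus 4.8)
The plan is to derive Theorem~\ref{thm:meanfield} by combining the strict inequality $\lambda_\mathrm{c}(L)<\lambda_{2\to2}(L)$ that already underlies Theorem~\ref{thm:NonUniqueness} with the critical-exponent machinery of \cite{caicedo2023criticalexponentsmarkedrandom}, whose hypotheses the ``$+$'' clauses of Assumptions~\ref{assump:finitelymanyPlus} and~\ref{assump:specialscalePlus} are tailored to supply. Since Assumption~\ref{assump:finitelymanyPlus} subsumes Assumption~\ref{assump:finitelymany} and Assumption~\ref{assump:specialscalePlus} subsumes Assumption~\ref{assump:specialscale}, the hypotheses of Theorem~\ref{thm:NonUniqueness} hold. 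The argument establishing that theorem --- which, via the exponential volume growth of $\HypDim$, embeds a supercritical branching process in a cluster and thereby bounds $\lambda_\mathrm{c}(L)$ above by a multiple of the reciprocal of the scaled degree, and compares against a model whose percolation and susceptibility thresholds coincide --- in fact delivers the sharper statement $\lambda_T(L)\le\lambda_\mathrm{c}(L)<\lambda_{2\to2}(L)$ for all large $L$: here $\lambda_{2\to2}(L)\ge\norm*{\Opconnf_L}_{2\to 2}^{-1}$ by Lemma~\ref{lem:generalOperatorBounds}, the spherical-transform formula for the adjacency operator's $L^2$-norm gives that $\norm*{\Opconnf_L}_{2\to 2}$ is of strictly smaller order than the scaled degree as $L\to\infty$ (by \eqref{eqn:EigenValueRatio} under Assumption~\ref{assump:finitelymany}, and by the volume-linear structure under Assumption~\ref{assump:specialscale}), and $\lambda_T(L)\le\lambda_\mathrm{c}(L)$ is automatic because finite susceptibility forces $\theta_{\lambda,L}\equiv 0$.

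Next I would promote this to the triangle condition \emph{at} the percolation critical intensity. By the diagrammatic consequences of $L^2\to L^2$ boundedness of the two-point operator --- the ``additional advantage'' of $\lambda_{2\to2}$ flagged in the introduction --- the triangle diagram $\triangle_\lambda$, as well as the further diagrammatic finiteness conditions used in \cite{caicedo2023criticalexponentsmarkedrandom} (all of which reduce to $L^2$-operator bounds), are finite throughout $\lambda\in[0,\lambda_{2\to2}(L))$. Since these diagrams are non-decreasing in $\lambda$ and, by the previous step, $\lambda_\mathrm{c}(L)$ lies \emph{strictly} inside that interval, they are finite at $\lambda=\lambda_\mathrm{c}(L)$; in particular $\triangle_{\lambda_\mathrm{c}(L)}<\infty$, so the triangle condition holds at criticality. (Working directly at $\lambda_\mathrm{c}(L)$, rather than taking limits $\lambda\uparrow\lambda_\mathrm{c}(L)$, is clean precisely because $\lambda_\mathrm{c}(L)$ is an interior point of the finiteness region.)

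It then remains to feed this into the imported framework. The inequalities \eqref{eqn:maxmaxOverminsum}--\eqref{eqn:kstepRatio} (resp.\ \eqref{eqn:bounddegreedensity}--\eqref{eqn:strongIrreducibilityCondition}) are precisely the degree-regularity and $k$-step irreducibility conditions that \cite{caicedo2023criticalexponentsmarkedrandom} requires (recalled verbatim in Section~\ref{sec:ProofCritExponents}). Granted the triangle condition at $\lambda_\mathrm{c}(L)$ together with these conditions, the differential-inequality arguments there --- of Aizenman--Newman type for $\gamma$ and Aizenman--Barsky type for $\beta$ and $\delta$, adapted to marked RCMs --- yield the bounded-ratio existence of the exponents with $\gamma(L)=1$, $\beta(L)=1$, $\delta(L)=2$, and along the way $\lambda_T(L)=\lambda_\mathrm{c}(L)$ (the bound $\norm*{\chi_{\lambda,L}}_p\lesssim(\lambda_\mathrm{c}(L)-\lambda)^{-1}$ forces $\lambda_T(L)\ge\lambda_\mathrm{c}(L)$, matching the automatic reverse inequality). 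What must be checked is that this machinery is phrased abstractly enough to transfer from $\R^d\times\Ecal$ to $\HypDim\times\Ecal$: the ingredients used are FKG/BK-type correlation inequalities, Russo-type differential formulas for the intensity parameter, and diagrammatic bounds, none of which invokes Euclidean structure, with transitivity of $\HypDim$ under its isometry group playing the role of $\R^d$-translation invariance.

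The main obstacle is that transfer step: \cite{caicedo2023criticalexponentsmarkedrandom} is written for $\R^d\times\Ecal$, and one must verify essentially line by line that its arguments use only features shared by $\HypDim\times\Ecal$ and that the ``$+$'' conditions stated here match its hypothesis package exactly --- this is the reason Theorem~\ref{thm:meanfield} carries the extra assumptions rather than running on Assumptions~\ref{assump:finitelymany}/\ref{assump:specialscale} alone. A secondary difficulty is that parts of that framework may require the triangle (and the companion diagrams) to be not merely finite at $\lambda_\mathrm{c}(L)$ but quantitatively \emph{small}; if so, one needs the sharper estimate $\norm*{\Opconnf_L}_{2\to2}\to0$ as $L\to\infty$ (equivalently a quantitative form of $\lambda_\mathrm{c}(L)\ll\lambda_{2\to2}(L)$), so that $\triangle_{\lambda_\mathrm{c}(L)}$ can be forced below the required threshold for $L$ large. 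This again follows from the spherical-transform bounds on $\norm*{\Opconnf_L}_{2\to2}$, but has to be propagated through the scaling, and is the one place where the perturbative nature of the statement is genuinely used.
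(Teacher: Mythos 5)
Your overall strategy is the paper's: verify the degree/irreducibility hypotheses of \cite{caicedo2023criticalexponentsmarkedrandom} (Proposition~\ref{prop:summaryCD24}), and get a triangle condition at criticality from the operator bounds $\lambda_\mathrm{c}(L)\ll\lambda_{2\to 2}(L)$. But your second paragraph contains a genuine error, and your hedged fix in the last paragraph leaves the decisive step unaddressed. First, mere finiteness of $\triangle_{\lambda_\mathrm{c}(L)}$ does \emph{not} give ``the triangle condition'' in this framework: as the remark opening Section~\ref{sec:ProofCritExponents} explains, for RCMs one cannot upgrade finiteness to an open triangle condition (the adjacency function need not be of positive type), so Condition~\ref{TriangleCondition_Assumption} is the quantitative bound $\triangle_{\lambda_T(L)}<\ConstantTriangle(L)$ with an explicit, $L$-dependent threshold $\ConstantTriangle(L)$ built from $\lambda_T(L)$, $D_L$, and the quantities $\Ical_{\lambda,a}(L)$, $\Jcal_{\lambda,a}(L)$. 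Your fallback --- make $\triangle_{\lambda_\mathrm{c}(L)}$ small for large $L$ via $\lambda_\mathrm{c}(L)\,\norm*{\Opconnf_L}_{2\to 2}\to 0$ --- is indeed the paper's Lemma~\ref{lem:TriangleIsSmall}, and it works (together with $\connf\le 1$ turning \eqref{eqn:bounddegreedensity} into \eqref{eqn:supintegralsquaredbound} under Assumption~\ref{assump:specialscalePlus}). But it is not sufficient on its own: since $\ConstantTriangle(L)$ varies with $L$, it could a priori shrink to zero as fast as the triangle does, and then ``forcing $\triangle$ below the required threshold for $L$ large'' never closes.

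This is precisely where the ``$+$'' clauses do real work, and your proposal mischaracterizes them as ``precisely the conditions \ref{Assump:BoundExpectedDegree} and \ref{Assump:AllReachablebySome}.'' They are stronger: the ratio, uniform-in-$L$ form of \eqref{eqn:maxmaxOverminsum}--\eqref{eqn:kstepRatio} (resp.\ the reference-function conditions under volume-linear scaling) is used in Lemmas~\ref{lem:FiniteCase-ConstantPositive} and~\ref{lem:ScalingCase-ConstantPositive}, combined with the two-sided bounds on $\lambda_T(L)$ coming from Lemma~\ref{lem:qtoqintensity_bound} and the branching-process upper bound (Lemma~\ref{lem:criticaltoOnetoOne-finite} or~\ref{lem:criticaltoOnetoOne-SpecialScaling}), to bound $\Ical$, $\Jcal$ and $\cbar{\lambda_T(L)}(L)$ and conclude $\ConstantTriangle(L)\ge\varepsilon>0$ uniformly for $L$ large. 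Only with this uniform lower bound does $\triangle_{\lambda_T(L)}\le\triangle_{\lambda_\mathrm{c}(L)}\to 0$ yield Condition~\ref{TriangleCondition_Assumption}, after which Proposition~\ref{prop:summaryCD24} gives the exponents and $\lambda_T(L)=\lambda_\mathrm{c}(L)$. Your proposal is missing this uniform lower bound on $\ConstantTriangle(L)$ entirely; supplying it is the main content of the paper's proof beyond what you wrote.
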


The additions of Assumption~\ref{assump:specialscalePlus} and Assumption~\ref{assump:finitelymanyPlus} serve the same role, in that they allow us to use results from \cite{caicedo2023criticalexponentsmarkedrandom} (summarized as Proposition~\ref{prop:summaryCD24} in this paper). 

\begin{remark}
    The requirement that $L$ be sufficiently large is very important for the proofs of both Theorem~\ref{thm:NonUniqueness} and \ref{thm:meanfield}. While the analogy with Bernoulli bond percolation on locally finite quasi-transitive nonamenable (and, say, Gromov hyperbolic) graphs suggests that the non-uniqueness phase should be non-empty for at least a large class of RCMs on $\HypDim\times\Ecal$, the general argument presented here relies heavily on the perturbative parameter $L$. That said, Proposition~\ref{prop:nonperturb} in Section~\ref{sec:SpecificModels} does show non-uniqueness for some models if one is willing to drop the requirement that the resulting graph is locally finite.
\end{remark}

\subsection{Applications to Specific Models}
\label{sec:SpecificModels}

We now see how Theorems~\ref{thm:NonUniqueness} and \ref{thm:meanfield} can apply to various notable examples of random connection models. We also see in Proposition~\ref{prop:nonperturb} a result that is not strictly a corollary of these theorems, but can be derived by applying similar techniques. In particular, it is non-perturbative unlike the general theorems.

\paragraph{Scaled Boolean Disc Model on $\HypDim$.}
    Let $\Ecal\subset\left(0,\infty\right)$ and $\Pcal$ be a probability measure on $\Ecal$, and for all $r\geq 0$ and $a,b\in\Ecal$ let
    \begin{equation}
        \connf\left(r;a,b\right)=\Id_{\left\{r<a+b\right\}}.
    \end{equation}
    
    The RCM with this reference adjacency function is a Boolean disc model in that it can be interpreted by placing balls with random radii with distribution $\Pcal$ centred on the points of a Poisson point process (intensity $\lambda$) on $\HypDim$. Vertices in the RCM are then adjacent if the associated two balls overlap. 

    We refer to the model with the scaled adjacency function $\connf_L$ as the \emph{scaled} Boolean disc model. The distinction is emphasised because in general the scaled Boolean disc model does not have the ``placing balls" interpretation. If the scaling function is length-linear (i.e. $\sigma_L(r)=Lr$) then the interpretation survives: if the original ball had radius $R$, then the scaled ball has radius $LR$ and vertices share an edge if and only if their balls intersect. If $\Ecal$ is a singleton (without loss of generality $\Ecal=\left\{1\right\}$), then the interpretation survives again and the balls all have radius $\frac{1}{2}\sigma_L(2)$. However, if the support of $\Pcal$ is at least two elements of $\left(0,\infty\right)$, and the scaling function is not linear in $r$, then radii cannot be assigned to each mark so that vertices are adjacent in the RCM if and only if the balls intersect.

    If one insisted on retaining the interpretation of ``placing balls" on the vertices, one could consider a model where $\connf_L(r;a,b) := \Id_{\left\{r<\sigma_L(a)+\sigma_L(b)\right\}}$, for some scaling function $\sigma_L$. However, if $\sigma_L(a)+\sigma_L(b)\ne \sigma_L(a+b)$  for some $a,b\in\Ecal$ for sufficiently large $L$, then this is not in our framework.

\begin{restatable}{corollary}{BooleanCorollary}
\label{lem:BooleanCorollary}
    Consider the scaled Boolean Disc model on $\HypDim$. 
    \begin{enumerate}[label=(\alph*)]
        \item \label{enum:BooleanCorPart0} If $R^*:=\sup\Ecal<\infty$ and $\Pcal\left(\left\{R^*\right\}\right)>0$, then for sufficiently large $L$ we have $\lambda_u(L) > \lambda_c(L)$.
        \item \label{enum:BooleanCorPart1} If $\sigma_L$ is volume-linear and
    \begin{equation}
    \label{eqn:FiniteExpectedVolume}
        \int^\infty_0r^2\exp\left(\left(d-1\right)r\right)\Pcal\left(\dd r\right)<\infty,
    \end{equation}
    then for sufficiently large $L$ we have $\lambda_u(L) > \lambda_c(L)$.
        \item \label{enum:BooleanCorPart2} If 
        \begin{equation}
            \label{eqn:InfiniteExpectedVolume}\int^\infty_0\exp\left(\left(d-1\right)r\right)\Pcal\left(\dd r\right)=\infty,
        \end{equation}
        then for all $L\geq 1$ almost every vertex is in the same infinite cluster, and $\lambda_{\mathrm{u}}(L)=\lambda_{\mathrm{c}}(L)=0$.
        \item \label{enum:BooleanCorPart3} If $\sigma_L$ is volume-linear and $R^*<\infty$, then for sufficiently large $L$ we also have  $\gamma(L)=1$, $\beta(L)=1$, $\delta(L)=2$, and $\lambda_T(L)=\lambda_{\mathrm{c}}(L)$. 
    \end{enumerate}
\end{restatable}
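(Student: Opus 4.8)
The plan is to verify, for each part, that the scaled Boolean disc model satisfies the hypotheses of the appropriate general result (Theorem~\ref{thm:NonUniqueness} for parts (a) and (b), Theorem~\ref{thm:meanfield} for part (d)), while part (c) is handled directly by a domination argument. Throughout, one keeps in mind that $\connf(r;a,b) = \Id_{\{r<a+b\}}$ so that $D(a,b) = \mu(B_{a+b}(\orig)) = \omega_{d-1}\mathbf{V}_d(a+b)$ up to a dimensional constant, and $\mathbf{V}_d(r) \asymp \e^{(d-1)r}$ for large $r$.

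For part~\ref{enum:BooleanCorPart0}, since $\Ecal$ is a closed bounded subset of $(0,\infty)$ we may reduce to the finitely-many-marks case only if $\Ecal$ is actually finite, which it need not be. Instead one should apply Assumption~\ref{assump:finitelymany} after a preliminary reduction: because $R^*=\sup\Ecal<\infty$ and $\connf$ is monotone in $a+b$, the two-point function is squeezed between the (unmarked) model with radius-pair always summing to $R^*$ (an upper bound) and comparable lower bounds, so the relevant $L^2\to L^2$ operator norms and susceptibility thresholds are controlled by a one-mark model with the single radius $R^*$. For that one-mark comparison model the pointwise integrability \eqref{eqn:pointwiseIntegrable} is immediate (the integrand is compactly supported once $\sigma_L^{-1}$ is applied, or in any case the exponential moment converges since $\connf_L$ is an indicator of a bounded set), and the eigenvalue-ratio condition \eqref{eqn:EigenValueRatio} holds because $\connf_L(r;R^*,R^*) = \Id_{\{r<\sigma_L(2R^*)\}}$ has $\int_0^R (\sinh r)^{d-1}\dd r$ fixed while $\int_0^\infty (\sinh r)^{d-1}\dd r \to \infty$ as $\sigma_L(2R^*)\to\infty$. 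One then needs that $\Pcal(\{R^*\})>0$ to ensure the positive-probability mark $R^*$ actually contributes, which lets the general theorem's conclusion transfer back to the genuine marked model. This bootstrapping from a finite sub-collection of marks to the full (infinite, bounded) mark space is the step I expect to require the most care.

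For part~\ref{enum:BooleanCorPart1}, the scaling is volume-linear so we aim at Assumption~\ref{assump:specialscale}. Here $\int_0^\infty \connf(r;a,b)\,r\,\e^{\frac12(d-1)r}\dd r = \int_0^{a+b} r\,\e^{\frac12(d-1)r}\dd r \asymp (a+b)\e^{\frac12(d-1)(a+b)}$, so \eqref{eqn:pointwiseIntegrableVolumeLinear} holds for every pair $a,b$ with no integrability assumption at all. For the $L^2\to L^2$ bound \eqref{eqn:L2normVolumeScale}, the kernel $K(a,b) := \int_0^{a+b} r\,\e^{\frac12(d-1)r}\dd r$ satisfies $K(a,b) \leq C(1+a)\e^{\frac12(d-1)a}\cdot(1+b)\e^{\frac12(d-1)b}$ (split the integral, or bound $a+b \leq (1+a)(1+b)$ and $\e^{\frac12(d-1)(a+b)} = \e^{\frac12(d-1)a}\e^{\frac12(d-1)b}$), which exhibits $K$ as dominated by a rank-one kernel $\psi(a)\psi(b)$ with $\psi(r) = C^{1/2}(1+r)\e^{\frac12(d-1)r}$; the associated operator has $L^2\to L^2$ norm equal to $\int_\Ecal \psi(a)^2\Pcal(\dd a) = C\int_\Ecal (1+r)^2\e^{(d-1)r}\Pcal(\dd r)$, which is finite precisely by hypothesis \eqref{eqn:FiniteExpectedVolume} (note $(1+r)^2$ and $r^2$ differ by a bounded factor away from $0$, and near $r=0$ the integrand is bounded). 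Hence Assumption~\ref{assump:specialscale} holds and Theorem~\ref{thm:NonUniqueness} applies.

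For part~\ref{enum:BooleanCorPart2}, when $\int_0^\infty \e^{(d-1)r}\Pcal(\dd r) = \infty$ one argues directly that the graph is so richly connected that $\lambda_\mathrm{c}(L)=0$: the expected degree of a vertex with mark $a$ is $\lambda\int_\Ecal D_L(a,b)\Pcal(\dd b) \asymp \lambda\int_\Ecal \mathbf{V}_d(\sigma_L^{-1}\text{-scaled }(a+b))\Pcal(\dd b) = \infty$ for every $\lambda>0$ and every $L\geq 1$ (the volume-linear or any admissible scaling only multiplies $\mathbf{V}_d$ by $L$, preserving divergence), so every vertex almost surely has infinite degree. One then shows any two vertices are connected: a standard second-moment / FKG argument on the Poisson process, using that a vertex with mark near the top of the support covers an unbounded hyperbolic ball, gives that the cluster of $\origin{a}$ contains, with probability one, vertices covering all of $\HypDim$, whence there is a unique infinite cluster containing a.e.\ vertex and $\lambda_\mathrm{u}(L)=\lambda_\mathrm{c}(L)=0$. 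The only subtlety is making ``covers all of $\HypDim$'' precise, e.g.\ by a Borel--Cantelli argument over an exhausting sequence of balls.

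For part~\ref{enum:BooleanCorPart3}, with $\sigma_L$ volume-linear and $R^*<\infty$ we must upgrade Assumption~\ref{assump:specialscale} (already verified, since $R^*<\infty$ trivially implies \eqref{eqn:FiniteExpectedVolume}) to Assumption~\ref{assump:specialscalePlus}. Condition \eqref{eqn:bounddegreedensity} is $\esssup_{a,b} D(a,b) = \omega_{d-1}\mathbf{V}_d(2R^*)<\infty$. Condition \eqref{eqn:infIntegralDegree} is $\essinf_a \int_\Ecal D(a,b)\Pcal(\dd b) \geq \omega_{d-1}\mathbf{V}_d(2\,\essinf\Ecal) > 0$ since marks are bounded below away from $0$ (as $\Ecal\subset(0,\infty)$; if $\essinf\Ecal=0$ one replaces $\essinf$ by any positive quantile, which still has positive $\Pcal$-mass). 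Condition \eqref{eqn:strongIrreducibilityCondition} — that for some $a$, $\sup_k \essinf_b D^{(k)}(a,b)>0$ — follows because $D(a,b)\geq c>0$ uniformly (each $D(a,b)\geq \omega_{d-1}\mathbf{V}_d(2\essinf\Ecal)$), so $D^{(k)}(a,b) \geq c^k\Pcal(\Ecal)^{k-1}$... wait, more carefully, $D^{(k)}(a,b)\geq c\cdot(\text{positive})$, and in fact with a uniform lower bound $D\geq c$ one gets $D^{(k)}(a,b)\geq c^k$ which does not vanish for fixed $k$; taking $k=1$ already gives $\essinf_b D(a,b)\geq c>0$. Thus Assumption~\ref{assump:specialscalePlus} holds and Theorem~\ref{thm:meanfield} yields $\gamma(L)=\beta(L)=1$, $\delta(L)=2$, and $\lambda_T(L)=\lambda_\mathrm{c}(L)$ for large $L$.

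The main obstacle, as noted, is part~\ref{enum:BooleanCorPart0}: one must handle a mark space that is infinite yet only bounded, where Assumption~\ref{assump:finitelymany} does not literally apply, by reducing to a one- or finitely-many-mark comparison model and transferring both the operator-norm bound and the susceptibility upper bound back; the role of $\Pcal(\{R^*\})>0$ is exactly to keep this reduction from collapsing.
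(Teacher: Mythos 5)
Parts~\ref{enum:BooleanCorPart1} and \ref{enum:BooleanCorPart3} of your proposal are essentially correct and follow the paper's route. For \ref{enum:BooleanCorPart1} you verify Assumption~\ref{assump:specialscale} by dominating the kernel $\int_0^{a+b}r\e^{\frac12(d-1)r}\dd r$ by a rank-one kernel $\psi(a)\psi(b)$, whereas the paper bounds the Hilbert--Schmidt norm directly; both give finiteness exactly under \eqref{eqn:FiniteExpectedVolume}. For \ref{enum:BooleanCorPart3} your verification of Assumption~\ref{assump:specialscalePlus} is what the paper does, modulo the slip that when $\essinf\Ecal=0$ there is no uniform lower bound $D\geq c$; the correct fix is that \eqref{eqn:strongIrreducibilityCondition} only requires an $\esssup$ over $a$, and $D(a,b)\geq\mathfrak{S}_{d-1}\mathbf{V}_d(a)>0$ already gives it with $k=1$.

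The genuine gaps are in parts~\ref{enum:BooleanCorPart0} and \ref{enum:BooleanCorPart2}. In \ref{enum:BooleanCorPart0}, the step you defer (``lets the general theorem's conclusion transfer back to the genuine marked model'') is not a valid step: non-uniqueness is not monotone under adding or deleting edges, so knowing $\lambda_\mathrm{u}>\lambda_\mathrm{c}$ for a dominating or dominated comparison model says nothing about the actual model. What is required --- and what the paper does --- is to transfer two quantitative bounds separately and in opposite directions: Lemmata~\ref{lem:qtoqintensity_bound} and \ref{lem:Uniquenesstoqtoq} are applied directly to the actual model to give $\lambda_\mathrm{u}(L)\geq 1/\norm*{\Opconnf_L}_{2\to 2}$; the proof of Lemma~\ref{lem:RatioofNorms} is rerun by hand (the truncated kernel is bounded by $\mathfrak{S}_{d-1}\mathbf{V}_d(R)$ uniformly, while $\norm*{D_L}_{2\to 2}\to\infty$ by testing against $\Id_{\{a>\varepsilon\}}$) to get $\norm*{\Opconnf_L}_{2\to 2}=o\left(\norm*{D_L}_{2\to 2}\right)$; and $\lambda_\mathrm{c}(L)$ is bounded above by coupling with a subgraph model on the two-element mark space $\{0,R^*\}$, to which Lemma~\ref{lem:criticaltoOnetoOne-finite} applies. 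The atom $\Pcal\left(\{R^*\}\right)>0$ is what makes the sub-model's scale $\mathbf{V}_d\left(\sigma_L(2R^*)\right)$ match the upper bound $\norm*{D_L}_{2\to 2}\leq\mathfrak{S}_{d-1}\mathbf{V}_d\left(\sigma_L(2R^*)\right)$, so that the two bounds meet for large $L$. None of this bookkeeping is supplied by your sandwich, so part \ref{enum:BooleanCorPart0} is not proven as written.

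In \ref{enum:BooleanCorPart2}, infinite expected degree does give $\lambda_\mathrm{c}(L)=0$, but the uniqueness claim needs more, and your mechanism is incorrect: no vertex covers an unbounded ball, since every radius is finite; \eqref{eqn:InfiniteExpectedVolume} is a heavy-tail condition on the ball \emph{volume}. The paper's argument at $L=1$ invokes the hyperbolic analogue of Hall's complete-coverage criterion: under \eqref{eqn:InfiniteExpectedVolume} the union of the balls almost surely covers all of $\HypDim$, and coverage of a connected space forces all vertices into a single cluster, giving $\lambda_\mathrm{u}(1)=\lambda_\mathrm{c}(1)=0$; general $L\geq 1$ then follows from the monotone coupling $\sigma_L(r)\geq r$, which is also needed because for $L>1$ the ``placing balls'' picture is generally lost. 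Your second-moment/Borel--Cantelli sketch establishes neither simultaneous coverage of all points nor the step from coverage to a single cluster, so this part also has a real gap.
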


Observe that parts \ref{enum:BooleanCorPart0} and \ref{enum:BooleanCorPart2} do not require a specific scaling function. Therefore these parts apply to models with the length-linear scaling function, on which the ``placing balls" interpretation survives.

Also note that apart from the extra factor of $r^2$ in \eqref{eqn:FiniteExpectedVolume}, the volume-linear scaling choice, and the requirement that $L$ be sufficiently large, the condition \eqref{eqn:FiniteExpectedVolume} is the complement of \eqref{eqn:InfiniteExpectedVolume}. This leaves only a small space between the two regimes uncovered.

Corollary~\ref{lem:BooleanCorollary} parts \ref{enum:BooleanCorPart0} and \ref{enum:BooleanCorPart1} are both generalisations of the non-uniqueness result of \cite{tykesson2007number} in dimensions $d\geq 3$ -- their result in these dimensions is recovered by taking $\Ecal=\left\{1\right\}$, for example. It does not recover their result on $\HypTwo$ because the corollary here still requires the perturbative ``sufficiently large $L$" condition for that part.

For Boolean disc models on $\R^d$, the result of \cite{chebunin2024uniqueness} proves that there is at most one infinite cluster, and \cite{hall1985continuum} proved that almost every point in $\R^d$ is covered by one of the balls if the radius distribution has infinite $d$ moment. The values of the critical exponents are less well understood. If the radius distribution has a finite $5d-3$ moment, then \cite{DumRaoTas20} proves a lower bound for the percolation function that corresponds to the exponent $\gamma=1$, and \cite{DewMui} extends this to lower bounds on the susceptibility function and critical cluster tail probabilities that correspond to the mean-field exponents $\beta=1$ and $\delta=2$. The $5d-3$ moment is not expected to be `true' and is instead a consequence of the technique used. If we further restrict to radii distributions with \emph{bounded support}, \cite{DicHey2022triangle} shows that the triangle diagram is finite for sufficiently large $d$ (a similar approach should work for $d>6$ and a sufficiently `spread-out' radius distribution), and then \cite{caicedo2023criticalexponentsmarkedrandom} can be used to derive matching upper bounds (up to a factor of a constant) for the percolation and susceptibility functions and the critical cluster tail probabilities.

\paragraph{Weight-dependent hyperbolic RCMs.}
    Let $\Ecal=\left(0,1\right)$ with Lebesgue measure: $\Pcal=\Leb\left(0,1\right)$. Let the profile function $\rho\colon \R_+\to \left[0,1\right]$ be non-increasing and the kernel function $\kappa\colon \left(0,1\right)^2\to \left(0,\infty\right)$ be measurable and non-increasing in both arguments. We define a weight-dependent random connection model on $\HypDim$ by its having the adjacency function
    \begin{equation}
        \connf\left(r;a,b\right) = \rho\left(s^{-1}_{\kappa(a,b)}\left(r\right)\right)
    \end{equation}
    for all $r\geq 0$ and $a,b\in\left(0,1\right)$. This model has not been studied in this generality before, but is a natural analogy of the weight-dependent Euclidean RCMs discussed in literature such as \cite{KOMJATHY20201309,GraLucMor,GraHeyMonMor,HofstadHoornMaitra_2023,jorritsma2024large} (variously also called: \emph{general geometric inhomogeneous}, \emph{spatial inhomogeneous}, and \emph{kernel-based spatial} random graphs). In both the hyperbolic and Euclidean cases the form of the input to the profile function ensures that density of edges between marks is just given by the product of the mass of the profile function and a function of the kernel function evaluated on those marks. In our case, the use of the volume-linear scaling function means that the density of $a$-$b$ edges is given by
\begin{equation}
	\int_{\HypDim}\connf_L\left(\dist{x,\orig};a,b\right)\mu\left(\dd x\right) = L\kappa\left(a,b\right)\int_{\HypDim}\rho\left(\dist{x,\orig}\right)\mu\left(\dd x\right).
\end{equation}
    In particular this allows for the possibility that under appropriate kernel functions, the degree of a vertex with uniformly chosen mark asymptotically follows a Pareto distribution rather than a Poisson distribution as it would in the unmarked case. The homomorphism structure of $s_L$ (see \eqref{eqn:scalingHomomorphism}) means that the volume-linear scaling parameter $L$ is equivalent to a parameter multiplying the kernel by $L$. In the Euclidean literature this is often called the edge density parameter, and is denoted by the character $\beta$. In this paper $\beta$ is reserved for the percolation critical exponent.

    We now highlight five specific weight-dependent RCMs by specifying their respective kernel function. In each case the parameter $\zeta>0$, while $a\wedge b:= \min\left\{a,b\right\}$ and $a\vee b:= \max\left\{a,b\right\}$.
    \begin{itemize}
                \item The \emph{product kernel} is defined by
        \begin{equation}
        \label{eqn:prodkernel}
            \kappa^{\textrm{prod}}(a,b) := \left(ab\right)^{-\zeta}.
        \end{equation}
    
        \item The \emph{strong kernel} is defined by
        \begin{equation}
        \label{eqn:strongkernel}
            \kappa^{\textrm{strong}}(a,b) := \left(a\wedge b\right)^{-\zeta}.
        \end{equation}
        \item The \emph{sum kernel} is a related kernel function defined by
        \begin{equation}
            \kappa^{\textrm{sum}}(a,b) := a^{-\zeta} + b^{-\zeta}.
        \end{equation}
        The sum and strong kernels are related because $\kappa^{\textrm{strong}}\leq \kappa^{\textrm{sum}} \leq 2 \kappa^{\textrm{strong}}$, and therefore show qualitatively the same behaviour. Observe that in this interpretation of the weight-dependent hyperbolic RCM, the sum kernel does not produce a Boolean disc model (specifically the ``placing balls on each vertex" interpretation), because $s_L(r_1)+s_L(r_2)\ne s_L(r_1+r_2)$ in general. It is also a different model to the \emph{scaled} Boolean disc model described above, because the ``sum'' appears in the scaling function here, while it appeared in the reference adjacency function for the scaled Boolean disc model.

        \item The \emph{weak kernel} is defined by
        \begin{equation}
            \kappa^{\textrm{weak}}(a,b) := \left(a\vee b\right)^{-1-\zeta}.
        \end{equation}

        \item The \emph{preferential attachment kernel} is defined by
        \begin{equation}
        \label{eqn:prefattachkernel}
            \kappa^{\textrm{pa}}(a,b) := \left(a\vee b\right)^{-1+\zeta}\left(a\wedge b\right)^{-\zeta}.
        \end{equation}
    \end{itemize}
    In \cite{GraLucMor,GraHeyMonMor} the parameter $\zeta$ was denoted by $\gamma$, but this latter character will be reserved for the susceptibility critical exponent in this paper. As the following Corollaries will show, we cannot actually say anything about the behaviour of the weak and preferential attachment models, but we include them to show the limitations of the approach (which possibly suggests at a different behaviour for these models).

    In the following results, let $\Kcal$ denote the linear operator acting on $L^2\left(\Ecal\right)$ by
    \begin{equation}
        \left(\Kcal f\right)(a) = \int_{\Ecal}\kappa\left(a,b\right)f(b)\Pcal\left(\dd b\right),
    \end{equation}
    and let $\norm*{\Kcal}_{2\to 2}$ denote the operator norm:
    \begin{equation}
        \norm*{\Kcal}_{2\to 2} = \sup\left\{\frac{\norm{\Kcal f}_2}{\norm{f}_2}\colon f\in L^2(\Ecal),f\ne 0\right\}.
    \end{equation}

\begin{corollary}
\label{cor:WDRCMGeneral}
    Consider a weight-dependent random connection model on $\HypDim$ with volume-linear scaling. If
    \begin{equation}
    \label{eqn:WDRCMprofileCondition}
        \int^\infty_0\rho\left(r\right) r\exp\left(\frac{1}{2}\left(d-1\right)\right)\dd r <\infty
    \end{equation}
    and $\norm*{\Kcal}_{2\to 2}<\infty$, then for sufficiently large $L$ we have $\lambda_u(L) > \lambda_c(L)$.
\end{corollary}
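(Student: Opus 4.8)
The plan is to deduce the statement from Theorem~\ref{thm:NonUniqueness} by verifying that a weight-dependent RCM with volume-linear scaling satisfies Assumption~\ref{assump:specialscale}. The scaling function is volume-linear by hypothesis, so only the two integrability conditions \eqref{eqn:pointwiseIntegrableVolumeLinear} and \eqref{eqn:L2normVolumeScale} need to be checked, and both are governed by the single kernel
\[
A(a,b) := \int_0^\infty \connf(r;a,b)\, r\, \exp\left(\tfrac{1}{2}(d-1)r\right)\dd r ,
\]
where $\connf(r;a,b)=\rho\big(s_{\kappa(a,b)}^{-1}(r)\big)$: condition \eqref{eqn:pointwiseIntegrableVolumeLinear} asks that $A(a,b)<\infty$ for $\Pcal$-almost every $a,b$, and \eqref{eqn:L2normVolumeScale} asks that the integral operator on $L^2(\Ecal)$ with kernel $A$ be bounded. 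It therefore suffices to prove the pointwise bound $A(a,b)\le C\big(1+\kappa(a,b)\big)$ for some constant $C=C(d,\rho)<\infty$.

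To obtain this I would substitute $r=s_\ell(u)$ with $\ell:=\kappa(a,b)$; since $s_\ell$ is an increasing bijection of $\R_+$, this turns $A(a,b)$ into $\int_0^\infty \rho(u)\, s_\ell(u)\,\exp(\tfrac{1}{2}(d-1)s_\ell(u))\, s_\ell'(u)\,\dd u$. Using the defining relation $\mathbf V_d(s_\ell(u))=\ell\,\mathbf V_d(u)$, the identity $s_\ell'(u)=\ell(\sinh u)^{d-1}/(\sinh s_\ell(u))^{d-1}$, and the asymptotics of $\mathbf V_d$ (namely $\mathbf V_d(r)\asymp \e^{(d-1)r}$ for $r$ bounded away from $0$, and $\mathbf V_d(r)\asymp r^d$ near $0$), one obtains for $\ell\ge 1$ the estimates $s_\ell(u)\le u+\tfrac{1}{d-1}\ln\ell+O(1)$ and $\exp(\tfrac{1}{2}(d-1)s_\ell(u))\le C\sqrt{\ell}\,\exp(\tfrac{1}{2}(d-1)u)$, while $s_\ell'$ is bounded by an absolute constant except on the short interval $u\lesssim\ell^{-1/d}$, where it is $O(\ell^{1/d})$ but contributes only $O(1)$ to the integral because $\rho\le 1$. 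Splitting the integral along that interval yields
\[
A(a,b)\le C_1 + C_2\,\sqrt{\ell}\,\ln(\ell+2)\int_0^\infty\rho(u)\,(u+1)\,\e^{\frac{1}{2}(d-1)u}\,\dd u ,
\]
and the integral on the right is finite by \eqref{eqn:WDRCMprofileCondition}, which also forces $\int_0^\infty\rho(u)\e^{\frac{1}{2}(d-1)u}\dd u<\infty$ (the part near $0$ being trivial). For $\ell\le 1$ one has $s_\ell(u)\le u$ and the same splitting gives $A(a,b)\le C_1$. Since $\sqrt{t}\,\ln(t+2)\le Ct$ for $t\ge 1$, the desired bound $A(a,b)\le C(1+\kappa(a,b))$ follows. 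These scaling-function estimates are the one genuinely technical ingredient --- they are of the kind collected in Appendix~\ref{app:ScalingFunctions} (cf.\ Lemma~\ref{lem:VolumeLinearNice}) --- and the point requiring care is precisely the non-uniform blow-up of $s_\ell'$ near $0$ as $\ell\to\infty$, which must be absorbed using that it occurs on a set of length $O(\ell^{-1/d})$.

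With $0\le A(a,b)\le C+C\kappa(a,b)$ in hand, positivity of the kernels shows that the integral operator with kernel $A$ is dominated in $L^2\to L^2$ norm by $C$ times the rank-one operator $f\mapsto\langle f,1\rangle\,1$, which is bounded because $\Ecal=(0,1)$ has finite measure, plus $C\,\Kcal$; hence its operator norm is at most $C(1+\norm{\Kcal}_{2\to2})<\infty$, which is \eqref{eqn:L2normVolumeScale}, and pointwise finiteness of $A$ --- that is, \eqref{eqn:pointwiseIntegrableVolumeLinear} --- is immediate from $\kappa(a,b)<\infty$. Hence Assumption~\ref{assump:specialscale} holds and Theorem~\ref{thm:NonUniqueness} gives $\lambda_u(L)>\lambda_c(L)$ for all sufficiently large $L$. (Equivalently, the homomorphism property $s_L\circ s_M=s_{LM}$ makes the scaled model the reference model with kernel $L\kappa$, so the same computation gives $\norm{\Opconnf_L}_{2\to2}\le C(1+L\norm{\Kcal}_{2\to2})<\infty$ for each $L$ directly.) The only real obstacle is the scaling-function bookkeeping just described; granted that, the result is a direct application of Theorem~\ref{thm:NonUniqueness}.
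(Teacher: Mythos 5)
Your proposal is correct in outline and reaches the statement by the same route as the paper: verify Assumption~\ref{assump:specialscale} and invoke Theorem~\ref{thm:NonUniqueness}. The difference is in how \eqref{eqn:pointwiseIntegrableVolumeLinear} and \eqref{eqn:L2normVolumeScale} are checked. The paper's proof is a one-line factorization, \eqref{eqn:SeparateProfileandKernel}: since $\kappa(a,b)$ enters only through the volume-linear scaling, the profile integral factors out and \eqref{eqn:L2normVolumeScale} reduces to $\norm{\Kcal}_{2\to 2}\int_0^\infty\rho(r)\,r\,\e^{\frac12(d-1)r}\dd r<\infty$, with no pointwise asymptotics of $s_\ell$ and no rank-one operator needed. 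You instead establish the cruder pointwise bound $A(a,b)\le C(1+\kappa(a,b))$ by hand and then dominate the operator by a rank-one piece plus $C\Kcal$ (legitimate, since the constant function lies in $L^2(\Ecal)$ because $\Pcal$ is a probability measure). Your longer route is in fact somewhat more robust: the exact scaling identity holds for the hyperbolic weight $(\sinh r)^{d-1}$ (cf.\ \eqref{eqn:volumeScalingProperty}), whereas for the weight $r\,\e^{\frac12(d-1)r}$ the factorization is only an up-to-constants statement of precisely the kind your estimate supplies; so your computation effectively justifies what the paper compresses into \eqref{eqn:SeparateProfileandKernel}.

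One intermediate claim in your estimate is wrong as stated, though harmlessly so: it is not true that $s_\ell'(u)$ is bounded by an absolute constant for all $u\gtrsim\ell^{-1/d}$. From $s_\ell'(u)=\ell(\sinh u)^{d-1}/(\sinh s_\ell(u))^{d-1}$ one finds $s_\ell'(u)\asymp 1/u$ on the intermediate range $\ell^{-1/d}\lesssim u\lesssim 1$ (for $d=2$, $s_\ell'(u)\approx 2/u$ there), so $s_\ell'$ is unbounded as $u\searrow 0$, uniformly in $\ell$. This does not break your final bound: on $u\in[0,1]$ one has $s_\ell(u)\le\frac{1}{d-1}\log\ell+O(1)$, hence $s_\ell(u)\,\e^{\frac12(d-1)s_\ell(u)}\le C\sqrt{\ell}\,\log(\ell+2)$, while $\int_0^1 s_\ell'(u)\dd u=s_\ell(1)=O(\log \ell)$, so (using $\rho\le 1$) the contribution of $u\le 1$ is $O\bigl(\sqrt{\ell}\,(\log\ell)^2\bigr)=O(\ell)$, which still yields $A(a,b)\le C(1+\kappa(a,b))$ and hence Assumption~\ref{assump:specialscale}. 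With that correction your argument is complete and the application of Theorem~\ref{thm:NonUniqueness} goes through exactly as in the paper.
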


\begin{corollary}
\label{cor:WDRCMSpecific}
    For the product, strong, and sum kernels,
    \begin{equation}
        \zeta<\frac{1}{2} \iff \norm*{\Kcal}_{2\to 2}<\infty.
    \end{equation}
    For the weak and preferential attachment kernels, $\norm*{\Kcal}_{2\to 2}=\infty$ for all $\zeta>0$.

    Therefore for weight-dependent random connection models on $\HypDim$ with product, strong, and sum kernels with parameter $\zeta<\frac{1}{2}$, if the scaling is volume-linear and the profile function satisfies \eqref{eqn:WDRCMprofileCondition} we have $\lambda_{\mathrm{u}}(L) > \lambda_{\mathrm{c}}(L)$ for $L$ sufficiently large.
\end{corollary}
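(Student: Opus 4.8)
The plan is to treat $\Kcal$, for each of the five kernels, as an explicit integral operator on $L^2((0,1),\Leb)$ and to decide whether $\norm{\Kcal}_{2\to 2}<\infty$ via Schur's test for the upper bounds and via localized test functions for the lower bounds. A convenient organizing device is the logarithmic substitution $a=\e^{-x}$: the map $U\colon L^2((0,1))\to L^2((0,\infty))$ given by $(Uf)(x)=\e^{-x/2}f(\e^{-x})$ is a unitary isomorphism, under which $\Kcal$ becomes the integral operator with kernel $\widetilde{\kappa}(x,y)=\e^{-(x+y)/2}\kappa(\e^{-x},\e^{-y})$; what matters is the size of $\widetilde{\kappa}$ on and near the diagonal.

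The product case is immediate: $(\Kcal f)(a)=a^{-\zeta}\int_0^1 b^{-\zeta}f(b)\,\dd b$, so $\Kcal$ is the rank-one operator $f\mapsto\bigl(\int_0^1 b^{-\zeta}f(b)\,\dd b\bigr)g$ with $g(a)=a^{-\zeta}$, and $\norm{\Kcal}_{2\to 2}=\norm{g}_{L^2((0,1))}^2=\int_0^1 a^{-2\zeta}\,\dd a$, which is finite if and only if $\zeta<\tfrac12$. For the sum kernel the pointwise sandwich $\kappa^{\mathrm{strong}}\leq\kappa^{\mathrm{sum}}\leq 2\kappa^{\mathrm{strong}}$, together with nonnegativity of the kernels (which gives $\norm{\Kcal}_{2\to 2}=\sup\{\norm{\Kcal f}_2:f\geq 0,\ \norm{f}_2=1\}$), yields $\norm{\Kcal^{\mathrm{sum}}}_{2\to 2}<\infty\iff\norm{\Kcal^{\mathrm{strong}}}_{2\to 2}<\infty$; alternatively $\Kcal^{\mathrm{sum}}$ has rank at most two, with range spanned by the constant $1$ and by $a^{-\zeta}$. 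It therefore remains to handle the strong kernel. For $\zeta<\tfrac12$ I would run Schur's test with the weight $h(a)=a^{-\alpha}$ for a fixed $\alpha\in(\zeta,1-\zeta)$ (a nonempty interval precisely because $\zeta<\tfrac12$), using $\int_0^1(a\wedge b)^{-\zeta}b^{-\alpha}\,\dd b=\tfrac{a^{1-\zeta-\alpha}}{1-\zeta-\alpha}+a^{-\zeta}\tfrac{1-a^{1-\alpha}}{1-\alpha}\leq C\,h(a)$; symmetry of $\kappa^{\mathrm{strong}}$ then gives $\norm{\Kcal^{\mathrm{strong}}}_{2\to 2}\leq C<\infty$. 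For $\zeta\geq\tfrac12$, testing against $f\equiv 1$ gives $(\Kcal^{\mathrm{strong}}1)(a)\geq\tfrac12 a^{-\zeta}$ for small $a$ (and $\equiv+\infty$ when $\zeta\geq 1$), which is not in $L^2((0,1))$ since $2\zeta\geq 1$; hence $\norm{\Kcal^{\mathrm{strong}}}_{2\to 2}=\infty$. This settles the first claimed equivalence.

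For the weak kernel, the point is that $\kappa^{\mathrm{weak}}(a,b)\geq\varepsilon^{-1-\zeta}$ on $(0,\varepsilon)^2$ (equivalently $\widetilde{\kappa}^{\mathrm{weak}}(x,x)=\e^{\zeta x}\to\infty$): testing against $f_\varepsilon=\varepsilon^{-1/2}\mathbf{1}_{(0,\varepsilon)}$, which has $\norm{f_\varepsilon}_2=1$, gives $(\Kcal^{\mathrm{weak}}f_\varepsilon)(a)\geq\varepsilon^{-1/2-\zeta}\mathbf{1}_{(0,\varepsilon)}(a)$, so $\norm{\Kcal^{\mathrm{weak}}f_\varepsilon}_2^2\geq\varepsilon^{-2\zeta}\to\infty$ as $\varepsilon\downarrow 0$, whence $\norm{\Kcal^{\mathrm{weak}}}_{2\to 2}=\infty$ for every $\zeta>0$. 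For the preferential attachment kernel I would first use the pointwise bound $\kappa^{\mathrm{pa}}(a,b)\geq\kappa^{\mathrm{strong}}(a,b)$, valid because $(a\vee b)^{-1+\zeta}\geq 1$ on $(0,1)$ when $\zeta\leq 1$ (and $\kappa^{\mathrm{pa}}(a,b)\geq(a\wedge b)^{-1}$ when $\zeta>1$), which reduces the range $\zeta\geq\tfrac12$ to the strong-kernel computation above. The main obstacle I anticipate is the remaining range $\zeta\in(0,\tfrac12)$ for the preferential attachment kernel: there the substitution gives $\widetilde{\kappa}^{\mathrm{pa}}(x,y)=\e^{(\zeta-1/2)|x-y|}$ and $\kappa^{\mathrm{pa}}(a,b)\asymp(a\vee b)^{-1}$ near the diagonal, so the crude localized test functions above no longer grow, and a sharper lower bound (or a direct spectral analysis of the resulting convolution-type operator on the half-line, with due care for boundary effects) is required. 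Finally, for the product, strong, and sum kernels with $\zeta<\tfrac12$, combining $\norm{\Kcal}_{2\to 2}<\infty$ with hypothesis~\eqref{eqn:WDRCMprofileCondition} and Corollary~\ref{cor:WDRCMGeneral} gives $\lambda_{\mathrm{u}}(L)>\lambda_{\mathrm{c}}(L)$ for $L$ sufficiently large.
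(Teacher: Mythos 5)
Your treatment of the product, strong, sum and weak kernels, and of the preferential attachment kernel for $\zeta\geq\tfrac12$, is correct and reaches the same conclusions as the paper's Lemmata~\ref{lem:ProdKernel}--\ref{lem:WeakKernel} and the $\zeta\geq\tfrac12$ half of Lemma~\ref{lem:PAkernel}, by partly different means: the paper computes Hilbert--Schmidt norms and uses explicit test functions ($f\equiv 1$ for the strong kernel, $a^{\zeta-1/2}$ for the weak kernel), whereas you use the rank-one structure of $\Kcal^{\mathrm{prod}}$, a Schur test with weight $a^{-\alpha}$, $\alpha\in\left(\zeta,1-\zeta\right)$, for $\Kcal^{\mathrm{strong}}$, the sandwich $\kappa^{\mathrm{strong}}\leq\kappa^{\mathrm{sum}}\leq 2\kappa^{\mathrm{strong}}$ (which the paper also invokes) together with monotonicity of nonnegative kernels, indicator test functions for $\Kcal^{\mathrm{weak}}$, and the pointwise dominations $\kappa^{\mathrm{pa}}\geq\kappa^{\mathrm{strong}}$ ($\zeta\leq 1$) and $\kappa^{\mathrm{pa}}\geq\left(a\wedge b\right)^{-1}$ ($\zeta>1$). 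These steps are all sound, and your final deduction of $\lambda_{\mathrm{u}}(L)>\lambda_{\mathrm{c}}(L)$ from Corollary~\ref{cor:WDRCMGeneral} is exactly the paper's route, so the last (and operationally important) sentence of the corollary is fully proven by your argument.

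The gap you flag -- the preferential attachment kernel for $\zeta\in\left(0,\tfrac12\right)$ -- is real, but it cannot be closed, because the claim $\norm*{\Kcal^{\mathrm{pa}}}_{2\to 2}=\infty$ is false in that range; your own substitution essentially shows this. Conjugating by the unitary $U$ turns $\Kcal^{\mathrm{pa}}$ into the integral operator on $L^2\left(0,\infty\right)$ with kernel $\e^{-\left(\frac12-\zeta\right)\abs*{x-y}}$, and Young's inequality for convolutions (extend by zero to $\R$ and restrict back) gives $\norm*{\Kcal^{\mathrm{pa}}}_{2\to 2}\leq 4/\left(1-2\zeta\right)<\infty$; alternatively, Schur's test directly on $\left(0,1\right)$ with weight $a^{-1/2}$ gives $\int_0^1\kappa^{\mathrm{pa}}\left(a,b\right)b^{-1/2}\dd b\leq \frac{4}{1-2\zeta}a^{-1/2}$ whenever $\zeta<\tfrac12$. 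The paper's proof of Lemma~\ref{lem:PAkernel} in this range is erroneous: with $f_\varepsilon(a)=a^{\zeta-1}\Id_{\left\{a>\varepsilon\right\}}$ the relevant term of $\left(\Kcal^{\mathrm{pa}}f_\varepsilon\right)(a)$ is $a^{\zeta-1}\log\left(a/\varepsilon\right)$, not $a^{\zeta-1}\log a$, and the substitution $a=\varepsilon t$ shows $\int_\varepsilon^1 a^{2\zeta-2}\left(\log\left(a/\varepsilon\right)\right)^2\dd a=\Theta\left(\varepsilon^{2\zeta-1}\right)$, which is of the same order as $\norm*{f_\varepsilon}_2^2$, so this family does not witness unboundedness. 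Hence your instinct that the localized test functions ``no longer grow'' is correct, but the conclusion is that the second assertion of the corollary (and Lemma~\ref{lem:PAkernel}) fails for $\zeta\in\left(0,\tfrac12\right)$, not that a sharper lower bound is needed. This does not damage the corollary's main conclusion, which concerns only the product, strong, and sum kernels; if anything, it means Corollary~\ref{cor:WDRCMGeneral} applies to the preferential attachment kernel with $\zeta<\tfrac12$ as well.
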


While the following result is not strictly a corollary of the theorems above, by using the bound on the uniqueness threshold developed here we can produce a non-perturbative result for weight-dependent hyperbolic RCMs with moderately heavy-tailed profile functions.
\begin{restatable}{prop}{nonperturb}
\label{prop:nonperturb}
    Consider a weight-dependent random connection model on $\HypDim$. If
\begin{enumerate}[label=(\alph*)]
	\item  \begin{equation}\label{eqn:InfiniteDegree}
	    \int^\infty_0\rho(r)\exp\left(\left(d-1\right)r\right)\dd r=\infty,
	\end{equation}
	\item  \begin{equation}\label{eqn:ProfileL2bound}
	        \int^\infty_0\rho(r)r\exp\left(\frac{1}{2}\left(d-1\right)r\right)\dd r<\infty,
	\end{equation}
	\item and
 \begin{equation}
     \norm*{\Kcal}_{2\to 2}<\infty,
 \end{equation}
\end{enumerate}
 then $\lambda_T=\lambda_\mathrm{c}=0$ and $\lambda_\mathrm{u}>0$. Furthermore, $\theta_\lambda\left(a\right)=1$ for all $\lambda>0$ for a $\Pcal$-positive measure of $a\in\Ecal$, and therefore the critical exponent $\beta=0$. 
\end{restatable}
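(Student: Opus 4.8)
\textbf{Proof proposal for Proposition~\ref{prop:nonperturb}.}

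The plan is to decouple the two halves of the statement: the ``$\lambda_T=\lambda_\mathrm{c}=0$ with $\theta_\lambda\equiv 1$'' part uses the \emph{large} profile (infinite-degree) hypothesis \eqref{eqn:InfiniteDegree}, while the ``$\lambda_\mathrm{u}>0$'' part uses the $L^2$ smallness hypotheses \eqref{eqn:ProfileL2bound} and $\norm*{\Kcal}_{2\to 2}<\infty$. For the first part, I would argue that \eqref{eqn:InfiniteDegree} forces $D(a,b)=\int_{\HypDim}\connf(\dist{x,\orig};a,b)\mu(\dd x)=\infty$ for a $\Pcal$-positive set of pairs $(a,b)$: indeed $D(a,b)$ is, up to a dimensional constant, $\kappa(a,b)\int^\infty_0\rho(r)(\sinh r)^{d-1}\dd r$, and $(\sinh r)^{d-1}\asymp \exp((d-1)r)$ for large $r$, so \eqref{eqn:InfiniteDegree} says exactly that the $\rho$-integral diverges while $\kappa>0$ everywhere. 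Hence any vertex with such a mark almost surely has infinitely many neighbours (Poisson integral is a.s.\ $+\infty$), and one then propagates this: starting from $\origin{a}$, infinitely many neighbours are produced, a positive density of which again have marks in the divergent set (here I need a measurability/irreducibility observation — since $\kappa>0$ everywhere, the divergent set is essentially $\{(a,b):\int\rho=\infty\}$ which is a product set $E\times E$ up to null sets with $\Pcal(E)>0$), so with probability one the cluster is infinite for every $\lambda>0$; a standard zero--one / ergodicity argument upgrades ``positive probability'' to ``a.s.\ one infinite cluster containing a.e.\ vertex'' and gives $\theta_\lambda(a)=1$ for $a\in E$, whence $\beta=0$ trivially and $\lambda_\mathrm{c}=0$; $\lambda_T=0$ follows since $\chi$ is infinite as soon as an infinite cluster has positive probability (or directly: one vertex has infinite expected degree).

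For the second part I would invoke the operator machinery already set up in the excerpt. Conditions \eqref{eqn:ProfileL2bound} and $\norm*{\Kcal}_{2\to 2}<\infty$ are precisely what is needed to make the adjacency operator $\Opconnf$ (equivalently $\Opconnf_L$ with $L=1$) bounded on $L^2(\HypDim\times\Ecal)$: the spherical-transform identity quoted around \eqref{eqn:SimpleSphericalTransform} and the fact that $Q_d(r)$ is controlled by $r\exp(\tfrac12(d-1)r)$ means the $\HypDim$-part of the norm is bounded by $\int^\infty_0\rho(r)r\exp(\tfrac12(d-1)r)\dd r$ (finite by \eqref{eqn:ProfileL2bound}) times a factor carrying the $\kappa$-dependence, whose $L^2(\Ecal)\to L^2(\Ecal)$ norm is exactly $\norm*{\Kcal}_{2\to 2}<\infty$. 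Thus $\norm*{\Opconnf}_{2\to 2}<\infty$, so $\lambda_{2\to 2}\geq \norm*{\Opconnf}_{2\to 2}^{-1}>0$, and by the chain $\lambda_T\leq\lambda_{2\to 2}\leq\lambda_\mathrm{u}$ established in the introduction (and Lemma~\ref{lem:generalOperatorBounds}), $\lambda_\mathrm{u}\geq\lambda_{2\to 2}>0=\lambda_\mathrm{c}$. This simultaneously establishes $\lambda_\mathrm{u}>0$ and the strict inequality $\lambda_\mathrm{c}<\lambda_\mathrm{u}$ implicit in the non-uniqueness claim.

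The main obstacle I anticipate is the propagation step in the first part: one must be careful that ``infinitely many neighbours'' genuinely cascades into an \emph{infinite cluster} rather than stalling, since the graph is not locally finite and the usual exploration arguments assume finite degrees. The clean way around this is to observe that for $a\in E$ the vertex $\origin{a}$ has infinitely many neighbours, and among the marks of those neighbours a.s.\ infinitely many lie in $E$ again (each neighbour independently has mark in $E$ with probability $\Pcal(E)>0$, and there are infinitely many of them); iterating, the cluster contains an infinite sequence of distinct vertices, hence is infinite. One should double-check the independence/Poisson bookkeeping here (the mark of a Poisson point is independent of everything, and conditioning on edges to $\origin{a}$ does not disturb the marks of the incident points), but this is routine given the standard RCM construction referenced after \eqref{eqn:EdgeProbs}. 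A secondary point to verify is the identification of $\norm*{\Opconnf}_{2\to 2}$ in the marked setting — the excerpt states the unmarked formula \eqref{eqn:SimpleSphericalTransform} explicitly and defers the marked version to Section~\ref{sec:Operators}, so I would simply cite that section's factorisation of the adjacency operator as (spherical-transform multiplier on $\HypDim$) $\otimes$ ($\Kcal$-type operator on $\Ecal$), which reduces the bound to the two displayed hypotheses.
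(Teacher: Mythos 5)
Your proposal is correct and follows essentially the same route as the paper: the first part uses the factorisation of the expected degree into $\kappa(a,b)$ times the divergent $\rho$-integral (so the degree is a.s.\ infinite, giving $\theta_\lambda(a)=1$, $\lambda_\mathrm{c}=\lambda_T=0$), and the second part bounds $\lambda_\mathrm{u}\geq\lambda_{2\to 2}\geq\norm*{\Opconnf}_{2\to 2}^{-1}>0$ via Lemmata~\ref{lem:Uniquenesstoqtoq}, \ref{lem:qtoqintensity_bound}, and \ref{lem:TwoToTwoNormBound}, with \eqref{eqn:ProfileL2bound} controlling the spherical-transform factor and $\norm*{\Kcal}_{2\to 2}$ the mark factor, exactly as in the paper. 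The only difference is that your ``propagation/iteration'' step (and the worry about the divergent set being a product set) is unnecessary: since $\kappa>0$ everywhere, the neighbour count of $\origin{a}$ is Poisson with infinite mean, so the cluster already contains a.s.\ infinitely many vertices, which is the paper's one-line shortcut.
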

 If $\lambda_T=0$, then it does not make sense to talk about our definition of the critical exponent $\gamma$.

 \begin{remark}
     When talking about Bernoulli bond (or site) percolation it is usually assumed that the original graph is locally finite. Similarly, when talking about (marked or unmarked) RCMs there is usually an integral condition on $\connf$ that ensures that the degrees of each vertex are almost surely finite (and so the resulting graph is locally finite). One reason this is done is that without these conditions one immediately gets $\lambda_\mathrm{c} = 0$ (and $p_\mathrm{c}=0$). If one wishes to study behaviour at or around $\lambda_\mathrm{c}$, then many properties then become trivial. Proposition~\ref{prop:nonperturb} shows that uniqueness of the infinite cluster is not one of these properties. The condition \eqref{eqn:InfiniteDegree} ensures that every vertex almost surely has infinite degree (forcing $\lambda_\mathrm{c}=0$), but the proposition identifies conditions under which there are still infinitely many infinite clusters. That is, the resulting graph is not locally finite and yet there is still structure worth studying. 
 \end{remark}

\begin{remark}
    It is worth commenting on the difficulty that arises when trying to prove that the critical exponents take their mean-field values for the weight-dependent models. In each of the cases described by the kernels \eqref{eqn:prodkernel}-\eqref{eqn:prefattachkernel}, the expected degree of a vertex can be made arbitrarily large by taking the mark to be arbitrarily close to $0$. The argument presented here aims to show that the triangle diagram, $\triangle_\lambda$ (see Definition~\ref{defn:TriangleDiagram}), is small enough at criticality to use the result of \cite{caicedo2023criticalexponentsmarkedrandom} to prove that the critical exponents take their mean-field values. However, the presence of the $\esssup$ in the expression for $\triangle_\lambda$ means that $\triangle_\lambda=\infty$ for these weight-dependent models. If we were to expect that some types of weight-dependent random connection models could exhibit mean-field behaviour, then it is apparent that this particular form of the triangle condition is too strong. The presence of the $\esssup$ in Assumption~\ref{assump:specialscalePlus} has similar issues.
\end{remark}

\subsection{Background}
\label{sec:background}
 Bernoulli bond percolation on $\Z^d$ with nearest neighbour edges is the archetypal percolation model and naturally has a very large collection of literature. A standard ergodicity and trifurcation argument proves that there is almost surely at most one infinite connected cluster (see \cite{grimmett1989percolation}). Furthermore, by coupling with Bernoulli bond percolation on a Bethe lattice, \cite{AizNew84} proved that if the triangle condition holds then the susceptibility critical exponent exists and takes its mean-field value. Similarly, \cite{AizBar87,AizBar91} showed that other critical exponents take their mean-field values under the triangle condition. The matter of whether the triangle condition held was considered in \cite{HarSla90}, in which a lace expansion argument was used to show that the triangle condition holds for so-called `spread-out' models when $d>6$, and in the nearest neighbour model for $d>d^*$ for some $d^*\geq 6$. Currently the best upper bound for this upper critical dimension is $d^*\leq 10$ by \cite{FitHof17}.

The questions of Bernoulli bond percolation can also be asked on graphs other than the usual $\Z^d$. \cite{benjamini1996percolation} provides an early study of Bernoulli bond percolation on locally finite Cayley graphs, quasi-transitive graphs, and planar graphs. In particular, they conjectured that for Bernoulli bond percolation on locally finite nonamenable quasi-transitive graphs there is a non-empty interval of bond probabilities that produces infinitely many infinite clusters almost surely. For nonamenable finitely generated groups, \cite{pak2000non} proved every nonamenable group has a Cayley graph such that the associated Bernoulli bond percolation model has a non-empty non-uniqueness phase, and \cite{nachmias2012non} proved the existence of this phase if the Cayley graph has sufficiently high girth (i.e. sufficiently long shortest cycle). Likewise,  \cite{benjamini2001percolation} and \cite{hutchcroft2019percolation} proved that a non-uniqueness phase exists for locally finite nonamenable transitive planar single-ended graphs and locally finite nonamenable quasi-transitive locally finite \emph{Gromov hyperbolic} graphs respectively. In particular, \cite{hutchcroft2019percolation} used an operator description of the two-point function and a geometric result that they called a `\emph{hyperbolic magic lemma}' that they derived from the so-called `\emph{magic lemma}' of \cite{benjamini2011recurrence} to prove this non-uniqueness. 

For percolation on finite graphs, the question of the uniqueness of the infinite component becomes a question of the relative sizes of the two largest components. In \cite{bollobas2007phase}, this question was answered for inhomogeneous random graphs, in which each edge in the complete graph of $n$ vertices is given a mark from a finite measure space and the edge between two vertices is open independently with a probability given by a kernel function (that varies with $n$ to close in on critical behaviour). Under the assumption that the kernel is ``irreducible" and an assumption that ensures that the expected number of edges is ``what it should be," they prove that the largest component is of order $n$ with high probability, while the second largest component is $o\left(n\right)$.

Regarding critical exponents, \cite{Schon01} showed that the triangle condition holds for locally finite graphs with sufficiently high Cheeger constant, and used this to prove that various mean-field critical exponents are attained. \cite{Schon02} was then able to do this for nonamenable transitive locally finite single-ended graphs \emph{without} resorting to the triangle condition, instead using the dual graph to find a separating barrier between clusters. For certain regular tessellations of two and three dimensional hyperbolic spaces, \cite{madras2010trees} adapted this approach by using the geometric result that a positive fraction of the vertices in a finite cluster will be on the boundary of the convex hull of the cluster. In addition to the non-uniqueness phase, \cite{nachmias2012non,hutchcroft2019percolation} both derived mean-field critical exponents in their respective regimes by bounding the triangle diagram. For \cite{hutchcroft2019percolation} this came naturally from their operator description.

For random connection models (RCMs) on $\Rd\times\Ecal$ many of the results that applied to Bernoulli bond percolation on $\Z^d$ can be adapted. \cite{MeeRoy96} considered unmarked RCMs with rotation invariant and decreasing adjacency functions, showing that there was no non-uniqueness regime. This was generalised to \emph{marked} RCMs (also removing the requirement of being rotation invariant and decreasing) in \cite{chebunin2024uniqueness}, by using the amenability of $\Rd$ to show that clusters in such models on $\Rd\times\Ecal$ are \emph{deletion tolerant}. Then under a natural irreducibility assumption they proved that this property is equivalent to having no non-uniqueness phase. 

The lace expansion argument was adapted by \cite{HeyHofLasMat19} to show that (an appropriate version) of the triangle condition holds for `spread-out' RCMs on $\Rd$ when $d>6$, and otherwise for $d>d^*$ for some $d^*\geq 6$. In contrast to the lattice version, no effort has been made to optimize $d^*$ in this context. They then demonstrated the utility of this triangle condition by using it to prove that the susceptibility critical exponent takes its mean-field value. For marked RCMs on $\Rd\times\Ecal$, the lace expansion argument was performed by \cite{DicHey2022triangle}, in which a version of the triangle condition was proven to hold for sufficiently high dimensions. This was then used by \cite{caicedo2023criticalexponentsmarkedrandom} to show that certain critical exponents take mean-field values in sufficiently high dimensions.

Let us now review RCMs on $\HypDim$. For the specific case of the Boolean Disc Model with a fixed radius (variously also called Poisson Boolean model, Gilbert disc model, etc.) on $\HypDim$, \cite{tykesson2007number} proved that there is a non-uniqueness phase if $d=2$ or if $d\geq 3$ and the radius of the discs is sufficiently large. This was achieved by finding infinite geodesic rays from a point that are contained in the vacant set of the vertices' discs. In \cite{dickson2024hyperbolicrandomconnectionmodels}, the geometric arguments of \cite{madras2010trees} have been adapted to show that a wide variety of RCMs on $\HypTwo$ and $\mathbb{H}^3$ exhibit mean-field critical exponents. Like for \cite{madras2010trees}, the argument could not be made to work for $d\geq 4$.

The study of RCMs on $\HypDim$ (or $\HypDim\times\Ecal$) is not only relevant for itself, but may also have applications for RCMs on $\Rd\times\Ecal$. It is described in \cite{Hofstad2024RGCNV2} how a (Boolean disc) RCM on $\HypTwo$ can be interpreted as a one-dimensional product \emph{geometric inhomogeneous random graph} (a form of RCM on $\R\times\Ecal$). Hyperbolic embeddings of complex networks have also attracted much attention in themselves: \cite{boguna2010sustaining} models the Internet by embedding it in $\HypTwo$. 

The novelty in this work is that not only do we consider marked hyperbolic RCMs (i.e. RCMs on the ambient space $\HypDim\times\Ecal$) for any $d\geq 2$, but we show non-uniqueness (Theorem~\ref{thm:NonUniqueness}) and mean-field critical exponents (Theorem~\ref{thm:meanfield}) by using spherical transforms -- a technique with no obvious analog in the Bernoulli percolation or RCM literature. The ability of this approach to deal with models that almost surely produce non-locally finite graphs is also demonstrated in Proposition~\ref{prop:nonperturb}. The non-local finiteness of the resulting graph is emphasised here because all the references above are only concerned with models that produce locally finite graphs (by having the original graph being locally finite or by having an integrable adjacency function). This came from natural assumptions for them because they wished to analyse the percolation transition (and so needed it to be non-zero), but in our case we are studying a super-critical behaviour that makes sense even if the graph ends up not being locally finite. Note that Proposition~\ref{prop:nonperturb} also provides a \emph{non-perturbative} result - an advantage over the main Theorems~\ref{thm:NonUniqueness} and \ref{thm:meanfield}. Nevertheless these main theorems are perturbative like the works of \cite{pak2000non,Schon01,nachmias2012non}, \cite{tykesson2007number} for $d\geq 3$, and \cite{HarSla90,HeyHofLasMat19} in considering spread-out models.

\section{Preliminaries}
\label{sec:prelims}

Here are some extra details and remarks on some of the objects that appeared in the results.

\paragraph{Hyperbolic space.}

The (hyperbolic-)isometries of $\mathbb{B}$ can be characterized by M{\"o}bius transformations (that is, finite compositions of reflections). Every isometry of $\mathbb{B}$ extends to a unique M{\"o}bius transformation of $\mathbb{B}$, and every M{\"o}bius transformation restricts to an isometry of $\mathbb{B}$ (see \cite{ratcliffe1994foundations}). If we denote $\partial \mathbb{B}:= \left\{x\in\Rd\colon \abs*{x}=1\right\}$, then the orientation-preserving isometries can be classified by the number of fixed points of the associated M{\"o}bius transformation in $\mathbb{B}$ and in $\partial\mathbb{B}$. These isometries are sometimes called
\begin{itemize}
    \item \emph{rotations} if they fix a point in $\mathbb{B}$,
    \item \emph{horolations} if they fix no point in $\mathbb{B}$ but a unique point in $\partial\mathbb{B}$,
    \item \emph{translations} if they fix no point in $\mathbb{B}$ but fix two points in $\partial\mathbb{B}$.
\end{itemize}
These classes are often called `elliptic', `parabolic', and `hyperbolic' respectively, but we will prefer the former terms to avoid overloading the term `hyperbolic.' Note that while in $\Rd$ rotations cannot be expressed as a composition of translations, in $\HypDim$ all orientation-preserving isometries can be expressed as the composition of two translations (see for example \cite[Exercise~29.13]{martin2012foundations}).

The decision to make the edge probability only depend on the spatial distance between the vertices (see \eqref{eqn:EdgeProbs}) and not the full possibilities of the unordered pair $\left\{x,y\right\}$ is no great restriction. If we want our edge law to be \emph{spatially} homogeneous (i.e. there is no special spatial position), then we require that $\connf(x,a,y,b) = \connf(t(x),a,t(y),b)$ for all $x,y\in\HypDim$, $a,b\in \Ecal$, and translations $t$. However, since all rotations and horolations can be expressed as the product of two translations, we then know that $\connf(x,a,y,b) = \connf(i(x),a,i(y),b)$ for all $x,y\in\HypDim$, $a,b\in \Ecal$, and orientation-preserving isometry $i$. Therefore for all $r\geq 0$, $a,b\in\Ecal$, and $x,y\in\left\{u\in\HypDim\colon \dist{u,\orig}=r\right\}$, we have $\connf(x,a,\orig,b)=\connf(y,a,\orig,b)$ and the edge probabilities are of the form $\connf\left(\dist{x,y};a,b\right)$ in \eqref{eqn:EdgeProbs}.

The use of the hyperbolic measure $\mu$ ensures that the vertex distribution is invariant under the isometries, and since the positions of the vertices only influence the edge probability through their metric distance the distribution $\mathbb{P}_\lambda$ of the whole graph $\xi$ is also invariant under these isometries. Note that we have no such assumed symmetries in the marks other than the trivial transposition of the marks (which is required since the event $\mathbf{x}\sim \mathbf{y}$ is itself symmetric).

The symmetries of the hyperbolic spaces can also be viewed through identifying with the coset space
\begin{equation}
    SO\left(d,1\right) / O(d).
\end{equation}
Specifically $\HypDim$ can be realised as the symmetric space of the simple Lie group $SO(d,1)$. It is this description as a symmetric space that allows us to use spherical transforms (see \cite{helgason1994geometric}).

\paragraph{Connections and Augmentations.}
    Given two distinct vertices $\mathbf{x},\mathbf{y}\in\eta$, we say that $\mathbf{x}$ \emph{is connected to} $\mathbf{y}$ in $\xi$ (written ``$\conn{\mathbf{x}}{\mathbf{y}}{\xi}$") if there is a finite sequence of distinct vertices $\mathbf{x}=\mathbf{v}_0,\mathbf{v}_1,\ldots,\mathbf{v}_{k-1},\mathbf{v}_k=\mathbf{y}\in\eta$ such that $\mathbf{v}_i\sim \mathbf{v}_{i+1}$ for all $i=0,\ldots,k-1$.

    For $\mathbf{x}\in\HypDim\times\Ecal$, we use $\eta^\mathbf{x}$ and $\xi^{\mathbf{x}}$ to denote the vertex set and graph that have been augmented by $\mathbf{x}$. For $\eta^\mathbf{x}$ this simply means $\eta^\mathbf{x}:=\eta\cup\left\{\mathbf{x}\right\}$. For $\xi^\mathbf{x}$ this also includes the (random) edges between $\mathbf{x}$ and the vertices already present in $\eta$. This can naturally be generalized to produce $\eta^{\mathbf{x}_1,\ldots,\mathbf{x}_k}$ and $\xi^{\mathbf{x}_1,\ldots,\mathbf{x}_k}$ for any finite $k$. A more precise and complete description of this augmenting procedure can be found in \cite{HeyHofLasMat19}. This augmenting procedure allows us to define the \emph{two-point function} as
    \begin{equation}
        \tlam\left(\mathbf{x},\mathbf{y}\right):= \mathbb{P}_\lambda\left(\conn{\mathbf{x}}{\mathbf{y}}{\xi^{\mathbf{x},\mathbf{y}}}\right).
    \end{equation}
    Since $\pla$ is $\HypDim$-isometry invariant, so is $\tlam$. Therefore we will often write $\tlam\left(\left(x,a\right),\left(y,b\right)\right) = \tlam\left(\dist{x,y};a,b\right)$ and treat $\tlam$ as a $\R_+\times\Ecal^2\to\left[0,1\right]$ function like $\connf$. When we are talking about the two-point function of a model using a scaled adjacency function, $\connf_L$, we will denote it by $\tau_{\lambda,L}$. Note that this does \emph{not} mean that $\tau_{\lambda,L}$ is just $\tlam$ with the distance scaled by some $\sigma_L$, though.

\paragraph{Operators}
Let $\left(\X,\mathcal{X}\right)$ be a Borel space with associated non-atomic $\sigma$-finite measure $\nu$. Let $\connf\colon \X\times\X\to\left[0,1\right]$ be $\Xcal\times\Xcal$-measurable, and let $\pla$ denote the law of the RCM on $\X$ with intensity measure $\lambda\nu$ and adjacency function $\connf$.

Let $\psi\colon \X\times\X\to\R_+$ be $\Xcal\times\Xcal$-measurable. Then define $\Dcal(\psi)\subset \R^\X$ to be the set of $f\in\R^\X$ such that $\int_{\X}\psi(\mathbf{x},\mathbf{y})\abs{f(\mathbf{y})}\nu(\dd \mathbf{y})<\infty$ for $\nu$-almost every $\mathbf{x}\in\X$, so that
\begin{equation}
\label{eqn:OperatorDefn}
    \left(\Psi f\right)(\mathbf{x}) := \int_{\X}\psi(\mathbf{x},\mathbf{y})f(\mathbf{y})\nu\left(\dd \mathbf{y}\right)
\end{equation}
defines a linear operator $\Psi\colon \Dcal(\psi)\to \R^\X$. For each $p,q\in\left[1,\infty\right]$, the $L^p\to L^q$ operator norm of $\Psi$ is defined by
\begin{equation}
\label{eqn:OperatorNormDefn}
    \norm{\Psi}_{p\to q} := \begin{cases}
        +\infty &\text{if }L^p(\X)\not\subset \Dcal(\psi)\\
        \sup\left\{\frac{\norm{\Psi f}_q}{\norm{f}_p}\colon f\in L^p(\X),f\ne 0\right\} &\text{otherwise.}
    \end{cases}
\end{equation}

Let $\Optlam$ and $\Opconnf$ be the linear operators constructed in this way from $\tlam$ and $\connf$ respectively. We can define the operator critical thresholds to be
\begin{equation}
    \lambda_{p\to q} := \inf\left\{\lambda>0\colon \norm{\Optlam}_{p\to q} = \infty\right\}
\end{equation}
for each $p,q\in\left[1,\infty\right]$. In Lemma~\ref{lem:qtoqintensity_bound}, these are bounded by comparing the two-point function $\tlam$ to the Green's function of a branching process with offspring density $\lambda\connf$:
\begin{equation}
    \Greenlam(\mathbf{x},\mathbf{y}) := \sum^\infty_{n=1}\lambda^{n-1}\connf^{\star n}\left(\mathbf{x},\mathbf{y}\right),
\end{equation}
where $\star$ denotes convolution. That is, given functions $f_1,f_2\colon \X\times\X\to \R_+$ define
\begin{equation}
    f_1\star f_2(\mathbf{x},\mathbf{y}) := \int_\X f_1(\mathbf{x},\mathbf{u})f_2(\mathbf{u},\mathbf{y})\nu\left(\dd \mathbf{u}\right),
\end{equation}
and similarly $f^{\star n}_1\left(\mathbf{x},\mathbf{y}\right)$ by performing this iteratively. Also observe that while $\Opconnf^{n}$ is defined as the iterated application of the operator $\Opconnf$, it is also equal to the operator constructed from the function $\connf^{\star n}$.

\section{Operator Bounds}
\label{sec:Operators}

In this section we will prove the lemmata that do not depend on the hyperbolic structure of the spatial component of the ambient space. The scaling $\sigma_L$ will also not be relevant in this section, so $L$ is omitted from the notation.

Recall $\left(\X,\mathcal{X}\right)$ denotes a Borel space with associated non-atomic $\sigma$-finite measure $\nu$.

\begin{lemma}
\label{lem:qtoqintensity_bound}
For all $p\in\left[1,\infty\right]$,
    \begin{equation}
        \lambda_{p\to p} \geq \frac{1}{\norm{\Opconnf}_{p\to p}}.
    \end{equation}
\end{lemma}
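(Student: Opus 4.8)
The plan is to bound the two-point function $\tlam$ pointwise by the branching-process Green's function $\Greenlam$, and then argue that the $L^p\to L^p$ operator norm of $\Optlam$ is controlled by that of the operator built from $\Greenlam$, which in turn is a geometric series in the operator $\Opconnf$. First I would establish the pointwise domination $\tlam(\mathbf{x},\mathbf{y}) \leq \Greenlam(\mathbf{x},\mathbf{y})$ for all $\mathbf{x},\mathbf{y}$: conditionally on $\mathbf{x},\mathbf{y}\in\eta$, the event $\conn{\mathbf{x}}{\mathbf{y}}{\xi^{\mathbf{x},\mathbf{y}}}$ implies the existence of a self-avoiding path $\mathbf{x}=\mathbf{v}_0\sim\mathbf{v}_1\sim\cdots\sim\mathbf{v}_n=\mathbf{y}$ through distinct Poisson points; a union bound over $n$ and over the (ordered) tuples of intermediate points, together with Mecke's formula for the Poisson point process $\eta$ (which contributes the factor $\lambda^{n-1}$ for the $n-1$ intermediate integrations) and the independence of the edges (each contributing a factor $\connf(\mathbf{v}_{i},\mathbf{v}_{i+1})$), yields exactly $\sum_{n\geq1}\lambda^{n-1}\connf^{\star n}(\mathbf{x},\mathbf{y}) = \Greenlam(\mathbf{x},\mathbf{y})$. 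This is the standard ``tree-graph'' or BK-type bound and should be routine given the setup in \cite{HeyHofLasMat19}.

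Next I would translate this into operator norms. Since both $\tlam$ and $\connf^{\star n}$ are nonnegative kernels, the pointwise inequality $0\leq \tlam \leq \Greenlam$ implies $\norm{\Optlam f}_q \leq \norm{\OpGreenlam \abs{f}}_q$ for every $f$, where $\OpGreenlam$ is the operator with kernel $\Greenlam$; hence $\norm{\Optlam}_{p\to q}\leq \norm{\OpGreenlam}_{p\to q}$ whenever the latter is finite (and in particular $L^p\subset\Dcal(\Greenlam)$ forces $L^p\subset\Dcal(\tlam)$). Specializing to $p=q$ and using that the operator built from a convolution $\connf^{\star n}$ is the $n$-fold composition $\Opconnf^n$, submultiplicativity of the operator norm gives
\begin{equation}
    \norm{\OpGreenlam}_{p\to p} \leq \sum_{n=1}^\infty \lambda^{n-1}\norm{\Opconnf^n}_{p\to p} \leq \sum_{n=1}^\infty \lambda^{n-1}\norm{\Opconnf}_{p\to p}^{\,n} = \frac{\norm{\Opconnf}_{p\to p}}{1-\lambda\norm{\Opconnf}_{p\to p}},
\end{equation}
which is finite whenever $\lambda\norm{\Opconnf}_{p\to p}<1$, i.e. $\lambda < 1/\norm{\Opconnf}_{p\to p}$. (One should check the interchange of the sum and the norm is legitimate: monotone convergence on the partial sums of the nonnegative kernels $\connf^{\star n}$ shows $\OpGreenlam$ is the increasing limit of the operators with kernel $\sum_{n\leq N}\lambda^{n-1}\connf^{\star n}$, so its norm is the supremum of theirs, each bounded by the geometric partial sum.) Consequently $\norm{\Optlam}_{p\to p}<\infty$ for all such $\lambda$, and therefore $\lambda_{p\to p}=\inf\{\lambda>0:\norm{\Optlam}_{p\to p}=\infty\}\geq 1/\norm{\Opconnf}_{p\to p}$, as claimed. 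If $\norm{\Opconnf}_{p\to p}=\infty$ the bound $1/\infty=0$ is vacuous, so there is nothing to prove in that degenerate case.

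The main obstacle I anticipate is the careful justification of the first step — the pointwise bound $\tlam\leq\Greenlam$ — in the continuum (Poisson) setting: one must correctly handle the combinatorics of self-avoiding paths through a random point set, apply the multivariate Mecke equation to extract the intensity factor $\lambda^{n-1}$ and reduce the edge-probability expectations to the deterministic product $\prod_i\connf(\mathbf{v}_i,\mathbf{v}_{i+1})$, and make sure the ``distinctness'' of the intermediate vertices is only used to upper-bound (we drop it, replacing the self-avoiding constraint by an unconstrained integral, which only inflates the bound). The rest — monotone passage to the limit in $N$, submultiplicativity, summing the geometric series, and reading off the threshold inequality — is essentially bookkeeping and should not present real difficulty.
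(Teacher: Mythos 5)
Your proposal is correct and follows essentially the same route as the paper: the pointwise domination $\tlam\leq\Greenlam$ (which the paper obtains via the ``method of generations'' argument from \cite{caicedo2023criticalexponentsmarkedrandom} together with Mecke's formula, while you obtain it by a union bound over self-avoiding paths plus Mecke — two standard, interchangeable justifications of the same inequality), followed by monotonicity of operator norms for nonnegative kernels, submultiplicativity, and summation of the geometric series to conclude finiteness of $\norm{\Optlam}_{p\to p}$ for $\lambda\norm{\Opconnf}_{p\to p}<1$. No gaps.
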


\begin{proof}
    We first note the relation that for $\lambda>0$ and $\nu$-almost all $\mathbf{x},\mathbf{y}\in\X$,
    \begin{equation}\label{eqn:twopointVsGreen}
        \tlam\left(\mathbf{x},\mathbf{y}\right) \leq \Greenlam\left(\mathbf{x},\mathbf{y}\right).
    \end{equation}
    This was established in the proof of \cite[Lemma~2.2]{caicedo2023criticalexponentsmarkedrandom} using a `method of generations' approach. The neighbours of the starting vertex $\mathbf{y}$ are distributed as a Poisson point process with intensity measure $\lambda\connf\left(\mathbf{x},\mathbf{y}\right)\nu\left(\dd \mathbf{x}\right)$. These vertices are the `$1^{st}$ generation.' The `$2^{nd}$ generation' are the vertices that are adjacent to the $1^{st}$ generation, but not adjacent to $\mathbf{y}$. These are also distributed according to a Poisson point process, but the intensity measure is now a thinned measure whose density with respect to $\nu$ is less than or equal to $\lambda^2\connf^{\star 2}\left(\mathbf{x},\mathbf{y}\right)$. This can then be repeated for any generation. By Mecke's formula (Lemma~\ref{lem:Mecke} below) and monotone convergence, we then find that for any measurable $B\subset \X$,
    \begin{equation}
        \lambda\int_B\tlam\left(\mathbf{x},\mathbf{y}\right)\nu\left(\dd \mathbf{x}\right) = \E_\lambda\left[\#\left\{\mathbf{u}\in\eta\cap B\colon \conn{\mathbf{y}}{\mathbf{u}}{\xi^{\mathbf{y}}}\right\}\right] \leq \sum^\infty_{n=1}\lambda^n\int_B\connf^{\star n}\left(\mathbf{x},\mathbf{y}\right)\nu\left(\dd \mathbf{x}\right).
    \end{equation}
    The relation \eqref{eqn:twopointVsGreen} then follows.

    To arrive at the result, we make the elementary observation that if $\psi_1\geq\psi_2\geq 0$ $\nu$-everywhere, then their corresponding operators follow the partial ordering $\norm*{\Psi_1}_{p\to p}\geq \norm*{\Psi_2}_{p\to p}$ for every $p\in\left[1,+\infty\right]$. Then \eqref{eqn:twopointVsGreen}, the triangle inequality, and the sub-multiplicativity of the operator norm imply that
    \begin{equation}
    \label{eqn:TwoPointBoundGreen}
        \norm{\Optlam}_{p\to p} \leq \norm{\OpGreenlam}_{p\to p}  \leq \sum^{\infty}_{n=1}\lambda^{n-1}\norm{\Opconnf^n}_{p\to p} \leq \sum^{\infty}_{n=1}\lambda^{n-1}\norm{\Opconnf}_{p\to p}^n.
    \end{equation}
    This upper bound is finite if $\lambda\norm{\Opconnf}_{p\to p}<1$, and so the result follows.
\end{proof}

\begin{lemma}[Mecke's Formula]\label{lem:Mecke}
    Given $m\in\N$ and a measurable non-negative $f$,
\begin{equation}
    \E_\lambda \left[ \sum_{\vec {\mathbf{x}} \in \eta^{(m)}} f(\xi, \vec{\mathbf{x}})\right] = \lambda^m \int
				\E_\lambda\left[ f\left(\xi^{\mathbf{x}_1, \ldots, \mathbf{x}_m}, \vec {\mathbf{x}}\right)\right] \nu^{\otimes m}\left(\dd \vec{\mathbf{x}}\right),  \label{eq:prelim:mecke_n}
\end{equation}
where $\vec{\mathbf{x}}=(\mathbf{x}_1,\ldots,\mathbf{x}_m)$, $\eta^{(m)}=\{(\mathbf{x}_1,\ldots,\mathbf{x}_m)\colon x_i \in \eta, \mathbf{x}_i \neq \mathbf{x}_j \text{ for } i \neq j\}$, and $\nu^{\otimes m}$ is the $m$-product measure of $\nu$ on $\X^m$.
\end{lemma}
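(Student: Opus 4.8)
The statement is the multivariate Mecke equation transported from the driving Poisson point process to the random graph $\xi$, and the plan is to reduce it to the classical Mecke equation for Poisson processes. Following the construction of \cite{HeyHofLasMat19} (adapted verbatim from $\Rd$ to the abstract Borel space $\X$ of this section), the graph $\xi$ together with all of its augmentations can be realised simultaneously as one fixed measurable functional $\Phi$ of an enriched Poisson point process $\widehat\eta$ on $\X\times\mathsf{M}$, where $\mathsf{M}$ is an auxiliary Borel mark space with a probability measure $\mathsf{Q}$ that carries all the edge randomness and $\widehat\eta$ has intensity measure $\lambda\nu\otimes\mathsf{Q}$; here $\xi=\Phi(\widehat\eta)$ and, for any $\mathbf{x}_1,\dots,\mathbf{x}_m\in\X$, the augmented graph $\xi^{\mathbf{x}_1,\dots,\mathbf{x}_m}$ equals in law (jointly with $\xi$) the functional $\Phi$ applied to $\widehat\eta+\sum_{i=1}^m\delta_{(\mathbf{x}_i,s_i)}$ averaged over independent marks $s_1,\dots,s_m\sim\mathsf{Q}$ — the average precisely encoding that augmenting the vertex set introduces fresh independent edges with the prescribed probabilities \eqref{eqn:EdgeProbs} and disturbs nothing else. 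Since every integrand below is non-negative, Tonelli lets us move this $\mathsf{Q}^{\otimes m}$-average freely in and out of expectations, so it suffices to prove \eqref{eq:prelim:mecke_n} with $\xi$ replaced by $\widehat\eta$ and $\xi^{\mathbf{x}_1,\dots,\mathbf{x}_m}$ replaced by $\widehat\eta+\sum_i\delta_{(\mathbf{x}_i,s_i)}$, and then integrate the $s_i$.

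For the classical input I would proceed in the standard way. For $m=1$ and a Poisson process $\zeta$ with $\sigma$-finite intensity $\Lambda$, the identity $\E\big[\sum_{\mathbf{z}\in\zeta}g(\zeta,\mathbf{z})\big]=\int\E\big[g(\zeta+\delta_{\mathbf{z}},\mathbf{z})\big]\,\Lambda(\dd\mathbf{z})$ is verified first for product functionals $g(\zeta,\mathbf{z})=h(\mathbf{z})\,\mathrm{e}^{-\zeta(u)}$ with $h,u\geq 0$ measurable, where it reduces to the Laplace functional formula $\E[\mathrm{e}^{-\zeta(u)}]=\exp\!\big(-\int(1-\mathrm{e}^{-u})\,\dd\Lambda\big)$, and is then extended to all non-negative measurable $g$ by a monotone class / monotone convergence argument. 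The $m$-fold version follows by induction: one splits the sum over $\eta^{(m+1)}$ into a sum over $\vec{\mathbf{x}}\in\eta^{(m)}$ and an inner sum over $\mathbf{x}_{m+1}\in\eta$ distinct from the $\mathbf{x}_i$, applies the $m=1$ identity in the innermost variable, and then the inductive hypothesis, rearranging with Tonelli. The ``distinct points'' constraint is harmless because $\Lambda=\lambda\nu\otimes\mathsf{Q}$ is non-atomic (every singleton $(\mathbf{x},s)$ has mass $\lambda\,\nu(\{\mathbf{x}\})\,\mathsf{Q}(\{s\})=0$ as $\nu$ is non-atomic), so $\Lambda^{\otimes m}$ charges no diagonal and deleting or keeping a newly added point inside the configuration changes no integral.

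Assembling: applying the $m$-fold Mecke equation to $\widehat\eta$ with the non-negative integrand $(\widehat\eta,(\mathbf{x}_1,s_1),\dots,(\mathbf{x}_m,s_m))\mapsto f\big(\Phi(\widehat\eta),(\mathbf{x}_1,\dots,\mathbf{x}_m)\big)$, the left-hand side becomes $\E_\lambda\big[\sum_{\vec{\mathbf{x}}\in\eta^{(m)}}f(\xi,\vec{\mathbf{x}})\big]$ after the trivial sum over the mark coordinates of the points of $\widehat\eta$, and the right-hand side becomes
\begin{equation*}
\lambda^m\int_{\X^m}\int_{\mathsf{M}^m}\E_\lambda\Big[f\big(\Phi(\widehat\eta+\textstyle\sum_i\delta_{(\mathbf{x}_i,s_i)}),\vec{\mathbf{x}}\big)\Big]\,\mathsf{Q}^{\otimes m}(\dd\vec{s})\,\nu^{\otimes m}(\dd\vec{\mathbf{x}}),
\end{equation*}
which by the coupling described above equals $\lambda^m\int_{\X^m}\E_\lambda\big[f(\xi^{\mathbf{x}_1,\dots,\mathbf{x}_m},\vec{\mathbf{x}})\big]\,\nu^{\otimes m}(\dd\vec{\mathbf{x}})$, giving \eqref{eq:prelim:mecke_n}. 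The only genuinely delicate step is the first paragraph — confirming that the construction of \cite{HeyHofLasMat19} really does identify $\xi^{\mathbf{x}_1,\dots,\mathbf{x}_m}$, jointly with $\xi$, with $\Phi$ applied to $\widehat\eta$ augmented by independently marked copies of the $\mathbf{x}_i$; once this bookkeeping is granted, the rest is routine Poisson-process calculus, and one may equally well simply cite the multivariate Mecke equation from \cite{HeyHofLasMat19}.
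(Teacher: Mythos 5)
Your argument is correct and is exactly the standard one lying behind the paper's treatment: the paper's ``proof'' simply cites \cite[Chapter~4]{LasPen17} (together with the edge-marked Poisson construction of \cite{HeyHofLasMat19} already invoked for the augmentation), which is precisely the reduction to the classical multivariate Mecke equation that you spell out. Your write-up is a more detailed expansion of that citation, and, as you note yourself, citing the multivariate Mecke equation directly is equally acceptable.
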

\begin{proof}
    A proof and discussion of Mecke's formula can be found in \cite[Chapter~4]{LasPen17}.
\end{proof}

For the following lemma, we can generalize the definition of $\lambda_T$ in \eqref{eqn:criticalSusceptibility} to our more general space. For $\mathbf{x}\in\X$,
\begin{align}
    \chi_{\lambda}(\mathbf{x})&= \E_{\lambda}\left[\#\C\left(\mathbf{x},\xi^{\mathbf{x}}\right)\right], \label{eqn:susceptibilityGeneral}\\
    \lambda_T &= \inf\left\{\lambda>0\colon \esssup_{\mathbf{x}\in\X}\chi_{\lambda}(\mathbf{x})=\infty\right\}.
\end{align}
In the case $\X=\HypDim\times\Ecal$, the transitive symmetry of the model in $\HypDim$ ensures that we recover the previous definition. The norm $\HSNorm{\cdot}$ in the following lemma is called the \emph{Hilbert-Schmidt} norm.

\begin{lemma}
\label{lem:generalOperatorBounds}
    For all linear operators $\Psi\colon \Dcal(\psi)\to \R^\X$ defined as in \eqref{eqn:OperatorDefn},
    \begin{align}
        \norm*{\Psi}_{1\to 1} &= \esssup_{\mathbf{y}\in\X}\int_\X\abs*{\psi\left(\mathbf{x},\mathbf{y}\right)}\nu\left(\dd \mathbf{x}\right),\\
        \norm*{\Psi}_{2\to 2} &\leq \HSNorm{\Psi}:= \left(\int_\X\int_\X\abs*{\psi\left(\mathbf{x},\mathbf{y}\right)}^2\nu\left(\dd \mathbf{x}\right)\nu\left(\dd \mathbf{y}\right)\right)^\frac{1}{2}.
    \end{align}
    In particular, this means
    \begin{equation}
        \lambda_T = \lambda_{1\to 1}.
    \end{equation}
\end{lemma}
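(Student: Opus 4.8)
The plan is to prove each of the three claims in turn, using only the definition \eqref{eqn:OperatorDefn} of $\Psi$ and standard $L^p$-space arguments; the hyperbolic structure plays no role here.

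First I would establish the $1\to 1$ identity. For the upper bound, given $f\in L^1(\X)$ one computes directly
\[
    \norm{\Psi f}_1 = \int_\X\abs*{\int_\X \psi(\mathbf{x},\mathbf{y})f(\mathbf{y})\nu(\dd\mathbf{y})}\nu(\dd\mathbf{x}) \leq \int_\X\int_\X\abs*{\psi(\mathbf{x},\mathbf{y})}\abs*{f(\mathbf{y})}\nu(\dd\mathbf{x})\nu(\dd\mathbf{y}),
\]
where the exchange of order of integration is justified by Tonelli's theorem since the integrand is non-negative. The inner integral $\int_\X\abs{\psi(\mathbf{x},\mathbf{y})}\nu(\dd\mathbf{x})$ is bounded $\nu$-almost everywhere by $M:=\esssup_{\mathbf{y}}\int_\X\abs{\psi(\mathbf{x},\mathbf{y})}\nu(\dd\mathbf{x})$, giving $\norm{\Psi f}_1\leq M\norm{f}_1$; if $M=\infty$ there is nothing to prove for the upper bound, and if $M<\infty$ this also shows $L^1(\X)\subset\Dcal(\psi)$ (the inner integral is finite a.e.), so the definition's first case does not trigger. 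For the lower bound, I would approximate: for any $\varepsilon>0$ there is a positive-measure set $A$ of $\mathbf{y}$'s on which $\int_\X\abs{\psi(\mathbf{x},\mathbf{y})}\nu(\dd\mathbf{x}) > M-\varepsilon$, and since $\nu$ is $\sigma$-finite one can choose $A$ of finite positive measure; testing against $f = \Id_A/\nu(A)$ (or, to handle the general non-negative-$\psi$ versus signed issue, noting $\psi\geq 0$ here since $\psi\colon\X\times\X\to\R_+$) yields $\norm{\Psi f}_1\geq M-\varepsilon$ after another application of Tonelli, so $\norm{\Psi}_{1\to 1}\geq M-\varepsilon$ for all $\varepsilon$.

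Next, the Hilbert--Schmidt bound $\norm{\Psi}_{2\to 2}\leq\HSNorm{\Psi}$: for $f\in L^2(\X)$, Cauchy--Schwarz in the $\mathbf{y}$ variable gives $\abs{(\Psi f)(\mathbf{x})}^2 \leq \left(\int_\X\abs{\psi(\mathbf{x},\mathbf{y})}^2\nu(\dd\mathbf{y})\right)\norm{f}_2^2$; integrating over $\mathbf{x}$ and taking square roots gives $\norm{\Psi f}_2\leq\HSNorm{\Psi}\norm{f}_2$. If $\HSNorm{\Psi}<\infty$ this also shows $L^2(\X)\subset\Dcal(\psi)$, so again the trivial case is avoided; if $\HSNorm{\Psi}=\infty$ the bound is vacuous. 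Finally, the equality $\lambda_T=\lambda_{1\to 1}$ follows by observing that $\chi_\lambda(\mathbf{x}) = 1 + \E_\lambda[\#\{\mathbf{u}\in\eta\colon \mathbf{u}\leftrightarrow\mathbf{x}\text{ in }\xi^{\mathbf{x}}\}]$, and by Mecke's formula (Lemma~\ref{lem:Mecke} with $m=1$) this equals $1 + \lambda\int_\X\tlam(\mathbf{x},\mathbf{y})\nu(\dd\mathbf{y})$. Hence $\esssup_{\mathbf{x}}\chi_\lambda(\mathbf{x})<\infty$ iff $\esssup_{\mathbf{x}}\int_\X\tlam(\mathbf{x},\mathbf{y})\nu(\dd\mathbf{y})<\infty$, which by the already-proven $1\to1$ identity (applied to $\psi=\tlam$, using symmetry of $\tlam$ in its two arguments) is exactly $\norm{\Optlam}_{1\to1}<\infty$; taking infima over such $\lambda$ gives $\lambda_T=\lambda_{1\to1}$.

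The only genuinely delicate point I anticipate is the lower bound in the $1\to1$ identity when $M=\infty$: one must produce test functions $f$ with $\norm{\Psi f}_1/\norm{f}_1$ arbitrarily large, which requires choosing sets $A$ on which $\int_\X\abs{\psi(\mathbf{x},\mathbf{y})}\nu(\dd\mathbf{x})$ is large \emph{and} of finite measure so that $\Id_A\in L^1$; the $\sigma$-finiteness and non-atomicity of $\nu$ guarantee this is possible, but it deserves a careful sentence. Everything else is a routine application of Tonelli, Cauchy--Schwarz, and Mecke's formula.
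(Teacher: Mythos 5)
Your proposal is correct and follows essentially the same route as the paper: the paper likewise treats the $1\to 1$ identity as a standard fact proved with test functions approximating delta functions (your indicators $\Id_A/\nu(A)$), obtains the Hilbert--Schmidt bound from Cauchy--Schwarz, and derives $\lambda_T=\lambda_{1\to 1}$ from Mecke's formula via $\esssup\chi_\lambda = 1+\lambda\norm*{\Optlam}_{1\to 1}$. Your write-up simply fills in the routine Tonelli/approximation details that the paper leaves implicit.
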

\begin{proof}
    The first two are standard results. The first follows immediately from considering test functions approximating delta functions, while the second follows by applying a Cauchy-Schwarz inequality to $\norm*{\Psi f}_{2}$ in \eqref{eqn:OperatorNormDefn}.

    The equality of the critical intensities then follows from Mecke's formula:
    \begin{multline}
        \esssup_{\mathbf{y}\in\X}\chi_{\lambda}(\mathbf{y}) = 1+ \esssup_{\mathbf{y}\in\X}\E_{\lambda}\left[\sum_{\mathbf{x}\in\eta}\Id_{\left\{\conn{\mathbf{x}}{\mathbf{y}}{\xi^{\mathbf{y}}}\right\}}\right] \\= 1+ \lambda\esssup_{\mathbf{y}\in\X}\int_\X\tlam\left(\mathbf{x},\mathbf{y}\right)\nu\left(\dd \mathbf{x}\right) = 1+ \lambda\norm*{\Optlam}_{1\to 1}.
    \end{multline}
\end{proof}

A crucial object in our derivation of mean-field critical exponents is the triangle diagram.
\begin{definition}
\label{defn:TriangleDiagram}
    For $\lambda\geq0$, the \emph{triangle diagram} is defined as
    \begin{equation}
        \trilam := \lambda^2\esssup_{\mathbf{x},\mathbf{y}\in\X}\tlam^{\star 3}\left(\mathbf{x},\mathbf{y}\right).
    \end{equation}
\end{definition}

\begin{lemma}
\label{lem:TriangleFinite}
    If $\esssup_{\mathbf{x}\in\X}\int_{\X}\connf\left(\mathbf{x},\mathbf{y}\right)^2\nu\left(\dd \mathbf{y}\right)<\infty$ and $\lambda<\lambda_{2\to 2}$, then $\trilam<\infty$.
\end{lemma}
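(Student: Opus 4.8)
The plan is to bound the triangle diagram $\trilam = \lambda^2 \esssup_{\mathbf{x},\mathbf{y}} \tlam^{\star 3}(\mathbf{x},\mathbf{y})$ by splitting the three-fold convolution into a product of an $L^\infty$ piece and pieces we control via the $L^2\to L^2$ operator norm. The key observation is that $\tlam^{\star 3}(\mathbf{x},\mathbf{y})$ can be written as $\langle \tlam(\mathbf{x},\cdot), \Optlam(\tlam(\cdot,\mathbf{y}))\rangle$, i.e. as an inner product in $L^2(\X)$ of the function $\mathbf{u}\mapsto \tlam(\mathbf{x},\mathbf{u})$ with $\Optlam$ applied to $\mathbf{u}\mapsto\tlam(\mathbf{u},\mathbf{y})$. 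By Cauchy--Schwarz this is at most $\norm{\tlam(\mathbf{x},\cdot)}_2 \cdot \norm{\Optlam}_{2\to 2} \cdot \norm{\tlam(\cdot,\mathbf{y})}_2$, and since $\tlam$ is symmetric (up to the isometry-invariant identification) both $L^2$ norms are of the same form. So it suffices to show that $\lambda < \lambda_{2\to 2}$ implies $\norm{\Optlam}_{2\to 2} < \infty$ (which is immediate by definition of $\lambda_{2\to 2}$) and that $\esssup_{\mathbf{x}\in\X}\int_\X \tlam(\mathbf{x},\mathbf{u})^2 \nu(\dd\mathbf{u}) < \infty$.

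The remaining work is therefore to control $\esssup_{\mathbf{x}}\norm{\tlam(\mathbf{x},\cdot)}_2^2$. Here I would use the branching-process domination \eqref{eqn:twopointVsGreen}, namely $\tlam \leq \Greenlam = \sum_{n\geq 1}\lambda^{n-1}\connf^{\star n}$, together with the hypothesis $\esssup_{\mathbf{x}}\int_\X \connf(\mathbf{x},\mathbf{y})^2\nu(\dd\mathbf{y}) < \infty$. Write $\Greenlam = \connf + \lambda\,\connf\star\Greenlam$ (or equivalently peel off the first factor: $\Greenlam(\mathbf{x},\mathbf{y}) = \connf(\mathbf{x},\mathbf{y}) + \lambda\int_\X \connf(\mathbf{x},\mathbf{u})\Greenlam(\mathbf{u},\mathbf{y})\nu(\dd\mathbf{u})$). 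Then $\Greenlam(\mathbf{x},\cdot) = \connf(\mathbf{x},\cdot) + \lambda\,\Opconnf\big(\Greenlam(\mathbf{x},\cdot)\big)$ pointwise in the second variable, so by the triangle inequality in $L^2$ and the operator-norm bound,
\begin{equation}
    \norm{\Greenlam(\mathbf{x},\cdot)}_2 \leq \norm{\connf(\mathbf{x},\cdot)}_2 + \lambda\norm{\Opconnf}_{2\to 2}\norm{\Greenlam(\mathbf{x},\cdot)}_2.
\end{equation}
Provided $\lambda\norm{\Opconnf}_{2\to 2} < 1$ — which holds for $\lambda < \lambda_{2\to 2}$ since Lemma~\ref{lem:qtoqintensity_bound} gives $\lambda_{2\to 2}\geq 1/\norm{\Opconnf}_{2\to2}$, and moreover $\norm{\Optlam}_{2\to2}<\infty$ forces $\norm{\Opconnf}_{2\to2}<\infty$ — this rearranges to $\norm{\Greenlam(\mathbf{x},\cdot)}_2 \leq \norm{\connf(\mathbf{x},\cdot)}_2/(1-\lambda\norm{\Opconnf}_{2\to2})$, and taking $\esssup$ over $\mathbf{x}$ and using the hypothesis on $\connf$ gives $\esssup_{\mathbf{x}}\norm{\tlam(\mathbf{x},\cdot)}_2 \leq \esssup_{\mathbf{x}}\norm{\Greenlam(\mathbf{x},\cdot)}_2 < \infty$.

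Combining the two steps: for $\lambda < \lambda_{2\to 2}$,
\begin{equation}
    \trilam = \lambda^2\esssup_{\mathbf{x},\mathbf{y}}\tlam^{\star3}(\mathbf{x},\mathbf{y}) \leq \lambda^2\norm{\Optlam}_{2\to2}\Big(\esssup_{\mathbf{x}}\norm{\tlam(\mathbf{x},\cdot)}_2\Big)^2 < \infty.
\end{equation}
One technical point to handle with a little care is the measure-theoretic bookkeeping: the symmetry of $\tlam$ used to identify $\norm{\tlam(\mathbf{x},\cdot)}_2$ with $\norm{\tlam(\cdot,\mathbf{y})}_2$ is exact (since $\mathbf{x}\sim\mathbf{y}$ and connection events are symmetric), and the essential suprema in the definition of $\trilam$ and in the hypothesis should be interpreted consistently — but none of this is a genuine obstacle. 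The main conceptual step is the Cauchy--Schwarz decomposition of the triple convolution into (operator norm) $\times$ ($L^2$-mass of a single slice)$^2$; the main computational input is the self-bounding estimate for $\norm{\Greenlam(\mathbf{x},\cdot)}_2$ via the resolvent-type identity for $\Greenlam$, which is where finiteness of $\lambda_{2\to2} - \lambda$ is consumed. I would not expect any serious difficulty beyond making sure $\norm{\Opconnf}_{2\to2}<\infty$ is available, which follows since $\norm{\Optlam}_{2\to2}\geq\norm{\Opconnf}_{2\to2}$ is false in general — instead one uses $\lambda_{2\to 2}\geq 1/\norm{\Opconnf}_{2\to2}$ from Lemma~\ref{lem:qtoqintensity_bound} directly, so $\lambda<\lambda_{2\to2}$ together with finiteness of that lower bound gives $\lambda\norm{\Opconnf}_{2\to2}<1$ outright.
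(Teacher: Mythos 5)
Your first step (writing $\tlam^{\star 3}(\mathbf{x},\mathbf{y})$ as an inner product and applying Cauchy--Schwarz to get the factor $\norm{\Optlam}_{2\to 2}$ times two slice norms) is fine, but the second step contains a genuine gap: you claim that $\lambda<\lambda_{2\to 2}$ gives $\lambda\norm{\Opconnf}_{2\to 2}<1$ ``outright'' from Lemma~\ref{lem:qtoqintensity_bound}. That lemma gives $\lambda_{2\to 2}\geq 1/\norm{\Opconnf}_{2\to 2}$, i.e.\ a \emph{lower} bound on $\lambda_{2\to 2}$, so from $\lambda<\lambda_{2\to 2}$ you cannot deduce $\lambda<1/\norm{\Opconnf}_{2\to 2}$; nothing rules out $\lambda\in\left(1/\norm{\Opconnf}_{2\to 2},\lambda_{2\to 2}\right)$, and on that range your geometric-series/Green's-function bound for $\esssup_{\mathbf{x}}\norm{\tlam(\mathbf{x},\cdot)}_2$ simply diverges. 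As written, your argument only proves the lemma for $\lambda<1/\norm{\Opconnf}_{2\to 2}$, a potentially strictly smaller interval than the one claimed. (Two smaller points: your parenthetical that $\norm{\Optlam}_{2\to 2}\geq \norm{\Opconnf}_{2\to 2}$ ``is false in general'' is itself incorrect --- it holds by pointwise monotonicity $\tlam\geq\connf$ of nonnegative kernels, and this is exactly how the paper concludes $\norm{\Opconnf}_{2\to 2}<\infty$; and your self-bounding inequality for $\norm{\Greenlam(\mathbf{x},\cdot)}_2$ cannot be rearranged unless you already know that norm is finite, which requires working with partial sums or summing the series term by term.)

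The repair is to replace the Green's-function comparison by the one-step inequalities $\tlam\leq \connf+\lambda\,\connf\star\tlam$ and $\tlam\leq\connf+\lambda\,\tlam\star\connf$ (the paper imports these from \cite{DicHey2022triangle}). For instance, $\tlam(\mathbf{x},\cdot)\leq\connf(\mathbf{x},\cdot)+\lambda\,\Optlam\connf(\mathbf{x},\cdot)$ gives $\norm{\tlam(\mathbf{x},\cdot)}_2\leq\left(1+\lambda\norm{\Optlam}_{2\to 2}\right)\norm{\connf(\mathbf{x},\cdot)}_2$, which consumes only $\norm{\Optlam}_{2\to 2}<\infty$ --- exactly what $\lambda<\lambda_{2\to 2}$ provides --- together with the hypothesis $\esssup_{\mathbf{x}}\int_\X\connf(\mathbf{x},\mathbf{y})^2\nu(\dd\mathbf{y})<\infty$. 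This is in essence the paper's proof: there the two inequalities are applied to the outer factors of $\tlam^{\star 3}$ so that every remaining $\tau$-factor is controlled by $\norm{\Optlam}_{2\to 2}$, while the outer $\connf$-factors absorb the test functions (or, in your formulation, give the slice $L^2$ bounds) via the squared-$\connf$ hypothesis. With that substitution your decomposition goes through and is essentially equivalent to the paper's argument, just phrased with slice norms rather than the normalized-indicator test functions of \cite[Lemma~6.3]{DicHey2022triangle}.
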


\begin{proof}
    Here we want to bound $\trilam$ using the operator norm $\norm*{\Optlam}_{2\to 2}$, which we know to be finite because $\lambda<\lambda_{2\to 2}$. The issue is that while we can approximate the essential suprema in the definition of $\trilam$ using $\Optlam$ and suitable test functions, the $2$-norms of these test functions may blow up as we improve the approximation. We avoid this issue by using factors of $\connf$ to mollify this blow-up.

    In \cite[Lemma~F.1]{DicHey2022triangle} it was proven using Mecke's formula that 
    \begin{align}
        \tlam\left(\mathbf{x},\mathbf{y}\right) &\leq \connf\left(\mathbf{x},\mathbf{y}\right) + \lambda\connf\star\tlam\left(\mathbf{x},\mathbf{y}\right)\\
        \tlam\left(\mathbf{x},\mathbf{y}\right) &\leq \connf\left(\mathbf{x},\mathbf{y}\right) + \lambda\tlam\star \connf\left(\mathbf{x},\mathbf{y}\right)
    \end{align}
    both hold for all $\lambda>0$ and $\mathbf{x},\mathbf{y}\in\X$. By then applying these inequalities to the first and last factors in $\tlam^{\star 3}$, we get
    \begin{equation}
        \tlam^{\star 3}\left(\mathbf{x},\mathbf{y}\right) \leq \connf\star\tlam\star\connf\left(\mathbf{x},\mathbf{y}\right) + 2\lambda\connf\star\tlam^{\star 2}\star\connf\left(\mathbf{x},\mathbf{y}\right) + \lambda^2\connf\star\tlam^{\star 3}\star\connf\left(\mathbf{x},\mathbf{y}\right)
    \end{equation}
    for all $\lambda>0$ and $\mathbf{x},\mathbf{y}\in\X$.

    By applying \cite[Lemma~F.2]{DicHey2022triangle} to our case, we find that for all $\varepsilon>0$ and $M\in\EssIm{\tlam^{\star 3}}$ (i.e. the essential image), there exist $\nu$-positive and finite sets $E_1,E_2\subset \X$ such that
    \begin{equation}
        f_i:= \frac{1}{\nu\left(E_i\right)}\Id_{E_i}
    \end{equation}
    for $i=1,2$ satisfy
    \begin{equation}
        \abs*{M - \inner{f_1}{\Optlam^3f_2}}\leq \varepsilon.
    \end{equation}
    Here $\inner{f}{g}:=\int_\X f\left(\mathbf{x}\right)\overline{g\left(\mathbf{x}\right)}\nu\left(\dd \mathbf{x}\right)$ denotes the inner product of two functions $f,g\in L^2\left(\X\right)$.

    Now since $f_1,f_2\geq 0$ everywhere, we arrive at
    \begin{equation}
        \inner{f_1}{\Optlam^3f_2} \leq \inner{f_1}{\left(\Opconnf\Optlam\Opconnf + 2\lambda\Opconnf\Optlam^2\Opconnf + \lambda^2\Opconnf\Optlam^3\Opconnf\right)f_2}.
    \end{equation}
    Observe that if $\norm*{\Opconnf}_{2\to 2}=\infty$ then $\norm*{\Optlam}_{2\to 2}=\infty$ for all $\lambda>0$ and therefore $\lambda_{2\to 2}=0$. Therefore under our assumptions $\Opconnf$ is a bounded operator, and therefore the symmetry of $\connf$ implies that $\Opconnf\colon L^2\left(\X\right)\to L^2\left(\X\right)$ is self-adjoint. This self-adjointness, the Cauchy-Schwarz inequality, the triangle inequality, and the sub-multiplicativity of the operator norm lead to
    \begin{equation}
        \inner{f_1}{\Optlam^3f_2} \leq \left(\norm*{\Optlam}_{2\to 2} + 2\lambda\norm*{\Optlam}_{2\to 2}^2 + \lambda^2\norm*{\Optlam}_{2\to 2}^3\right)\norm*{\Opconnf f_1}_2 \norm*{\Opconnf f_2}_2.
    \end{equation}
    Then we can write for $i=1,2$ that
    \begin{multline}
        \norm*{\Opconnf f_i}_2^2 = \int_\X\int_\X\int_\X f_i(\mathbf{x})\connf(\mathbf{x},\mathbf{y})\connf(\mathbf{y},\mathbf{z})f_i(\mathbf{z})\nu(\dd \mathbf{z})\nu(\dd \mathbf{y})\nu(\dd \mathbf{x}) \\\leq \norm*{f_i}^2_1\esssup_{\mathbf{x},\mathbf{z}\in\X}\connf^{\star 2}(\mathbf{x},\mathbf{z}) \leq \esssup_{\mathbf{x}\in\X}\int_{\X}\connf(\mathbf{x},\mathbf{y})^2\nu\left(\dd \mathbf{y}\right)<\infty.
    \end{multline}
    Finally since $\lambda<\lambda_{2\to 2}$, we know that $\norm*{\Optlam}_{2\to 2}<\infty$ and therefore $\trilam<\infty$.
\end{proof}

\section{Spherical Transform}
\label{sec:sphericaltransform}

We now restrict our attention to $\X=\HypDim$, and consider transforms of radial functions. Since we will be requiring this rotational symmetry, it will be convenient to write explicit expressions in terms of the Poincar{\`e} disc model. Letting
\begin{equation}
    \mathbb{B} := \left\{z\in\Rd\colon \abs*{z}<1\right\},
\end{equation}
the hyperbolic volume element on $\mathbb{B}$ is given by
\begin{equation}
    \mathbf{d}z = \frac{4}{\left(1-\abs*{z}^2\right)^2}\dd z.
\end{equation}
We also let $\mathbb{S}^{d-1} = \left\{z\in\Rd\colon \abs*{z}=1\right\}$ denote the Euclidean sphere.

\begin{definition}
Let $C^\infty_c\left(\mathbb{B}\right)$ denote the set of smooth functions on $\mathbb{B}$ with compact support (with respect to the hyperbolic metric), and $C^\infty_{c,\natural}\left(\mathbb{B}\right)$ denote the subset of such functions that are symmetric with respect to rotations about the origin $\orig$.

Given a radial function $g\colon \mathbb{B}\to \R_+$, converting into polar coordinates (with a minor abuse of notation) gives the expression
\begin{equation}
    \int_\mathbb{B} g(z)\mathbf{d}z = \mathfrak{S}_{d-1}\int^1_0 g(\varrho)\frac{4\varrho^{d-1}}{\left(1-\varrho^2\right)^2}\dd \varrho,
\end{equation}
where $\mathfrak{S}_{d-1}$ denotes the (Lebesgue) $\left(d-1\right)$-volume of the unit-radius sphere ${\mathbb{S}^{d-1}}$.

For $g\in C^\infty_{c}\left(\mathbb{B}\right)$, $s\in\R$ and $b\in {\mathbb{S}^{d-1}}$, we define the \emph{spherical transform} to be
    \begin{equation}
        \widetilde{g}(s,b) := \int_\mathbb{B}g(z)\e^{\left(-is+\frac{d-1}{2}\right)A(z,b)}\frac{4}{\left(1-\abs{z}^2\right)^2}\dd z,
    \end{equation}
    where
    \begin{equation}
        A(z,b) := \log \frac{1-\abs{z}^2}{\abs{z-b}^2},
    \end{equation}
    is the \emph{composite distance} between $\orig$ and the horocycle with normal $b$ and passing through $z$.

    \end{definition}
    \begin{remark}
    The spherical transform for $\HypTwo$ is given explicitly in \cite{helgason2000groups}. The general expression for symmetric spaces can be found in \cite{helgason1994geometric}, from which the expression of the $\HypDim$ spherical transform for $d\geq 3$ can be deduced.
\end{remark}
If $g\in C^\infty_{c,\natural}\left(\mathbb{B}\right)$, this expression for the spherical transform is $b$-invariant and reduces to
    \begin{align}
        \widetilde{g}(s) &:= \mathfrak{S}_{d-2}\int^1_0 g(\varrho)\frac{4\varrho^{d-1}}{\left(1-\varrho^2\right)^2}\left(\int^{\pi}_0\left(\frac{1-\varrho^2}{1+\varrho^2-2\varrho\cos\theta}\right)^{\frac{d-1}{2}-is}\left(\sin \theta\right)^{d-2}\dd \theta\right)\dd \varrho\\
        &= \mathfrak{S}_{d-1}\int^1_0 g(\varrho)Q^{\mathbb{B}}_d(\varrho;s)\frac{4\varrho^{d-1}}{\left(1-\varrho^2\right)^2}\dd \varrho,
    \end{align}
where, for $\varrho\in\left[0,1\right)$, $d\geq 2$ and $s\in\R$, we define
\begin{equation}
\label{eqn:Qd_Definition}
    Q^{\mathbb{B}}_d(\varrho;s):= \frac{\mathfrak{S}_{d-2}}{\mathfrak{S}_{d-1}}\int^{\pi}_0\left(\frac{1-\varrho^2}{1+\varrho^2-2\varrho\cos\theta}\right)^{\frac{d-1}{2}-is}\left(\sin \theta\right)^{d-2}\dd \theta.
\end{equation}
Note that for $s=0$ the integrand is real and strictly positive for all $\varrho\in\left[0,1\right)$ and $\theta\in\left(0,\pi\right)$, and therefore $Q^{\mathbb{B}}_d(\varrho;0)>0$ for all $\varrho\in\left[0,1\right)$. Furthermore, 
\begin{equation}
\label{eqn:QdatZero}
    Q^{\mathbb{B}}_d(0;s) = \frac{\mathfrak{S}_{d-2}}{\mathfrak{S}_{d-1}}\int^\pi_{0}\left(\sin \theta\right)^{d-2}\dd \theta = 1
\end{equation}
for all $d\geq 2$ and $s\in\R$.

It will sometimes be convenient to describe $Q^{\mathbb{B}}_d(\varrho;0)$ in terms of the hyperbolic distance rather than the Euclidean distance on $\mathbb{B}$. To this end, let us use \eqref{eqn:HypDistance} to define $Q_d\colon \R_+\to\R_+$ by
\begin{equation}
\label{eqn:QdFunctionDefinition}
    Q_d\left(r\right):= Q^{\mathbb{B}}_d\left(\tanh\frac{r}{2};0\right).
\end{equation}

While the above definition of $\widetilde{g}(s,b)$ and $\widetilde{g}(s)$ was given for $g\in C^\infty_{c}\left(\mathbb{B}\right)$ and $C^\infty_{c,\natural}\left(\mathbb{B}\right)$ functions respectively, these definition can naturally be extended to
\begin{align}
    \mathfrak{D}\left(\mathbb{B}\right) &:= \left\{g\colon \mathbb{B}\to \R \:\left\vert\: \int_\mathbb{B}\abs*{g(z)}\e^{\frac{d-1}{2}A(z,b)}\frac{1}{\left(1-\abs{z}^2\right)^2}\dd z<\infty\right.\right\}\\
    \mathfrak{D}_\natural\left(\mathbb{B}\right) &:= \left\{g\in\mathfrak{D}\left(\mathbb{B}\right) \:\left\vert\: g \:\text{is invariant under rotations about}\: \orig\right.\right\}
\end{align}
by using dominated convergence. An alternative expression for $\mathfrak{D}_\natural\left(\mathbb{B}\right)$ is 
\begin{multline}
\label{eqn:frakDnatural}
    \mathfrak{D}_\natural\left(\mathbb{B}\right) = \left\{g\colon \mathbb{B}\to \R \:\left\vert\:g \:\text{is invariant under rotations about}\: \orig, \right.\right.\\\left.\left.\:\text{and}\: \int^1_0\abs*{g\left(\varrho\right)}Q^\mathbb{B}_d\left(\varrho;0\right)\frac{\varrho^{d-1}}{\left(1-\varrho^2\right)^2}\dd \varrho<\infty\right.\right\}.
\end{multline}

\begin{lemma}
\label{lem:limitQd}
    For all $d\geq 2$, $\lim_{\varrho\nearrow 1}Q^{\mathbb{B}}_d(\varrho;0)=0$. In particular, there exist $c_d\in\left(0,\infty\right)$ such that
    \begin{equation}
        Q^{\mathbb{B}}_d(\varrho;0) \leq c_d\left(1-\varrho\right)^{\frac{d-1}{2}}\left(1\vee\abs*{\log\left(1-\varrho\right)}\right).
    \end{equation}
    Equivalently, there exist $c'_d\in\left(0,\infty\right)$ such that
    \begin{equation}
        Q_d(r) \leq
            c'_d \left(1\vee r\right) \exp\left(-\frac{1}{2}\left(d-1\right)r\right).
    \end{equation}
\end{lemma}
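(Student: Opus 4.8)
The plan is to estimate the integral in \eqref{eqn:Qd_Definition} at $s=0$ directly, using the identity $1+\varrho^2-2\varrho\cos\theta = (1-\varrho)^2 + 2\varrho\left(1-\cos\theta\right)$ together with the elementary bounds $1-\varrho^2\leq 2(1-\varrho)$, $(\sin\theta)^{d-2}\leq\theta^{d-2}$ on $[0,\pi]$, and $1-\cos\theta = 2\sin^2(\theta/2)\geq 2\theta^2/\pi^2$ (Jordan's inequality). Writing $\varepsilon := 1-\varrho$ and restricting first to $\varrho\in[1/2,1)$ so that $2\varrho\geq 1$, these combine to give the pointwise bound
\begin{equation*}
    \frac{1-\varrho^2}{1+\varrho^2-2\varrho\cos\theta}\leq\frac{2\varepsilon}{\varepsilon^2+c\theta^2},\qquad c:=\tfrac{2}{\pi^2},
\end{equation*}
whence $Q^{\mathbb{B}}_d(\varrho;0)\leq \frac{\mathfrak{S}_{d-2}}{\mathfrak{S}_{d-1}}(2\varepsilon)^{(d-1)/2}\int_0^\pi \theta^{d-2}\left(\varepsilon^2+c\theta^2\right)^{-(d-1)/2}\dd\theta$.

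Next I would rescale by $\theta=\varepsilon u$, which cancels all powers of $\varepsilon$ in the $\theta$-integral and leaves $\int_0^{\pi/\varepsilon}u^{d-2}(1+cu^2)^{-(d-1)/2}\dd u$. Since the integrand here is $O(u^{d-2})$ near $0$ (integrable because $d\geq 2$) and $O(u^{-1})$ as $u\to\infty$, splitting at $u=1$ shows that this integral is at most $C_d\bigl(1\vee\log(\pi/\varepsilon)\bigr)$. Combining, one gets $Q^{\mathbb{B}}_d(\varrho;0)\leq c_d(1-\varrho)^{(d-1)/2}\bigl(1\vee\abs{\log(1-\varrho)}\bigr)$ for $\varrho\in[1/2,1)$; for $\varrho\in[0,1/2]$ the map $\varrho\mapsto Q^{\mathbb{B}}_d(\varrho;0)$ is a bounded (dominated, continuous) integral while the right-hand side is bounded below by a positive constant on that range, so enlarging $c_d$ extends the bound to all $\varrho\in[0,1)$. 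Since $(1-\varrho)^{(d-1)/2}$ dominates $\abs{\log(1-\varrho)}$ as $\varrho\nearrow 1$ for every $d\geq 2$, the claimed limit $\lim_{\varrho\nearrow 1}Q^{\mathbb{B}}_d(\varrho;0)=0$ follows at once.

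To obtain the equivalent statement for $Q_d(r)=Q^{\mathbb{B}}_d(\tanh(r/2);0)$ I would use $1-\tanh(r/2)=\frac{2}{\e^r+1}$. Then $\left(1-\tanh(r/2)\right)^{(d-1)/2}\leq 2^{(d-1)/2}\e^{-(d-1)r/2}$, and since $2\leq \e^r+1\leq 2\e^r$ for $r\geq 0$ we have $\abs{\log(1-\tanh(r/2))}=\log\frac{\e^r+1}{2}\leq r$, hence $1\vee\abs{\log(1-\tanh(r/2))}\leq 1\vee r$. Substituting into the bound for $Q^{\mathbb{B}}_d$ yields $Q_d(r)\leq c'_d\,(1\vee r)\exp\left(-\tfrac12(d-1)r\right)$, as required.

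I do not anticipate a genuine obstacle: the whole argument is a routine ``split-and-rescale'' estimate. The only points that need a little care are tracking the logarithmic factor correctly through the substitution (the rescaled integral genuinely diverges like $\log(\pi/\varepsilon)$, which is precisely the source of the $1\vee\abs{\log(1-\varrho)}$ term and cannot be dispensed with), and ensuring the stated inequality holds on all of $[0,1)$ rather than merely near the boundary, which is handled by the continuity/compactness remark on $[0,1/2]$.
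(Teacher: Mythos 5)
Your proposal is correct: the identity $1+\varrho^2-2\varrho\cos\theta=(1-\varrho)^2+2\varrho(1-\cos\theta)$, the elementary bounds you list, and the rescaling $\theta=\varepsilon u$ (whose powers of $\varepsilon$ indeed cancel exactly, leaving an integral that is $O\bigl(1\vee\log(\pi/\varepsilon)\bigr)$ because the integrand decays like $u^{-1}$) give the stated bound on $[1/2,1)$, the compactness remark extends it to $[0,1)$, and the hyperbolic reformulation via $1-\tanh(r/2)=2/(\e^r+1)$ is handled correctly. However, your route is genuinely different from the paper's. The paper first rewrites $Q^{\mathbb{B}}_d(\varrho;0)$ via a double-angle substitution as $\frac{\mathfrak{S}_{d-2}}{\mathfrak{S}_{d-1}}\left(\frac{1-\varrho}{1+\varrho}\right)^{\frac{d-1}{2}}J_d\left(\frac{4\varrho}{(1+\varrho)^2}\right)$, where $J_2$ is the complete elliptic integral of the first kind, and then extracts the precise asymptotics $J_d(1-\varepsilon)\sim -C_d\log\varepsilon$ by a further substitution and a series expansion of $(y-\varepsilon)^{\frac{d-3}{2}}$; this yields the two-sided asymptotic $Q^{\mathbb{B}}_d(\varrho;0)\sim -C'_d(1-\varrho)^{\frac{d-1}{2}}\log(1-\varrho)$, of which the lemma's inequality is a consequence. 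Your split-and-rescale argument is more elementary and suffices for the lemma as stated, but it produces only an upper bound; the paper's sharper asymptotic equivalence (in particular the matching lower-bound behaviour $Q_d(r)\asymp r\e^{-\frac{1}{2}(d-1)r}$) is cited again later, in the proof of Lemma~\ref{lem:NormSublinear} around \eqref{eqn:largerratio}, so if one adopted your proof wholesale one would need to supplement it with a corresponding lower bound at that later point.
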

\begin{proof}
    First observe that by a double angle formula and appropriate substitutions,
    \begin{align}
        Q^{\mathbb{B}}_d(\varrho;0) & = \frac{\mathfrak{S}_{d-2}}{\mathfrak{S}_{d-1}}\left(\frac{1-\varrho^2}{1+\varrho^2}\right)^\frac{d-1}{2}\int^{\pi}_0\left(1-\frac{2\varrho}{1+\varrho^2}\cos\theta\right)^{-\frac{d-1}{2}}\left(\sin \theta\right)^{d-2}\dd \theta \nonumber\\
        & =\frac{\mathfrak{S}_{d-2}}{\mathfrak{S}_{d-1}}\left(\frac{1-\varrho^2}{\left(1-\varrho\right)^2}\right)^\frac{d-1}{2}\int^{\frac{\pi}{2}}_0\left(1+\frac{4\varrho}{\left(1-\varrho\right)^2}\left(\sin t\right)^2\right)^{-\frac{d-1}{2}}\left(\sin 2t\right)^{d-2}\dd t \nonumber\\
        & =\frac{\mathfrak{S}_{d-2}}{\mathfrak{S}_{d-1}}\left(\frac{1-\varrho^2}{\left(1+\varrho\right)^2}\right)^\frac{d-1}{2}\int^{\frac{\pi}{2}}_0\left(1-\frac{4\varrho}{\left(1+\varrho\right)^2}\left(\sin t\right)^2\right)^{-\frac{d-1}{2}}\left(\sin 2t\right)^{d-2}\dd t \nonumber\\
        & =\frac{\mathfrak{S}_{d-2}}{\mathfrak{S}_{d-1}}\left(\frac{1-\varrho}{1+\varrho}\right)^\frac{d-1}{2}J_d\left(\frac{4\varrho}{\left(1+\varrho\right)^2}\right),
    \end{align}
where
    \begin{equation}
        J_d\left(m\right) := \int^{\frac{\pi}{2}}_0\left(1-m\left(\sin t\right)^2\right)^{-\frac{d-1}{2}}\left(\sin 2t\right)^{d-2}\dd t.
    \end{equation}
    Note that $J_2\left(m\right)$ is the elliptic integral of the first kind. Also note that $J_3(m)$ can be evaluated by integration by substitution to get
    \begin{equation}
        J_3(m) = -\frac{2}{m}\log\left(1-m\right),
    \end{equation}
    and (since $\mathfrak{S}_1=2\pi$, $\mathfrak{S}_2=4\pi$ and $\varrho=\tanh\frac{r}{2}$) this produces
    \begin{equation}
        Q^{\mathbb{B}}_3\left(\varrho;0\right) = \frac{1-\varrho^2}{2\varrho}\log\frac{1+\varrho}{1-\varrho}, \qquad Q_3(r) = \frac{r}{\sinh r}.
    \end{equation}
    Similarly, by integration by substitution and long division it is possible to evaluate $Q^\mathbb{B}_d$ for any odd $d\geq 3$ using rational functions and logarithms. However because of the difficulty in getting generic expressions out of the long division step we will proceed by a different approach. For $d=2,3,4,5$, $Q^{\mathbb{B}}_d\left(\varrho;0\right)$ has been plotted in Figure~\ref{fig:QdFunctions2345}.

    \begin{figure}
    \centering
    \includegraphics[width=\columnwidth]{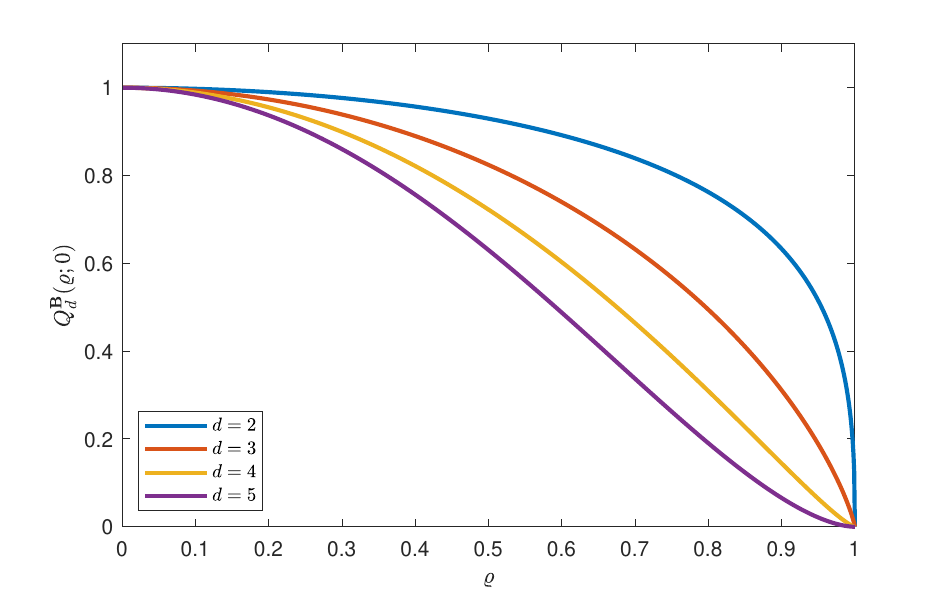}
    \caption{Plot of $Q^\mathbb{B}_d\left(\varrho;0\right)$ for $d=2,3,4,5$ using MATLAB.}
    \label{fig:QdFunctions2345}
\end{figure}

    For $\varepsilon\in\left(0,1\right)$, by using the substitution $y=1-\left(1-\varepsilon\right)\left(\sin t\right)^2$,
    \begin{multline}
        J_d\left(1-\varepsilon\right) = \int^{\frac{\pi}{2}}_0\left(1-\left(1-\varepsilon\right)\left(\sin t\right)^2\right)^{-\frac{d-1}{2}}\left(\sin 2t\right)^{d-2}\dd t \\
        = \left(1-\varepsilon\right)^{2-d}2^{d-3}\int^1_{\varepsilon}y^{-\frac{d-1}{2}}\left(1-y\right)^{\frac{d-3}{2}}\left(y-\varepsilon\right)^\frac{d-3}{2}\dd y.
    \end{multline}
    There exists a sequence $\left\{c_k\right\}_{k\in\N}$ such that $\left(y-\varepsilon\right)^{\frac{d-3}{2}}= \sum^\infty_{k=0}c_k\varepsilon^k y^{\frac{d-3}{2}-k}$, and only the integral resulting from the first of these terms will diverge as $\varepsilon\searrow 0$. Since $c_0=1$, 
    \begin{equation}
        J_d\left(1-\varepsilon\right) = \left(1-\varepsilon\right)^{2-d}2^{d-3}\int^1_{\varepsilon}y^{-1}\left(1-y\right)^{\frac{d-3}{2}}\dd y + \LandauBigO{1}
    \end{equation}
    as $\varepsilon\searrow 0$. Therefore there exists a constant $C_d\in\left(0,\infty\right)$ such that
    \begin{equation}
        J_d\left(1-\varepsilon\right) \sim -C_d \log \varepsilon
    \end{equation}
    as $\varepsilon\searrow 0$.

    Therefore there exists a constant $C'_d\in\left(0,\infty\right)$ such that
    \begin{equation}
        Q^{\mathbb{B}}_d(\varrho;0) \sim -C'_d\left(1-\varrho\right)^{\frac{d-1}{2}}\log\left(1-\varrho\right)
    \end{equation}
    as $\varrho\nearrow 1$. From the expression \eqref{eqn:Qd_Definition}, it is clear $Q^{\mathbb{B}}_d(\varrho;0)$ is continuous for $\varrho\in\left[0,1\right)$, and \eqref{eqn:QdatZero} then implies that the upper bound result holds.

    The bounds for $Q_d$ then follow from using \eqref{eqn:HypDistance} to get
    \begin{equation}
        1-\varrho \sim 2 \exp\left(-r\right).
    \end{equation}
\end{proof}

We now use the spherical transform to `partially diagonalize' $L^2\left(\HypDim\times\Ecal\times\Ecal\right)\to L^2\left(\HypDim\times\Ecal\times\Ecal\right)$ linear operators that are invariant under rotations about $\orig$ in $\HypDim$. The following lemma gives the three properties that we require from the spherical transform for our argument.
\begin{lemma}
\label{lem:diagonalisePlancherel}
    There exists a $d$-dependent real constant $w>0$ and a $d$-dependent function $\mathbf{c}\colon \R\to\Complex$ such that for $f\in \mathfrak{D}\left(\mathbb{B}\right)$, and $z\in\mathbb{B}$,
    \begin{equation}
        f(z) = \frac{1}{w} \int_\R\int_{{\mathbb{S}^{d-1}}}\e^{\left(+is+\frac{d-1}{2}\right)A(z,b)}\widetilde{f}(s,b)\abs{\mathbf{c}(s)}^{-2}\dd b\dd s.
    \end{equation}

    For $f_1\in \mathfrak{D}_\natural\left(\mathbb{B}\right)$ and $f_2\in \mathfrak{D}\left(\mathbb{B}\right)$,
    \begin{equation}\label{eqn:diagonalise}
        \left(f_1\star f_2\right)^{\sim}(s,b) = \widetilde{f_1}(s)\widetilde{f_2}(s,b).
    \end{equation}
    
    For $f_1,f_2\in \mathfrak{D}\left(\mathbb{B}\right)$ the following Plancherel result holds:
    \begin{equation}
        \int_{\mathbb{B}}f_1(z)f_2(z) \mathbf{d}z = \frac{1}{w}\int_\R\int_{{\mathbb{S}^{d-1}}}\widetilde{f_1}(s,b)\overline{\widetilde{f_2}(s,b)}\abs{\mathbf{c}(s)}^{-2}\dd b\dd s.
    \end{equation}
\end{lemma}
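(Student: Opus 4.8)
The plan is to derive all three claims from the standard harmonic analysis on the hyperbolic space $\HypDim$ viewed as the symmetric space $SO(d,1)/O(d)$, specializing Helgason's general theory of the spherical (Fourier–Helgason) transform to the radial (bi-$K$-invariant) setting, as laid out in \cite{helgason1994geometric,helgason2000groups}. First I would recall the horocycle coordinate: for $z\in\mathbb{B}$ and $b\in\partial\mathbb{B}$, the quantity $A(z,b)$ is (up to sign convention) the Busemann function $\langle z,b\rangle$, and the exponentials $\e^{(-is+d-1)A(z,b)}$ are precisely the (non-spherical) eigenfunctions of the Laplace–Beltrami operator with eigenvalue $-(s^2+(d-1)^2/4)$; here $\rho=(d-1)/2$ is the half-sum of positive roots for $\HypDim$. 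The spherical transform $\widetilde f(s,b)$ is then the Fourier–Helgason transform, and the three statements are, respectively, the Helgason inversion formula, the convolution theorem, and the Plancherel theorem for this transform.

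For the \textbf{inversion formula}, I would cite Helgason's inversion theorem for the Fourier transform on a Riemannian symmetric space of the noncompact type: for suitable $f$ (e.g. $f\in C^\infty_c$, then extended to $\mathfrak{D}(\mathbb{B})$ by dominated convergence using the integrability built into the definition of $\mathfrak{D}(\mathbb{B})$), one has
\begin{equation*}
 f(z) = \frac{1}{w}\int_\R\int_{\partial\mathbb{B}} \e^{(is+d-1)A(z,b)}\,\widetilde f(s,b)\,\abs{\mathbf{c}(s)}^{-2}\,\dd b\,\dd s,
\end{equation*}
where $\mathbf{c}(s)$ is the Harish-Chandra $\mathbf{c}$-function for $\HypDim$ and $w=\abs{W}=2$ is the order of the Weyl group (the factor $1/w$ accounting for the fact that the $\mathbf{c}$-function density is even in $s$). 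One should note the explicit form of $\abs{\mathbf{c}(s)}^{-2}$ — for $\HypDim$ it is a Plancherel density expressible via Gamma functions, polynomial in $s$ for odd $d$ — but the argument does not need its precise shape, only that it is the density making the transform an isometry.

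For the \textbf{convolution identity}, I would use that when $f_1$ is radial ($f_1\in\mathfrak{D}_\natural(\mathbb{B})$), convolution $f_1\star f_2$ (the group convolution, which under the transitive isometry action and $o$-centering becomes exactly the integral kernel convolution written in the excerpt) is diagonalized: the radial function $f_1$ acts on each eigenspace by the scalar $\widetilde{f_1}(s)$, its spherical Fourier transform, since the elementary spherical function $\varphi_s$ satisfies $f_1\star\varphi_s = \widetilde{f_1}(s)\,\varphi_s$ and $\e^{(-is+d-1)A(\cdot,b)}$ lies in the $\varphi_s$-eigenspace. Hence $(f_1\star f_2)^{\sim}(s,b)=\widetilde{f_1}(s)\widetilde{f_2}(s,b)$; this requires checking that $f_1\star f_2\in\mathfrak{D}(\mathbb{B})$ so the left side is defined, which follows from Young-type estimates against the weight $\e^{(d-1)A(z,b)}$ — the one genuinely technical point, handled by Fubini and the bound on $Q^{\mathbb{B}}_d$ from Lemma~\ref{lem:limitQd} controlling the radial part of $f_1$. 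The \textbf{Plancherel formula} then follows by the standard polarization argument: it holds for $C^\infty_c$ functions as a theorem of Helgason, the constant $1/w\,\abs{\mathbf{c}(s)}^{-2}$ being forced to match the inversion formula, and it extends to $\mathfrak{D}(\mathbb{B})$ by density and dominated convergence.

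The \textbf{main obstacle} is purely bookkeeping of normalizations: Helgason's sources use varying conventions for the sign in the exponent, the placement of $\rho=(d-1)/2$, the measure on $\partial\mathbb{B}=K/M$, and the normalization of $\mathbf{c}(s)$, and one must verify that with the specific conventions fixed by the definition of $\widetilde g(s,b)$ in this paper (exponent $\e^{(-is+d-1)A(z,b)}$, hyperbolic measure with the factor $4/(1-\abs z^2)^2$), the three formulas hold verbatim with the single constant $1/w$ out front and $\abs{\mathbf{c}(s)}^{-2}$ as the spectral density. I would resolve this by pinning down the two explicitly computable cases $d=2$ (where $\abs{\mathbf{c}(s)}^{-2}\propto s\tanh(\pi s)$ and everything is classical, cf. \cite{helgason2000groups}) and $d=3$ (where $\widetilde g$ reduces to a Euclidean-type sine transform and $\abs{\mathbf{c}(s)}^{-2}\propto s^2$), checking the constants there, and then invoking the general symmetric-space theorem for $d\geq 4$; since only the \emph{existence} of such constants (not their values) is used in the sequel, this suffices.
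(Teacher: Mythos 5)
Your proposal is correct and takes essentially the same route as the paper: both simply invoke Helgason's spherical (Fourier--Helgason) transform theory, citing \cite{helgason2000groups} for $d=2$ and the general symmetric-space results of \cite{helgason1994geometric} for $d\geq 3$, and then extend from compactly supported functions to $\mathfrak{D}\left(\mathbb{B}\right)$ by a standard dominated-convergence argument. The one caveat the paper adds that you do not mention explicitly is that Helgason defines convolution in the opposite order to the paper's convention (convolution on $\HypDim$ is not commutative), which determines which factor must be radial in \eqref{eqn:diagonalise}; this is, however, the same kind of convention bookkeeping you already flag as the main obstacle.
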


The constant $w$ is the order of the Weyl group and the function $\mathbf{c}(s)$ is known as the Harish-Chandra $\mathbf{c}$-function. We will not require expressions for these in our argument, but their definitions can be found in \cite[Chapter~1, p.75 \& p.108]{helgason1994geometric}.

\begin{proof}
    For $d=2$, this is proven in \cite[Intro.: Thm~4.2, Thm~4.6]{helgason2000groups}, and the $d\geq 3$ cases follow from the general symmetric space case dealt with in \cite[Ch.~III: Thm~1.3, Lem.~1.4, Thm~1.5]{helgason1994geometric}. Note that while \cite{helgason1994geometric,helgason2000groups} express these properties for functions $f_1,f_2$ in the space of continuous functions with compact (with respect to the $\HypDim$ metric) support, these can be extended to $\mathfrak{D}\left(\mathbb{B}\right)$ in the usual manner. Also be warned that these references define convolution in the reverse order to the convention we take here (convolution on $\HypDim$ is in general not commutative).
\end{proof}

In what follows, let $\psi\colon\left(\HypDim\times\Ecal\right)\times \left(\HypDim\times\Ecal\right)\to \left[0,1\right]$ be measurable and define the associated operator $\Psi\colon \Dcal\left(\psi\right)\to \R^{\HypDim\times \Ecal}$ in the sense of \eqref{eqn:OperatorDefn}. Also suppose that $\psi$ is isometry invariant in the $\HypDim$ arguments, so that $\psi\left(x,a,y,b\right)=\psi\left(\dist{x,y};a,b\right)$ for $x,y\in\HypDim$ and $a,b\in\Ecal$. Observe then that the spherical transform is defined for  $r\mapsto \psi\left(r;a,b\right)$ if and only if
\begin{equation}
    \label{eqn:SphericalTransformDefined}\int^\infty_0\psi\left(r;a,b\right)Q_d\left(r\right)\e^{\left(d-1\right)r}\dd r<\infty,
\end{equation}
where we have taken the integrability condition in \eqref{eqn:frakDnatural} and expressed an equivalent condition in hyperbolic distance coordinates (and using $\psi\in\left[0,1\right]$). If \eqref{eqn:SphericalTransformDefined} holds for $\Pcal$-almost every $a,b\in\Ecal$, then we can define an operator-valued function $s\mapsto\widetilde{\Psi}(s)$. For each $s\in\R$ we can define an operator $\widetilde{\Psi}(s)\colon \Dcal\left(\widetilde{\psi}(s;\cdot,\cdot)\right)\to \R^{\HypDim\times\Ecal}$ using the spherical transform-ed functions $\left(a,b\right)\mapsto\widetilde{\psi}(s;a,b)$.

\begin{lemma}
\label{lem:TwoToTwoNormBound}
If \eqref{eqn:SphericalTransformDefined} holds for $\Pcal$-almost every $a,b\in\Ecal$ and $\norm*{\widetilde{\Psi}\left(0\right)}_{2\to 2}<\infty$,
    \begin{equation}
        \norm*{\Psi}_{2\to 2} = \esssup_{s\in\R}\norm*{\widetilde{\Psi}\left(s\right)}_{2\to 2} = \norm*{\widetilde{\Psi}\left(0\right)}_{2\to 2}.
    \end{equation}
\end{lemma}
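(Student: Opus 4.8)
The plan is to diagonalise $\Psi$ in the spherical transform domain, where it becomes a direct integral over $s\in\R$ of the operators $\widetilde\Psi(s)$ on $L^2(\Ecal)$, and then to use $\psi\ge0$ to locate the largest fibre at $s=0$. I would proceed in three steps. \textbf{Step 1 (domination of fibres).} From the defining integral \eqref{eqn:Qd_Definition} the base $\tfrac{1-\varrho^2}{1+\varrho^2-2\varrho\cos\theta}$ is strictly positive, so $\abs*{Q^{\mathbb{B}}_d(\varrho;s)}\le Q^{\mathbb{B}}_d(\varrho;0)$ for all $s\in\R$, $\varrho\in[0,1)$. Since $\psi\ge0$, integrating gives $\abs*{\widetilde\psi(s;a,b)}\le\widetilde\psi(0;a,b)$ for every $s$ and $\Pcal$-a.e.\ $a,b$. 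As $\widetilde\Psi(0)$ has the nonnegative kernel $\widetilde\psi(0;\cdot,\cdot)$, pointwise estimation $\abs*{(\widetilde\Psi(s)g)(a)}\le(\widetilde\Psi(0)\abs*{g})(a)$ and the monotonicity of $L^2$ operator norms under kernel domination give $\norm*{\widetilde\Psi(s)}_{2\to2}\le\norm*{\widetilde\Psi(0)}_{2\to2}$ for every $s$, hence $\esssup_{s\in\R}\norm*{\widetilde\Psi(s)}_{2\to2}\le\norm*{\widetilde\Psi(0)}_{2\to2}<\infty$ under the hypothesis.

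\textbf{Step 2 (upper bound $\norm*{\Psi}_{2\to2}\le\norm*{\widetilde\Psi(0)}_{2\to2}$).} Here I would regard $\Psi$ as spatial $\HypDim$-convolution by the radial kernels $r\mapsto\psi(r;a,b)$, mixed over marks by $\Pcal$. For a finite sum $f=\sum_i\phi_i\otimes g_i$ with $\phi_i\in C^\infty_c(\mathbb{B})$ and $g_i\in L^2(\Ecal)$, the convolution identity \eqref{eqn:diagonalise} — applied with the radial factor $\psi(\cdot;a,b)\in\mathfrak{D}_\natural(\mathbb{B})$, which is legitimate by \eqref{eqn:SphericalTransformDefined} — gives, with the spherical transform taken in the spatial variable,
\[
\widetilde{\Psi f}(s,b,a)=\bigl(\widetilde\Psi(s)\,\widetilde f(s,b,\cdot)\bigr)(a).
\]
Then the Plancherel identity of Lemma~\ref{lem:diagonalisePlancherel} applied on $\HypDim\times\Ecal$, together with Step 1, yields $\norm*{\Psi f}_2\le\norm*{\widetilde\Psi(0)}_{2\to2}\norm*{f}_2$. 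Such $f$ are dense in $L^2(\HypDim\times\Ecal)$, and a routine monotone-convergence/Fatou argument (valid since $\psi\ge0$, using $\psi\in[0,1]$ and finite-measure truncations) extends the bound to all $f\in L^2$ and identifies the pointwise integral operator with the resulting bounded operator.

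\textbf{Step 3 (lower bound $\norm*{\Psi}_{2\to2}\ge\norm*{\widetilde\Psi(0)}_{2\to2}$).} Fix $g\in L^2(\Ecal)$; since $\widetilde\psi(0;\cdot,\cdot)\ge0$ we may take $g\ge0$. For rotation-invariant $\phi\in C^\infty_{c,\natural}(\mathbb{B})$ set $f=\phi\otimes g$; then $(\Psi f)(\cdot,a)$ is radial, and \eqref{eqn:diagonalise} with the radial Plancherel formula give
\[
\frac{\norm*{\Psi f}_2^2}{\norm*{f}_2^2}=\frac{\int_\R\abs*{\widetilde\phi(s)}^2\,\norm*{\widetilde\Psi(s)g}_{L^2(\Ecal)}^2\,\abs*{\mathbf{c}(s)}^{-2}\,\dd s}{\norm*{g}_2^2\int_\R\abs*{\widetilde\phi(s)}^2\,\abs*{\mathbf{c}(s)}^{-2}\,\dd s}.
\]
I would choose $\phi_n$ so that the normalised spectral weights $\abs*{\widetilde{\phi_n}(s)}^2\abs*{\mathbf{c}(s)}^{-2}\,\dd s$ converge weakly to the Dirac mass at $0$; such $\phi_n$ exist because the spherical transform maps $C^\infty_{c,\natural}(\mathbb{B})$ onto a dense subspace of $L^2$ of the Plancherel measure, so one can approximate indicators of $[-1/n,1/n]$. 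The map $s\mapsto\norm*{\widetilde\Psi(s)g}_{L^2(\Ecal)}^2$ is bounded by $\norm*{\widetilde\Psi(0)g}^2$ (Step 1) and continuous at $s=0$: indeed $\widetilde\psi(s;a,b)\to\widetilde\psi(0;a,b)$ by dominated convergence with dominant $\widetilde\psi(0;a,b)$, and then $\widetilde\Psi(s)g\to\widetilde\Psi(0)g$ in $L^2(\Ecal)$ by a second dominated-convergence step with dominant $\widetilde\Psi(0)\abs*{g}\in L^2(\Ecal)$. Hence the ratio above tends to $\norm*{\widetilde\Psi(0)g}_{L^2(\Ecal)}^2/\norm*{g}_2^2$, so $\norm*{\Psi}_{2\to2}\ge\norm*{\widetilde\Psi(0)g}_2/\norm*{g}_2$ for every $g$, i.e.\ $\norm*{\Psi}_{2\to2}\ge\norm*{\widetilde\Psi(0)}_{2\to2}$.

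Chaining the three steps gives $\norm*{\widetilde\Psi(0)}_{2\to2}\le\norm*{\Psi}_{2\to2}\le\norm*{\widetilde\Psi(0)}_{2\to2}$, and $\esssup_{s\in\R}\norm*{\widetilde\Psi(s)}_{2\to2}$ is squeezed between the same two quantities by Steps 1 and 2, so all three coincide, which is the claim. The main obstacle I anticipate is the bookkeeping in Steps 2--3: Lemma~\ref{lem:diagonalisePlancherel} is stated for smooth compactly supported functions on $\mathbb{B}$, so one must carefully extend the convolution and Plancherel identities (via density and the integrability condition \eqref{eqn:SphericalTransformDefined}) to the mixed kernels $\psi(\cdot;a,b)$ and to product test functions, justify the Fubini interchanges between the $\Ecal$- and $\HypDim$-integrations, and verify that the pointwise-defined integral operator $\Psi$ agrees with the $L^2$-bounded operator that the transform produces. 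The construction of the concentrating sequence $\phi_n$ in Step 3 — keeping everything in $L^2$ while the spectral mass collapses to $s=0$ — is the second delicate point, though it is standard harmonic analysis on $\HypDim$.
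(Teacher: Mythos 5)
Your proposal is correct and follows essentially the same route as the paper: diagonalisation of the spatial convolution via the spherical transform and the Plancherel identity of Lemma~\ref{lem:diagonalisePlancherel}, the pointwise kernel domination $\abs*{\widetilde{\psi}(s;a,b)}\leq\widetilde{\psi}(0;a,b)$, and a dominated-convergence continuity argument at $s=0$. One small bookkeeping point: the lower bound $\esssup_{s\in\R}\norm*{\widetilde{\Psi}(s)}_{2\to 2}\geq\norm*{\widetilde{\Psi}(0)}_{2\to 2}$ comes from the continuity at $s=0$ established in your Step 3 (values of $s$ arbitrarily close to $0$ outside any null set already approximate the $s=0$ norm), not from Steps 1--2 as your closing sentence suggests.
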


\begin{proof}
    The bound \eqref{eqn:SphericalTransformDefined} holding for $\Pcal$-almost every $a,b\in\Ecal$ indicates that the spherical transform is defined for these $a,b\in\Ecal$, and we can use Lemma~\ref{lem:diagonalisePlancherel} for those values. The first equality holds by the same argument as in \cite[Lemma~3.6]{DicHey2022triangle}. Approximately speaking, for each $s\in\R$ find a unit $g^{(s)}\in L^2\left(\Ecal\right)$ such that $\norm*{\widetilde{\Psi}(s) g^{(s)}}_2$ approximates the operator norm of $\widetilde{\Psi}(s)$. Then by suitably choosing some $\widetilde{f}\colon \R\to \Complex$ one can use the Plancherel and inverse transform parts of Lemma~\ref{lem:diagonalisePlancherel} to construct a unit $h\in L^2\left(\HypDim\times \Ecal\right)$ such that $\norm*{\Psi h}_2$ approximates some part of the essential spectrum of $\Psi$. This shows $\norm*{\Psi}_{2\to 2} \geq \esssup_{s\in\R}\norm*{\widetilde{\Psi}\left(s\right)}_{2\to 2}$. To show the reverse inequality, first take some unit $h\in L^2\left(\HypDim\times\Ecal\right)$ that approximates the operator norm of $\Psi$, and then for all $s\in\R$ define $g^{(s)}$ by fixing $a\in\Ecal$ and taking the spherical transform of $h(x,a)$ (i.e. $g^{(s)}(a) = \widetilde{h}(s,a)$). The convolution operator on $L^2\left(\Ecal\right)$ that uses $g^{(s)}$ can be `diagonalised' into a multiplication operator on some other space by a unitary transformation. This unitarity, and the Plancherel and multiplication parts of Lemma~\ref{lem:diagonalisePlancherel} then mean that $g^{(s)}$ is a unit vector in $L^2\left(\Ecal\right)$ such that $\norm*{\widetilde{\Psi}(s)g^{(s)}}_2$ approximates some part of the essential spectrum of $\widetilde{\Psi}(s)$.

    For the inequality $\esssup_{s\in\R}\norm*{\widetilde{\Psi}\left(s\right)}_{2\to 2} \leq  \norm*{\widetilde{\Psi}\left(0\right)}_{2\to 2}$, note that
    \begin{align}
        &\abs*{\widetilde{\psi}\left(s;a,b\right)}\nonumber\\
        &\hspace{1cm}=\abs*{\mathfrak{S}_{d-2}\int^1_0 \psi(\varrho;a,b)\frac{4\varrho^{d-1}}{\left(1-\varrho^2\right)^2}\left(\int^{\pi}_0\left(\frac{1-\varrho^2}{1+\varrho^2-2\varrho\cos\theta}\right)^{\frac{d-1}{2}-is}\left(\sin \theta\right)^{d-2}\dd \theta\right)\dd \varrho} \nonumber\\
        &\hspace{1cm}\leq \mathfrak{S}_{d-2}\int^1_0 \psi(\varrho;a,b)\frac{4\varrho^{d-1}}{\left(1-\varrho^2\right)^2}\left(\int^{\pi}_0\abs*{\left(\frac{1-\varrho^2}{1+\varrho^2-2\varrho\cos\theta}\right)^{\frac{d-1}{2}-is}\left(\sin \theta\right)^{d-2}}\dd \theta\right)\dd \varrho\nonumber\\
        &\hspace{1cm}= \mathfrak{S}_{d-2}\int^1_0 \psi(\varrho;a,b)\frac{4\varrho^{d-1}}{\left(1-\varrho^2\right)^2}\left(\int^{\pi}_0\left(\frac{1-\varrho^2}{1+\varrho^2-2\varrho\cos\theta}\right)^{\frac{d-1}{2}}\left(\sin \theta\right)^{d-2}\dd \theta\right)\dd \varrho\nonumber\\
        &\hspace{1cm}= \widetilde{\psi}\left(0;a,b\right)
    \end{align}
    uniformly in $a,b\in\Ecal$ and $s\in\R$. The inequality then holds because of the monotonicity of the operator norm.

    The last equality then uses $\norm*{\widetilde{\Psi}\left(0\right)}_{2\to 2}<\infty$ and a dominated convergence argument to show that $s\to \norm*{\widetilde{\Psi}\left(s\right)}_{2\to 2}$ is continuous at $s=0$.
\end{proof}

Recall the definition of $D_L(a,b)$ from \eqref{eqn:DegreeFunction}. We can use this function to define a linear operator on $\Dcal\left(D_L\right)\subset \R^\Ecal$ as in \eqref{eqn:OperatorDefn}, and let $\norm*{D_L}_{p\to q}$ denote the $L^p(\Ecal)\to L^q(\Ecal)$ operator norm of this operator.

\begin{lemma}
\label{lem:RatioofNorms}
Suppose Assumption~\ref{assump:finitelymany} or Assumption~\ref{assump:specialscale} holds.
Then
    \begin{equation}
        \lim_{L\to\infty}\frac{\norm*{\Opconnf_L}_{2\to 2}}{\norm*{D_L}_{2\to 2}} = 0.
    \end{equation}
\end{lemma}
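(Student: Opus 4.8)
The plan is to relate both operator norms to the relevant integrals of $\connf_L$ and then exploit Lemma~\ref{lem:limitQd} to show that the $Q_d$ weight makes the numerator asymptotically negligible. First I would write both norms explicitly. By Lemma~\ref{lem:TwoToTwoNormBound}, $\norm*{\Opconnf_L}_{2\to 2} = \norm*{\widetilde{\Opconnf_L}(0)}_{2\to 2}$, where $\widetilde{\Opconnf_L}(0)$ is the convolution operator on $L^2(\Ecal)$ with kernel $(a,b)\mapsto \widetilde{\connf_L}(0;a,b)$, and by definition
\begin{equation}
    \widetilde{\connf_L}(0;a,b) = \mathfrak{S}_{d-1}\int^\infty_0 \connf_L(r;a,b)\,Q_d(r)\,\left(\sinh r\right)^{d-1}\dd r
\end{equation}
(after converting to hyperbolic coordinates, using $4\varrho^{d-1}/(1-\varrho^2)^2 \dd\varrho \propto (\sinh r)^{d-1}\dd r$). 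On the other hand $D_L(a,b)=\mathfrak{S}_{d-1}\int^\infty_0 \connf_L(r;a,b)(\sinh r)^{d-1}\dd r$ is the same integral with the $Q_d$ weight removed. So the comparison is entirely between two $\Ecal\times\Ecal$ kernels, one with and one without the factor $Q_d(r)$, and I want to show the ratio of their $L^2(\Ecal)\to L^2(\Ecal)$ operator norms tends to $0$.

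Next I would bound $\widetilde{\connf_L}(0;a,b)$ pointwise in terms of $D_L(a,b)$ with a small prefactor. Using Lemma~\ref{lem:limitQd}, $Q_d(r)\le c'_d(1\vee r)\exp(-\tfrac12(d-1)r)$, so splitting the integral at a threshold $R$ gives
\begin{equation}
    \widetilde{\connf_L}(0;a,b) \le \mathfrak{S}_{d-1}\sup_{r\le R}Q_d(r)\int^R_0\connf_L(r;a,b)(\sinh r)^{d-1}\dd r + \mathfrak{S}_{d-1}c'_d\int^\infty_R \connf_L(r;a,b)\,r\,\e^{-\frac12(d-1)r}(\sinh r)^{d-1}\dd r.
\end{equation}
Since $(\sinh r)^{d-1}\le \e^{(d-1)r}$, the tail integrand is at most $c'_d\,\connf_L(r;a,b)\,r\,\e^{\frac12(d-1)r}$, which is integrable and has small tail by the pointwise-integrability hypothesis \eqref{eqn:pointwiseIntegrable} (resp. \eqref{eqn:pointwiseIntegrableVolumeLinear}). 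The head term is controlled by the truncated degree integral $\int^R_0\connf_L(r;a,b)(\sinh r)^{d-1}\dd r$, which by \eqref{eqn:EigenValueRatio} (resp. Lemma~\ref{lem:VolumeLinearNice} under Assumption~\ref{assump:specialscale}) is $o$ of the full degree integral uniformly in $a,b$ as $L\to\infty$.

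Then I would pass from these pointwise kernel bounds to the operator norms. The operator norm is monotone: if $0\le K_1\le K_2$ pointwise then $\norm*{K_1}_{2\to2}\le\norm*{K_2}_{2\to2}$, and $\norm*{K_1+K_2}_{2\to2}\le\norm*{K_1}_{2\to2}+\norm*{K_2}_{2\to2}$. Applying this to the two-term bound above, the head contributes an operator whose norm is dominated by $\sup_{r\le R}Q_d(r)$ times the norm of the truncated-degree kernel, and the tail contributes an operator dominated (kernel-wise, hence in norm) by $c'_d\,\mathfrak{S}_{d-1}$ times the kernel $(a,b)\mapsto \int^\infty_R\connf_L(r;a,b)r\e^{\frac12(d-1)r}\dd r$. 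Under Assumption~\ref{assump:finitelymany}, where $\Ecal$ is finite, all these operator norms on $\R^\Ecal$ are comparable to the entrywise maxima of the kernels, so the estimate reduces to the scalar statements \eqref{eqn:pointwiseIntegrable} and \eqref{eqn:EigenValueRatio} directly, and one concludes $\norm*{\Opconnf_L}_{2\to2}/\norm*{D_L}_{2\to2}\to 0$ by first choosing $R$ large to kill the tail relative to $\norm*{D_L}_{2\to2}$ — noting $\norm*{D_L}_{2\to2}\to\infty$, or at least is bounded below by a uniform multiple of $\max_{a,b}D_L(a,b)$ which dominates the tail kernel — and then using \eqref{eqn:EigenValueRatio} to make the head small once $R$ is fixed. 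Under Assumption~\ref{assump:specialscale} the homomorphism property of the volume-linear scaling (the analogue of \eqref{eqn:scalingHomomorphism}) lets one factor out $L$: $D_L(a,b)=L\,D(a,b)$ and, for the head, the truncated integral over $[0,R]$ after the substitution is comparable to the $\connf$-integral over $[0,s_L^{-1}(R)]$, which shrinks as $L\to\infty$ with the tail handled by \eqref{eqn:L2normVolumeScale} giving $\norm*{D_L}_{2\to2}=L\norm*{D}_{2\to2}$ with $\norm*{D}_{2\to2}\in(0,\infty)$.

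The main obstacle is the head term in the case of Assumption~\ref{assump:specialscale} with infinitely many marks: there the operator norm $\norm*{D_L}_{2\to2}$ is genuinely an $L^2(\Ecal)$ operator norm rather than a finite matrix norm, so I cannot simply reduce everything to scalar ratios as in the finite-mark case, and I must instead argue that the \emph{kernel} of the head operator is, uniformly in $L$ large, bounded by a small multiple of the kernel $L\cdot D(a,b)$ whose operator norm is $L\norm*{D}_{2\to2}$. This requires the head contribution, after the volume-linear change of variables, to be dominated pointwise by $\varepsilon_L\cdot D(a,b)$ with $\varepsilon_L\to0$ — which follows because $\sup_{r\le R}Q_d(r)$ is a fixed constant while the truncated $\connf$-integral $\int_0^{s_L^{-1}(R)}\connf(r;a,b)(\sinh r)^{d-1}\dd r$ tends to $0$ (by dominated convergence, since $s_L^{-1}(R)\to0$) uniformly enough in $a,b$ given \eqref{eqn:pointwiseIntegrableVolumeLinear}; care is needed to make this uniformity precise, perhaps invoking Lemma~\ref{lem:VolumeLinearNice}. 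Everything else is bookkeeping with monotonicity of operator norms and the triangle inequality.
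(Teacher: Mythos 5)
Your skeleton is the paper's: identify $\norm*{\Opconnf_L}_{2\to 2}$ with the norm of the kernel $\widetilde{\connf_L}(0;a,b)$ via Lemma~\ref{lem:TwoToTwoNormBound}, split the radial integral at a threshold $R$, control the head via \eqref{eqn:EigenValueRatio} (finite marks) or the volume-linear scaling, and let the decay of $Q_d$ kill the rest. However, your justification of the tail step has a genuine gap. You bound the tail integrand by $c'_d\,\connf_L(r;a,b)\,r\,\e^{\frac12(d-1)r}$ and then claim it is small ``by the pointwise-integrability hypothesis'' \eqref{eqn:pointwiseIntegrable}/\eqref{eqn:pointwiseIntegrableVolumeLinear}. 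That hypothesis only gives finiteness for each fixed $L$; it gives no control that is uniform in $L$, whereas your order of quantifiers (``choose $R$ large to kill the tail, then let $L\to\infty$ to kill the head'') requires the tail to be small \emph{relative to} $\norm*{D_L}_{2\to 2}$ uniformly in $L$ at a fixed $R$ --- and for fixed $R$ the tail integral typically grows with $L$ as mass moves outward. Your parenthetical ``noting $\norm*{D_L}_{2\to 2}\to\infty$'' is also not implied by Assumption~\ref{assump:finitelymany} (the scaled degree can stay bounded or even vanish, cf.\ Example~\ref{expl:annulus}), and the hedge ``$\max_{a,b}D_L(a,b)$ dominates the tail kernel'' gives only tail $\leq C\max_{a,b}D_L(a,b)$, which kills nothing. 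The missing (and essential) ingredient is that the domination constant vanishes as $R\to\infty$ \emph{uniformly in $L$}: for $r>R$ one has $Q_d(r)(\sinh r)^{d-1}\leq \bigl(\esssup_{t>R}Q^{\mathbb{B}}_d(t;0)\bigr)(\sinh r)^{d-1}$, so the tail kernel is at most $\esssup_{t>R}Q^{\mathbb{B}}_d(t;0)\cdot D_L(a,b)$, i.e.\ a vanishing multiple of the denominator's own kernel; this is exactly how the paper closes the argument, using Lemma~\ref{lem:limitQd} in the form $\esssup_{t>R}Q^{\mathbb{B}}_d(t;0)\to 0$ rather than as a mere pointwise bound.

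Two further points in your Assumption~\ref{assump:specialscale} discussion are off. First, \eqref{eqn:L2normVolumeScale} does \emph{not} give $\norm*{D}_{2\to 2}\in(0,\infty)$: it controls the kernel weighted by $r\e^{\frac12(d-1)r}$, not by $\e^{(d-1)r}$, and indeed $\norm*{D}_{2\to 2}$ may be infinite under Assumption~\ref{assump:specialscale} (this is precisely why the paper needs the separate refinement Lemma~\ref{lem:NormSublinear}); for the present lemma that case is harmless only if you explicitly note that the numerator is finite (by \eqref{eqn:L2normVolumeScale} and Lemma~\ref{lem:TwoToTwoNormBound}) so the ratio is trivially $0$. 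Second, your head-term treatment under the volume-linear scaling drops the factor $L$ produced by the substitution: the truncated integral equals $L\int_0^{s_L^{-1}(R)}\connf(u;a,b)(\sinh u)^{d-1}\dd u$, which does \emph{not} shrink to $0$ in general (if $\connf$ is positive near $0$ it stays of order $\mathbf{V}_d(R)$), so the dominated-convergence/uniformity worry you raise (and the appeal to Lemma~\ref{lem:VolumeLinearNice}, which assumes finitely many marks) is the wrong tool. The clean step, as in the paper, is simply $\connf_L\leq 1$: the truncated kernel is bounded by the constant $\mathfrak{S}_{d-1}\mathbf{V}_d(R)$, which on the probability space $L^2(\Ecal)$ bounds its operator norm, and a constant is $o\left(L\norm*{D}_{2\to 2}\right)=o\left(\norm*{D_L}_{2\to 2}\right)$; no pointwise domination by $\varepsilon_L\cdot L\,D(a,b)$ is needed.
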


\begin{proof}
    Let $R\in\left(0,1\right)$. Then Lemma~\ref{lem:limitQd} indicates that there exists a constant $C=C(d)<\infty$ such that $Q^{\mathbb{B}}_d\left(\varrho;0\right)\leq C$ and thus
    \begin{align}
        &\int^1_0 \connf_L(\varrho;a,b)Q^{\mathbb{B}}_d\left(\varrho;0\right)\frac{4\varrho^{d-1}}{\left(1-\varrho^2\right)^2}\dd \varrho \nonumber \\
        &\hspace{2cm}\leq C\int^R_0 \connf_L(\varrho;a,b)\frac{4\varrho^{d-1}}{\left(1-\varrho^2\right)^2}\dd \varrho + \int^1_R \connf_L(\varrho;a,b)\frac{4\varrho^{d-1}}{\left(1-\varrho^2\right)^2}\dd \varrho \esssup_{t>R}Q^{\mathbb{B}}_d(t;0)\nonumber\\
        &\hspace{2cm}\leq C\int^R_0 \connf_L(\varrho;a,b)\frac{4\varrho^{d-1}}{\left(1-\varrho^2\right)^2}\dd \varrho + D_L(a,b) \esssup_{t>R}Q^{\mathbb{B}}_d(t;0).
    \end{align}
    Now define 
    \begin{equation}
        D^{(\leq R)}_L\left(a,b\right):= \mathfrak{S}_{d-1}\int^R_0 \connf_L(\varrho;a,b)\frac{4\varrho^{d-1}}{\left(1-\varrho^2\right)^2}\dd \varrho
    \end{equation}
    and the associated linear operator $\Dcal\left(D^{(\leq R)}_L\right)\to \R^\Ecal$. Assumptions~\ref{assump:finitelymany} and \ref{assump:specialscale} each show that $\int^\infty_0\connf_L\left(r;a,b\right)Q_d\left(r\right)\e^{\left(d-1\right)r}\dd r<\infty$ for $\Pcal$-almost every $a,b\in\Ecal$ and $ \norm*{\widetilde{\Opconnf}_L(0)}_{2\to 2}<\infty$ in their regimes, and therefore we can apply Lemma~\ref{lem:TwoToTwoNormBound}, the triangle inequality, and the homogeneity of the norm to get
    \begin{equation}
        \norm*{\Opconnf_L}_{2\to 2} = \norm*{\widetilde{\Opconnf}_L(0)}_{2\to 2} \leq C\norm*{D^{(\leq R)}_L}_{2\to 2} + \norm*{D_L}_{2\to 2}\esssup_{t>R}Q^{\mathbb{B}}_d(t;0).
    \end{equation}

    Then under Assumption~\ref{assump:finitelymany}, the equivalence of norms and \eqref{eqn:EigenValueRatio}  (and translating into Poincar{\`e} disc coordinates) implies that there exists a sequence $\left\{C'_L\right\}_{L}$ such that $C'_L\to 0$ and $\norm*{D^{(\leq R)}_L}_{2\to 2}\leq C'_L \norm*{D_L}_{2\to 2}$. Under Assumption~\ref{assump:specialscale}, the scaling function is volume-linear and therefore
    \begin{align}
        \norm*{D^{(\leq R)}_L}_{2\to 2} &\leq L \mathfrak{S}_{d-1}\mathbf{V}_d\left(s^{-1}_L(R)\right) = \mathfrak{S}_{d-1}\mathbf{V}_d\left(R\right)\\
        \norm*{D_L}_{2\to 2} & = L\norm*{D}_{2\to 2}.
    \end{align}
    That is, under Assumption~\ref{assump:specialscale} there also exists a sequence $\left\{C'_L\right\}_{L}$ such that $C'_L\to 0$ and $\norm*{D^{(\leq R)}_L}_{2\to 2}\leq C'_L \norm*{D_L}_{2\to 2}$. Either way,
    \begin{equation}
        \norm*{\Opconnf_L}_{2\to 2} \leq\norm*{D_L}_{2\to 2}\left(C'_L+\esssup_{t>R}Q^{\mathbb{B}}_d(t;0)\right).
    \end{equation}

    Therefore
    \begin{equation}
        \limsup_{L\to\infty}\frac{\norm*{\Opconnf_L}_{2\to 2}}{\norm*{D_L}_{2\to 2}} \leq \esssup_{t>R}Q^{\mathbb{B}}_d(t;0).
    \end{equation}
    From Lemma~\ref{lem:limitQd}, this bound can be made arbitrarily small by taking $R\nearrow 1$ and the result follows.
\end{proof}

When we are working under Assumption~\ref{assump:specialscale}, Lemma~\ref{lem:RatioofNorms} doesn't give enough information when $\norm*{D}_{2\to 2}=\infty$, and we will need the following refinement.

\begin{lemma}
\label{lem:NormSublinear}
    If Assumption~\ref{assump:specialscale} holds, then $\norm*{\Opconnf_L}_{2\to 2} =o\left( L\right)$ as $L\to\infty$.
\end{lemma}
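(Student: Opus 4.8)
The plan is to run the estimate through the spherical representation of the adjacency operator. Under Assumption~\ref{assump:specialscale} the integrability \eqref{eqn:pointwiseIntegrableVolumeLinear} holds and $\norm*{\widetilde{\Opconnf}_L(0)}_{2\to 2}<\infty$ (this is precisely \eqref{eqn:L2normVolumeScale}), so Lemma~\ref{lem:TwoToTwoNormBound} gives $\norm*{\Opconnf_L}_{2\to 2}=\norm*{\widetilde{\Opconnf}_L(0)}_{2\to 2}$ and it suffices to bound the $L^2(\Ecal)\to L^2(\Ecal)$ operator norm of the kernel $\widetilde{\connf}_L(0;a,b)$. Written in hyperbolic polar coordinates and then substituting $r=s_L(u)$ — using that volume-linear scaling means $\mathbf{V}_d(s_L(u))=L\mathbf{V}_d(u)$, hence $\left(\sinh s_L(u)\right)^{d-1}s_L'(u)=L\left(\sinh u\right)^{d-1}$, exactly as in the proof of Lemma~\ref{lem:RatioofNorms} — one gets
\[
    \widetilde{\connf}_L(0;a,b) = L\,\mathfrak{S}_{d-1}\int^\infty_0 \connf(u;a,b)\,Q_d\!\left(s_L(u)\right)\left(\sinh u\right)^{d-1}\dd u .
\]
So after pulling out the explicit factor of $L$, the claim reduces to showing that the operator with kernel $\int^\infty_0\connf(u;a,b)Q_d(s_L(u))\left(\sinh u\right)^{d-1}\dd u$ has operator norm $o(1)$ as $L\to\infty$.

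The engine is the pointwise estimate
\[
    Q_d\!\left(s_L(u)\right)\left(\sinh u\right)^{d-1} \;\leq\; \frac{c_d\,(1+\log L)}{\sqrt{L}}\,(1+u)\,\e^{\frac{1}{2}(d-1)u}\qquad\text{for all }u>0,
\]
valid for $L$ large with a constant $c_d$ depending only on $d$. To prove it I would feed in the bound $Q_d(r)\leq c'_d(1\vee r)\e^{-\frac{1}{2}(d-1)r}$ of Lemma~\ref{lem:limitQd} and then exploit $\mathbf{V}_d(s_L(u))=L\mathbf{V}_d(u)$ through the elementary two-sided comparison $c_d\e^{(d-1)r}\leq\mathbf{V}_d(r)\leq C_d\e^{(d-1)r}$ (the lower inequality only for $r\geq1$, which is all that is needed): the upper inequality yields $\e^{(d-1)s_L(u)}\geq L\mathbf{V}_d(u)/C_d$, hence $\e^{-\frac{1}{2}(d-1)s_L(u)}\leq\sqrt{C_d/(L\mathbf{V}_d(u))}$; the lower inequality yields $s_L(u)\leq u+\tfrac{1}{d-1}\log(C_dL/c_d)$ whenever $s_L(u)\geq1$ (and $1\vee s_L(u)=1$ otherwise), hence $1\vee s_L(u)\leq C_d(1+u)(1+\log L)$. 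Feeding these in together with the elementary comparison $\left(\sinh u\right)^{d-1}\leq M_d\sqrt{\mathbf{V}_d(u)}\,\e^{\frac{1}{2}(d-1)u}$, valid for every $u>0$ and $d\geq2$ (near $u=0$ it amounts to $u^{d-1}\lesssim u^{d/2}$), gives the displayed bound. Integrating it against $\connf(u;a,b)$ dominates the kernel by $\tfrac{c_d(1+\log L)}{\sqrt{L}}\widetilde{D}(a,b)$ with $\widetilde{D}(a,b):=\mathfrak{S}_{d-1}\int^\infty_0\connf(u;a,b)(1+u)\e^{\frac{1}{2}(d-1)u}\dd u$; and since $\widetilde{D}(a,b)\leq C'_d+2\mathfrak{S}_{d-1}\int^\infty_0\connf(u;a,b)u\,\e^{\frac{1}{2}(d-1)u}\dd u$ pointwise, the associated operator $\widetilde{\mathcal D}$ on $L^2(\Ecal)$ is bounded — the $u\,\e^{\frac{1}{2}(d-1)u}$-kernel by \eqref{eqn:L2normVolumeScale}, the additive constant $C'_d$ giving only a bounded rank-one operator on the probability space $\Ecal$. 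By monotonicity of the operator norm in a nonnegative kernel, the operator with kernel $\int^\infty_0\connf(u;a,b)Q_d(s_L(u))\left(\sinh u\right)^{d-1}\dd u$ then has norm at most $\tfrac{c_d(1+\log L)}{\sqrt{L}}\norm*{\widetilde{\mathcal D}}_{2\to 2}\to0$; restoring the factor $L$ gives $\norm*{\Opconnf_L}_{2\to 2}\leq c_d(1+\log L)\sqrt{L}\,\norm*{\widetilde{\mathcal D}}_{2\to 2}=o(L)$.

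The main obstacle — and the reason Lemma~\ref{lem:RatioofNorms} does not already close the case — is that under Assumption~\ref{assump:specialscale} the degree operator (kernel $D$, with $\norm*{D_L}_{2\to 2}=L\norm*{D}_{2\to 2}$) may be unbounded on $L^2(\Ecal)$: the profile is only assumed integrable against $u\,\e^{\frac{1}{2}(d-1)u}$, not against the ball-volume density $\e^{(d-1)u}$. Thus the clean bound $\norm*{\Opconnf_L}_{2\to 2}\leq o(1)\cdot\norm*{D_L}_{2\to 2}$ of Lemma~\ref{lem:RatioofNorms} is vacuous here, and one must instead keep \emph{all} of the decay that $Q_d$ supplies \emph{inside} the $u$-integral: exactly an $\e^{-\frac{1}{2}(d-1)u}$ of it (coming from $s_L(u)\geq u$) is spent against $\left(\sinh u\right)^{d-1}$ so as to land precisely on the density $u\,\e^{\frac{1}{2}(d-1)u}$ that \eqref{eqn:L2normVolumeScale} controls, while the surplus is converted via $\mathbf{V}_d(s_L(u))=L\mathbf{V}_d(u)$ into the quantitative $L^{-1/2}\log L$ prefactor. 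Checking the pointwise estimate uniformly down to $u=0$, where $\left(\sinh u\right)^{d-1}$ vanishes but $\mathbf{V}_d(u)^{-1/2}$ blows up, is a routine case check; the rest is bookkeeping.
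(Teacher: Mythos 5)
Your proposal is correct, and while it shares the paper's outer skeleton (reduce via Lemma~\ref{lem:TwoToTwoNormBound} to the transformed kernel at $s=0$, change variables with $\mathbf{V}_d(s_L(u))=L\mathbf{V}_d(u)$, and compare against an $L$-independent operator that Assumption~\ref{assump:specialscale} makes bounded), the core estimate is genuinely different. The paper works with the monotone envelope $f(r)=\sup_{t\geq r}Q_d(t)$, splits the radial integral at a radius $R_\varepsilon$ defined through the ratio $f(s_L(r))/f(r)$, uses monotonicity of that ratio in $L$ plus dominated convergence on the compact piece, and then sends $\varepsilon\searrow 0$ — a soft argument that delivers only the qualitative conclusion $o(L)$. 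You instead prove the uniform pointwise inequality $Q_d(s_L(u))(\sinh u)^{d-1}\leq c_d L^{-1/2}(1+\log L)(1+u)\e^{\frac{1}{2}(d-1)u}$, obtained by feeding Lemma~\ref{lem:limitQd} into the exact volume relation via a two-sided comparison $c_d\e^{(d-1)r}\leq\mathbf{V}_d(r)\leq C_d\e^{(d-1)r}$ (lower bound only for $r\geq 1$, which you correctly isolate) together with the small-$u$ interpolation $(\sinh u)^{d-1}\lesssim\sqrt{\mathbf{V}_d(u)}\,\e^{\frac{1}{2}(d-1)u}$ so that the $\sqrt{\mathbf{V}_d(u)}$ factors cancel exactly; integrating against $\connf$ and using monotonicity of the $2\to 2$ norm in a nonnegative kernel then gives the quantitative bound $\norm*{\Opconnf_L}_{2\to 2}=\LandauBigO{\sqrt{L}\log L}$, which is stronger than what the lemma asserts and dispenses with the envelope, the sets $R_{L,\varepsilon}$, and the double limit. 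Each route has its advantages: yours yields an explicit rate and is arguably shorter; the paper's avoids any pointwise control near $u=0$ (the split at $R_\varepsilon$ handles the compact region by plain boundedness) and only uses the asymptotics of $s_L$ and $Q_d$, not the two-sided exponential comparison of $\mathbf{V}_d$. One cosmetic point: the finiteness $\norm*{\widetilde{\Opconnf}_L(0)}_{2\to 2}<\infty$ needed to invoke Lemma~\ref{lem:TwoToTwoNormBound} is not literally \eqref{eqn:L2normVolumeScale} but follows from it via Lemma~\ref{lem:limitQd}; your own pointwise bound establishes it directly, so this is a matter of phrasing rather than a gap, and it matches how the paper itself handles the hypothesis.
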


\begin{proof}
    Let $a,b\in\Ecal$. Then by a change of variables
    \begin{multline}
        \int^\infty_0\connf_L(r;a,b)Q_d(r)\left(\sinh r\right)^{d-1}\dd r = L \int^\infty_0\connf(r;a,b) Q_d\left(s_L\left(r\right)\right)\left(\sinh r\right)^{d-1}\dd r \\
         \leq L \int^\infty_0  \connf(r;a,b) f\left(s_L(r)\right)\left(\sinh r\right)^{d-1}\dd r,
    \end{multline}
    where $f(r):= \sup_{t\geq r}Q_d\left(t\right)$. $f$ is non-increasing and inherits the $r\to\infty$ asymptotics from those derived for $Q_d$ in the proof of Lemma~\ref{lem:limitQd}. In particular Assumption~\ref{assump:specialscale} implies that
    \begin{equation}
        \mathfrak{S}_{d-1}\int^\infty_0  \connf(r;a,b) f\left(r\right)\left(\sinh r\right)^{d-1}\dd r<\infty
    \end{equation}
    for almost every $a,b\in\Ecal$, and the operator with this kernel (denote this $\Opconnf^{(f)}$) has finite $L^2\left(\Ecal\right)\to L^2\left(\Ecal\right)$ operator norm.

    Now we explore the asymptotics of $s_L(r)$ as $r\to\infty$ for fixed $L$. Clearly as $r\to\infty$
    \begin{align}
        \mathbf{V}_d\left(r\right) &\sim \frac{1}{d-1}\frac{1}{2^{d-1}}\e^{\left(d-1\right)r},\\
        \mathbf{V}_d^{-1}\left(r\right) &\sim \log 2 + \frac{1}{d-1}\log\left(d-1\right) + \frac{1}{d-1}\log r,
    \end{align}
    and therefore
    \begin{equation}
        s_L(r)\sim r+ \frac{1}{d-1}\log L.
    \end{equation}
    Therefore from the proof of Lemma~\ref{lem:limitQd} we have
    \begin{equation}
    \label{eqn:largerratio}
        \frac{f\left(s_L\left(r\right)\right)}{f\left(r\right)} \sim \frac{s_L(r)}{r}\exp\left(\frac{1}{2}\left(d-1\right)\left(r - s_L(r)\right)\right) \sim \frac{1}{\sqrt{L}}
    \end{equation}
    as $r\to \infty$ for fixed $L$. On the other hand, if we fix $r>0$ and let $L\to \infty$ we get $s_L(r)\to\infty$ and therefore
    \begin{equation}
        \lim_{L\to\infty}\frac{f\left(s_L\left(r\right)\right)}{f\left(r\right)} = 0
    \end{equation}
    for all $r>0$. Furthermore, since $L\mapsto s_L(r)$ is increasing, $L\mapsto \tfrac{f\left(s_L\left(r\right)\right)}{f\left(r\right)}$ is non-increasing. For $L,\varepsilon>0$, define
    \begin{equation}
        R_{L,\varepsilon}:= \sup\left\{r>0\colon \frac{f\left(s_L\left(r\right)\right)}{f\left(r\right)} \geq \varepsilon\right\},
    \end{equation}
    so that the map $L\mapsto R_{L,\varepsilon}$ is non-increasing.

    Now given $\varepsilon>0$, fix $R_{\varepsilon}:= R_{4\varepsilon^{-2},\varepsilon}$ (\eqref{eqn:largerratio} implies $R_\varepsilon<\infty$). Therefore for all $a,b\in\Ecal$
    \begin{align}
        &\int^\infty_0\connf_L(r;a,b)Q_d(r)\left(\sinh r\right)^{d-1}\dd r\nonumber\\
        &\hspace{1cm}\leq L\int^{R_\varepsilon}_0 f\left(s_L(r)\right)\left(\sinh r\right)^{d-1}\dd r + L\int^\infty_{R_\varepsilon}\frac{f\left(s_L(r)\right)}{f(r)}\connf(r;a,b)f\left(r\right)\left(\sinh r\right)^{d-1}\dd r\nonumber\\
        &\hspace{1cm}\leq L\int^{R_\varepsilon}_0 f\left(s_L(r)\right)\left(\sinh r\right)^{d-1}\dd r + \varepsilon L\int^\infty_{0}\connf(r;a,b)f\left(r\right)\left(\sinh r\right)^{d-1}\dd r.
    \end{align}
    By Lemma~\ref{lem:TwoToTwoNormBound}, the triangle inequality, and the homogeneity of the operator norm,
    \begin{equation}
        \norm*{\Opconnf_L}_{2\to 2} \leq L\mathfrak{S}_{d-1}\int^{R_\varepsilon}_0 f\left(s_L(r)\right)\left(\sinh r\right)^{d-1}\dd r + \varepsilon L \norm*{\Opconnf^{(f)}}_{2\to 2}.
    \end{equation}
    Therefore 
    \begin{equation}
        \limsup_{L\to\infty} \frac{\norm*{\Opconnf_L}_{2\to 2}}{L} \leq \varepsilon\norm*{\Opconnf^{(f)}}_{2\to 2}.
    \end{equation}
    Since this norm on the right hand side is finite, taking $\varepsilon\searrow 0$ gives the result.

\end{proof}

\section{Proof of the Main Results}
\label{sec:MainProof}

In the first subsection, the non-uniqueness result Theorem~\ref{thm:NonUniqueness} is proven. Then Theorem~\ref{thm:meanfield} is proven in Section~\ref{sec:ProofCritExponents}.

\subsection{Proof of Non-Uniqueness}
\label{sec:NonUniqueness}

First we relate the uniqueness critical threshold to the $L^p\to L^p$ critical thresholds. The only special feature of $\HypDim$ that the following lemma uses is the existence of an infinite transitive family of transformations, such as the translations. To reflect this generality we generalize the definition of $\theta_\lambda$ like we did for $\chi_\lambda$ in \eqref{eqn:susceptibilityGeneral}: for $\mathbf{x}\in\HypDim\times \Ecal$
\begin{equation}
    \theta_\lambda\left(\mathbf{x}\right) = \mathbb{P}_{\lambda}\left(\#\C\left(\mathbf{x},\xi^{\mathbf{x}}\right) = \infty\right).
\end{equation}

\begin{lemma}
\label{lem:Uniquenesstoqtoq}
    For all $p\in\left[1,\infty\right]$ and $L>0$,
    \begin{equation}
        \lambda_u(L) \geq \lambda_{p\to p}(L).
    \end{equation}
\end{lemma}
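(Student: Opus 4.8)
The plan is to show that whenever there is $\mathbb{P}_{\lambda,L}$-almost surely a unique infinite cluster, the operator $\mathcal{T}_{\lambda,L}$ built from $\tau_{\lambda,L}$ as in \eqref{eqn:OperatorDefn} already satisfies $\norm*{\mathcal{T}_{\lambda,L}}_{p\to p}=\infty$ for \emph{every} $p\in\left[1,\infty\right]$. Since the excerpt records $\lambda_u(L)=\inf\left\{\lambda>0\colon \exists!\text{ infinite cluster }\mathbb{P}_{\lambda,L}\text{-a.s.}\right\}$, the set of such $\lambda$ is then contained in $\left\{\lambda>0\colon \norm*{\mathcal{T}_{\lambda,L}}_{p\to p}=\infty\right\}$, and since the infimum over a larger set is no larger this gives $\lambda_{p\to p}(L)\leq\lambda_u(L)$; if there is no such $\lambda$ then $\lambda_u(L)=\infty$ and there is nothing to prove. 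So fix such a $\lambda$ and abbreviate $\theta:=\theta_{\lambda,L}$ and $\tau:=\tau_{\lambda,L}$. First, $\theta$ is a nonzero element of $L^p(\Ecal)$ for every $p$: uniqueness a.s.\ in particular forces at least one infinite cluster with probability one, so by Mecke's formula (Lemma~\ref{lem:Mecke}) the expected number of vertices of a fixed bounded ball lying in an infinite cluster, which equals $\lambda$ times the integral of $\theta$ over that ball, is positive; hence $\esssup_{a\in\Ecal}\theta(a)>0$.

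The crux is the classical lower bound
\[
    \tau\left(\mathbf{x},\mathbf{y}\right)\;\geq\;\theta\left(\mathbf{x}\right)\theta\left(\mathbf{y}\right)\qquad\text{for }\nu\otimes\nu\text{-a.e. }\mathbf{x},\mathbf{y}.
\]
To obtain it, work in the doubly-augmented graph $\xi^{\mathbf{x},\mathbf{y}}$. One first checks that $\xi^{\mathbf{x},\mathbf{y}}$ still has a unique infinite cluster almost surely: augmenting by a single vertex only merges clusters, and in this regime the new vertex is almost surely adjacent to the already-present infinite cluster — this is where a little care is needed when the graph is not locally finite, but it holds because $\connf$ is positive on a positive-measure set and the a.s.\ infinite cluster has positive density, so no new infinite cluster is created. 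Given this, the events $\left\{\#\C\left(\mathbf{x},\xi^{\mathbf{x},\mathbf{y}}\right)=\infty\right\}$ and $\left\{\#\C\left(\mathbf{y},\xi^{\mathbf{x},\mathbf{y}}\right)=\infty\right\}$ are increasing in both the Poisson process and the edge configuration, their intersection entails $\conn{\mathbf{x}}{\mathbf{y}}{\xi^{\mathbf{x},\mathbf{y}}}$ by uniqueness, and the FKG inequality for random connection models bounds their joint probability below by the product of their marginals; each marginal is in turn at least the corresponding probability in the singly-augmented graph (augmenting by the other vertex can only help connectivity), namely $\theta(\mathbf{x})$ resp.\ $\theta(\mathbf{y})$.

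The operator norm now blows up on an explicit test function. Let $B_n\subset\HypDim$ be the hyperbolic ball of radius $n$ about $\orig$ and put $f_n:=\Id_{B_n}\otimes\theta$. Then $\norm*{f_n}_p<\infty$ for all $p$ (since $\mu(B_n)<\infty$ and $\theta$ is bounded) and $f_n\in\Dcal(\tau)$ (since $\tau\leq 1$). By the previous display, for $\nu$-a.e.\ $(x,a)$,
\[
    \left(\mathcal{T}_{\lambda,L}f_n\right)(x,a)\;\geq\;\int_{\HypDim}\int_{\Ecal}\theta(a)\theta(b)\Id_{B_n}(y)\theta(b)\,\Pcal(\dd b)\,\mu(\dd y)\;=\;\theta(a)\,\mu(B_n)\,\norm*{\theta}_{L^2(\Ecal)}^2,
\]
a strictly positive multiple of $\theta(a)$ that does not depend on $x$. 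For $p<\infty$ this lower bound has infinite $L^p(\HypDim\times\Ecal)$-norm because $\mu(\HypDim)=\infty$ and $\norm*{\theta}_{L^p(\Ecal)}>0$, so $\norm*{\mathcal{T}_{\lambda,L}f_n}_p=\infty$ while $\norm*{f_n}_p<\infty$, whence $\norm*{\mathcal{T}_{\lambda,L}}_{p\to p}=\infty$; for $p=\infty$ one instead lets $n\to\infty$, so that $\norm*{\mathcal{T}_{\lambda,L}f_n}_\infty\geq\left(\esssup_a\theta(a)\right)\mu(B_n)\norm*{\theta}_{L^2(\Ecal)}^2\to\infty$ while $\norm*{f_n}_\infty=\norm*{\theta}_\infty$ stays fixed, so again $\norm*{\mathcal{T}_{\lambda,L}}_{\infty\to\infty}=\infty$.

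The only genuinely delicate point is the step in the second paragraph — the persistence of uniqueness under augmentation, equivalently the bound $\tau\geq\theta\otimes\theta$, in the possibly non-locally-finite setting; an alternative is to bound $\tau(\mathbf{x},\mathbf{y})$ below directly by the probability that $\mathbf{x}$ and $\mathbf{y}$ are both adjacent (in $\xi^{\mathbf{x},\mathbf{y}}$) to the infinite cluster of the un-augmented $\xi$, and to apply FKG to the increasing, $\xi$-measurable functions $\xi\mapsto\mathbb{P}(\mathbf{z}\sim\mathcal{C}_\infty(\xi)\mid\xi)$. The Mecke computation, the RCM version of FKG, and the operator blow-up are otherwise routine.
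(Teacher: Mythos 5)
Your overall route is the paper's: you reduce the lemma to showing $\norm*{\Optlam}_{p\to p}=\infty$ whenever the infinite cluster is a.s.\ unique, you use the correlation bound $\tlam(\mathbf{x},\mathbf{y})\geq\theta_\lambda(\mathbf{x})\theta_\lambda(\mathbf{y})$ coming from uniqueness plus FKG, and you blow up the operator norm against a test function supported on a finite-$\mu$-measure spatial region, exploiting $\mu\left(\HypDim\right)=\infty$. These are exactly the paper's steps; the differences (test function $\Id_{B_n}\otimes\theta$ versus $\Id_E$ with $E$ a set where $\theta_\lambda\geq\varepsilon$, and your Mecke computation versus $\lambda>\lambda_{\mathrm{u}}\geq\lambda_{\mathrm{c}}$ to see $\theta_\lambda\not\equiv 0$) are immaterial.

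The one genuine weak point is your justification of the key inequality. The claim that the added vertex is almost surely adjacent to the pre-existing infinite cluster is false in general: for a bounded-range adjacency function (e.g.\ the scaled Boolean disc model with $R^*<\infty$) there is positive probability that no vertex of $\mathcal{I}$ lies within interaction range of the deposited point, so persistence of uniqueness in $\xi^{\mathbf{x},\mathbf{y}}$ cannot be argued that way. Your fallback is also not quite what is needed: applying FKG to $\xi\mapsto\mathbb{P}\left(\mathbf{z}\sim\mathcal{C}_\infty(\xi)\mid\xi\right)$ gives $\tlam(\mathbf{x},\mathbf{y})\geq\mathbb{P}\left(\mathbf{x}\sim\mathcal{C}_\infty\right)\mathbb{P}\left(\mathbf{y}\sim\mathcal{C}_\infty\right)$, and direct adjacency to the infinite cluster is in general far less likely than the event defining $\theta_\lambda$ (which allows connection by a path), so this does not yield $\tlam\geq\theta_\lambda\otimes\theta_\lambda$; it could still feed your norm blow-up, but only after separately proving that this adjacency probability is not $\Pcal$-a.e.\ zero. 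The clean repair, and in effect what the paper does, is to apply FKG to the increasing events that $\mathbf{x}$ (resp.\ $\mathbf{y}$) is joined to $\mathcal{I}$ in the singly-augmented graph $\xi^{\mathbf{x}}$ (resp.\ $\xi^{\mathbf{y}}$); their intersection forces $\conn{\mathbf{x}}{\mathbf{y}}{\xi^{\mathbf{x},\mathbf{y}}}$, and each probability is at least $\theta_\lambda$, because when the added vertex a.s.\ has finite degree an infinite $\C\left(\mathbf{x},\xi^{\mathbf{x}}\right)$ is, apart from $\mathbf{x}$, a finite union of clusters of $\xi$, one of which must be infinite and hence equal $\mathcal{I}$. (In the non-locally-finite regime this identification does require an extra argument, but that caveat applies equally to the paper's one-line derivation.) Everything else in your write-up—the reduction via infima, the Mecke step, and the $p<\infty$ and $p=\infty$ blow-ups—is correct.
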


\begin{proof}
    If there exists a unique infinite cluster in $\xi$, denote its vertex set by $\mathcal{I}\subset\eta$. Choose $\lambda>\lambda_{\mathrm{u}}$ such that $\pla\left(\exists! \text{ infinite cluster}\right)>0$. Then ergodicity implies that there is almost surely an infinite cluster and therefore
    \begin{equation}
        \tlam\left(\mathbf{x},\mathbf{y}\right)\geq \pla\left(\mathbf{x}\in\mathcal{I} \text{ in }\xi^{\mathbf{x}}, \mathbf{y}\in\mathcal{I} \text{ in }\xi^{\mathbf{y}}\right) \geq \theta_\lambda\left(\mathbf{x}\right)\theta_\lambda\left(\mathbf{y}\right),
    \end{equation}
    where we have used the FKG inequality (Lemma~\ref{lem:FKG} below). Since $\lambda>\lambda_u\geq \lambda_c$, there exist $\varepsilon>0$ and $E\subset \X$ such that $\theta_\lambda(\mathbf{x})\geq\varepsilon$ for all $\mathbf{x}\in E$, and $\nu\left(E\right)>0$. From the hyperbolic translation invariance, $\theta_\lambda(\mathbf{x})\geq \varepsilon$ on an infinite measure set and therefore $\norm{\theta_\lambda}_p=\infty$. Given $f\in L^p\left(\X\right)$ such that $f\geq 0$, we then have 
    \begin{equation}
        \int_\X \tlam(\mathbf{x},\mathbf{y})f(\mathbf{y})\nu\left(\dd \mathbf{y}\right) \geq \theta_\lambda(\mathbf{x})\int_\X\theta_\lambda(\mathbf{y})f(\mathbf{y})\nu\left(\dd \mathbf{y}\right),
    \end{equation}
    and
    \begin{equation}
        \norm{\Optlam f}_{p} \geq \norm{\theta_\lambda}_p\int_\X\theta_\lambda(\mathbf{y})f(\mathbf{y})\nu\left(\dd \mathbf{y}\right).
    \end{equation}

    Now let $f=\Id_E$. Then $\norm*{f}_p=\nu\left(E\right)^{\frac{1}{p}}$ and 
    \begin{equation}
        \int_\X\theta_\lambda(\mathbf{y})f(\mathbf{y})\nu\left(\dd \mathbf{y}\right) \geq \varepsilon\nu\left(E\right).
    \end{equation}
    Therefore
    \begin{equation}
        \norm*{\Optlam}_{p\to p} \geq \frac{\norm*{\Optlam f}_{p}}{\norm*{f}_p}\geq \norm*{\theta_\lambda}_{p}\varepsilon \nu\left(E\right)^{1-\frac{1}{p}} = \infty.
    \end{equation}
    Therefore $\lambda_{p\to p}\geq \lambda \geq \lambda_\mathrm{u}$.
\end{proof}

\begin{remark}
\label{rem:2to2isminimal}
    The following observation was made in an analogous case by \cite{hutchcroft2019percolation}, and applies equally well here. The symmetry of $\connf$ implies that $\norm{\Opconnf}_{p\to p} = \norm{\Opconnf}_{\frac{p}{p-1}\to\frac{p}{p-1}}$, and the Riesz--Thorin Theorem implies that $p\mapsto \log \norm{\Opconnf}_{\frac{1}{p}\to\frac{1}{p}}$ is a convex function on $p\in\left[0,1\right]$. Hence $p\mapsto\norm{\Opconnf}_{p\to p}$ is decreasing on $\left[1,2\right]$ and increasing on $\left[2,\infty\right]$. Hence the choice $p=2$ gives the optimal version of the bound in Lemma~\ref{lem:Uniquenesstoqtoq}.
\end{remark}

\begin{lemma}[FKG Inequality]
\label{lem:FKG}
    Given two increasing events $E,F$,
    \begin{equation}
        \pla\left(\xi\in E\cap F\right) \geq \pla\left(\xi\in E\right)\pla\left(\xi\in F\right).
    \end{equation}
\end{lemma}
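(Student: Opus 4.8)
This is the continuum form of the Harris--FKG inequality, and the plan is to deduce it from the classical lattice statement by discretising the ambient space and then removing the discretisation by a monotone limiting argument. (An equally standard route represents the whole graph $\xi$ as an increasing measurable functional of a marked Poisson process and invokes the positive association of Poisson random measures; I follow the first route as it is more self-contained.)

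\textbf{Discretisation.} Fix a bounded window $W\subset\HypDim\times\Ecal$ of finite $\nu$-measure and a partition of $W$ into finitely many measurable cells $A_1,\dots,A_N$ of small measure. Consider the finite model on $W$ in which, independently over cells, each $A_i$ either is empty or carries a single vertex, the latter with probability $1-\e^{-\lambda\nu(A_i)}$, and in which, conditionally on the occupied cells, each potential edge between two occupied cells is open independently with the probability prescribed by $\connf$. This model is driven by a product measure, and the indicator of any \emph{local} increasing event --- one determined by the restriction of the configuration to a fixed bounded window and monotone under adjoining vertices and edges --- is an increasing function of the product coordinates, so the classical Harris--FKG inequality for product measures gives $\pla(\xi\in E\cap F)\ge\pla(\xi\in E)\pla(\xi\in F)$ for such $E,F$ in the finite model.

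\textbf{Passage to the limit.} Refining the mesh and exhausting $\HypDim\times\Ecal$ by an increasing sequence of windows, the restriction of the finite model to any fixed bounded window converges in total variation to that of the RCM --- the probability that a shrinking cell carries two or more points of $\eta$ being $o(\nu(A_i))$ per cell --- so FKG holds in the limit for every local increasing event, including those defined via the augmented graphs $\xi^{\mathbf{x}}$ and $\xi^{\mathbf{x},\mathbf{y}}$, since augmentation only adjoins deterministic points and their independent incident edge-variables. Finally, an arbitrary increasing event is realised as a monotone limit of local increasing events with converging probabilities --- e.g.\ $\{\#\C(\mathbf{x},\xi^{\mathbf{x}})=\infty\}$ is the decreasing limit over $n$ of the local increasing events ``$\mathbf{x}$ is joined to the boundary of $B_n$ using only vertices in $B_{2n}$'' --- and the inequality $\pla(\xi\in E\cap F)\ge\pla(\xi\in E)\pla(\xi\in F)$ is stable under such limits by dominated convergence; applying this to $E$ and $F$ gives the statement.

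The genuine obstacle is this last step: for each increasing event actually needed one must exhibit an explicit monotone approximation by local increasing events, verify convergence of the associated probabilities, and control the convergence of the finite approximations on bounded windows (and, in the discretisation, handle the dependence of $\connf$ on positions within a cell). All of this is routine for continuum percolation models; see \cite{MeeRoy96} for the analogous arguments, and \cite{LasPen17} for the Poisson-functional approach.
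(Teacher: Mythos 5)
Your argument is essentially sound, but it takes a genuinely different route from the paper. The paper does not re-derive the inequality at all: it invokes \cite[Section~2.3]{HeyHofLasMat19}, which obtains FKG for random connection models from the positive association of Poisson processes (\cite[Theorem~20.4]{LasPen17}) by realising the whole graph $\xi$ as a measurable, increasing functional of a single (marked) Poisson process, so that \emph{every} increasing measurable event is covered in one stroke. You instead rederive it from the classical Harris inequality for product measures via discretisation of $\HypDim\times\Ecal$ and a limiting argument --- the route of \cite{MeeRoy96}. What the paper's (cited) route buys is that no approximation of events is needed; what your route buys is self-containedness, at the price of the technical overhead you yourself list: controlling $\connf$ within cells, total-variation convergence on bounded windows, and, crucially, extending from local to general increasing events.

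That last step is the one place where your write-up is not quite right as stated: it is not true that an arbitrary increasing event is a monotone limit of \emph{local increasing} events, so the appeal to dominated convergence alone does not close the argument in full generality. Two standard fixes: either restrict attention to the increasing events actually used in the paper (connection events, $\{\mathbf{x}\in\mathcal{I}\}$, $\{\#\C(\mathbf{x},\xi^{\mathbf{x}})=\infty\}$), each of which does admit an explicit monotone approximation by local increasing events, as in your example; or run the usual martingale/conditional-expectation extension, noting that conditioning an increasing functional on the configuration inside a bounded window (averaging out the exterior and the unexamined edge variables) again yields an increasing local functional, so that FKG for local events passes to the $L^2$ limit. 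With either repair, or by switching to the Poisson-functional argument you mention in passing (which is exactly the paper's source), the lemma follows.
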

\begin{proof}
    This is proven in \cite[Section~2.3]{HeyHofLasMat19}, building upon the FKG inequality for point processes (for example from \cite[Theorem~20.4]{LasPen17}).
\end{proof}

For the next two (similar) lemmas, we will make use of two subsets of $\HypDim$ that we now construct.

Select an arbitrary direction which we will refer to by the geodesic ray $\gamma_0$ emanating from $\orig$. Then, given $\theta\in\left[0,\pi\right]$, define
    \begin{equation}
        \Lcal_*\left(\theta\right) := \arcosh\left(\frac{1-\cos\theta\cos\frac{\theta}{2}}{\sin\theta\sin\frac{\theta}{2}}\right).
    \end{equation}
    The geometric construction of this length can be seen in Figure~\ref{fig:separattion_length}. The hyperbolic quadrilateral has internal angles $\theta$, $\pi-\theta$, $\pi-\theta$, and $0$ (the ideal point on the boundary of the hyperbolic plane). By splitting this into two triangles along the geodesic between the $\theta$ angle and the $0$ angle, the formula for $\Lcal_*\left(\theta\right)$ follows by applying the cosine rule for hyperbolic triangles.

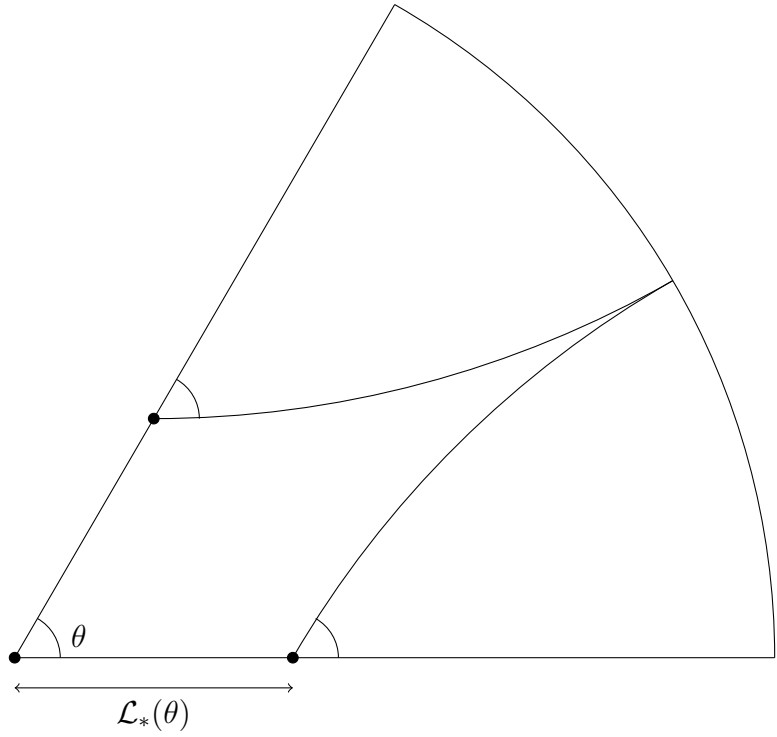
\begin{figure}
    \centering
    \begin{tikzpicture}[scale=2]
        \draw (5,0) arc (0:60:5);
        \draw (5,0) -- (0,0) -- (2.5,4.33);
        \draw (4.33,2.5) arc(120:150:6.83);
        \draw (4.33,2.5) arc(300:270:6.83);
        \filldraw (0,0) circle (1pt);
        \filldraw (1.83,0) circle (1pt);
        \filldraw (0.915,1.585) circle (1pt);
        \draw (0.3,0) node[above right]{$\theta$} arc(0:60:0.3);
        \draw (2.13,0) arc(0:60:0.3);
        \draw (1.215,1.585) arc(0:60:0.3);
        \draw[<->] (0,-0.2) -- (1.83,-0.2);
        \draw (0.915,-0.2) node[below]{$\Lcal_*(\theta)$};
    \end{tikzpicture}
    \caption{Construction of the length $\Lcal_*\left(\theta\right)$ in the Poincar{\'e} disc model.}
    \label{fig:separattion_length}
\end{figure}
    Given $x\in\HypDim$, let $t_x$ denote the unique translation isometry that maps $\orig\mapsto x$. Let $x_0,x_1\in\HypDim$ be distinct points satisfying $\dist{x_0,\orig},\dist{x_1,\orig}>\Lcal_*\left(\theta\right)$, where $\theta$ is the angle subtended at $\orig$ by $x_0$ and $x_1$. Also let $\left(k_1,k_2,\ldots\right)\in \left\{0,1\right\}^\N$ and $\left(l_1,l_2,\ldots\right)\in \left\{0,1\right\}^\N$ be two distinct sequences. Then it was proven in \cite{dickson2024hyperbolicrandomconnectionmodels} that there exists $\varepsilon>0$ such that 
    \begin{equation}
        \dist{t_{x_{k_n}}t_{x_{k_{n-1}}}\ldots t_{x_{k_1}}x_0,t_{x_{l_n}}t_{x_{l_{n-1}}}\ldots t_{x_{l_1}}x_1}>\varepsilon
    \end{equation}
    for all $n\geq 1$ such that $\left(k_1,k_2,\ldots,k_n\right)\ne \left(l_1,l_2,\ldots,l_n\right)$. Imprecisely, this means that in the binary tree generated by $x_0$ and $x_1$ the vertices do not get close to each other. The reasoning is that if the length of the edge labelled $\Lcal_*(\theta)$ in Figure~\ref{fig:separattion_length} was in fact longer, then the branching edges would not be able to meet by the time they reached the boundary.

    Now let $\angle\left(x,\orig,y\right)$ denote the angle subtended at $\orig$ by $x$ and $y$. Fix distinct $x_0,x_1\in\HypDim$ such that $\dist{x_0,\orig},\dist{x_1,\orig}>2\Lcal_*\left(\theta\right)$. Then for $\varepsilon>0$ we define the two frusta (cones without the point)
    \begin{align}
        \Vcal_0 &:= \left\{y\in\HypDim\colon \angle\left(x_0,\orig,y\right)<\varepsilon, \dist{y,\orig}>2\Lcal_*\left(\theta\right)\right\}\\
        \Vcal_1 &:= \left\{y\in\HypDim\colon \angle\left(x_1,\orig,y\right)<\varepsilon, \dist{y,\orig}>2\Lcal_*\left(\theta\right)\right\}.
    \end{align}
    The $d=2$ versions of these are sketched in Figure~\ref{fig:VsetConstruction}.
    \begin{figure}
        \centering
        \begin{tikzpicture}[scale=1.1]
        \clip (-2,-2.5) rectangle + (7,7.5);
        \draw (5,0) arc (0:360:5);
        \draw (0,2) arc (90:100:2);
        \draw (0,2) arc (90:80:2);
        \draw (2,0) arc (0:10:2);
        \draw (2,0) arc (0:-10:2);
        \draw[dashed] (0,0) -- (1.970,0.347);
        \draw (1.970,0.347) -- (4.924,0.868);
        \draw[dashed] (0,0) -- (1.970,-0.347);
        \draw (1.970,-0.347) -- (4.924,-0.868);
        \draw[dashed] (0,0) -- (0.347,1.970);
        \draw (0.347,1.970) -- (0.868,4.924);
        \draw[dashed] (0,0) -- (-0.347,1.970);
        \draw (-0.347,1.970) -- (-0.868,4.924);
        \node at (0,3) {$\Vcal_1$};
        \node at (3,0) {$\Vcal_0$};
        \draw (5.5,0) arc (0:10:5.5);
        \draw (5.5,0) arc (0:-10:5.5);
        \node at (5.8,0) {$\varepsilon$};
        \draw[<->] (0,-0.2) -- (1.970,-0.347-0.2);
        \node at (1,-1.1) {$2\Lcal\left(\frac{\pi}{2}\right)$};
        \filldraw (0,0) circle (2pt);
    \end{tikzpicture}
        \caption{Sketch of the construction of the sets $\Vcal_0$ and $\Vcal_1$ in $\HypTwo$.}
        \label{fig:VsetConstruction}
    \end{figure}
    Then for each $n\geq 1$ and $\mathbf{k}=\left(k_1,\ldots,k_n\right)\in\left\{0,1\right\}^n$, we can define
    \begin{equation}
        \Vcal^{(n)}_{\mathbf{k}} := \left\{y\in\HypDim\colon \exists \left\{z_i\right\}^n_{i=1}\subset \prod^n_{i=1}\Vcal_{k_i} \text{ s.t. } y = t_{z_n}t_{z_{n-1}}\ldots t_{z_{2}}t_{z_{1}}\orig\right\}.
    \end{equation}
    Then for $\varepsilon$ sufficiently small, for all $n\geq 1$, $\mathbf{k},\mathbf{m}\in\left\{0,1\right\}^n$, $\mathbf{k}\ne \mathbf{m}$ implies $\Vcal^{(n)}_{\mathbf{k}}\cap \Vcal^{(n)}_{\mathbf{m}} = \emptyset$. Furthermore, if $n_1<n_2$ and $\left(k_1,\ldots,k_{n_1}\right)\ne \left(m_1,\ldots,m_{n_1}\right)$, then $\Vcal^{(n_1)}_{\mathbf{k}}\cap \Vcal^{(n_2)}_{\mathbf{m}} = \emptyset$.
    
\begin{lemma}
\label{lem:criticaltoOnetoOne-finite}
    If $\#\Ecal<\infty$ and \eqref{eqn:EigenValueRatio} holds, then for all $p,q\in\left[1,\infty\right]$ there exists $C_d=C_d(\connf,p,q)<\infty$ such that for $L$ sufficiently large,
    \begin{equation}
        \lambda_\mathrm{c}(L) \leq \frac{C_d}{\norm*{D_L}_{p\to q}}.
    \end{equation}
\end{lemma}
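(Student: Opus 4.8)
The plan is to build a supercritical Galton--Watson process inside the cluster of a well-chosen origin vertex, using the nested frusta $\Vcal^{(n)}_{\mathbf k}$ constructed just above, and to feed the necessary quantitative estimate out of \eqref{eqn:EigenValueRatio}. First I would fix, once and for all, an admissible configuration for the construction preceding the lemma --- say the angle $\theta=\tfrac{\pi}{2}$, centre points $x_0,x_1$ at distance $3\Lcal_*(\theta)$ from $\orig$, and $\varepsilon=\varepsilon(d)>0$ small enough that all the stated disjointness properties of $\{\Vcal^{(n)}_{\mathbf k}\}_{n,\mathbf k}$ hold --- and set $R:=2\Lcal_*(\theta)<\infty$. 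Since $\mu$ is invariant under rotations about $\orig$ it splits into a radial and an angular factor, so the cone of half--angle $\varepsilon$ underlying $\Vcal_i$ carries a fixed positive fraction $c_\varepsilon\in(0,1)$ of the angular mass; thus for every radial $g\geq0$ and both $i\in\{0,1\}$,
\[
    \int_{\Vcal_i}g\bigl(\dist{\orig,x}\bigr)\,\mu(\dd x)\;=\;c_\varepsilon\int_{\HypDim\setminus B(\orig,R)}g\bigl(\dist{\orig,x}\bigr)\,\mu(\dd x).
\]

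The quantitative core is the following. Work with $L$ large enough that the $o(\cdot)$ below is in force; if $\norm*{D_L}_{p\to q}=0$ the asserted bound is trivial, so assume $\norm*{D_L}_{p\to q}>0$, equivalently (as $\#\Ecal<\infty$) $\max_{a,b\in\Ecal}D_L(a,b)>0$, and choose $(a^\ast,b^\ast)=(a^\ast_L,b^\ast_L)$ attaining this maximum, with $D_L$ as in \eqref{eqn:DegreeFunction}. Since $\Pcal$ is a probability measure on the finite set $\Ecal$ one has $\norm*{D_L}_{p\to q}\leq\max_{a,b}D_L(a,b)=D_L(a^\ast,b^\ast)$ for all $p,q\in[1,\infty]$. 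On the other hand \eqref{eqn:EigenValueRatio}, used with the fixed radius $R$, gives $\int_{B(\orig,R)}\connf_L(\dist{\orig,x};a^\ast,b^\ast)\,\mu(\dd x)\le\max_{a,b}\int_{B(\orig,R)}\connf_L(\dist{\orig,x};a,b)\,\mu(\dd x)=o\bigl(\max_{a,b}D_L(a,b)\bigr)$ as $L\to\infty$. Combining with the displayed identity, for all $L$ large and both $i\in\{0,1\}$,
\[
    \int_{\Vcal_i}\connf_L\bigl(\dist{\orig,x};a^\ast,b^\ast\bigr)\,\mu(\dd x)\;\geq\;c_\varepsilon\bigl(1-o(1)\bigr)D_L(a^\ast,b^\ast)\;\geq\;\tfrac{c_\varepsilon}{2}\norm*{D_L}_{p\to q}.
\]

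Now I would set $m_0:=1$ and $\lambda=\lambda(L):=2m_0\big/\bigl(c_\varepsilon(\min_{a\in\Ecal}\Pcal(a))\norm*{D_L}_{p\to q}\bigr)=:C_d/\norm*{D_L}_{p\to q}$, and explore the cluster of $\origin{a^\ast}$ at intensity $\lambda$ as in \cite{dickson2024hyperbolicrandomconnectionmodels}. When the exploration reaches a vertex $\mathbf v$ of the embedded tree, at position $v$ (with $\gamma_v$ the isometry, built from the positions along the path from $\origin{a^\ast}$ to $\mathbf v$, that maps $\orig$ to $v$), one looks for neighbours of $\mathbf v$ carrying the mark alternating between $a^\ast$ and $b^\ast$ and lying in the two translated frusta $\gamma_v\Vcal_0,\gamma_v\Vcal_1$; by invariance of $\mu$ and $\connf_L$ under $\gamma_v$ and by Poisson thinning, the number of such neighbours in $\gamma_v\Vcal_i$ is Poisson with parameter $\lambda\Pcal(c')\int_{\Vcal_i}\connf_L(\dist{\orig,y};a^\ast,b^\ast)\mu(\dd y)\ge\lambda(\min_a\Pcal(a))\tfrac{c_\varepsilon}{2}\norm*{D_L}_{p\to q}=m_0$ (here $c'$ is the opposite mark), independently over $i\in\{0,1\}$ since $\Vcal_0\cap\Vcal_1=\varnothing$. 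Because the regions $\Vcal^{(n)}_{\mathbf k}$ are disjoint, the branches of the exploration are carried out with independent portions of the Poisson process and independent edge variables --- this bookkeeping is exactly the geometric mechanism of \cite{dickson2024hyperbolicrandomconnectionmodels} --- so $\C(\origin{a^\ast},\xi^{\origin{a^\ast}})$ contains a Galton--Watson tree whose offspring distribution dominates the sum of two independent $\mathrm{Bernoulli}(1-\e^{-m_0})$ variables, with mean $2(1-\e^{-1})>1$. This process is supercritical and survives with positive probability, so $\theta_\lambda(\origin{a^\ast})>0$; hence $\esssup_{a\in\Ecal}\theta_{\lambda,L}(a)>0$ and $\lambda_\mathrm{c}(L)\le\lambda=C_d/\norm*{D_L}_{p\to q}$. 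The constant $C_d=2m_0/\bigl(c_\varepsilon\min_a\Pcal(a)\bigr)$ is finite and depends only on $d$ and $(\Ecal,\Pcal)$, which is covered by the claimed dependence on $(\connf,p,q)$.

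The hard part is not the arithmetic above but the legitimacy of the Galton--Watson domination: the careful exploration that, using the disjointness of the $\Vcal^{(n)}_{\mathbf k}$, realises the stated offspring law with independence across branches. I would not reprove this --- it is the geometric heart of \cite{dickson2024hyperbolicrandomconnectionmodels} and is recalled in the construction above; the only genuinely new ingredient is the quantitative offspring estimate, which by \eqref{eqn:EigenValueRatio} and the elementary bound $\norm*{D_L}_{p\to q}\le\max_{a,b}D_L(a,b)$ reduces to the fact that the mass of $\connf_L$ escapes to arbitrarily large radii as $L\to\infty$.
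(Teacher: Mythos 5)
Your proposal is correct and follows essentially the same route as the paper: pick the mark pair maximizing $D_L$, use \eqref{eqn:EigenValueRatio} at the fixed radius $2\Lcal_*(\theta)$ to show the frusta capture a constant fraction of $D_L(a^*,b^*)$, embed a two-branch tree via the disjoint translated frusta with alternating marks, and conclude supercriticality at $\lambda\asymp\norm*{D_L}_{p\to q}^{-1}$. The only (harmless) cosmetic differences are that you replace the paper's ``equivalence of norms on the finite mark space'' by the elementary bound $\norm*{D_L}_{p\to q}\leq\max_{a,b}D_L(a,b)$ and use a mean-offspring criterion $2(1-\e^{-1})>1$ in place of the paper's ``each branch succeeds with probability $>\tfrac12$'' threshold.
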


\begin{proof}
    Since $\#\Ecal<\infty$, for each $L$ there exist $a^*_L,b^*_L\in\Ecal$ such that
    \begin{equation}
        D_L\left(a^*_L,b^*_L\right) = \max_{a,b\in\Ecal}D_L(a,b),
    \end{equation}
    where we permit this maximum to be infinite. The symmetry of $\connf$ also implies that $D_L\left(b^*_L,a^*_L\right) = \max_{a,b\in\Ecal}D_L(a,b)$. Observe that since $\Ecal$ is finite, $\max_{a,b\in\Ecal}D_L(a,b)$ and all $\norm*{D_L}_{p\to q}$ are equivalent norms on $D_L$. Furthermore, the finiteness of $\Ecal$ and the convention that $\Pcal\left(a\right)>0$ for all $a\in\Ecal$ imply that $\liminf_{L\to \infty}\Pcal\left(a^*_L\right)>0$ and $\liminf_{L\to \infty}\Pcal\left(b^*_L\right)>0$.

    Now starting from $\origin{a^*_L}$, we aim to extract an infinite tree from $\C\left(\origin{a^*_L},\xi^\origin{a^*_L}\right)$. By Mecke's formula, the expected number of neighbours of $\origin{a^*_L}$ in $\Vcal_0\times\left\{b^*_L\right\}$ is given by
    \begin{multline}
        \E_{\lambda,L}\left[\#\left\{y\in\eta\cap \Vcal_0\times\left\{b^*_L\right\}\colon \adja{\origin{a^*_L}}{y}{\xi^{\origin{a^*_L}}}\right\}\right] = \lambda\Pcal\left(b^*_L\right)\int_{\Vcal_0}\connf_L\left(x;a^*_L,b^*_L\right)\dd x \\= \lambda\Pcal\left(b^*_L\right)c_d\int^\infty_{2\Lcal_*\left(\theta\right)}\connf_L\left(r;a^*_L,b^*_L\right)\left(\sinh r\right)^{d-1}\dd r,
    \end{multline}
    where $c_d=c_d(\varepsilon)$ is the $(d-1)$-Lebesgue volume of the spherical cap $\left\{y\in \partial \mathbb{B}\colon \angle\left(y,\orig,x_0\right)<\varepsilon\right\}$. Then by using \eqref{eqn:EigenValueRatio}, for $L$ sufficiently large
    \begin{multline}
        \int^\infty_{2\Lcal_*\left(\theta\right)}\connf_L\left(r;a^*_L,b^*_L\right)\left(\sinh r\right)^{d-1}\dd r \geq \frac{1}{2}\int^\infty_{0}\connf_L\left(r;a^*_L,b^*_L\right)\left(\sinh r\right)^{d-1}\dd r \\= \frac{1}{2\mathfrak{S}_{d-1}}D_L\left(a^*_L,b^*_L\right).
    \end{multline}
    Therefore
    \begin{equation}
        \E_{\lambda,L}\left[\#\left\{y\in\eta\cap \Vcal_0\times\left\{b^*_L\right\}\colon \adja{\origin{a^*_L}}{y}{\xi^{\origin{a^*_L}}}\right\}\right]\geq  \lambda\Pcal\left(b^*_L\right)\frac{c_d}{2\mathfrak{S}_{d-1}} D_L\left(a^*_L,b^*_L\right)
    \end{equation}
    for sufficiently large $L$.
    
    Since the number of neighbours of $\origin{a^*_L}$ is a Poisson random variable,
    \begin{align}
        &\mathbb{P}_{\lambda,L}\left(\exists y\in\eta\cap \Vcal_0\times\left\{b^*_L\right\}\colon \adja{\origin{a^*_L}}{y}{\xi^{\origin{a^*_L}}}\right)\nonumber\\
        &\hspace{4cm}= 1 - \exp\left(-\E_{\lambda,L}\left[\#\left\{y\in\eta\cap \Vcal_0\times\left\{b^*_L\right\}\colon \adja{\origin{a^*_L}}{y}{\xi^{\origin{a^*_L}}}\right\}\right]\right)\nonumber\\
        &\hspace{4cm} \geq 1 - \exp\left(-\frac{1}{2}\lambda\Pcal\left(b^*_L\right)\frac{c_d}{\mathfrak{S}_{d-1}} D_L\left(a^*_L,b^*_L\right)\right)
    \end{align}
    for $L$ sufficiently large. Similarly, 
    \begin{align}
        \mathbb{P}_{\lambda,L}\left(\exists y\in\eta\cap \Vcal_1\times\left\{b^*_L\right\}\colon \adja{\origin{a^*_L}}{y}{\xi^{\origin{a^*_L}}}\right) &\geq 1 - \exp\left(-\frac{1}{2}\lambda\Pcal\left(b^*_L\right)\frac{c_d}{\mathfrak{S}_{d-1}} D_L\left(a^*_L,b^*_L\right)\right)\\
        \mathbb{P}_{\lambda,L}\left(\exists y\in\eta\cap \Vcal_0\times\left\{a^*_L\right\}\colon \adja{\origin{b^*_L}}{y}{\xi^{\origin{b^*_L}}}\right) &\geq 1 - \exp\left(-\frac{1}{2}\lambda\Pcal\left(a^*_L\right)\frac{c_d}{\mathfrak{S}_{d-1}} D_L\left(a^*_L,b^*_L\right)\right)\\
        \mathbb{P}_{\lambda,L}\left(\exists y\in\eta\cap \Vcal_1\times\left\{a^*_L\right\}\colon \adja{\origin{b^*_L}}{y}{\xi^{\origin{b^*_L}}}\right) &\geq 1 - \exp\left(-\frac{1}{2}\lambda\Pcal\left(a^*_L\right)\frac{c_d}{\mathfrak{S}_{d-1}} D_L\left(a^*_L,b^*_L\right)\right).
    \end{align}

    We now construct our tree. Starting from the $0^{th}$ generation $\origin{a^*_L}$, we let our offspring be the nearest neighbour in the $\Vcal_0\times\left\{b^*_L\right\}$ branch and the nearest neighbour in the $\Vcal_1\times\left\{b^*_L\right\}$ branch (if either does not exist, then that branch produces no offspring). $\Vcal_0$ and $\Vcal_1$ are disjoint, so these offspring are independent. Furthermore, the vertex set that is further from $\origin{a^*_L}$ than these nearest neighbours is also independent of these neighbours. If a vertex $\left(x,b^*_L\right)$ is one of these offspring (i.e. in generation $1$), then it selects its offspring to be its nearest neighbour in $t_{x}\left(\Vcal_0\right)\times\left\{a^*_L\right\}$ and its nearest neighbour in $t_{x}\left(\Vcal_1\right)\times\left\{a^*_L\right\}$. This is then repeated iteratively, alternating between $a^*_L$ and $b^*_L$ in each generation. From the construction of $\Vcal_0$ and $\Vcal_1$, each offspring is independent from each other offspring in its generation and the offspring in previous generations (except its ancestral line).

    This random tree in $\HypDim\times\Ecal$ does not go extinct with a positive probability if the probability of each offspring is strictly greater than $\frac{1}{2}$. Therefore if
    \begin{equation}
        \frac{1}{2}\lambda\min\left\{\Pcal\left(a^*_L\right),\Pcal\left(b^*_L\right)\right\}\frac{c_d}{\mathfrak{S}_{d-1}} D_L\left(a^*_L,b^*_L\right) > \log 2,
    \end{equation}
    then the tree survives with a positive probability. Since this tree is a subset of $\C\left(\origin{a^*_L},\xi^\origin{a^*_L}\right)$, the cluster is infinite with positive probability. Since $\Pcal\left(a^*_L\right)>0$, we therefore have
    \begin{equation}
        \lambda_{\mathrm{c}}(L) \leq \frac{2\mathfrak{S}_{d-1}\log 2}{c_d\min\left\{\Pcal\left(a^*_L\right),\Pcal\left(b^*_L\right)\right\} D_L\left(a^*_L,b^*_L\right)}.
    \end{equation}
    Then from $\liminf_{L\to \infty}\Pcal\left(a^*_L\right)>0$ and $\liminf_{L\to \infty}\Pcal\left(b^*_L\right)>0$, and the equivalence of norms, the result follows.
\end{proof}

There are two main difficulties in applying the above argument to cases with infinitely many marks. The first is that we no longer have equivalence on norms on kernels on $\Ecal^2$. The second (and more serious) is that it is now possible that the significant marks move around infinitely many sets in ways that are very specific to the model in question and are difficult to control. Both of these issues can be solved by using the volume-linear scaling function. The property that
\begin{equation}
    D_L(a,b) = LD(a,b)
\end{equation}
for all $a,b\in\Ecal$ and $L$ ensures that the same marks are the significant ones for all scales. This - with the homogeneity of the norms - also implies that all $\norm*{D_L}_{p\to q} = L\norm*{D}_{p\to q}$. Therefore if $\norm*{D}_{p\to q}<\infty$, then the $L$ in the denominator in Lemma~\ref{lem:criticaltoOnetoOne-SpecialScaling} could be replaced with $\norm*{D_L}_{p\to q}$ like in Lemma~\ref{lem:criticaltoOnetoOne-finite}.

\begin{lemma}
\label{lem:criticaltoOnetoOne-SpecialScaling}
    If $\sigma_L(r)$ is volume-linear, then there exists $C_d=C_d(\connf)<\infty$ such that for $L$ sufficiently large,
    \begin{equation}
        \lambda_\mathrm{c}(L) \leq \frac{C_d}{L}.
    \end{equation}
\end{lemma}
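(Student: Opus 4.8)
The plan is to re-run the branching-process comparison of Lemma~\ref{lem:criticaltoOnetoOne-finite}, replacing the maximizing marks $a^*_L,b^*_L$ (which need not exist when $\#\Ecal=\infty$) by a \emph{positive-$\Pcal$-measure} reservoir of ``decent'' marks that, thanks to volume-linearity and the identity $D_L(a,b)=LD(a,b)$, can be chosen once and for all independently of $L$. Concretely, I would first use the standing non-degeneracy of $\connf$ to get $\delta>0$ with $\Pcal^{\otimes 2}(\{D>\delta\})>0$, and then, exploiting the symmetry $D(a,b)=D(b,a)$ together with a Lebesgue-density argument (or a direct argument at atoms) applied to $\{D>\delta\}$, extract a $\Pcal$-positive set $S\subset\Ecal$ and a constant $\rho>0$ with
\begin{equation}
    \Pcal\bigl(\{c\in S:D(a,c)>\delta\}\bigr)\geq\rho\qquad\text{for }\Pcal\text{-a.e.\ }a\in S.
\end{equation}
One way to do this: pick a box $I\times J$ on which $\{D>\delta\}$ has density close to $1$, set $A:=\{a\in I:\Pcal(\{c\in J:D(a,c)>\delta\})\geq\tfrac34\Pcal(J)\}$ and $B:=\{b\in J:\Pcal(\{a\in A:D(a,b)>\delta\})\geq\tfrac34\Pcal(A)\}$, and take $S:=A\cup B$.

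Next, fix $\theta=\tfrac{\pi}{2}$ and $\varepsilon$ small enough for the frusta $\Vcal_0,\Vcal_1$ constructed before Lemma~\ref{lem:criticaltoOnetoOne-finite} to generate disjoint $\Vcal^{(n)}_{\mathbf{k}}$, and let $c_d$ be the associated spherical-cap volume. Volume-linearity gives, for all $a,c$,
\begin{equation}
    \int^\infty_{2\Lcal_*(\theta)}\connf_L(r;a,c)(\sinh r)^{d-1}\dd r = L\int^\infty_{s_L^{-1}(2\Lcal_*(\theta))}\connf(r;a,c)(\sinh r)^{d-1}\dd r,
\end{equation}
and since $\mathbf{V}_d(s_L^{-1}(2\Lcal_*(\theta)))=\mathbf{V}_d(2\Lcal_*(\theta))/L$ the right-hand side is at least $\tfrac{L}{2\mathfrak{S}_{d-1}}D(a,c)$ as soon as $L\geq L_0:=2\mathfrak{S}_{d-1}\mathbf{V}_d(2\Lcal_*(\theta))/\delta$ — uniformly over the pairs with $D(a,c)>\delta$, which is the precise sense in which ``$L$ sufficiently large'' is uniform. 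Feeding this into Mecke's formula exactly as in Lemma~\ref{lem:criticaltoOnetoOne-finite}, for $\Pcal$-a.e.\ $a\in S$ and $i\in\{0,1\}$ the expected number of neighbours of $(\orig,a)$ in $\Vcal_i\times\{c\in S:D(a,c)>\delta\}$ is at least $\lambda c_d L\delta\rho/(2\mathfrak{S}_{d-1})$.

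I would then run the tree of Lemma~\ref{lem:criticaltoOnetoOne-finite} from $(\orig,a_0)$ with $a_0\in S$: a generation-$n$ vertex $(z,b)$ spawns, in each of the disjoint regions $t_z(\Vcal_0)$ and $t_z(\Vcal_1)$, its nearest neighbour whose mark lies in $\{c\in S:D(b,c)>\delta\}$, if any. The geometric separation of the $\Vcal^{(n)}_{\mathbf{k}}$ makes distinct branches independent of one another and of the past, never re-visits a vertex, and — crucially — keeps every spawned vertex carrying a mark in $S$, so the bound of the previous step applies at every node; since the neighbour counts are Poisson, each child is present with probability $\geq 1-\exp(-\lambda c_d L\delta\rho/(2\mathfrak{S}_{d-1}))$. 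Hence the tree dominates a Galton--Watson process of mean offspring $2\bigl(1-\exp(-\lambda c_d L\delta\rho/(2\mathfrak{S}_{d-1}))\bigr)$, which exceeds $1$ — so the tree, and therefore $\C((\orig,a_0),\xi^{(\orig,a_0)})$, is infinite with positive probability — precisely when $\lambda>\tfrac{2\mathfrak{S}_{d-1}\log2}{c_d\delta\rho}\cdot\tfrac1L$. Since $\Pcal(S)>0$, this forces $\esssup_a\theta_{\lambda,L}(a)>0$, i.e.\ $\lambda\geq\lambda_\mathrm{c}(L)$, and letting $\lambda$ decrease to the threshold yields $\lambda_\mathrm{c}(L)\leq C_d/L$ with $C_d=C_d(\connf):=\tfrac{2\mathfrak{S}_{d-1}\log2}{c_d\delta\rho}$.

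The only genuinely new difficulty is the first step: with infinitely many marks the ``heaviest'' marks may form a null set or carry infinite degree, so instead of a single maximizing pair one must find a positive-measure chunk on which $D$ is bounded below \emph{and} which the branching walk can keep re-entering through $\delta$-heavy edges; the density/Fubini extraction above is designed precisely to supply such an $S$ together with a uniform $\rho$. Once that is in hand, volume-linearity renders $\delta$, $S$, $\rho$ and $L_0$ all independent of $L$, and the remaining two steps are the now-routine hyperbolic branching comparison.
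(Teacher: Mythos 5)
Your proof is correct and, at its core, is the same argument as the paper's: volume-linearity gives $D_L=LD$, and then the frustum/Mecke computation, the binary tree built from $\Vcal_0,\Vcal_1$, the Poisson offspring bound and the Galton--Watson survival criterion go through exactly as in Lemma~\ref{lem:criticaltoOnetoOne-finite}, yielding $\lambda_\mathrm{c}(L)\leq 2\mathfrak{S}_{d-1}\log 2/(c_d\,\delta\rho\, L)$; your cutoff estimate via $\mathbf{V}_d\left(s_L^{-1}\left(2\Lcal_*(\theta)\right)\right)=\mathbf{V}_d\left(2\Lcal_*(\theta)\right)/L$ and the uniform threshold $L_0=2\mathfrak{S}_{d-1}\mathbf{V}_d\left(2\Lcal_*(\theta)\right)/\delta$ are checked correctly. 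Where you genuinely deviate is the mark-reservoir step. The paper simply chooses two non-null sets $E,F\subset\Ecal$ with $\essinf_{a\in E,\,b\in F}D(a,b)>\varepsilon$ and alternates the tree's marks between $E$ and $F$, getting offspring intensities of order $\lambda L\varepsilon\Pcal(F)$ and $\lambda L\varepsilon\Pcal(E)$; you instead extract, by a product-density/Fubini cleaning, a single positive-measure set $S$ and $\rho>0$ such that every $a\in S$ has a $\Pcal$-measure at least $\rho$ of $\delta$-heavy partners inside $S$, and keep the tree inside $S$ by spawning only through $\delta$-heavy edges. Your input is strictly weaker than the paper's: a measurable symmetric $D$ with $\esssup D>0$ need not admit any positive-measure rectangle $E\times F$ on which $D$ is essentially bounded below (e.g.\ $D(a,b)=\Id_{\{\abs{a-b}\in A\}}$ on $\Ecal=(0,1)$ with $A$ and $A^{c}$ both of positive measure in every interval), whereas your $A$/$B$ cleaning always produces such an $S$ with a uniform $\rho$ — so your route is marginally longer but somewhat more robust, at the small cost that on a general mark space the ``density box'' should be obtained from a refining sequence of product partitions and martingale convergence rather than literal Lebesgue density, a point you flag. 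Both versions give the same $L$-dependence and the same kind of constant $C_d(\connf)$.
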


\begin{proof}
    We will proceed similarly to the proof of the finitely many mark case, but instead of the two single marks $a^*_L$ and $b^*_L$, we will use sets $E,F\subset\Ecal$ and now our choice of $s_L$ will ensure that we do not need to vary these sets as $L$ changes.

    Since $\connf\not\equiv 0$, we have $\esssup_{a,b\in\Ecal}D(a,b)>0$. Therefore there exists $\varepsilon>0$ such that we can choose non-$\Pcal$-null sets $E,F\subset\Ecal$ such that 
    \begin{equation}
    \label{eqn:lowerboundonD}
        \essinf_{a\in E,b\in F}D(a,b) \in\left(\varepsilon,\infty\right).
    \end{equation}

    Now let $a\in E$. By Mecke's formula, the expected number of neighbours of $\origin{a}$ in $\Vcal_0\times F$ is given by
    \begin{multline}
        \E_{\lambda,L}\left[\#\left\{y\in\eta\cap \Vcal_0\times F\colon \adja{\origin{a}}{y}{\xi^{\origin{a}}}\right\}\right] = \lambda\int_F\int_{\Vcal_0}\connf_L\left(x;a,b\right)\dd x\Pcal\left(\dd b\right) \\= \lambda c_d\int_F \int^\infty_{2\Lcal_*\left(\theta\right)} \connf_L\left(r;a,b\right)\left(\sinh r\right)^{d-1}\dd r\Pcal\left(\dd b\right).
    \end{multline}
    Since $\connf\leq 1$ we have
    \begin{multline}
        \E_{\lambda,L}\left[\#\left\{y\in\eta\cap \Vcal_0\times F\colon \adja{\origin{a}}{y}{\xi^{\origin{a}}}\right\}\right] 
        \\\geq \frac{\lambda c_d}{\mathfrak{S}_{d-1}} \int_F D_L\left(a,b\right)\Pcal\left(\dd b\right) - \lambda c_d\Pcal\left(F\right) \mathbf{V}_d\left(2\Lcal_*\left(\theta\right)\right).
    \end{multline}
    Since our scaling function is volume-linear we have $D_L(a,b)= L D(a,b)$ for all $a,b\in\Ecal$, and therefore \eqref{eqn:lowerboundonD} implies that for sufficiently large $L$ we have
    \begin{equation}
        \E_{\lambda,L}\left[\#\left\{y\in\eta\cap \Vcal_0\times F\colon \adja{\origin{a}}{y}{\xi^{\origin{a}}}\right\}\right] \geq \lambda \frac{c_d}{2\mathfrak{S}_{d-1}}L\varepsilon\Pcal\left(F\right).
    \end{equation}
    Repeating this argument in all our required cases gives
        \begin{align}
        \essinf_{a\in E}\mathbb{P}_{\lambda,L}\left(\exists y\in\eta\cap \Vcal_0\times F\colon \adja{\origin{a}}{y}{\xi^{\origin{a}}}\right) &\geq 1 - \exp\left(-\lambda \frac{c_d}{2\mathfrak{S}_{d-1}}L\varepsilon\Pcal\left(F\right)\right)\\
        \essinf_{a\in E}\mathbb{P}_{\lambda,L}\left(\exists y\in\eta\cap \Vcal_1\times F\colon \adja{\origin{a}}{y}{\xi^{\origin{a}}}\right) &\geq 1 - \exp\left(-\lambda \frac{c_d}{2\mathfrak{S}_{d-1}}L\varepsilon\Pcal\left(F\right)\right)\\
        \essinf_{a\in F}\mathbb{P}_{\lambda,L}\left(\exists y\in\eta\cap \Vcal_0\times E\colon \adja{\origin{a}}{y}{\xi^{\origin{a}}}\right) &\geq 1 - \exp\left(-\lambda \frac{c_d}{2\mathfrak{S}_{d-1}}L\varepsilon\Pcal\left(E\right)\right)\\
        \essinf_{a\in F}\mathbb{P}_{\lambda,L}\left(\exists y\in\eta\cap \Vcal_1\times E\colon \adja{\origin{a}}{y}{\xi^{\origin{a}}}\right) &\geq 1 - \exp\left(-\lambda \frac{c_d}{2\mathfrak{S}_{d-1}}L\varepsilon\Pcal\left(E\right)\right).
    \end{align}
    Constructing the tree in the same way, replacing the singletons $a^*_L$ and $b^*_L$ with the sets $E$ and $F$ then tells us that
    \begin{equation}
        \lambda_{\mathrm{c}}(L) \leq \frac{2\mathfrak{S}_{d-1}\log 2}{c_d\min\left\{\Pcal\left(E\right),\Pcal\left(F\right)\right\} L \varepsilon}
    \end{equation}
    for $L$ sufficiently large.
\end{proof}

\begin{proof}[Proof of Theorem~\ref{thm:NonUniqueness}]
    Both assumptions~\ref{assump:finitelymany} and \ref{assump:specialscale} with Lemma~\ref{lem:limitQd} imply that the integral $\int^\infty_0\connf_L\left(r;a,b\right)Q_d\left(r\right)\e^{\left(d-1\right)r}\dd r<\infty$ for $\Pcal$-almost every $a,b\in\Ecal$ and $\norm*{\widetilde{\Opconnf}_L(0)}_{2\to 2}<\infty$ for all $L$ sufficiently large. Therefore by applying Lemma~\ref{lem:RatioofNorms} to the adjacency function $\connf_L$, we see that $\norm*{\Opconnf_L}_{2\to 2} = o\left( \norm*{D_L}_{2\to 2}\right)$ as $L\to \infty$. 
    
    Under Assumption~\ref{assump:finitelymany}, Lemma~\ref{lem:criticaltoOnetoOne-finite} then implies that $\lambda_c(L) =o\left( \norm*{\Opconnf_L}_{2\to 2}^{-1}\right)$ as $L\to \infty$. If $\norm*{D}_{p\to p}<\infty$ for any $p\in\left[1,\infty\right]$, then Lemma~\ref{lem:criticaltoOnetoOne-SpecialScaling} proves the same asymptotic behaviour under Assumption~\ref{assump:specialscale}. If $\norm*{D}_{2\to 2}=\infty$ (and therefore $\norm*{D}_{p\to p}=\infty$ for all $p\in\left[1,\infty\right]$ by the reasoning in Remark~\ref{rem:2to2isminimal}), then Lemma~\ref{lem:NormSublinear} is also required to show $\lambda_\mathrm{c}(L) \leq \frac{C_d}{L} \ll \frac{1}{\norm*{\Opconnf_L}_{2\to 2}}$ as $L\to\infty$.

    Finally Lemma~\ref{lem:Uniquenesstoqtoq} tells us that $\frac{1}{\norm*{\Opconnf_L}_{2\to 2}} \leq \lambda_{u}(L)$ for all $L$, and therefore for sufficiently large $L$ we have $\lambda_c(L)<\lambda_u(L)$.

\end{proof}

\subsection{Proof of Critical Exponents}
\label{sec:ProofCritExponents}

\begin{remark}
    If we were to consider Bernoulli bond percolation on a given graph, then usually the finiteness of the triangle diagram ($\triangle_\lambda$ in our case) at criticality would be sufficient to derive mean-field behaviour for various critical exponents. This is because finiteness would show that an \emph{open triangle condition} would hold, and it is this that is actually used to derive critical exponents. For Bernoulli bond percolation on $\Z^d$ with nearest neighbour edges the fact that the triangle condition implied the open triangle condition was proven in \cite{AizBar91}, and it was proven in \cite{kozma2011triangle} for any transitive graph. Both of these use the fact that the two-point function is of positive type on the set of vertices. However, the differentiability (and therefore continuity) of the two-point function for RCMs on $\Rd$ shown by \cite[Lemma~2.2]{HeyHofLasMat19} and the equality $\tau_0\left(\mathbf{x},\mathbf{y}\right) = \connf(\mathbf{x},\mathbf{y})$ shows that this need not be true for RCMs for which the adjacency function is not itself of positive type (such as the Boolean disc model).
\end{remark}

\begin{lemma}
\label{lem:TriangleIsSmall}
    If Assumption~\ref{assump:finitelymany} holds, or Assumption~\ref{assump:specialscale} and
    \begin{equation}
    \label{eqn:supintegralsquaredbound}
        \esssup_{a\in\Ecal} \int_{\Ecal}\int_{\HypDim}\connf_L(x;a,b)^2\mu\left(\dd x\right)\Pcal\left(\dd b\right)<\infty
    \end{equation}
    hold for sufficiently large $L$, then
    \begin{equation}
        \lim_{L\to\infty}\triangle_{\lambda_{\mathrm{c}}(L)}= 0.
    \end{equation}
\end{lemma}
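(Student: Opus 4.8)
The plan is to revisit the estimate established \emph{inside} the proof of Lemma~\ref{lem:TriangleFinite} and make it quantitative in $L$. That argument shows that whenever $C_\connf:=\esssup_{\mathbf{x}\in\X}\int_\X\connf(\mathbf{x},\mathbf{y})^2\nu(\dd\mathbf{y})<\infty$ and $\lambda<\lambda_{2\to 2}$,
\begin{equation}
    \trilam\;\leq\;\lambda^2\,C_\connf\left(\norm*{\Optlam}_{2\to 2}+2\lambda\norm*{\Optlam}_{2\to 2}^2+\lambda^2\norm*{\Optlam}_{2\to 2}^3\right).
\end{equation}
Write $C_{\connf_L}:=\esssup_{\mathbf{x}\in\X}\int_\X\connf_L(\mathbf{x},\mathbf{y})^2\nu(\dd\mathbf{y})$. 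I will check below, in each of the two regimes, that $\norm*{\Opconnf_L}_{2\to 2}<\infty$ and $\lambda_\mathrm{c}(L)\norm*{\Opconnf_L}_{2\to 2}\to 0$ for $L$ large; granting this, Lemma~\ref{lem:qtoqintensity_bound} gives $\lambda_\mathrm{c}(L)<\norm*{\Opconnf_L}_{2\to 2}^{-1}\leq\lambda_{2\to 2}(L)$, while \eqref{eqn:TwoPointBoundGreen} gives $\norm*{\mathcal{T}_{\lambda_\mathrm{c}(L)}}_{2\to 2}\leq\norm*{\Opconnf_L}_{2\to 2}/(1-\lambda_\mathrm{c}(L)\norm*{\Opconnf_L}_{2\to 2})\leq 2\norm*{\Opconnf_L}_{2\to 2}$ for $L$ large, so also $\lambda_\mathrm{c}(L)\norm*{\mathcal{T}_{\lambda_\mathrm{c}(L)}}_{2\to 2}\to 0$. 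Applying the displayed bound with $\connf_L$ at $\lambda=\lambda_\mathrm{c}(L)$ — the bracket being $\norm*{\mathcal{T}_{\lambda_\mathrm{c}(L)}}_{2\to 2}(1+\lambda_\mathrm{c}(L)\norm*{\mathcal{T}_{\lambda_\mathrm{c}(L)}}_{2\to 2})^2\leq 8\norm*{\Opconnf_L}_{2\to 2}$ for $L$ large — and using $C_{\connf_L}<\infty$ (verified below), we obtain $\triangle_{\lambda_\mathrm{c}(L)}\leq 8\,\lambda_\mathrm{c}(L)^2\,C_{\connf_L}\,\norm*{\Opconnf_L}_{2\to 2}$, so it remains to show the right-hand side tends to $0$.

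Under Assumption~\ref{assump:specialscale} (with \eqref{eqn:supintegralsquaredbound}), the volume-linear change of variables $r=s_L(t)$ used in the proof of Lemma~\ref{lem:NormSublinear}, applied to $\connf^2$ instead of $\connf\,Q_d$, gives $\int_{\HypDim}\connf_L(x;a,b)^2\mu(\dd x)=L\int_{\HypDim}\connf(x;a,b)^2\mu(\dd x)$, hence $C_{\connf_L}=L\,C_\connf$ with $C_\connf<\infty$ the left side of \eqref{eqn:supintegralsquaredbound}. Since $\lambda_\mathrm{c}(L)\leq C_d/L$ by Lemma~\ref{lem:criticaltoOnetoOne-SpecialScaling} and $\norm*{\Opconnf_L}_{2\to 2}=o(L)$ by Lemma~\ref{lem:NormSublinear}, we get $\lambda_\mathrm{c}(L)\norm*{\Opconnf_L}_{2\to 2}\leq C_d\,\norm*{\Opconnf_L}_{2\to 2}/L\to 0$ (which checks the hypothesis from the first paragraph) and $\lambda_\mathrm{c}(L)^2 C_{\connf_L}\norm*{\Opconnf_L}_{2\to 2}\leq C_d^2\,C_\connf\,\norm*{\Opconnf_L}_{2\to 2}/L\to 0$.

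Under Assumption~\ref{assump:finitelymany} I would work with $M_L:=\max_{a,b\in\Ecal}D_L(a,b)$. If $M_L=\infty$ then $\norm*{D_L}_{p\to q}=\infty$, so Lemma~\ref{lem:criticaltoOnetoOne-finite} forces $\lambda_\mathrm{c}(L)=0$ and $\triangle_{\lambda_\mathrm{c}(L)}=\triangle_0=0$; so assume $M_L<\infty$ for the $L$ in question. Since $\connf_L\leq 1$, isometry invariance gives, for $\mathbf{x}=(x,a)$, $\int_\X\connf_L(\mathbf{x},\mathbf{y})^2\nu(\dd\mathbf{y})\leq\sum_{b\in\Ecal}\Pcal(b)\int_{\HypDim}\connf_L(y;a,b)\mu(\dd y)=\sum_{b\in\Ecal}\Pcal(b)D_L(a,b)\leq M_L$, so $C_{\connf_L}\leq M_L<\infty$. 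Lemma~\ref{lem:criticaltoOnetoOne-finite} with $p=q=2$ together with the equivalence of norms on the finite-dimensional space of kernels on $\Ecal^2$ yields $\lambda_\mathrm{c}(L)\leq C_d'/M_L$, whence $\lambda_\mathrm{c}(L)^2 C_{\connf_L}\norm*{\Opconnf_L}_{2\to 2}\leq(C_d')^2\,\norm*{\Opconnf_L}_{2\to 2}/M_L$; since $M_L$ is comparable to $\norm*{D_L}_{2\to 2}$, Lemma~\ref{lem:RatioofNorms} sends this to $0$, and the same inequality gives $\lambda_\mathrm{c}(L)\norm*{\Opconnf_L}_{2\to 2}\to 0$ as required.

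The only genuine obstacle is this bookkeeping of growth rates: the mollifying factors $\connf_L$ inserted in the triangle bound of Lemma~\ref{lem:TriangleFinite} make $C_{\connf_L}$ \emph{diverge} — linearly in $L$ under Assumption~\ref{assump:specialscale}, at the rate $M_L\to\infty$ under Assumption~\ref{assump:finitelymany} — so one must show both that $\lambda_\mathrm{c}(L)^2$ decays fast enough and, crucially, that $\norm*{\Opconnf_L}_{2\to 2}$ is of strictly smaller order than the ambient degree scale ($L$, resp.\ $M_L$). This last point is precisely the content of Lemma~\ref{lem:NormSublinear} (needed under Assumption~\ref{assump:specialscale} when $\norm*{D}_{2\to 2}=\infty$) and of Lemma~\ref{lem:RatioofNorms} (under Assumption~\ref{assump:finitelymany}); the degenerate case $M_L=\infty$ under Assumption~\ref{assump:finitelymany}, where $\lambda_\mathrm{c}(L)=0$, is a minor extra case handled trivially.
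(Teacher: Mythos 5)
Your proof is correct and takes essentially the same route as the paper: it reuses the bound from the proof of Lemma~\ref{lem:TriangleFinite}, controls $\norm*{\mathcal{T}_{\lambda_\mathrm{c}(L)}}_{2\to 2}$ via the geometric series in $\lambda_\mathrm{c}(L)\norm*{\Opconnf_L}_{2\to 2}$, and combines Lemmata~\ref{lem:criticaltoOnetoOne-finite}/\ref{lem:criticaltoOnetoOne-SpecialScaling} with Lemmata~\ref{lem:RatioofNorms}/\ref{lem:NormSublinear}, exactly as in the paper. The only (harmless) difference is bookkeeping: the paper pairs one factor of $\lambda_\mathrm{c}(L)$ with $\esssup_a\int_\Ecal\int_{\HypDim}\connf_L^2$ via $\connf_L^2\leq\connf_L$ to get the bounded quantity $\lambda_\mathrm{c}(L)\norm*{D_L}_{1\to 1}\leq C_d$, whereas you bound $C_{\connf_L}$ directly ($L\,C_\connf$ under Assumption~\ref{assump:specialscale}, $\leq\max_{a,b}D_L(a,b)$ under Assumption~\ref{assump:finitelymany}) and pair it with $\lambda_\mathrm{c}(L)^2$.
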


\begin{proof}
    From Lemma~\ref{lem:TriangleFinite} and having $\lambda_{2\to 2}(L)>\lambda_{\mathrm{c}}(L)$ for sufficiently large $L$, the condition \eqref{eqn:supintegralsquaredbound} implies that $\triangle_{\lambda_{\mathrm{c}}(L)}<\infty$ for $L$ sufficiently large. We now need to show that it is in fact ``small".

    Repeating the argument of the proof of Lemma~\ref{lem:TriangleFinite}, we can show that
    \begin{multline}
        \triangle_{\lambda_{\mathrm{c}}(L)} \leq \left(\lambda_{\mathrm{c}}(L)\norm*{\OptlamCritL}_{2\to 2} + 2\lambda_{\mathrm{c}}(L)^2\norm*{\OptlamCritL}_{2\to 2}^2 + \lambda_{\mathrm{c}}(L)^3\norm*{\OptlamCritL}_{2\to 2}^3\right)\\
        \times\lambda_{\mathrm{c}}(L)\esssup_{a\in\Ecal} \int_{\Ecal}\int_{\HypDim}\connf_L(x;a,b)^2\mu\left(\dd x\right)\Pcal\left(\dd b\right).
    \end{multline}
    
    Now we can use $\connf_L\leq 1$ to get
    \begin{align}
        \lambda_{\mathrm{c}}(L)\esssup_{a\in\Ecal}\int_{\Ecal}\int_{\HypDim}\connf_L(x;a,b)^2\mu\left(\dd x\right)\Pcal\left(\dd b\right) &\leq \lambda_{\mathrm{c}}(L)\esssup_{a\in\Ecal}\int_{\Ecal}D_L(a,b)\Pcal\left(\dd b\right)\nonumber\\
        &= \lambda_{\mathrm{c}}(L)\norm*{D_L}_{1\to 1}\nonumber\\
        &\leq C_d,
    \end{align}
    where $C_d$ is the constant arising in Lemma~\ref{lem:criticaltoOnetoOne-finite} or Lemma~\ref{lem:criticaltoOnetoOne-SpecialScaling} as appropriate. We therefore only need to show that $\lambda_{\mathrm{c}}(L)\norm*{\OptlamCritL}_{2\to 2}\to 0$ as $L\to\infty$.

    By using the bound \eqref{eqn:TwoPointBoundGreen} arising from bounding $\tlam$ with $\Greenlam$, we get
    \begin{equation}
        \lambda_{\mathrm{c}}(L)\norm*{\OptlamCritL}_{2\to 2} \leq \sum_{k=1}^\infty\lambda_{\mathrm{c}}(L)^k\norm*{\Opconnf_L}_{2\to 2}^k.
    \end{equation}
    Since we have $\lambda_{\mathrm{c}}(L)\ll\frac{1}{\norm*{\Opconnf_L}_{2\to 2}}$ as $L\to\infty$, we therefore have $\lambda_{\mathrm{c}}(L)\norm*{\OptlamCritL}_{2\to 2}\to 0$ and $\triangle_{\lambda_{\mathrm{c}}(L)}\to 0$ as $L\to\infty$.
\end{proof}

We now want to use results from \cite{caicedo2023criticalexponentsmarkedrandom} to show that a form of triangle condition does indeed imply that certain critical exponents take their mean-field values. Let us introduce some notation so we can use their result. Given $\lambda>0$, $a\in\Ecal$, and $L>0$, define
\begin{align}
    \Ical_{\lambda,a}(L) &:= \left(\sup_{k\geq 1}\left(\frac{\lambda}{1+\lambda}\right)^k\essinf_{b\in\Ecal}D_L^{(k)}(a,b)\right)^{-1},\\
    \Jcal_{\lambda,a}(L) &:= \left(\sup_{k\geq 1}\left(\frac{\lambda}{2\left(1+\lambda\right)}\right)^k\essinf_{b\in\Ecal}D_L^{(k)}(a,b)\right)^{-1},\\
    \cbar{\lambda}(L) &:= 1+\lambda\esssup_{a,b,c\in\Ecal}D_L(a,b)\Jcal_{\lambda,c}(L),\\
    \ConstantTriangle(L) &:= \min\left\{\frac{1}{\left(1+\lambda_T(L)\esssup_{a,b,c\in\Ecal}D_L(a,b)\Ical_{\lambda_T(L),c}(L)\right)^2},\right.\nonumber\\
    &\hspace{3cm}\left.\frac{1}{\cbar{\lambda_T(L)}(L)}\frac{\lambda_T(L)^2\left(\essinf_{a\in\Ecal} \int D_L(a,b)\Pcal(\dd b)\right)^2}{1 + 2\lambda_T(L)\esssup_{a\in\Ecal}\int D_L(a,b)\Pcal(\dd b)}\right\}.
\end{align}

Following the naming from \cite{caicedo2023criticalexponentsmarkedrandom}, we have three conditions:
\paragraph{Conditions:}
\begin{enumerate}[label=\textbf{(D.\arabic*)}]
\item \label{Assump:BoundExpectedDegree}
    \begin{equation}
        \esssup_{a,b\in\Ecal}D_L(a,b) < \infty, \label{eqn:supsupbound}
    \end{equation}

\item \label{Assump:AllReachablebySome}
    \begin{equation}
        \esssup_{a\in\Ecal}\sup_{k\geq 1}\essinf_{b\in\Ecal}D_L^{(k)}(a,b) >0,\label{eqn:supinfbound}
    \end{equation}
\end{enumerate}
\begin{enumerate}[label=\textbf{(T)}]
    \item
    \begin{equation}
    \triangle_{\lambda_T(L)} < \ConstantTriangle(L).
\end{equation}\label{TriangleCondition_Assumption}
\end{enumerate}

The following proposition is an amalgamation of Theorems~1.7, 1.8, 1.9, 1.11, and 1.12, and Corollary~1.10, all from \cite{caicedo2023criticalexponentsmarkedrandom}. The notation $\asymp$ is used to denote asymptotic equality ``in the bounded ratio sense."
\begin{prop}\label{prop:summaryCD24}
    If Conditions~\ref{Assump:BoundExpectedDegree}, \ref{Assump:AllReachablebySome}, and \ref{TriangleCondition_Assumption} all hold for some $L$, then
    \begin{equation}
    \begin{array}{rll}
        \norm*{\chi_{\lambda,L}}_p &\asymp \left(\lambda_{\mathrm{c}}(L)-\lambda\right)^{-1} &\text{as }\lambda\nearrow\lambda_{\mathrm{c}}(L),\\
        \norm*{\theta_{\lambda,L}}_p &\asymp \lambda-\lambda_{\mathrm{c}}(L) &\text{as }\lambda\searrow\lambda_{\mathrm{c}}(L),\\
        \mathbb{P}_{\lambda_\mathrm{c}(L),L}\left(\#\C\left(\origin{a},\xi^{\origin{a}}\right) = n\right) &\asymp n^{-1-\frac{1}{2}} &\text{as }n\to\infty,
    \end{array}
    \end{equation}
    for $\Pcal$-almost every $a\in\Ecal$, for all $p\in\left[1,\infty\right]$, and that $L$ in question. In particular, $\lambda_T(L)=\lambda_\mathrm{c}(L)$.
\end{prop}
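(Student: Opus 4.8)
The proposition is, by design, a repackaging of results of \cite{caicedo2023criticalexponentsmarkedrandom}, so the plan is largely bookkeeping: match each of Conditions~\ref{Assump:BoundExpectedDegree}, \ref{Assump:AllReachablebySome}, and \ref{TriangleCondition_Assumption} to the corresponding hypothesis in that paper, and then quote the relevant theorem for each of the three asymptotics. Conditions~\ref{Assump:BoundExpectedDegree} and \ref{Assump:AllReachablebySome} are precisely the two structural assumptions imposed there on the mark-kernel $D_L$ -- that the expected degree density $\esssup_{a,b\in\Ecal}D_L(a,b)$ is finite, and that the model is irreducible in the sense that from $\Pcal$-almost every mark every mark is reachable in finitely many $D_L$-steps -- while Condition~\ref{TriangleCondition_Assumption} is their quantitative triangle condition, with the triangle diagram $\triangle_{\lambda_T(L)}$ compared against the explicit constant $\ConstantTriangle(L)$ assembled from $\Ical$, $\Jcal$, and $\cbar{\lambda}$. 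The notation for $D_L$, $D_L^{(k)}$, $\Ical_{\lambda,a}(L)$, $\Jcal_{\lambda,a}(L)$, $\cbar{\lambda}(L)$, and $\ConstantTriangle(L)$ has been set up here to agree with theirs, so no further translation is required.

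First I would invoke the part of \cite{caicedo2023criticalexponentsmarkedrandom} (their Theorems~1.7 and 1.8) that, under Conditions~\ref{Assump:BoundExpectedDegree}, \ref{Assump:AllReachablebySome}, and \ref{TriangleCondition_Assumption}, gives the matching bounds $\norm*{\chi_{\lambda,L}}_p \asymp (\lambda_\mathrm{c}(L)-\lambda)^{-1}$ as $\lambda\nearrow\lambda_\mathrm{c}(L)$, uniformly in $p\in[1,\infty]$. This already pins down $\lambda_T(L)=\lambda_\mathrm{c}(L)$: the divergence of $\norm*{\chi_{\lambda,L}}_p$ as $\lambda\uparrow\lambda_\mathrm{c}(L)$ forces $\lambda_T(L)\le\lambda_\mathrm{c}(L)$, while the elementary fact that a finite susceptibility precludes an infinite cluster gives $\lambda_T(L)\ge\lambda_\mathrm{c}(L)$. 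With $\lambda_T(L)=\lambda_\mathrm{c}(L)$ established, Condition~\ref{TriangleCondition_Assumption} becomes a bound on $\triangle_{\lambda_\mathrm{c}(L)}$, and I would then quote their Corollary~1.10 and Theorems~1.9, 1.11, and 1.12 to obtain $\norm*{\theta_{\lambda,L}}_p\asymp\lambda-\lambda_\mathrm{c}(L)$ as $\lambda\searrow\lambda_\mathrm{c}(L)$ and $\mathbb{P}_{\lambda_\mathrm{c}(L),L}(\#\C(\origin{a},\xi^{\origin{a}})=n)\asymp n^{-3/2}$ as $n\to\infty$ for $\Pcal$-almost every $a\in\Ecal$; these are the bounded-ratio statements that Theorem~\ref{thm:meanfield} then reads off as $\gamma(L)=1$, $\beta(L)=1$, and $\delta(L)=2$.

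The main point to be careful about is that there is no real mathematical obstacle here: the content lives entirely in \cite{caicedo2023criticalexponentsmarkedrandom}, and the remaining work is checking that conventions line up -- the $\tfrac{\lambda}{1+\lambda}$ and $\tfrac{\lambda}{2(1+\lambda)}$ weights in $\Ical$ and $\Jcal$, the placement of the $\esssup$ and $\essinf$ over $\Ecal$, and the normalisation of $\triangle_\lambda$ in Definition~\ref{defn:TriangleDiagram} must all match theirs -- together with confirming that the $\asymp$ estimates there are stated uniformly in $p\in[1,\infty]$, which they are since that paper works throughout with the $L^p(\Ecal)$ norms. It is worth noting that the argument is not circular: Condition~\ref{TriangleCondition_Assumption} is already phrased at $\lambda_T(L)$, matching the way the susceptibility estimate is phrased, so deducing $\lambda_T(L)=\lambda_\mathrm{c}(L)$ from it uses nothing downstream of it. The substantive work -- verifying that Conditions~\ref{Assump:BoundExpectedDegree}, \ref{Assump:AllReachablebySome}, and \ref{TriangleCondition_Assumption} actually hold for the hyperbolic models under Assumptions~\ref{assump:finitelymanyPlus} or \ref{assump:specialscalePlus} -- is the content of Theorem~\ref{thm:meanfield} and is carried out in its proof, with Lemma~\ref{lem:TriangleIsSmall} supplying Condition~\ref{TriangleCondition_Assumption}.
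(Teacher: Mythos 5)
Your approach coincides with the paper's: Proposition~\ref{prop:summaryCD24} is proved there by nothing more than amalgamating Theorems~1.7, 1.8, 1.9, 1.11, 1.12 and Corollary~1.10 of \cite{caicedo2023criticalexponentsmarkedrandom}, exactly as you propose, and the notation ($D_L$, $D_L^{(k)}$, $\Ical$, $\Jcal$, $\cbar{\lambda}$, $\ConstantTriangle$, $\trilam$) is indeed set up to match theirs. The one substantive point you omit is that those results are stated for RCMs on $\R^d\times\Ecal$, not on $\HypDim\times\Ecal$, so they cannot literally be ``quoted''; the step that carries the weight is the observation (made explicitly in the paper immediately after the proposition) that the arguments of \cite{caicedo2023criticalexponentsmarkedrandom} --- differential inequalities driven by the triangle condition, Mecke's formula, and the $L^p(\Ecal)$ operator framework --- use no feature of the spatial component beyond a transitive, isometry-invariant Poisson model, and hence go through unchanged on $\HypDim\times\Ecal$. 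Writing ``no further translation is required'' and reducing the proof to convention-matching skips precisely this point, which is the only place where the citation could fail.

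A smaller slip: in your aside deriving $\lambda_T(L)=\lambda_\mathrm{c}(L)$ the attributions are swapped. The elementary fact that finite susceptibility precludes an infinite cluster gives $\lambda_T(L)\le\lambda_\mathrm{c}(L)$ (not $\ge$), while $\lambda_T(L)\ge\lambda_\mathrm{c}(L)$ comes from the upper bound $\norm*{\chi_{\lambda,L}}_\infty\le C_2\left(\lambda_\mathrm{c}(L)-\lambda\right)^{-1}<\infty$ for $\lambda<\lambda_\mathrm{c}(L)$, i.e.\ from the finiteness of the susceptibility strictly below $\lambda_\mathrm{c}(L)$ rather than from its divergence at $\lambda_\mathrm{c}(L)$. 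The conclusion is unaffected (and $\lambda_T(L)=\lambda_\mathrm{c}(L)$ is in any case part of what is imported from \cite{caicedo2023criticalexponentsmarkedrandom}), but as written the two inequalities are justified by the wrong facts.
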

While the argument in \cite{caicedo2023criticalexponentsmarkedrandom} is phrased for RCMs on $\Rd\times\Ecal$, the argument follows in exactly the same way for RCMs on $\HypDim\times\Ecal$.

Now if Conditions~\ref{Assump:BoundExpectedDegree}, \ref{Assump:AllReachablebySome}, and \ref{TriangleCondition_Assumption} can all be shown to hold, Theorem~\ref{thm:meanfield} will be proven. Conditions \ref{Assump:BoundExpectedDegree} and \ref{Assump:AllReachablebySome} follow fairly directly from the extra ingredients of Assumptions \ref{assump:finitelymanyPlus} and \ref{assump:specialscalePlus}. In Assumption~\ref{assump:finitelymanyPlus}, \eqref{eqn:maxmaxOverminsum} implies $\max_{a,b\in\Ecal} D_L(a,b)<\infty$ for all $L$ (since $D_L(a,b)$ is non-decreasing in $L$), and \eqref{eqn:kstepRatio} implies $\max_{a\in\Ecal}\sup_{k\geq 1}\min_{b\in\Ecal} D^{(k)}_L(a,b)>0$ for all $L$ sufficiently large. Therefore Assumption~\ref{assump:finitelymanyPlus} implies that the Conditions~\ref{Assump:BoundExpectedDegree} and \ref{Assump:AllReachablebySome} of \cite{caicedo2023criticalexponentsmarkedrandom} both hold. 

In Assumption~\ref{assump:specialscalePlus}, the conditions \eqref{eqn:bounddegreedensity} and \eqref{eqn:strongIrreducibilityCondition} are precisely the Conditions~\ref{Assump:BoundExpectedDegree} and \ref{Assump:AllReachablebySome} of \cite{caicedo2023criticalexponentsmarkedrandom} for the model with the \emph{reference} adjacency function, $\connf$, and \eqref{eqn:infIntegralDegree} makes it feasible for Condition~\ref{TriangleCondition_Assumption} to hold. The choice of a volume-linear scaling then means that these values change in an explicit way, and we can show that all of \ref{Assump:BoundExpectedDegree}, \ref{Assump:AllReachablebySome}, and \ref{TriangleCondition_Assumption} hold for the \emph{scaled} versions of the model.

This leaves only Condition~\ref{TriangleCondition_Assumption} to address.

\begin{lemma}
\label{lem:FiniteCase-ConstantPositive}
    If $\#\Ecal<\infty$, and
    \begin{align}
        \liminf_{L\to\infty}\sup_{k\geq 1}\frac{\min_{a,b\in\Ecal} D^{(k)}_L(a,b)}{\left(\max_{a\in\Ecal}\sum_{b\in\Ecal} D_L(a,b)\Pcal\left(b\right)\right)^k} &>0,
        \label{eqn:FiniteIrreducibility-ish}\\
        \limsup_{L\to\infty}\frac{\max_{a,b\in\Ecal} D_L(a,b)}{\min_{a\in\Ecal}\sum_{b\in\Ecal} D_L(a,b)\Pcal\left(b\right)}&<\infty, \label{eqn:FiniteSameOrder}
    \end{align}
    both hold, then there exists $\varepsilon>0$ such that
    \begin{equation}
        \ConstantTriangle(L) \geq \varepsilon
    \end{equation}
    for $L$ sufficiently large.
\end{lemma}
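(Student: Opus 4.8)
The plan is to bound every ingredient appearing in the definition of $\ConstantTriangle(L)$ by a constant that does not depend on $L$, so that the whole minimum stays bounded away from $0$. Throughout, $L$ is assumed large. Since $\Ecal$ is finite, every $\esssup$ and $\essinf$ below is a maximum or minimum, and all of the ``norms'' that occur --- the operator norms $\norm*{D_L}_{p\to q}$, $p,q\in[1,\infty]$, together with $M_L:=\max_{a,b\in\Ecal}D_L(a,b)$, $m_L:=\min_{a\in\Ecal}\sum_{b\in\Ecal}D_L(a,b)\Pcal(b)$ and $\overline{m}_L:=\max_{a\in\Ecal}\sum_{b\in\Ecal}D_L(a,b)\Pcal(b)=\norm*{D_L}_{1\to 1}$ --- are mutually equivalent on the finite-dimensional space of kernels on $\Ecal^2$, with $L$-independent equivalence constants. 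From $\sum_{b}D_L(a,b)\Pcal(b)\le M_L$ we have $m_L\le\overline{m}_L\le M_L$, and \eqref{eqn:FiniteSameOrder} provides $c_2$ with $M_L\le c_2 m_L$; hence $m_L$, $\overline{m}_L$ and $M_L$ are comparable, uniformly in $L$.

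The first step is to trap $\lambda_T(L)$ between constant multiples of $1/\overline{m}_L$. By Lemma~\ref{lem:generalOperatorBounds} and Lemma~\ref{lem:qtoqintensity_bound}, $\lambda_T(L)=\lambda_{1\to 1}(L)\ge 1/\norm*{\Opconnf_L}_{1\to 1}=1/\overline{m}_L$; and by Lemma~\ref{lem:criticaltoOnetoOne-finite} (applicable here: $\#\Ecal<\infty$, and \eqref{eqn:EigenValueRatio} holds in the regime in which this lemma is invoked) $\lambda_T(L)\le\lambda_\mathrm{c}(L)\le C_d/\norm*{D_L}_{1\to 1}=C_d/\overline{m}_L$. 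Hence $1\le\lambda_T(L)\overline{m}_L\le C_d$, and combined with the comparability just noted there are constants $0<c_-\le c_+<\infty$ with
\[
  c_-\ \le\ \lambda_T(L)\,m_L,\ \ \lambda_T(L)\,\overline{m}_L,\ \ \lambda_T(L)\,M_L\ \le\ c_+ ,
\]
and in particular $\lambda_T(L)\to 0$.

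The core of the argument --- and the step I expect to be the main obstacle --- is to bound $\Ical_{\lambda_T(L),c}(L)$ and $\Jcal_{\lambda_T(L),c}(L)$ from above, uniformly in $c\in\Ecal$ and in $L$. By \eqref{eqn:FiniteIrreducibility-ish} there is $c_1>0$ so that for each large $L$ one may pick an exponent $k=k_L$ with $\min_{a,b}D_L^{(k_L)}(a,b)\ge c_1\overline{m}_L^{k_L}$, and then, for every $c\in\Ecal$,
\[
  \Ical_{\lambda_T(L),c}(L)^{-1}\ \ge\ \Bigl(\tfrac{\lambda_T(L)}{1+\lambda_T(L)}\Bigr)^{k_L}\min_{b}D_L^{(k_L)}(c,b)\ \ge\ c_1\Bigl(\tfrac{\lambda_T(L)\,\overline{m}_L}{1+\lambda_T(L)}\Bigr)^{k_L},
\]
and the identical estimate with $2\bigl(1+\lambda_T(L)\bigr)$ replacing $1+\lambda_T(L)$ in the base bounds $\Jcal_{\lambda_T(L),c}(L)^{-1}$ from below. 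The delicate point is to show that these geometric factors do not degenerate as $L\to\infty$: the previous step only forces the base $\lambda_T(L)\overline{m}_L/(1+\lambda_T(L))$ to tend to $1$, whereas the exponent $k_L$ furnished by \eqref{eqn:FiniteIrreducibility-ish} is a priori unbounded in $L$. To handle this I would pass to the normalised kernels $D_L/\overline{m}_L$, which range in a fixed compact family of non-negative kernels on $\Ecal^2$ (entries at most $c_2$, row-sums in $[c_2^{-1},1]$), and use \eqref{eqn:FiniteIrreducibility-ish} together with \eqref{eqn:FiniteSameOrder} --- which between them control precisely the ``mixing'' of these kernels --- to reduce to a \emph{uniformly bounded} exponent $k$; with $k$ bounded and the base bounded below, the factor is bounded below. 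The upshot is the existence of $c_3,c_4>0$ with $\Ical_{\lambda_T(L),c}(L)\le c_3^{-1}$ and $\Jcal_{\lambda_T(L),c}(L)\le c_4^{-1}$ for all $c\in\Ecal$ and all large $L$.

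With these bounds in hand the two terms of $\ConstantTriangle(L)$ are dealt with routinely. From the previous two steps,
\[
  \lambda_T(L)\,\esssup_{a,b,c\in\Ecal}D_L(a,b)\,\Ical_{\lambda_T(L),c}(L)\ \le\ \lambda_T(L)\,M_L/c_3\ \le\ c_+/c_3 ,
\]
so the first term of $\ConstantTriangle(L)$ is at least $(1+c_+/c_3)^{-2}$; likewise $\cbar{\lambda_T(L)}(L)=1+\lambda_T(L)\esssup_{a,b,c\in\Ecal}D_L(a,b)\Jcal_{\lambda_T(L),c}(L)\le 1+c_+/c_4$, while $\lambda_T(L)^2 m_L^2/\bigl(1+2\lambda_T(L)\overline{m}_L\bigr)\ge c_-^2/(1+2c_+)$, so the second term is at least $c_-^2\bigl[(1+c_+/c_4)(1+2c_+)\bigr]^{-1}$. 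Taking $\varepsilon$ to be the smaller of these two positive numbers gives $\ConstantTriangle(L)\ge\varepsilon$ for $L$ sufficiently large.
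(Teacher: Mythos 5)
Your proof follows the same skeleton as the paper's: trap $\lambda_T(L)$ between constant multiples of $1/\norm*{D_L}_{1\to 1}$ (via Lemmas~\ref{lem:generalOperatorBounds}, \ref{lem:qtoqintensity_bound}, $\lambda_T\le\lambda_{\mathrm{c}}$ and Lemma~\ref{lem:criticaltoOnetoOne-finite}), obtain uniform upper bounds on $\Ical_{\lambda_T(L),a}(L)$ and $\Jcal_{\lambda_T(L),a}(L)$, and then control the two terms of $\ConstantTriangle(L)$ using \eqref{eqn:FiniteSameOrder}; your last paragraph is essentially the paper's closing computation. The problem is the middle step, which you yourself flag as the main obstacle and then only sketch. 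The claim you lean on --- that \eqref{eqn:FiniteIrreducibility-ish} together with \eqref{eqn:FiniteSameOrder} allows one to replace the supremum over $k$ by a \emph{uniformly bounded} exponent --- is not proved, and at the level of generality at which you invoke it (kernels $D_L$ on a finite mark space satisfying the two displayed conditions) it is false. Take $\Ecal=\{1,2\}$, $\Pcal$ uniform, $D_L(1,1)=D_L(2,2)=M_L$ and $D_L(1,2)=D_L(2,1)=\epsilon_L M_L$ with $\epsilon_L\to0$. Then \eqref{eqn:FiniteSameOrder} holds (the ratio is $2/(1+\epsilon_L)$), and diagonalising the $2\times2$ kernel gives $\min_{a,b}D_L^{(k)}(a,b)\big/\bigl(\max_a\sum_b D_L(a,b)\Pcal(b)\bigr)^k=1-\bigl(\tfrac{1-\epsilon_L}{1+\epsilon_L}\bigr)^k$, whose supremum over $k\ge1$ equals $1$, so \eqref{eqn:FiniteIrreducibility-ish} holds as well; yet for any fixed $k$ this quantity is of order $k\epsilon_L\to0$, so no bounded exponent can work. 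Your compactness heuristic breaks exactly here: the normalised kernels converge to a reducible (diagonal) limit, for which $\min_{a,b}D^{(k)}(a,b)=0$ for every $k$. Since the uniform bounds $\Ical_{\lambda_T(L),c}(L)\le c_3^{-1}$ and $\Jcal_{\lambda_T(L),c}(L)\le c_4^{-1}$ are precisely what the remainder of your argument consumes, this is a genuine gap rather than a routine omission.

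For comparison, the paper makes no attempt to bound the exponent: it uses the identity $\max_a\Ical_{\lambda_T(L),a}(L)=\bigl(\sup_{k\ge1}\bigl(\lambda_T(L)/(1+\lambda_T(L))\bigr)^k\min_{a,b}D_L^{(k)}(a,b)\bigr)^{-1}$, inserts the lower bound $\lambda_T(L)\ge c_1/\overline{A}(L)$ with $\overline{A}(L)=\max_a\sum_b D_L(a,b)\Pcal(b)$, and bounds $\sup_{k\ge1}\bigl(c_1/(c_1+\overline{A}(L))\bigr)^k\min_{a,b}D_L^{(k)}(a,b)$ below directly from \eqref{eqn:FiniteIrreducibility-ish}, keeping the supremum over all $k$ so that the (possibly large) $k$ nearly attaining the ratio in \eqref{eqn:FiniteIrreducibility-ish} is played off against the factor $\bigl(c_1\overline{A}(L)/(c_1+\overline{A}(L))\bigr)^k$. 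Your instinct that this trade-off is the tender spot of the lemma is sound, but the route you propose to resolve it does not follow from the stated hypotheses; to complete your argument you would need either to carry out the paper's estimate with the unrestricted supremum (together with control of how large the relevant $k$ must be relative to $\overline{A}(L)$), or to strengthen the hypothesis to a bounded-$k$ version of \eqref{eqn:FiniteIrreducibility-ish}.
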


\begin{proof}
    For succinctness, let us denote
    \begin{align}
        \overline{A}_\star(L) &:= \max_{a,b\in\Ecal} D_L(a,b)\\
        \overline{A}(L) &:= \max_{a\in\Ecal}\sum_{b\in\Ecal} D_L(a,b)\Pcal\left(b\right)\\
        \underline{A}(L) &:= \min_{a\in\Ecal}\sum_{b\in\Ecal} D_L(a,b)\Pcal\left(b\right)\\
        \underline{A}^{(k)}_\star(L) &:= \min_{a,b\in\Ecal} D^{(k)}_L(a,b).
    \end{align}
    First observe that 
    \begin{multline}
        \max_{a\in\Ecal}\Ical_{\lambda_T(L),a}(L) = \left(\min_{a\in\Ecal}\sup_{k\geq 1}\left(\frac{\lambda_T(L)}{1+\lambda_T(L)}\right)^k\min_{b\in\Ecal}D_L^{(k)}(a,b)\right)^{-1} \\ = \left(\sup_{k\geq 1}\left(\frac{\lambda_T(L)}{1+\lambda_T(L)}\right)^k\underline{A}^{(k)}_\star(L)\right)^{-1}.
    \end{multline}
    Then from Lemmata~\ref{lem:qtoqintensity_bound} and \ref{lem:criticaltoOnetoOne-finite} there exist constants $c_1,c_2\in\left(0,\infty\right)$ such that
    \begin{equation}
        \frac{c_1}{\overline{A}(L)} \leq \lambda_T(L) \leq \frac{c_2}{\overline{A}(L)}.
    \end{equation}
    This means that 
    \begin{equation}
        \max_{a\in\Ecal}\Ical_{\lambda_T(L),a}(L) \leq \left(\sup_{k\geq 1}\left(\frac{c_1}{c_1 + \overline{A}(L)}\right)^k\underline{A}^{(k)}_\star(L)\right)^{-1}.
    \end{equation}
    Then \eqref{eqn:FiniteIrreducibility-ish} implies that there exists a constant $C<\infty$ such that
    \begin{align}
        \max_{a\in\Ecal}\Ical_{\lambda_T(L),a}(L) &\leq C\\
        \max_{a\in\Ecal}\Jcal_{\lambda_T(L),a}(L) &\leq C
    \end{align}
    for all $L\geq 1$. We also have
    \begin{equation}
        \limsup_{L\to\infty}\frac{\overline{A}_\star(L)}{\overline{A}(L)}\leq \limsup_{L\to\infty}\frac{\overline{A}_\star(L)}{\underline{A}(L)}<\infty,
    \end{equation}
    which then implies that there exists a constant $C'<\infty$ such that
    \begin{align}
        \cbar{\lambda_T(L)}(L) &\leq 1 + C',\\
        \left(1+\lambda_T(L)\max_{a,b,c\in\Ecal}
        D_L(a,b)\Ical_{\lambda_T(L),c}(L)\right)^{-2} &\geq \left(1+C'\right)^{-2}.
    \end{align}
    Therefore
    \begin{align}
        &\frac{1}{\cbar{\lambda_T(L)}(L)}\frac{\lambda_T(L)^2\left(\min_{a\in\Ecal} \sum_{b\in\Ecal} D_L(a,b)\Pcal( b)\right)^2}{1 + 2\lambda_T(L)\max_{a\in\Ecal}\sum_{b\in\Ecal} D_L(a,b)\Pcal( b)} \nonumber\\
        &\hspace{7cm}
        \geq \frac{1}{1 + C'}\frac{\lambda_T(L)^2\underline{A}(L)^2}{1 + 2\lambda_T(L)\overline{A}(L)}\nonumber\\
        &\hspace{7cm}
        \geq \frac{1}{1 + C'}\frac{c_1^2}{1 + 2c_2}\left(\frac{\underline{A}(L)}{\overline{A}(L)}\right)^2 \nonumber\\
        &\hspace{7cm}
        \geq \frac{1}{1 + C'}\frac{c_1^2}{1 + 2c_2}\left(\frac{\overline{A}_\star(L)}{\underline{A}(L)}\right)^{-2}.
    \end{align}
    The condition \eqref{eqn:FiniteSameOrder} then produces the result.
\end{proof}

\begin{lemma}
\label{lem:ScalingCase-ConstantPositive}
    If $\sigma_L$ is volume-linear, and
    \begin{align}
        \esssup_{a,b\in\Ecal} D(a,b)&<\infty \label{eqn:Dnormonetoone}\\
        \essinf_{a\in\Ecal}\int_\Ecal D(a,b)\Pcal\left(\dd b\right)&>0\\
        \esssup_{a\in\Ecal}\sup_{k\geq 1}\essinf_{b\in\Ecal}D^{(k)}(a,b)&>0 \label{eqn:StrongIrreducibility}
    \end{align}
    all hold, then there exists $\varepsilon>0$ such that
    \begin{equation}
        \ConstantTriangle(L) \geq \varepsilon
    \end{equation}
    for $L$ sufficiently large.
\end{lemma}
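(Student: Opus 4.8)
The plan is to substitute the volume‑linearity identities $D_L(a,b)=L\,D(a,b)$ and $D^{(k)}_L(a,b)=L^k D^{(k)}(a,b)$ into the definitions of $\Ical$, $\Jcal$, $\cbar{\cdot}$ and $\ConstantTriangle$, track the powers of $L$, and check that they cancel against the $\Theta(1/L)$ behaviour of $\lambda_T(L)$, leaving a bound in terms of $L$‑independent quantities built from $\connf$ that are positive and finite by \eqref{eqn:Dnormonetoone}--\eqref{eqn:StrongIrreducibility}. First I would pin down $\lambda_T(L)$ exactly as in the finitely‑many‑marks case: Lemma~\ref{lem:qtoqintensity_bound} together with Lemma~\ref{lem:generalOperatorBounds} gives $\lambda_T(L)=\lambda_{1\to 1}(L)\ge \norm*{\Opconnf_L}_{1\to 1}^{-1}=\bigl(L\,\esssup_{a}\int_\Ecal D(a,b)\Pcal(\dd b)\bigr)^{-1}$, whose denominator is finite by \eqref{eqn:Dnormonetoone}; meanwhile $\lambda_T(L)\le\lambda_\mathrm{c}(L)\le C_d/L$ by Lemma~\ref{lem:criticaltoOnetoOne-SpecialScaling}. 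Hence there are constants $0<\kappa_-\le\kappa_+<\infty$ with $\lambda_T(L)L\in[\kappa_-,\kappa_+]$ for all large $L$, and consequently $\theta_L:=\tfrac{\lambda_T(L)L}{1+\lambda_T(L)}\in[\theta_-,\theta_+]$ with $0<\theta_-\le\theta_+<\infty$.

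The technical heart is a uniform lower bound for the quantity inside $\Ical$ and $\Jcal$. I claim that \eqref{eqn:StrongIrreducibility}, together with the symmetry of $\connf$ (whence of $D$ and of every $D^{(k)}$), produces $k_0\ge 1$ and $\delta_0>0$ with $D^{(k_0)}(a,b)\ge\delta_0$ for $\Pcal\otimes\Pcal$‑a.e.\ $(a,b)$. Indeed, \eqref{eqn:StrongIrreducibility} yields $\delta>0$ and a $\Pcal$‑positive set on which $\sup_k\essinf_b D^{(k)}(a,b)>\delta$; decomposing this set according to which $k$ realises the supremum and using countable additivity gives a single $k^\ast$ and a $\Pcal$‑positive set $G$ with $D^{(k^\ast)}(a,b)\ge\delta$ for a.e.\ $(a,b)\in G\times\Ecal$. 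Symmetry of $D^{(k^\ast)}$ upgrades this to a.e.\ $(a,b)\in(G\times\Ecal)\cup(\Ecal\times G)$, and then by Fubini, for a.e.\ $(a,b)$, $D^{(2k^\ast)}(a,b)=\int_\Ecal D^{(k^\ast)}(a,c)D^{(k^\ast)}(c,b)\Pcal(\dd c)\ge\int_G\delta^2\,\Pcal(\dd c)=\delta^2\Pcal(G)>0$, so one may take $k_0=2k^\ast$ and $\delta_0=\delta^2\Pcal(G)$. Feeding $D^{(k_0)}_L=L^{k_0}D^{(k_0)}$ into the definitions and keeping only the $k=k_0$ term of the supremum gives, for a.e.\ $c$ and all large $L$,
\[
\Ical_{\lambda_T(L),c}(L)\le\Bigl(\tfrac{\lambda_T(L)}{1+\lambda_T(L)}\Bigr)^{-k_0}\bigl(L^{k_0}\delta_0\bigr)^{-1}=\theta_L^{-k_0}\delta_0^{-1}\le\theta_-^{-k_0}\delta_0^{-1}=:C_\Ical<\infty,
\]
and the same computation with $\tfrac{\lambda}{2(1+\lambda)}$ in place of $\tfrac{\lambda}{1+\lambda}$ gives $\Jcal_{\lambda_T(L),c}(L)\le (2/\theta_-)^{k_0}\delta_0^{-1}=:C_\Jcal<\infty$.

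With these uniform bounds the two entries of the minimum defining $\ConstantTriangle(L)$ are routine. For the first, $\lambda_T(L)\esssup_{a,b,c}D_L(a,b)\Ical_{\lambda_T(L),c}(L)\le(\lambda_T(L)L)\,\bigl(\esssup_{a,b}D(a,b)\bigr)\,C_\Ical\le\kappa_+\bigl(\esssup_{a,b}D(a,b)\bigr)C_\Ical=:M_1<\infty$ using \eqref{eqn:Dnormonetoone}, so that term is $\ge (1+M_1)^{-2}$. For the second, the same reasoning gives $\cbar{\lambda_T(L)}(L)\le 1+\kappa_+\bigl(\esssup_{a,b}D(a,b)\bigr)C_\Jcal=:M_2<\infty$, while $\lambda_T(L)^2\bigl(\essinf_a\int D_L(a,b)\Pcal(\dd b)\bigr)^2=(\lambda_T(L)L)^2\bigl(\essinf_a\int D(a,b)\Pcal(\dd b)\bigr)^2\ge\kappa_-^2\,m_1^2$ with $m_1:=\essinf_a\int D(a,b)\Pcal(\dd b)>0$ by \eqref{eqn:infIntegralDegree}, and the denominator $1+2\lambda_T(L)\esssup_a\int D_L(a,b)\Pcal(\dd b)\le 1+2\kappa_+\esssup_{a,b}D(a,b)=:M_3<\infty$; hence that term is $\ge\kappa_-^2 m_1^2/(M_2M_3)>0$. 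Taking $\varepsilon$ to be the minimum of these two positive constants concludes the proof.

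The main obstacle I anticipate is exactly the uniform control of $\esssup_c\Ical_{\lambda_T(L),c}(L)$ and $\esssup_c\Jcal_{\lambda_T(L),c}(L)$: without it the first term of $\ConstantTriangle(L)$ (and $\cbar{\lambda_T(L)}(L)$) can degenerate, and naive use of \eqref{eqn:StrongIrreducibility} only gives positivity from \emph{some} mark and \emph{some} step count. The volume‑linear scaling is essential twice over — it keeps $D^{(k)}_L$ a fixed multiple of $D^{(k)}$ so the set $G$ and the index $k_0$ do not drift with $L$, and it makes $\lambda_T(L)L$ bounded above and below so that the geometric factor $\theta_L^{-k_0}$ stays bounded. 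The one step demanding genuine care is the claim $D^{(k_0)}\ge\delta_0$ a.e., where symmetry of $\connf$ (hence of $D^{(k^\ast)}$) is used in an essential way: the analogous statement can fail for kernels that are not symmetric, and this is precisely why \eqref{eqn:StrongIrreducibility} suffices here despite being, on its face, only a one‑sided reachability condition.
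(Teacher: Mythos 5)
Your proof is correct and follows the same overall route as the paper: use volume-linearity to write $D_L=LD$ and $D_L^{(k)}=L^kD^{(k)}$, sandwich $L\lambda_T(L)$ between positive constants (the lower bound from Lemma~\ref{lem:qtoqintensity_bound} together with $\norm{D}_{1\to 1}<\infty$, the upper bound from $\lambda_T\le\lambda_\mathrm{c}$ and Lemma~\ref{lem:criticaltoOnetoOne-SpecialScaling}), deduce uniform bounds on $\Ical$ and $\Jcal$, and then bound each entry of the minimum defining $\ConstantTriangle(L)$ by $L$-independent positive constants; your bookkeeping of the two terms matches the paper's. The genuine difference is the step you call the technical heart. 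The paper argues more briefly: it bounds $\esssup_a\sup_k\bigl(\tfrac{\lambda_T(L)}{1+\lambda_T(L)}\bigr)^k\essinf_b D_L^{(k)}(a,b)$ below by the $L$-independent quantity $\esssup_a\sup_k\bigl(\norm{D}_{1\to 1}+1\bigr)^{-k}\essinf_b D^{(k)}(a,b)$, positive by \eqref{eqn:StrongIrreducibility}, and from this asserts $\esssup_a\Ical_{\lambda_T(L),a}(L)\le c$ and $\esssup_a\Jcal_{\lambda_T(L),a}(L)\le c$; as written this passes from positivity at \emph{some} mark to control at \emph{every} mark, which is exactly the esssup/essinf mismatch you flag. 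Your symmetry-plus-Chapman--Kolmogorov upgrade --- extracting a single $k^\ast$ and a positive-measure set $G$ with $D^{(k^\ast)}\ge\delta$ on $G\times\Ecal$, then using $D^{(2k^\ast)}(a,b)=\int_\Ecal D^{(k^\ast)}(a,c)D^{(k^\ast)}(c,b)\Pcal(\dd c)\ge\delta^2\Pcal(G)$ to get an a.e.\ two-variable lower bound --- is correct (the symmetry of $D^{(k)}$ and Tonelli/Fubini do what you need, modulo routine measurability of $a\mapsto\essinf_b D^{(k)}(a,b)$), and it supplies the uniformity in the mark $c$ appearing in $\esssup_{a,b,c}D_L(a,b)\Ical_{\lambda_T(L),c}(L)$ that the paper's wording glosses over. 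So your argument costs one extra lemma-level step but yields a fully justified uniform bound on $\esssup_c\Ical$ and $\esssup_c\Jcal$, which strengthens rather than deviates from the paper's proof.
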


\begin{proof}
    First observe that the choice of $\sigma_L=s_L$ means that
    \begin{equation}
        D_L(a,b) = LD(a,b)
    \end{equation}
    for all $a,b\in\Ecal$ and $L>0$. This then means that 
    \begin{align}
        \esssup_{a\in\Ecal}\int D_L(a,b)\Pcal(\dd b) &= L\esssup_{a\in\Ecal}\int D(a,b)\Pcal(\dd b),\\
        \essinf_{a\in\Ecal}\int D_L(a,b)\Pcal(\dd b) &= L\essinf_{a\in\Ecal}\int D(a,b)\Pcal(\dd b),\\
        \essinf_{b\in\Ecal}D_L^{(k)}(a,b) &= L^k\essinf_{b\in\Ecal}D^{(k)}(a,b) \qquad\forall a\in\Ecal,k\geq 1,
    \end{align}
    for all $L>0$.

    From Lemma~\ref{lem:qtoqintensity_bound}, we then have
    \begin{equation}
        \lambda_T(L) = \lambda_{1\to 1}(L) \geq \frac{1}{\norm*{\Opconnf_L}_{1\to 1}} = \frac{1}{\norm*{D_L}_{1\to 1}} = \frac{1}{L\norm*{D}_{1\to 1}}>0,
    \end{equation}
    where this positivity follows from \eqref{eqn:Dnormonetoone} implying $\norm*{D}_{1\to 1}<\infty$. Therefore for $L\geq 1$,
    \begin{multline}
        \esssup_{a\in\Ecal}\sup_{k\geq 1}\left(\frac{\lambda_T(L)}{1+\lambda_T(L)}\right)^k\essinf_{b\in\Ecal}D_L^{(k)}(a,b) \\\geq \esssup_{a\in\Ecal}\sup_{k\geq 1}\left(\frac{1}{\norm*{D}_{1\to 1}+1}\right)^k\essinf_{b\in\Ecal}D^{(k)}(a,b).
    \end{multline}
    In particular, this bound is $L$-independent and \eqref{eqn:StrongIrreducibility} implies that it is strictly positive. This then means that there exists $c<\infty$ such that
    \begin{align}
        \esssup_{a\in\Ecal}\Ical_{\lambda_T(L),a}(L) &\leq c,\\
        \esssup_{a\in\Ecal}\Jcal_{\lambda_T(L),a}(L) &\leq c.
    \end{align}
    Therefore
    \begin{equation}
        \cbar{\lambda_T(L)}(L) \leq 1 + cL\lambda_T(L)\esssup_{a,b\in\Ecal}D(a,b) \leq 1 + \frac{c}{\norm*{D}_{1\to 1}}\esssup_{a,b\in\Ecal}D(a,b).
    \end{equation}

    Also recall from Lemma~\ref{lem:criticaltoOnetoOne-SpecialScaling} that
    \begin{equation}
        \lambda_T(L) \leq \frac{C_d}{\norm*{D}_{1\to 1}}.
    \end{equation}
    
    In summary, we have
    \begin{equation}
        \left(1+\lambda_T(L)\esssup_{a,b,c\in\Ecal}D_L(a,b)\Ical_{\lambda_T(L),c}(L)\right)^{-2} \geq \left(1 + \frac{c}{\norm*{D}_{1\to 1}}\esssup_{a,b\in\Ecal}D(a,b)\right)^{-2} >0,
    \end{equation}
    and
    \begin{multline}
        \frac{1}{\cbar{\lambda_T(L)}(L)}\frac{\lambda_T(L)^2\left(\essinf_{a\in\Ecal} \int D_L(a,b)\Pcal(\dd b)\right)^2}{1 + 2\lambda_T(L)\esssup_{a\in\Ecal}\int D_L(a,b)\Pcal(\dd b)} \\\geq \frac{1}{\left(\norm*{D}_{1\to 1} + c\esssup_{a,b\in\Ecal}D(a,b)\right)\left(\norm*{D}_{1\to 1}+2C_d\esssup_{a,b\in\Ecal}D(a,b)\right)}>0.
    \end{multline}
    Therefore we have the required strictly positive and $L$-independent lower bound for $\ConstantTriangle(L)$.
\end{proof}

\section{Proofs for Specific Models}
\label{sec:ProofSpecificModels}

\subsection{Boolean Disc Model}
\label{sec:ProofBooleanModel}

\begin{proof}[Proof of Corollary~\ref{lem:BooleanCorollary}]
    For part~\ref{enum:BooleanCorPart0}, first note that we have 
    \begin{equation}
    \label{eqn:BooleanPtAitem1}
        \lambda_\mathrm{u}(L) \geq \frac{1}{\norm*{\Opconnf_L}_{2\to 2}}
    \end{equation}
    through Lemmata~\ref{lem:qtoqintensity_bound} and \ref{lem:Uniquenesstoqtoq}. In general we cannot directly apply Lemma~\ref{lem:RatioofNorms} to our model, but the argument needs only a slight modification for our case. Fix $R\in\left(0,\infty\right)$. Since $D^{(\leq R)}_L(a,b)\leq \mathfrak{S}_{d-1}\mathbf{V}_d\left(R\right)$ for all $L$ and all $a,b\in\Ecal$,
    \begin{equation}
        \norm*{D^{(\leq R)}_L}_{2\to 2} \leq \mathfrak{S}_{d-1}\mathbf{V}_d\left(R\right)
    \end{equation}
    for all $L$. Now fix $\varepsilon>0$ such that $\Pcal\left(\left(\varepsilon,\infty\right)\right)>0$ and consider $f(a):= \Pcal\left(\left(\varepsilon,\infty\right)\right)^{-\frac{1}{2}}\Id_{\left\{a>\varepsilon\right\}}$. We have $\norm*{f}_2=1$ and
    \begin{align}
        \norm*{D_L}_{2\to 2}^2 \geq \norm*{D_Lf}_2^2 &= \frac{\mathfrak{S}^2_{d-1}}{\Pcal\left(\left(\varepsilon,\infty\right)\right)}\int_{\Ecal}\left(\int_{\Ecal\cap\left(\varepsilon,\infty\right)}\mathbf{V}_d\left(\sigma_L\left(a+b\right)\right)\Pcal\left(\dd a\right)\right)^2\Pcal\left(\dd b\right)\nonumber\\
        &\geq \frac{\mathfrak{S}^2_{d-1}}{\Pcal\left(\left(\varepsilon,\infty\right)\right)}\int_{\Ecal}\mathbf{V}_d\left(\sigma_L\left(\varepsilon\right)\right)^2\Pcal\left(\left(\varepsilon,\infty\right)\right)^2\Pcal\left(\dd b\right)\nonumber\\
        & = \mathfrak{S}^2_{d-1}\Pcal\left(\left(\varepsilon,\infty\right)\right)\mathbf{V}_d\left(\sigma_L\left(\varepsilon\right)\right)^2 \to\infty
    \end{align}
    as $L\to\infty$. Therefore $\norm*{D^{(\leq R)}_L}_{2\to 2} = o\left(\norm*{D_L}_{2\to 2}\right)$ as $L\to\infty$ and the argument in Lemma~\ref{lem:RatioofNorms} proceeds in the same way to show
    \begin{equation}
    \label{eqn:BooleanPtAitem2}
        \lim_{L\to\infty}\frac{\norm*{\Opconnf_L}_{2\to 2}}{\norm*{D_L}_{2\to 2}} = 0.
    \end{equation}
    
    In general we also cannot directly apply Lemma~\ref{lem:criticaltoOnetoOne-finite} or Lemma~\ref{lem:criticaltoOnetoOne-SpecialScaling}. However, we can bound $\lambda_\mathrm{c}(L)$ for our model by the critical intensity for another model that we can apply Lemma~\ref{lem:criticaltoOnetoOne-finite} for. Recall $R^*:= \esssup\Ecal\in\left(0,\infty\right)$, and let $\lambda^*_\mathrm{c}(L)$ denote the percolation critical intensity for the same scaled Boolean disc model but with the finite mark space $\Ecal^*=\left\{0,R^*\right\}$ and $\Pcal^*\left(\left\{0\right\}\right) = \Pcal\left(\left(0,R^*\right)\right)$ and $\Pcal^*\left(\left\{R^*\right\}\right) = \Pcal\left(\left\{R^*\right\}\right)$. This can be coupled to the original scaled Boolean disc model in such a way that the graph resulting from the new model is a sub-graph of the graph resulting from the original model. Therefore
    \begin{equation}
        \lambda^*_\mathrm{c}\left(L\right) \geq \lambda_\mathrm{c}\left(L\right).
    \end{equation}
    We can now apply Lemma~\ref{lem:criticaltoOnetoOne-finite} to this new model. From the equivalence of norms on finite dimensional spaces, 
    \begin{equation}
        \norm*{D^*_L}_{2\to 2} \asymp \mathbf{V}_d\left(\sigma_L\left(2R^*\right)\right)
    \end{equation}
    as $L\to \infty$. On the other hand, bounding by the Hilbert-Schmidt norm gives
    \begin{multline}
        \norm*{D_L}_{2\to 2} \leq \HSNorm{D_L} = \mathfrak{S}_{d-1}\left(\int_{\left(0,R^*\right]}\int_{\left(0,R^*\right]}\mathbf{V}_d\left(\sigma_L\left(a+b\right)\right)^2\Pcal\left(\dd a\right)\Pcal\left(\dd b\right)\right)^\frac{1}{2}\\
        \leq \mathfrak{S}_{d-1}\mathbf{V}_d\left(\sigma_L\left(2R^*\right)\right).
    \end{multline}
    Therefore there exists a constant $C>0$ such that for sufficiently large $L$
    \begin{equation}
    \label{eqn:BooleanPtAitem3}
        \lambda_\mathrm{c}(L) \leq \frac{C}{\norm*{D_L}_{2\to 2}}.
    \end{equation}
    Taken together, \eqref{eqn:BooleanPtAitem1}, \eqref{eqn:BooleanPtAitem2}, and \eqref{eqn:BooleanPtAitem3} are enough to prove part~\ref{enum:BooleanCorPart0}.
    
    To prove part~\ref{enum:BooleanCorPart1}, directly use Theorem~\ref{thm:NonUniqueness} with Assumption~\ref{assump:specialscale}. This requires that the $L^2\left(\Ecal\right)\to L^2\left(\Ecal\right)$ operator norm of the operator constructed from the function
    \begin{equation}
        \left(a,b\right)\mapsto \int^{a+b}_0r\exp\left(\frac{1}{2}\left(d-1\right)r\right)\dd r
    \end{equation}
    is finite. Let us denote this operator by $\Bcal$ (for ``Boolean"). We can bound the $L^2(\Ecal)\to L^2(\Ecal)$ operator norm by the Hilbert-Schmidt norm:
    \begin{equation}
        \norm*{\Bcal}_{2\to 2} \leq \HSNorm{\Bcal} = \left(\int^\infty_0\int^\infty_0\left(\int^{a+b}_0r\exp\left(\frac{1}{2}\left(d-1\right)r\right)\dd r\right)^2\Pcal\left(\dd a\right)\Pcal\left(\dd b\right)\right)^\frac{1}{2}.
    \end{equation}
    There exists a constant $C_d<\infty$ such that
    \begin{multline}
        \int^{a+b}_0r\exp\left(\frac{1}{2}\left(d-1\right)r\right)\dd r = \frac{4}{\left(d-1\right)^2}\left(1 - \left(1-\frac{1}{2}\left(d-1\right)\left(a+b\right)\e^{\frac{1}{2}\left(d-1\right)\left(a+b\right)}\right)\right)\\
        \leq
        C_d\left(\left(1\vee a\right)+\left(1\vee b\right)\right)\e^{\frac{1}{2}\left(d-1\right)\left(a+b\right)},
    \end{multline}
    so that
    \begin{align}
        \norm*{\Bcal}_{2\to 2}^2 &\leq C_d^2\int^\infty_0\int^\infty_0 \left(\left(1\vee a\right)+\left(1\vee b\right)\right)^2\e^{\left(d-1\right)\left(a+b\right)}\Pcal\left(\dd a\right)\Pcal\left(\dd b\right)\nonumber\\
        & =  C_d^2\int^\infty_0\int^\infty_0 \left(\left(1\vee a\right)^2+\left(1\vee b\right)^2\right)\e^{\left(d-1\right)\left(a+b\right)}\Pcal\left(\dd a\right)\Pcal\left(\dd b\right) \nonumber\\ &\hspace{4cm}+ 2C_d^2 \int^\infty_0\int^\infty_0 \left(1\vee a\right)\left(1\vee b\right)\e^{\left(d-1\right)\left(a+b\right)}\Pcal\left(\dd a\right)\Pcal\left(\dd b\right)\nonumber\\
        & = 2C_d^2\left(\int^\infty_0\e^{\left(d-1\right)b}\Pcal\left(\dd b\right)\int^\infty_0\left(1\vee a\right)^2\e^{\left(d-1\right)a}\Pcal\left(\dd a\right)\right.\nonumber\\
        &\hspace{4cm}+ \left.\left(\int^\infty_0\left(1\vee a\right)\e^{\left(d-1\right)a}\Pcal\left(\dd a\right)\right)^2\right)\nonumber\\
        & \leq \left(2C_d \int^\infty_0\left(1\vee r^2\right)\e^{\left(d-1\right)r}\Pcal\left(\dd r\right)\right)^2.
    \end{align}
    This bound is finite precisely when $\int^\infty_0r^2\e^{\left(d-1\right)r}\Pcal\left(\dd r\right)<\infty$, and therefore the condition \eqref{eqn:FiniteExpectedVolume} is sufficient to apply Theorem~\ref{thm:NonUniqueness}.

    For part \ref{enum:BooleanCorPart2}, let us first consider the $L=1$ version. This version still has the interpretation in which a vertex with mark $a$ is assigned a ball of radius $a$ that is centred on it, and two vertices are adjacent in the RCM if their balls overlap. For the associated model on $\Rd$, \cite{hall1985continuum} gives an argument that every point in $\Rd$ is covered by a ball if and only if the expected size of the random ball is infinite. The same argument applies for the $\HypDim$ models when $L=1$, and the expected size of the balls is finite exactly when \eqref{eqn:FiniteExpectedVolume} holds. Therefore if this expectation is infinite the union of all the balls almost surely equals $\HypDim$, and the connectedness and infinite-ness of this set then implies $\lambda_{\mathrm{u}}(1)=\lambda_{\mathrm{c}}(1)=0$. For $L>1$, observe that this family of RCMs is monotone: there is a coupling of the $L=1$ model with the $L>1$ model such that if an edge exists in the $L=1$ model then it exists in the $L>1$ model. Therefore if almost every vertex is in the same cluster in the $L=1$ model, then the same is true for the $L>1$ model.

    For part \ref{enum:BooleanCorPart3}, use Theorem~\ref{thm:meanfield} with Assumption~\ref{assump:specialscalePlus}. The form of $\connf$ and $\Pcal\left(\left\{0\right\}\right)<1$ imply that \eqref{eqn:infIntegralDegree} and \eqref{eqn:strongIrreducibilityCondition} hold, and it is simple to show that the boundedness of the support of $\Pcal$ implies the condition \eqref{eqn:bounddegreedensity} holds.
\end{proof}

\subsection{Weight-Dependent Hyperbolic Random Connection Models}
\label{sec:ProofWDRCM}

\begin{proof}[Proof of Corollary~\ref{cor:WDRCMGeneral}]
    First we aim to use Theorem~\ref{thm:NonUniqueness} with Assumption~\ref{assump:specialscale}. The main observation is that
    \begin{equation}
    \label{eqn:SeparateProfileandKernel}
        \int^\infty_0\connf\left(r;a,b\right) r\exp\left(\frac{1}{2}\left(d-1\right)r\right)\dd r = \kappa\left(a,b\right)\int^\infty_0\rho(r)r\exp\left(\frac{1}{2}\left(d-1\right)r\right)\dd r.
    \end{equation}
    Therefore condition \eqref{eqn:WDRCMprofileCondition} ensures that $\int^\infty_0\connf\left(r;a,b\right) r\exp\left(\frac{1}{2}\left(d-1\right)r\right)\dd r<\infty$ for $\Pcal$-almost every $a,b\in\Ecal$, and \eqref{eqn:L2normVolumeScale} becomes the requirement that
    \begin{equation}
        \norm*{\Kcal}_{2\to 2}\int^\infty_0\rho(r)r\exp\left(\frac{1}{2}\left(d-1\right)r\right)\dd r<\infty.
    \end{equation}
    This is then clearly satisfied if $\norm*{\Kcal}_{2\to 2}<\infty$.
\end{proof}

\begin{proof}[Proof of Corollary~\ref{cor:WDRCMSpecific}]
    This follows directly from Corollary~\ref{cor:WDRCMGeneral} once one has evaluated $\norm*{\Kcal}_{2\to 2}$ for each model. These are indeed evaluated in Lemmata~\ref{lem:ProdKernel}--\ref{lem:PAkernel} below.
\end{proof}

\begin{lemma}
\label{lem:ProdKernel}
\begin{equation}
    \norm*{\Kcal^{\mathrm{prod}}}_{2\to 2} = \begin{cases}
    \frac{1}{1-2\zeta} &: \zeta\in\left(0,\frac{1}{2}\right)\\
    \infty &: \zeta\geq \frac{1}{2}.
    \end{cases}
\end{equation}
\end{lemma}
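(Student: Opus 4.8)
The plan is to exploit the fact that the product kernel factorises, $\kappa^{\mathrm{prod}}(a,b)=(ab)^{-\zeta}=a^{-\zeta}b^{-\zeta}$, so that on $L^2\left(0,1\right)$ the operator $\Kcal^{\mathrm{prod}}$ is (at most) rank one: writing $g(a):=a^{-\zeta}$, one has $\bigl(\Kcal^{\mathrm{prod}}f\bigr)(a)=g(a)\int^1_0 g(b)f(b)\,\dd b$. The whole lemma then reduces to whether $g\in L^2\left(0,1\right)$, and if so to evaluating $\norm{g}_2^2$.

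First I would treat $\zeta\in\left(0,\tfrac12\right)$. Here $\int^1_0 a^{-2\zeta}\,\dd a=\frac{1}{1-2\zeta}<\infty$, so $g\in L^2\left(0,1\right)$, and for every $f\in L^2\left(0,1\right)$ the Cauchy--Schwarz inequality gives $\int^1_0 b^{-\zeta}\abs{f(b)}\,\dd b\leq\norm{g}_2\norm{f}_2<\infty$; hence $L^2\left(0,1\right)\subset\Dcal\left(\kappa^{\mathrm{prod}}\right)$ and the formula $\Kcal^{\mathrm{prod}}f=\bigl(\int^1_0 gf\bigr)g$ holds genuinely. Then I would compute the norm of this rank-one operator directly: $\norm{\Kcal^{\mathrm{prod}}f}_2\big/\norm{f}_2=\bigl\lvert\int^1_0 gf\bigr\rvert\,\norm{g}_2\big/\norm{f}_2\leq\norm{g}_2^2$, again by Cauchy--Schwarz, with equality at $f=g$. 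This yields $\norm{\Kcal^{\mathrm{prod}}}_{2\to 2}=\norm{g}_2^2=\frac{1}{1-2\zeta}$.

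For $\zeta\geq\tfrac12$ the plan is to show instead that $L^2\left(0,1\right)\not\subset\Dcal\left(\kappa^{\mathrm{prod}}\right)$, which by the convention in the definition \eqref{eqn:OperatorNormDefn} of the $L^2\to L^2$ operator norm immediately forces the value $+\infty$. It suffices to exhibit a single $f\in L^2\left(0,1\right)$ with $\int^1_0 b^{-\zeta}\abs{f(b)}\,\dd b=\infty$; a function just barely making $b^{-1/2}f(b)$ non-integrable works, e.g.\ $f(b):=b^{-1/2}\left(1+\abs{\log b}\right)^{-1}$. The substitution $u=-\log b$ shows $\norm{f}_2^2=\int^\infty_0\left(1+u\right)^{-2}\,\dd u<\infty$, while $b^{-\zeta}\geq b^{-1/2}$ on $(0,1)$ gives $\int^1_0 b^{-\zeta}\abs{f(b)}\,\dd b\geq\int^\infty_0\left(1+u\right)^{-1}\,\dd u=\infty$.

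There is no genuine obstacle here: the rank-one structure makes every quantity explicit. The only point requiring care is the $\zeta\geq\tfrac12$ case, where one must not argue by "letting the Rayleigh quotient diverge'' but rather appeal to the convention in \eqref{eqn:OperatorNormDefn} that sets the norm to $+\infty$ as soon as $L^2\left(0,1\right)\not\subset\Dcal\left(\kappa^{\mathrm{prod}}\right)$, supplying the explicit bad test function above. The endpoint $\zeta=\tfrac12$ is covered by the same argument since $a^{-1}$ is still non-integrable near $0$.
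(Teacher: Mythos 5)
Your proof is correct, and for $\zeta<\tfrac12$ it is essentially the paper's argument in different packaging: the paper bounds $\norm*{\Kcal^{\mathrm{prod}}}_{2\to 2}$ above by the Hilbert--Schmidt norm and then tests with the normalised function $a\mapsto(1-2\zeta)^{1/2}a^{-\zeta}$, which is exactly your extremal $f=g$; since the operator is rank one these two upper bounds coincide, so the computations are the same. The only genuine divergence is in the case $\zeta\geq\tfrac12$. The paper tests with $f\equiv 1$ and observes that $\Kcal^{\mathrm{prod}}f$ is a multiple of $a^{-\zeta}\notin L^2\left(0,1\right)$; this is immediate for $\zeta\in\left[\tfrac12,1\right)$, but for $\zeta\geq 1$ the displayed formula $\frac{1}{1-\zeta}a^{-\zeta}$ degenerates because $\int^1_0 b^{-\zeta}\dd b=\infty$, and one really falls back on the convention in \eqref{eqn:OperatorNormDefn} that the norm is $+\infty$ once some $L^2$ function leaves the domain $\Dcal$. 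Your route goes straight to that convention for all $\zeta\geq\tfrac12$ by exhibiting $f(b)=b^{-1/2}\left(1+\abs*{\log b}\right)^{-1}\in L^2\left(0,1\right)$ with $\int^1_0 b^{-\zeta}f(b)\,\dd b=\infty$, which handles the whole range uniformly and is, if anything, slightly more careful at $\zeta\geq 1$ than the paper's own computation; the paper's choice buys a marginally shorter calculation in the regime $\zeta\in\left[\tfrac12,1\right)$.
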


\begin{proof}
We first consider $\zeta<\frac{1}{2}$ and evaluate $\HSNorm{\Kcal^{\mathrm{prod}}}$. We find
\begin{equation}
    \HSNorm{\Kcal^{\mathrm{prod}}} = \left(\int^1_0 \int^1_0 a^{-2\zeta} b^{-2\zeta} \dd a\dd b\right)^\frac{1}{2} = \int^1_0 a^{-2\zeta}\dd a = \frac{1}{1-2\zeta}.
\end{equation}
Therefore $\norm*{\Kcal^{\mathrm{prod}}}_{2\to 2}\leq \frac{1}{1-2\zeta}$ for $\zeta<\frac{1}{2}$. Now let us consider the normalized $L^2$-function
\begin{equation}
    f_1(a) := \left(1-2\zeta\right)^\frac{1}{2}a^{-\zeta}.
\end{equation}
Then
\begin{equation}
    \left(\Kcal^{\mathrm{prod}}f_1\right)(a) = \frac{1}{\left(1-2\zeta\right)^{\frac{1}{2}}}a^{-\zeta},
\end{equation}
and
\begin{equation}
    \norm*{\Kcal^{\mathrm{prod}}f_1}_2 = \frac{1}{1-2\zeta}.
\end{equation}
Therefore $\norm*{\Kcal^{\mathrm{prod}}}_{2\to 2}= \frac{1}{1-2\zeta}$ for $\zeta<\frac{1}{2}$.

For $\zeta\geq \frac{1}{2}$, let us consider $f_2\equiv 1$. It is clear that $f_2\in L^2$ with $\norm*{f_2}_2=1$. We then find
\begin{equation}
    \left(\Kcal^{\mathrm{prod}}f_2\right)(a) = \frac{1}{1-\zeta}a^{-\zeta}.
\end{equation}
Note that for $\zeta\geq \frac{1}{2}$, $\norm*{\Kcal^{\mathrm{prod}}f_2}_2=\infty$ and $\Kcal^{\mathrm{prod}}f_2\not\in L^2$. Therefore $\norm*{\Kcal^{\mathrm{prod}}}_{2\to 2}= \infty$ for $\zeta\geq\frac{1}{2}$.
\end{proof}

\begin{lemma}
\label{lem:StrongKernel}
    \begin{equation}
        \norm*{\Kcal^{\mathrm{strong}}}_{2\to 2}<\infty \iff \zeta<\frac{1}{2}
    \end{equation}
\end{lemma}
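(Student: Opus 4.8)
The statement is a purely analytic fact about the integral operator $\Kcal^{\mathrm{strong}}$ on $L^2\big((0,1)\big)$ (with Lebesgue measure) having kernel $(a\wedge b)^{-\zeta}$, and I would prove it by the same two-sided scheme used for $\Kcal^{\mathrm{prod}}$ in Lemma~\ref{lem:ProdKernel}: a Hilbert--Schmidt bound for the ``$\Leftarrow$'' direction and an explicit test function for the ``$\Rightarrow$'' direction. No input from the hyperbolic geometry or the random connection model is needed.

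For $\zeta<\tfrac12$ I would just compute the Hilbert--Schmidt norm. Splitting $(0,1)^2$ into $\{a<b\}$ and $\{b<a\}$ and using that $(a\wedge b)^{-2\zeta}=a^{-2\zeta}$ on the first region,
\begin{equation}
    \HSNorm{\Kcal^{\mathrm{strong}}}^2 = \int^1_0\int^1_0 (a\wedge b)^{-2\zeta}\,\dd a\,\dd b = 2\int^1_0 (1-a)\,a^{-2\zeta}\,\dd a = 2\left(\frac{1}{1-2\zeta}-\frac{1}{2-2\zeta}\right),
\end{equation}
which is finite precisely because $\zeta<\tfrac12$ (the factor $\tfrac{1}{1-2\zeta}$ is the one that blows up at $\zeta=\tfrac12$). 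The bound $\norm*{\Kcal^{\mathrm{strong}}}_{2\to 2}\leq\HSNorm{\Kcal^{\mathrm{strong}}}$ from Lemma~\ref{lem:generalOperatorBounds} then gives $\norm*{\Kcal^{\mathrm{strong}}}_{2\to 2}<\infty$. (A Schur test with weight $h(a)=a^{-\zeta}$ also works and yields the bound $\tfrac{1}{1-2\zeta}+\tfrac1{1-\zeta}$, but the Hilbert--Schmidt route is shorter and mirrors the proof of Lemma~\ref{lem:ProdKernel}.)

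For $\zeta\geq\tfrac12$ I would test against $f\equiv 1$, which has $\norm*{f}_2=1$. Since $(a\wedge b)^{-\zeta}$ equals $b^{-\zeta}$ for $b<a$ and $a^{-\zeta}$ for $b\geq a$,
\begin{equation}
    \big(\Kcal^{\mathrm{strong}}f\big)(a) = \int^a_0 b^{-\zeta}\,\dd b + a^{-\zeta}\,(1-a).
\end{equation}
If $\zeta\geq 1$, the first integral diverges for every $a>0$, so $f\notin\Dcal(\kappa^{\mathrm{strong}})$ and hence $\norm*{\Kcal^{\mathrm{strong}}}_{2\to 2}=+\infty$ by the definition of the operator norm. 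If $\tfrac12\leq\zeta<1$, then $\big(\Kcal^{\mathrm{strong}}f\big)(a)\geq a^{-\zeta}(1-a)\geq\tfrac12 a^{-\zeta}$ for $a\in(0,\tfrac12]$, so $\norm*{\Kcal^{\mathrm{strong}}f}_2^2\geq\tfrac14\int^{1/2}_0 a^{-2\zeta}\,\dd a=+\infty$ because $2\zeta\geq 1$; again $\norm*{\Kcal^{\mathrm{strong}}}_{2\to 2}=+\infty$.

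There is no serious obstacle here — the computation is routine — but the two things to be careful about are the borderline $\zeta=\tfrac12$, where it is the logarithmic divergence of $\int^{1/2}_0 a^{-1}\,\dd a$ that forces unboundedness, and the regime $\zeta\geq 1$, where $f\equiv 1$ fails to lie in $\Dcal(\kappa^{\mathrm{strong}})$ at all and one must invoke the ``$L^p\not\subset\Dcal(\psi)$'' clause of the operator-norm definition rather than estimating $\norm*{\Kcal^{\mathrm{strong}}f}_2$ directly.
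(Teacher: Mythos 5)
Your proof is correct and follows essentially the same route as the paper: a Hilbert--Schmidt bound (your computation $2\int_0^1(1-a)a^{-2\zeta}\,\dd a$ equals the paper's $\tfrac{1}{(1-2\zeta)(1-\zeta)}$) for $\zeta<\tfrac12$, and the test function $f\equiv 1$ for $\zeta\geq\tfrac12$. Your extra care at $\zeta\geq 1$ (invoking the $L^p\not\subset\Dcal(\psi)$ clause) and the explicit lower bound $\tfrac12 a^{-\zeta}$ on $(0,\tfrac12]$ only make the argument slightly more detailed than the paper's, which asserts the divergence directly.
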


\begin{proof}
    For $\zeta<\frac{1}{2}$, we first show that by the symmetry of the kernel
    \begin{equation}
        \HSNorm{\Kcal^{\mathrm{strong}}}^2 = 2\int^1_0\left(\int^b_0 a^{-2\zeta} \dd a\right)\dd b = \frac{1}{\left(1-2\zeta\right)\left(1-\zeta\right)}.
    \end{equation}
    This then bounds $\norm*{\Kcal^{\mathrm{strong}}}_{2\to 2}<\infty$.

    For $\zeta\geq \frac{1}{2}$, let us consider $f\equiv 1$. We find that for $a\in\left(0,1\right)$
    \begin{equation}
        \left(\Kcal^{\mathrm{strong}}f\right)(a) = a^{-\zeta}\left(1-a\right) + \int^a_0b^{-\zeta}\dd t = 
        \begin{cases}
            a^{-\zeta}\left(1-a\right) + \frac{1}{1-\zeta}a^{1-\zeta} &\colon \zeta<1\\
            \infty &\colon \zeta\geq 1.
        \end{cases}
    \end{equation}
    Clearly $\norm*{\Kcal^{\mathrm{strong}}f}_2=\infty$ if $\zeta\geq \frac{1}{2}$, and therefore $\norm*{\Kcal^{\mathrm{strong}}}_{2\to 2}=\infty$.
\end{proof}

While the product and strong (and therefore sum) kernels do indeed have parameter regimes for which $\norm*{\Kcal}_{2\to 2}<\infty$, we can now show that the weak and preferential attachment kernels do not, and therefore our results do not apply to them.

\begin{lemma}
\label{lem:WeakKernel}
    For all $\zeta>0$, $\norm*{\Kcal^{\mathrm{weak}}}_{2\to 2} = \infty$.
\end{lemma}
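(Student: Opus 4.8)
The plan is to exhibit a family of test functions (or a single test function) on which the operator $\Kcal^{\mathrm{weak}}$ has unbounded $L^2\to L^2$ Rayleigh quotient, exactly as was done for the product and strong kernels. Recall $\kappa^{\mathrm{weak}}(a,b)=(a\vee b)^{-1-\zeta}$. The first step is to compute, or at least lower-bound, $\left(\Kcal^{\mathrm{weak}}f\right)(a)$ for a convenient $f$. Because the kernel only sees the larger of the two marks, the natural choice is a function concentrated near $0$, e.g.\ $f=f_\varepsilon:=\varepsilon^{-1/2}\Id_{(0,\varepsilon)}$, which has $\norm{f_\varepsilon}_2=1$. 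For such $f_\varepsilon$ and any $a\in(0,1)$ we get
\begin{equation}
    \left(\Kcal^{\mathrm{weak}}f_\varepsilon\right)(a) = \varepsilon^{-1/2}\int_0^{\varepsilon}(a\vee b)^{-1-\zeta}\dd b \geq \varepsilon^{-1/2}\int_0^{\varepsilon\wedge a}a^{-1-\zeta}\dd b = \varepsilon^{-1/2}(\varepsilon\wedge a)\,a^{-1-\zeta}.
\end{equation}
Restricting the outer $L^2$-integration to $a\in(0,\varepsilon)$, where $\varepsilon\wedge a = a$, gives $\left(\Kcal^{\mathrm{weak}}f_\varepsilon\right)(a)\geq \varepsilon^{-1/2}a^{-\zeta}$ on that range.

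The second step is to bound the $L^2$-norm from below:
\begin{equation}
    \norm{\Kcal^{\mathrm{weak}}f_\varepsilon}_2^2 \geq \int_0^{\varepsilon}\varepsilon^{-1}a^{-2\zeta}\dd a.
\end{equation}
If $\zeta\geq \tfrac12$ this integral is already $+\infty$ for every $\varepsilon$, so $\Kcal^{\mathrm{weak}}f_\varepsilon\notin L^2$ and we are done (indeed then $L^2\not\subset\Dcal(\kappa^{\mathrm{weak}})$, so $\norm{\Kcal^{\mathrm{weak}}}_{2\to2}=+\infty$ by definition). If $\zeta<\tfrac12$, the integral equals $\tfrac{1}{1-2\zeta}\varepsilon^{-2\zeta}$, which tends to $+\infty$ as $\varepsilon\searrow 0$; since $\norm{f_\varepsilon}_2=1$, this forces $\norm{\Kcal^{\mathrm{weak}}}_{2\to 2}=+\infty$. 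Combining the two cases gives the claim for all $\zeta>0$.

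I do not expect a real obstacle here: the computation is elementary and the mechanism is the same ``mass near the singularity'' phenomenon already used in Lemmata~\ref{lem:ProdKernel} and \ref{lem:StrongKernel}. The only mild care needed is bookkeeping of the case split $\zeta\geq\tfrac12$ versus $\zeta<\tfrac12$ (in the former case one should note that the operator is not even defined on all of $L^2$, matching the $+\infty$ convention in \eqref{eqn:OperatorNormDefn}), and making sure the restriction of the outer integral to $(0,\varepsilon)$ is used so that $\varepsilon\wedge a$ simplifies cleanly. An alternative single-function argument — taking $f\equiv 1$ and checking $\left(\Kcal^{\mathrm{weak}}f\right)(a)=a^{-1-\zeta}\cdot a+\int_a^1 b^{-1-\zeta}\dd b$, whose leading term is of order $a^{-\zeta}$ near $0$ only when $\zeta<1$ — also works for a sub-range, but the $f_\varepsilon$ family is cleaner because it covers all $\zeta>0$ at once.
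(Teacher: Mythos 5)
Your proposal is correct and uses essentially the same mechanism as the paper: a direct test-function lower bound exploiting the singularity of the kernel at small marks. The paper's proof is a one-step version of this, taking the single function $f(a)=a^{\zeta-1/2}\in L^2$ and computing $\left(\Kcal^{\mathrm{weak}}f\right)(a)=\left(\tfrac{1}{\zeta+\frac12}+2\right)a^{-1/2}-2\notin L^2$, which covers all $\zeta>0$ without your case split between $\zeta\geq\tfrac12$ and $\zeta<\tfrac12$; your family $f_\varepsilon=\varepsilon^{-1/2}\Id_{(0,\varepsilon)}$ achieves the same conclusion with only marginally more bookkeeping. One parenthetical in your argument is inaccurate, though harmless: for the weak kernel one has $L^2(0,1)\subset\Dcal(\kappa^{\mathrm{weak}})$ for every $\zeta>0$, since for fixed $a>0$ the kernel is bounded by $a^{-1-\zeta}$ and $L^2(0,1)\subset L^1(0,1)$, so the correct justification in the case $\zeta\geq\tfrac12$ is simply that $\norm*{\Kcal^{\mathrm{weak}}f_\varepsilon}_2=\infty$ while $\norm*{f_\varepsilon}_2=1$, which already forces the supremum in \eqref{eqn:OperatorNormDefn} to be $+\infty$.
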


\begin{proof}
    For $\zeta>0$, let us consider $f(a)= a^{\zeta -\frac{1}{2}}$. We have $f\in L^2$ since $\norm*{f}_2 = \frac{1}{\sqrt{2\zeta}}<\infty$. Now for $a\in\left(0,1\right)$
    \begin{equation}
        \left(\Kcal^{\mathrm{weak}}f\right)(a) = a^{-1-\zeta}\int^a_0b^{\zeta -\frac{1}{2}}\dd b + \int^1_ab^{-\frac{3}{2}}\dd b = \frac{1}{\zeta + \frac{1}{2}}a^{-\frac{1}{2}} + 2\left(a^{-\frac{1}{2}}-1\right).
    \end{equation}
    Therefore $\norm*{\Kcal^{\mathrm{weak}}f}_2=\infty$, and $\norm*{\Kcal^{\mathrm{weak}}}_{2\to 2} = \infty$.
\end{proof}

\begin{lemma}
\label{lem:PAkernel}
    For all $\zeta>0$, $\norm*{\Kcal^{\mathrm{pa}}}_{2\to 2} = \infty$.
\end{lemma}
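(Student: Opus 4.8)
The plan is to exhibit an explicit $f\in L^2(0,1)$ whose image $\Kcal^{\mathrm{pa}}f$ fails to be in $L^2(0,1)$, exactly as in the proofs of Lemmata~\ref{lem:WeakKernel} and \ref{lem:StrongKernel}. Recall that
\begin{equation}
    \kappa^{\mathrm{pa}}(a,b) = \left(a\vee b\right)^{-1+\zeta}\left(a\wedge b\right)^{-\zeta},
\end{equation}
so the kernel is largest when one argument is small and the other is of order $1$. The natural test function is again a pure power $f(a) = a^{-\alpha}$ for a suitable exponent $\alpha\in\left(0,\tfrac12\right)$, which guarantees $\norm*{f}_2<\infty$. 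The first step is to split the integral defining $\left(\Kcal^{\mathrm{pa}}f\right)(a)$ at $b=a$:
\begin{equation}
    \left(\Kcal^{\mathrm{pa}}f\right)(a) = a^{-1+\zeta}\int^a_0 b^{-\zeta}b^{-\alpha}\dd b + a^{-\zeta}\int^1_a b^{-1+\zeta}b^{-\alpha}\dd b.
\end{equation}
The first term contributes (when $\zeta+\alpha<1$) a multiple of $a^{-1+\zeta}a^{1-\zeta-\alpha} = a^{-\alpha}$, i.e. it is harmless. The second term is the dangerous one: if $\zeta-\alpha<0$, i.e. $\alpha>\zeta$, then $\int^1_a b^{-1+\zeta-\alpha}\dd b$ behaves like a multiple of $a^{\zeta-\alpha}$ as $a\searrow 0$, and the whole term is a multiple of $a^{-\zeta}a^{\zeta-\alpha} = a^{-\alpha}$ — still only $a^{-\alpha}$, hence still in $L^2$. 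So a single power does not immediately work, and one must be more careful: the right choice makes the second integral \emph{logarithmically divergent}.

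The key step is therefore to take $\alpha = \zeta$ exactly when $\zeta<\tfrac12$ (so that $f(a)=a^{-\zeta}\in L^2$), and to handle $\zeta\ge\tfrac12$ separately. For $\zeta<\tfrac12$ and $f(a)=a^{-\zeta}$ the second term becomes
\begin{equation}
    a^{-\zeta}\int^1_a b^{-1}\dd b = a^{-\zeta}\left(-\log a\right),
\end{equation}
so $\left(\Kcal^{\mathrm{pa}}f\right)(a) \ge c\, a^{-\zeta}\abs{\log a}$ for some $c>0$ near $a=0$, and then
\begin{equation}
    \norm*{\Kcal^{\mathrm{pa}}f}_2^2 \ge c^2\int^{1}_{0} a^{-2\zeta}\left(\log a\right)^2\dd a = \infty,
\end{equation}
since $\int_0 a^{-2\zeta}(\log a)^2\dd a$ diverges for $2\zeta\ge 1$. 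Wait — for $\zeta<\tfrac12$ that integral actually \emph{converges}; one must instead observe that $a^{-\zeta}\abs{\log a}$ is genuinely not comparable to anything in $L^2$ only when $2\zeta=1$. So the cleaner route is: for \emph{every} $\zeta>0$ pick $\alpha = \tfrac12 - \delta$ with $\delta>0$ small, note $f(a)=a^{-\alpha}\in L^2$, and compute the second term, which for $\zeta>\alpha$ (true once $\delta$ is small) gives a multiple of $a^{-\zeta}\cdot a^{\zeta-\alpha} = a^{-\alpha} = a^{-1/2+\delta}$ — in $L^2$; but for $\zeta = \alpha$, i.e. $\delta = \tfrac12-\zeta$ (only relevant when $\zeta<\tfrac12$), one gets the logarithmic factor $a^{-\zeta}\abs{\log a}$, and since $2\zeta<1$ this \emph{is} in $L^2$. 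The upshot is that for small $\zeta$ a power test function is too blunt; the genuinely divergent choice is $f(a) = a^{-1/2}\left(\log(1/a)\right)^{-\beta}$ for $\beta\in\left(\tfrac12,1\right]$, which lies in $L^2$ but whose image picks up an extra $\log$ and is pushed out of $L^2$.

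The hard part will be choosing the test function so that the argument is uniform in $\zeta>0$ and genuinely simple to write down. I would organize the final proof in two clean cases: (i) $\zeta\ge\tfrac12$, where $f\equiv 1$ already works, since the second term is $\int^1_a b^{-1+\zeta}\dd b$, which is finite, but the first term is $a^{-1+\zeta}\int_0^a b^{-\zeta}\dd b = \tfrac{1}{1-\zeta}a^{-1+\zeta}\cdot a\cdot a^{-\zeta}$ — hmm, that is $a^{0}$ for $\zeta<1$; instead for $\zeta\ge\tfrac12$ use $f(a)=a^{\zeta-1/2}\in L^2$ as in Lemma~\ref{lem:WeakKernel}, giving second term $a^{-\zeta}\int_a^1 b^{-3/2}\dd b \asymp a^{-\zeta}a^{-1/2} = a^{-\zeta-1/2}$, not in $L^2$; (ii) $\zeta\in\left(0,\tfrac12\right)$, use $f(a) = a^{-1/2}\bigl(1\vee\log(1/a)\bigr)^{-1}$, check $f\in L^2$ via $\int_0^{1/e} a^{-1}(\log(1/a))^{-2}\dd a<\infty$, then bound the second term below by $c\,a^{-\zeta}\int_a^{1} b^{-1+\zeta-\zeta}\bigl(\log(1/b)\bigr)^{-1}\dd b = c\,a^{-\zeta}\int_a^1 b^{-1}(\log(1/b))^{-1}\dd b \asymp a^{-\zeta}\log\log(1/a)$ as $a\searrow 0$, wait this needs the $b^{-\alpha}$ with $\alpha=1/2$ so the exponent is $-1+\zeta-1/2 = -3/2+\zeta$; recompute: second term $= a^{-\zeta}\int_a^1 b^{-1+\zeta}\cdot b^{-1/2}(\log(1/b))^{-1}\dd b = a^{-\zeta}\int_a^1 b^{-3/2+\zeta}(\log(1/b))^{-1}\dd b$, and since $-3/2+\zeta<-1$ this integral diverges like $a^{-1/2+\zeta}(\log(1/a))^{-1}$, so the second term $\asymp a^{-1/2}(\log(1/a))^{-1} = f(a)$ — merely reproducing $f$, hence in $L^2$. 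This shows the subtlety I flagged: for the PA kernel the off-diagonal part is \emph{bounded} on $L^2$ near the critical scaling, so the divergence must come from combining both terms or from the diagonal scaling $b\asymp a$. The main obstacle, then, is precisely to identify \emph{where} the unboundedness of $\Kcal^{\mathrm{pa}}$ comes from — I expect it comes from the region $b \asymp 1$ interacting with small $a$ through the $\left(a\wedge b\right)^{-\zeta} = a^{-\zeta}$ factor together with $\left(a\vee b\right)^{-1+\zeta}$ of order $1$, so that $\kappa^{\mathrm{pa}}(a,b)\asymp a^{-\zeta}$ for $b\asymp1$; testing against $f$ supported near $b=1$ then gives $\left(\Kcal^{\mathrm{pa}}f\right)(a)\gtrsim a^{-\zeta}$, which fails to be square-integrable at $a=0$ precisely when $\zeta\ge\tfrac12$, but for $\zeta<\tfrac12$ one must instead exploit the region $a\asymp b\asymp 1$ where $\kappa^{\mathrm{pa}}\asymp(a\vee b)^{-1}$ is non-integrable in the \emph{other} variable near $1$. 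I would settle this by a careful two-region decomposition and, if a single explicit $f$ proves awkward, fall back on the general criterion $\norm*{\Kcal}_{2\to2}\ge\bigl(\int\!\int \kappa(a,b)g(a)g(b)\bigr)/\norm*{g}_2^2$ with $g=f$ a power, optimizing the exponent; this reduces the lemma to showing that $\sup_{\alpha} \int_0^1\!\int_0^1 (a\vee b)^{-1+\zeta}(a\wedge b)^{-\zeta}a^{-\alpha}b^{-\alpha}\dd a\dd b = \infty$, which by symmetry equals $2\int_0^1\!\int_0^b b^{-1+\zeta}a^{-\zeta-\alpha}\dd a\dd b$ and diverges whenever $\zeta+\alpha\ge 1$ (from the inner integral) \emph{or} the resulting $\int_0^1 b^{-\zeta-\alpha}\dd b$ diverges, i.e. again $\zeta+\alpha\ge1$; choosing $\alpha\nearrow\tfrac12$ this forces divergence for all $\zeta\ge\tfrac12$, and for $\zeta<\tfrac12$ one takes $\alpha=1-\zeta>\tfrac12$, which is \emph{not} allowed since then $f\notin L^2$ — confirming once more that the genuine obstruction in the regime $\zeta<\tfrac12$ requires a logarithmic test function, and carrying that computation through carefully is the crux of the proof.
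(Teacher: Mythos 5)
Your case $\zeta\ge\tfrac12$ is in substance the paper's argument (the paper simply takes $f\equiv1$, which does work: you dropped the prefactor $a^{-\zeta}$ from the second term when you dismissed it), and your alternative $f(a)=a^{\zeta-1/2}$ also works after fixing a slip — for the preferential attachment kernel the second term is $a^{-\zeta}\int_a^1 b^{2\zeta-3/2}\dd b\asymp a^{-\zeta}$, not $a^{-\zeta}\int_a^1 b^{-3/2}\dd b$ (you copied the weak-kernel exponent), and $a^{-\zeta}\notin L^2$ precisely because $2\zeta\ge1$. The genuine gap is the regime $\zeta\in\left(0,\tfrac12\right)$: you never produce an argument there, and every candidate you test (pure powers, $a^{-\zeta}$, $a^{-1/2}\left(\log(1/a)\right)^{-1}$) is — correctly, by your own computations — mapped back into $L^2$, so the proposal as written does not prove the lemma.

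You should know, however, that this gap cannot be closed, because the claim is false for $\zeta<\tfrac12$: the kernel is homogeneous of degree $-1$, and the Schur test with weight $h(a)=a^{-1/2}$ gives, for $\zeta<\tfrac12$,
\begin{equation}
    \int^1_0\kappa^{\mathrm{pa}}(a,b)\,b^{-1/2}\dd b \;=\; \frac{a^{-1/2}}{\tfrac12-\zeta}+\frac{a^{-1/2}-a^{-\zeta}}{\tfrac12-\zeta}\;\le\;\frac{2}{\tfrac12-\zeta}\,a^{-1/2},
\end{equation}
and by symmetry of the kernel this yields $\norm*{\Kcal^{\mathrm{pa}}}_{2\to 2}\le\frac{2}{1/2-\zeta}<\infty$. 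This is consistent with all of your failed attempts. The paper's own treatment of this case uses $f_\varepsilon(a)=a^{\zeta-1}\Id_{\left\{a>\varepsilon\right\}}$, but the logarithm appearing in $\left(\Kcal^{\mathrm{pa}}f_\varepsilon\right)(a)$ for $a>\varepsilon$ is $\log\frac{a}{\varepsilon}$, and
\begin{equation}
    \int^1_\varepsilon a^{2\zeta-2}\left(\log\frac{a}{\varepsilon}\right)^2\dd a \;=\; \varepsilon^{2\zeta-1}\int^{1/\varepsilon}_1 u^{2\zeta-2}\left(\log u\right)^2\dd u \;\asymp\; \varepsilon^{2\zeta-1}\;\asymp\;\norm*{f_\varepsilon}^2_2,
\end{equation}
so the ratio $\norm*{\Kcal^{\mathrm{pa}}f_\varepsilon}_2/\norm*{f_\varepsilon}_2$ stays bounded; the step $\norm*{\Kcal^{\mathrm{pa}}f_\varepsilon}^2_2\asymp\int^1_\varepsilon a^{2\zeta-2}\left(\log a\right)^2\dd a$ silently replaces $\log\frac{a}{\varepsilon}$ by $\log a$. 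The correct statement is $\norm*{\Kcal^{\mathrm{pa}}}_{2\to 2}<\infty\iff\zeta<\tfrac12$, exactly as for the product, strong, and sum kernels — which would only extend the scope of Corollary~\ref{cor:WDRCMSpecific} to the preferential attachment kernel with $\zeta<\tfrac12$, rather than affect the paper's main theorems. So the verdict on your proposal is: correct but partly garbled for $\zeta\ge\tfrac12$, incomplete for $\zeta<\tfrac12$, and your difficulty in the latter regime is a genuine feature of the problem, not a failure of technique.
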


\begin{proof}
    First we consider $\zeta<\frac{1}{2}$ and the family of functions $f_\varepsilon(a) = a^{\zeta-1}\Id_{a>\varepsilon}$ for $\varepsilon>0$. We have
    \begin{equation}
        \norm*{f_\varepsilon}^2_2 = \int^1_\varepsilon a^{2\zeta-2}\dd a = \frac{1}{1-2\zeta}\left(\varepsilon^{2\zeta-1}-1\right).
    \end{equation}
    Note $\norm*{f_\varepsilon}_2<\infty$ for all $\varepsilon>0$, but that $\norm*{f_\varepsilon}_2\to\infty$ as $\varepsilon\to 0$.

    Now for $a\leq\varepsilon$ we have
    \begin{equation}
        \left(\Kcal^{\mathrm{pa}}f_\varepsilon\right)(a) = \frac{a^{-\zeta}}{1-2\zeta}\left(\varepsilon^{2\zeta-1}-1\right),
    \end{equation}
    and for $a>\varepsilon$ we have
    \begin{equation}
        \left(\Kcal^{\mathrm{pa}}f_\varepsilon\right)(a) = \frac{a^{-\zeta}}{1-2\zeta}\left(a^{2\zeta-1}-1\right) + a^{\zeta-1}\log\frac{a}{\varepsilon}.
    \end{equation}
    Since $\zeta<\frac{1}{2}$, we can identify the dominant term and find
    \begin{equation}
        \norm*{\Kcal^{\mathrm{pa}}f_\varepsilon}^2_2 \asymp \int^1_\varepsilon a^{2\zeta-2}\left(\log a\right)^2\dd a
    \end{equation}
    as $\varepsilon\to 0$. Since $\left(\log a\right)^2\to\infty$ as $a\to 0$,
    \begin{equation}
        \norm*{\Kcal^{\mathrm{pa}}f_\varepsilon}_2 \gg \norm*{f_\varepsilon}_2
    \end{equation}
    as $\varepsilon\to 0$. Therefore $\norm*{\Kcal^{\mathrm{pa}}}_{2\to 2}=\infty$ for $\zeta<\frac{1}{2}$.

    For $\zeta\geq \frac{1}{2}$, consider $f\equiv 1$. Then
    \begin{equation}
        \left(\Kcal^{\mathrm{pa}}f\right)(a) = a^{-\zeta}\int^1_a b^{\zeta-1}\dd b + a^{\zeta-1}\int^a_0 b^{-\zeta}\dd b = \begin{cases}
            \frac{1}{\zeta}a^{-\zeta} - \frac{1}{\zeta} + \frac{1}{1-\zeta} &\colon \zeta<1\\
            \infty &\colon \zeta\geq 1.
        \end{cases}
    \end{equation}
    Therefore $\norm*{\Kcal^{\mathrm{pa}}f}_2=\infty$ for $\zeta\geq \frac{1}{2}$, and $\norm*{\Kcal^{\mathrm{pa}}}_{2\to 2}=\infty$.
\end{proof}

\begin{proof}[Proof of Proposition~\ref{prop:nonperturb}]
    First note that by Mecke's formula, the expected degree of a vertex with mark $a$ is given by
    \begin{multline}
        \lambda\mathfrak{S}_{d-1}\int^1_0\left(\int^\infty_0\rho\left(s^{-1}_{\kappa\left(a,b\right)}\left(r\right)\right)\left(\sinh r\right)^{d-1}\dd r\right) \dd b \\= \lambda\mathfrak{S}_{d-1} \int^1_0 \kappa\left(a,b\right)\left(\int^\infty_0\rho\left(r\right)\left(\sinh r\right)^{d-1}\dd r \right)\dd b.
    \end{multline}
    Therefore since $\kappa\left(a,b\right)>0$ for a positive measure of $\left(a,b\right)\in\left(0,1\right)^2$, the expected degree of a vertex with mark $a$ equals $\infty$ for a positive measure of $a$. In particular this occurs for any $\lambda>0$. Since the expected cluster size is bounded below by the expected degree, $\theta_\lambda(a)=1$ for $\lambda>0$ and a positive measure of $a$, and $\lambda_\mathrm{c}=0$.

    To show that $\lambda_{\mathrm{u}}>0$, we utilize Lemma~\ref{lem:Uniquenesstoqtoq} to show $\lambda_{\mathrm{u}}\geq \lambda_{2\to 2}$ and Lemma~\ref{lem:qtoqintensity_bound} to show $\lambda_{2\to 2}\geq \frac{1}{\norm*{\Opconnf}_{2\to 2}}$. Then the observation \eqref{eqn:SeparateProfileandKernel} means that \eqref{eqn:ProfileL2bound} implies that
    \begin{equation}
        \int^\infty_0\connf(r;a,b) Q_d\left(r\right)\exp\left(\left(d-1\right)r\right)\dd r<\infty
    \end{equation}
    for $\Pcal$-almost every $a,b\in\Ecal$, and Lemma~\ref{lem:TwoToTwoNormBound} shows that $\norm*{\Opconnf}_{2\to 2} = \norm*{\widetilde{\Opconnf}(0)}_{2\to 2}$ if the latter is finite. Since $\int^\infty_0\rho(r) Q_d(r)\left(\sinh r\right)^{d-1}\dd r<\infty$ if \eqref{eqn:ProfileL2bound} holds, $\norm*{\widetilde{\Opconnf}(0)}_{2\to 2}<\infty$ if $\norm*{\Kcal}_{2\to 2}<\infty$. Therefore $\lambda_{2\to 2}>0=\lambda_{\mathrm{c}}$.
\end{proof}

\begin{appendix}

\section{Scaling Functions}
\label{app:ScalingFunctions}
The volume-linear scaling $s_L$ appearing in Assumptions~\ref{assump:specialscale} and \ref{assump:specialscalePlus} is special because it transforms radial integrals in a predictable way. Up to an $r$-independent constant, $\mathbf{V}_d(r)$ equals the volume of a hyperbolic ball with radius $r$, and $s_L$ can therefore be viewed as a conjugation between lengths and volumes. Therefore for any measurable function $\phi\colon \R_+\to \R_+$, we have
    \begin{equation}
    \label{eqn:volumeScalingProperty}
        \int_\HypDim\phi\left(s_L^{-1}\left(\dist{x,\orig}\right)\right)\mu\left(\dd x\right) = L\int_\HypDim\phi\left(\dist{x,\orig}\right)\mu\left(\dd x\right).
    \end{equation}
    Also observe that $L\mapsto s_L$ is a homomorphism: for all $L,M>0$ and $r>0$
    \begin{equation}
    \label{eqn:scalingHomomorphism}
        s_M\left(s_L\left(r\right)\right) = \mathbf{V}_d^{-1}\left(M \mathbf{V}_d\left(\mathbf{V}_d^{-1}\left(L \mathbf{V}_d(r)\right)\right)\right) = \mathbf{V}_d^{-1}\left(ML \mathbf{V}_d(r)\right) = s_{ML}(r).
    \end{equation}
    In particular this means $s_L^{-1} = s_{1/L}$. To give a picture of what the function $s_L(r)$ actually looks like, it is elementary to evaluate the asymptotics of $s_L(r)$. For sequences $\left\{r_n\right\}_{n\in\N}$ and $\left\{L_n\right\}_{n\in\N}$ such that $L_n\to\infty$,
    \begin{equation}
        s_{L_n}(r_n) \sim \begin{cases}
            L_n^\frac{1}{d}r_n &\text{if } r_n\ll L_n^{-\frac{1}{d}},\\
            \frac{1}{d-1}\log L_n &\text{if } L_n^{-\frac{1}{d}}\ll r_n\ll \log L_n,\\
            r_n &\text{if } r_n\gg \log L_n,
        \end{cases}
    \end{equation}
    as $n\to\infty$.
    
    Recall that we refer to the family of functions $\left\{\sigma_L\right\}_{L>0}$ as a scaling function if all $\sigma_L\colon\R_+\to \R_+$ are increasing bijections such that
\begin{itemize}
    \item $\sigma_1(r)=r$,
    \item for all $r>0$, $L\mapsto \sigma_L(r)$ is increasing and $\lim_{L\to\infty}\sigma_L(r)=\infty$,
\end{itemize} Also recall that Assumption~\ref{assump:finitelymany} required of the scaling and adjacency function that
    \begin{equation}
    \label{eqn:EigenValueRatioV2}
        \lim_{L\to\infty}\frac{\max_{a,b\in\Ecal}\int^R_0\connf_L\left(r;a,b\right) \left(\sinh r\right)^{d-1}\dd r}{\max_{a,b\in\Ecal}\int^\infty_0\connf_L\left(r;a,b\right) \left(\sinh r\right)^{d-1}\dd r} = 0
    \end{equation}
    for all $R<\infty$.
    \begin{lemma}
    \label{lem:VolumeLinearNice}
        Suppose $\#\Ecal<\infty$. For all $\connf$ such that $\max_{a,b\in\Ecal}\int^\infty_0\connf\left(r;a,b\right)\dd r>0$, the volume-linear scaling function satisfies \eqref{eqn:EigenValueRatioV2}.
    \end{lemma}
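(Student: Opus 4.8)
The plan is to rewrite both integrals appearing in \eqref{eqn:EigenValueRatioV2} in terms of the reference adjacency function $\connf$ using the defining property of the volume-linear scaling, and then to observe that the numerator stays bounded uniformly in $L$ while the denominator grows linearly in $L$.

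First I would handle the denominator. The volume-linear scaling satisfies $\mathbf{V}_d\left(s_L(u)\right) = L\mathbf{V}_d(u)$, so the substitution $u = s_L^{-1}(r)$ turns $\left(\sinh r\right)^{d-1}\dd r = \dd\mathbf{V}_d\left(s_L(u)\right)$ into $L\left(\sinh u\right)^{d-1}\dd u$ (equivalently, one may invoke \eqref{eqn:volumeScalingProperty} with $\phi = \connf(\cdot\,;a,b)$). Either way,
\begin{equation}
    \int^\infty_0\connf_L\left(r;a,b\right)\left(\sinh r\right)^{d-1}\dd r = L\int^\infty_0\connf\left(r;a,b\right)\left(\sinh r\right)^{d-1}\dd r
\end{equation}
for all $a,b\in\Ecal$. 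Since $\Ecal$ is finite, writing $c := \max_{a,b\in\Ecal}\int^\infty_0\connf(r;a,b)\left(\sinh r\right)^{d-1}\dd r$ and taking the maximum gives
\begin{equation}
    \max_{a,b\in\Ecal}\int^\infty_0\connf_L\left(r;a,b\right)\left(\sinh r\right)^{d-1}\dd r = Lc .
\end{equation}
Here $c>0$: the measures $\dd r$ and $\left(\sinh r\right)^{d-1}\dd r$ are mutually absolutely continuous on $\left(0,\infty\right)$, so for each fixed pair $a,b$ positivity of $\int^\infty_0\connf(r;a,b)\dd r$ is equivalent to positivity of $\int^\infty_0\connf(r;a,b)\left(\sinh r\right)^{d-1}\dd r$, and the hypothesis $\max_{a,b\in\Ecal}\int^\infty_0\connf(r;a,b)\dd r>0$ over the finite set $\Ecal$ then forces $c>0$.

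Next, the numerator is bounded trivially: since $\connf_L\le 1$ pointwise,
\begin{equation}
    \max_{a,b\in\Ecal}\int^R_0\connf_L\left(r;a,b\right)\left(\sinh r\right)^{d-1}\dd r \leq \int^R_0\left(\sinh r\right)^{d-1}\dd r = \mathbf{V}_d(R) < \infty ,
\end{equation}
independently of $L$. Combining the two estimates, the ratio in \eqref{eqn:EigenValueRatioV2} is at most $\mathbf{V}_d(R)/(Lc)$, which tends to $0$ as $L\to\infty$ (and if $c=+\infty$ the denominator is infinite for every $L$ and the ratio vanishes identically). Since $R<\infty$ was arbitrary, the lemma follows. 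The whole argument is essentially a one-line change of variables; the only point that needs a moment's care is the deduction $c>0$ from the $\dd r$-hypothesis together with the harmless bookkeeping in the degenerate case where some of the integrals are infinite, so I do not expect a genuine obstacle here.
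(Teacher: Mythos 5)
Your proof is correct and follows essentially the same route as the paper: bound the numerator by $\mathbf{V}_d(R)$ using $\connf_L\le 1$, use the volume-linear scaling identity to write the denominator as $L$ times an $L$-independent positive constant, and conclude that the ratio is $\LandauBigO{1/L}$. The extra remarks on transferring positivity from $\dd r$ to $\left(\sinh r\right)^{d-1}\dd r$ and on the degenerate infinite case are fine and only make explicit what the paper leaves implicit.
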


    \begin{proof}
        Our bound for the numerator in \eqref{eqn:EigenValueRatioV2} is independent of the scaling function. For all $a,b\in \Ecal$ and $R\in\left[0,\infty\right]$,
        \begin{equation}
            \int^R_0\connf_L\left(r;a,b\right)\left(\sinh r\right)^{d-1}\dd r \leq \mathbf{V}_d\left(R\right).
        \end{equation}
        
        On the other hand, for all $a,b\in\Ecal$ the volume-linear scaling function means
        \begin{equation}
            \int^\infty_0\connf_L\left(r;a,b\right)\left(\sinh r\right)^{d-1}\dd r = L\int^\infty_0\connf\left(r;a,b\right)\left(\sinh r\right)^{d-1}\dd r.
        \end{equation}

        Therefore, if $\max_{a,b\in\Ecal}\int^\infty_0\connf\left(r;a,b\right)\dd r>0$ then $\max_{a,b\in\Ecal}\int^\infty_0\connf\left(r;a,b\right)\left(\sinh r\right)^{d-1}\dd r>0$ and
        \begin{multline}
            \limsup_{L\to\infty}\frac{\max_{a,b\in\Ecal}\int^R_0\connf_L\left(r;a,b\right) \left(\sinh r\right)^{d-1}\dd r}{\max_{a,b\in\Ecal}\int^\infty_0\connf_L\left(r;a,b\right) \left(\sinh r\right)^{d-1}\dd r} \\\leq \frac{\mathbf{V}_d\left(R\right)}{\max_{a,b\in\Ecal}\int^\infty_0\connf\left(r;a,b\right)\left(\sinh r\right)^{d-1}\dd r}\lim_{L\to\infty}\frac{1}{L}=0.
        \end{multline}
    \end{proof}

An alternative natural option for a scaling function is the length-linear scaling function $\sigma_L(r) = L r$. However, the volume scaling effects of this simple length-linear scaling are rather complicated. In Euclidean $\Rd$, if we scale lengths by $L$ then we scale volumes by $L^d$. Specifically, if we have a measurable function $\phi\colon \R_+\to \R_+$ then
    \begin{equation}
        \int_{\Rd}\phi\left(\frac{\abs*{x}}{L}\right)\dd x = L^d \int_{\Rd}\phi\left(\abs*{x}\right)\dd x.
    \end{equation}
    On the other hand, there is no function $f\colon \R_+\to \R_+$ such that 
    \begin{equation}
        \int_\HypDim\phi\left(\frac{\dist{x,\orig}}{L}\right)\mu\left(\dd x\right) = f(L)\int_\HypDim\phi\left(\dist{x,\orig}\right)\mu\left(\dd x\right)
    \end{equation}   
    for all $\phi$. 
    
    \begin{lemma}
    \label{lem:LenthLinearNice}
        Suppose $\#\Ecal<\infty$. For all $\connf$ such that $\max_{a,b\in\Ecal}\int^\infty_0\connf\left(r;a,b\right)\dd r>0$, the length-linear scaling function satisfies \eqref{eqn:EigenValueRatioV2}.
    \end{lemma}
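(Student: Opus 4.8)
The plan is to follow the same two-part strategy as the proof of Lemma~\ref{lem:VolumeLinearNice}: the numerator in \eqref{eqn:EigenValueRatioV2} is bounded above by a constant that does not depend on $L$, while the denominator diverges as $L\to\infty$. For the numerator, since $\connf_L\leq 1$ we have, exactly as in the volume-linear case,
\[
    \int^R_0\connf_L\left(r;a,b\right)\left(\sinh r\right)^{d-1}\dd r \leq \int^R_0\left(\sinh r\right)^{d-1}\dd r = \mathbf{V}_d\left(R\right)
\]
for all $L>0$ and all $a,b\in\Ecal$, so $\max_{a,b\in\Ecal}\int^R_0\connf_L\left(r;a,b\right)\left(\sinh r\right)^{d-1}\dd r\leq\mathbf{V}_d(R)$ uniformly in $L$.

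For the denominator I would change variables by $u=r/L$ (for the length-linear scaling $\sigma_L^{-1}(r)=r/L$), obtaining
\[
    \int^\infty_0\connf_L\left(r;a,b\right)\left(\sinh r\right)^{d-1}\dd r = L\int^\infty_0\connf\left(u;a,b\right)\left(\sinh Lu\right)^{d-1}\dd u.
\]
By hypothesis there exist $a^*,b^*\in\Ecal$ with $\int^\infty_0\connf\left(u;a^*,b^*\right)\dd u>0$, so there is $\delta>0$ for which the superlevel set $\left\{u>0\colon\connf(u;a^*,b^*)\geq\delta\right\}$ has positive Lebesgue measure. Since $\Leb\left(\left\{u\leq c\right\}\cap\left\{\connf(\cdot;a^*,b^*)\geq\delta\right\}\right)\to0$ as $c\searrow0$, one can fix a small $c>0$ so that $A:=\left\{u>c\colon\connf(u;a^*,b^*)\geq\delta\right\}$ still has positive measure $m$. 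On $A$ we have $\sinh(Lu)\geq\sinh(Lc)$, hence the quantity above evaluated at $(a^*,b^*)$ is at least $L\,\delta\,m\,\left(\sinh Lc\right)^{d-1}$, which tends to $\infty$ with $L$ since both $L\to\infty$ and $\sinh(Lc)\to\infty$. As $a^*,b^*$ are fixed (independent of $L$), the same lower bound holds for $\max_{a,b\in\Ecal}\int^\infty_0\connf_L\left(r;a,b\right)\left(\sinh r\right)^{d-1}\dd r$.

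Combining the two bounds, the ratio in \eqref{eqn:EigenValueRatioV2} is at most $\mathbf{V}_d(R)\big/\bigl(L\,\delta\,m\,(\sinh Lc)^{d-1}\bigr)$, which converges to $0$ as $L\to\infty$; this gives \eqref{eqn:EigenValueRatioV2}. The one point that requires care — and the main, if minor, obstacle — is the truncation step: a priori the mass of $\connf(\cdot;a^*,b^*)$ could be concentrated near the origin, where $\sinh(Lu)$ is not large, so one must first discard a neighbourhood of $0$ while retaining positive mass before the factor $(\sinh Lc)^{d-1}$ can be exploited. Everything else is a routine change of variables together with the bound $\connf\leq1$.
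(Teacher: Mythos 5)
Your proof is correct, and it follows the same overall template as the paper's (and as Lemma~\ref{lem:VolumeLinearNice}): bound the numerator by $\mathbf{V}_d(R)$ uniformly in $L$, then show the denominator diverges. The only difference is in how the divergence of the denominator is obtained. The paper uses the one-line observation that for the length-linear scaling and $L\geq 1$ one has $\left(\sinh(Lr)\right)^{d-1}\geq\left(\sinh r\right)^{d-1}$, so after the change of variables the denominator is at least $L\max_{a,b\in\Ecal}\int^\infty_0\connf\left(r;a,b\right)\left(\sinh r\right)^{d-1}\dd r$, a positive constant times $L$ (positivity being immediate from the hypothesis since $\sinh r>0$ for $r>0$); the ratio is then $\LandauBigO{1/L}$. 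You instead extract a superlevel set of $\connf(\cdot;a^*,b^*)$, truncate it away from the origin, and obtain the lower bound $L\,\delta\, m\left(\sinh Lc\right)^{d-1}$. This is a perfectly valid (indeed stronger, superlinear) bound, and your truncation step is handled correctly — concentration of mass near $0$ is exactly the issue one must rule out if one wants the $\left(\sinh Lc\right)^{d-1}$ factor, though the paper's monotonicity shortcut sidesteps it entirely. One cosmetic point: if your set $A$ has infinite Lebesgue measure the bound is still fine (the denominator is then infinite), but it is cleaner to pass to a bounded subset of positive finite measure before naming $m$.
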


    \begin{proof}
        As in the proof of Lemma~\ref{lem:VolumeLinearNice}, for all $a,b\in \Ecal$ and $R\in\left[0,\infty\right]$,
        \begin{equation}
            \int^R_0\connf_L\left(r;a,b\right)\left(\sinh r\right)^{d-1}\dd r \leq \mathbf{V}_d\left(R\right).
        \end{equation}

        On the other hand, for all $a,b\in\Ecal$ the length-linear scaling function means
    \begin{multline}
        \int^\infty_0\connf_L\left(r;a,b\right)\left(\sinh r\right)^{d-1}\dd r = L\int^\infty_0\connf\left(r;a,b\right)\left(\sinh L r\right)^{d-1}\dd r \\\geq L\int^\infty_0\connf\left(r;a,b\right)\left(\sinh r\right)^{d-1}\dd r
    \end{multline}
    for $L\geq 1$. The proof then concludes similarly as for Lemma~\ref{lem:VolumeLinearNice}.
    \end{proof}

    It is also worth noting that there are also some adjacency functions for which \eqref{eqn:EigenValueRatioV2} holds for any scaling function.
    \begin{lemma}
    \label{lem:adjacencyNearOrigin}
        Suppose $\#\Ecal<\infty$ and that 
        \begin{equation}
        \label{eqn:adjacencyNearOrigin}
            \liminf_{r\searrow 0}\max_{a,b\in\Ecal}\connf\left(r;a,b\right)>0,
        \end{equation}
    Then for any scaling function \eqref{eqn:EigenValueRatioV2} holds.
    \end{lemma}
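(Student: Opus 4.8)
The plan is to reuse the two-sided estimate that already drives Lemmata~\ref{lem:VolumeLinearNice} and \ref{lem:LenthLinearNice}. For the numerator of \eqref{eqn:EigenValueRatioV2}, the bound $\connf_L\leq 1$ gives $\int_0^R\connf_L(r;a,b)(\sinh r)^{d-1}\dd r\leq \mathbf{V}_d(R)$ uniformly in $L$ and in $a,b\in\Ecal$, so the whole problem reduces to showing that the denominator $\max_{a,b\in\Ecal}\int_0^\infty\connf_L(r;a,b)(\sinh r)^{d-1}\dd r$ diverges as $L\to\infty$. This is precisely the role played here by hypothesis \eqref{eqn:adjacencyNearOrigin}, which replaces the scaling-specific lower bounds used in the two earlier lemmas.

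First I would unpack the hypothesis: $\liminf_{r\searrow 0}\max_{a,b\in\Ecal}\connf(r;a,b)>0$ yields constants $c>0$ and $\delta>0$ with $\max_{a,b\in\Ecal}\connf(r;a,b)>c$ for every $r\in(0,\delta)$. Since $\sigma_L$ is an increasing bijection of $\R_+$ (so $\sigma_L(0)=0$ and $\sigma_L\big((0,\delta)\big)=(0,\sigma_L(\delta))$), it follows that $\max_{a,b\in\Ecal}\connf_L(r;a,b)=\max_{a,b\in\Ecal}\connf(\sigma_L^{-1}(r);a,b)>c$ for every $r\in(0,\sigma_L(\delta))$. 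The one genuine subtlety — which I would flag as the main (albeit minor) obstacle — is that in the denominator the maximum over $(a,b)$ sits \emph{outside} the integral, whereas the pointwise bound just produced a maximising pair depending on $r$. I would resolve this by a pigeonhole/averaging step over the finitely many pairs:
\[
\max_{a,b\in\Ecal}\int_0^\infty\connf_L(r;a,b)(\sinh r)^{d-1}\dd r \ \geq\ \frac{1}{(\#\Ecal)^2}\int_0^\infty\Big(\sum_{a,b\in\Ecal}\connf_L(r;a,b)\Big)(\sinh r)^{d-1}\dd r,
\]
and then use $\sum_{a,b\in\Ecal}\connf_L(r;a,b)\geq \max_{a,b\in\Ecal}\connf_L(r;a,b)\geq c\,\Id_{\{r<\sigma_L(\delta)\}}$ to obtain the lower bound $\frac{c}{(\#\Ecal)^2}\int_0^{\sigma_L(\delta)}(\sinh r)^{d-1}\dd r=\frac{c}{(\#\Ecal)^2}\mathbf{V}_d(\sigma_L(\delta))$.

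Combining the two estimates, the ratio in \eqref{eqn:EigenValueRatioV2} is at most $(\#\Ecal)^2\,\mathbf{V}_d(R)\big/\big(c\,\mathbf{V}_d(\sigma_L(\delta))\big)$. Since $L\mapsto \sigma_L(\delta)$ tends to $\infty$ as $L\to\infty$ (a defining property of a scaling function, applied at the fixed point $r=\delta>0$) and $\mathbf{V}_d$ is a bijection of $\R_+$ onto $\R_+$, the denominator diverges, so the ratio tends to $0$ for every fixed $R<\infty$, which is the claim. I would close by remarking that only the "increasing bijection of $\R_+$" and "$\sigma_L(r)\to\infty$" properties were used, so the conclusion indeed holds for an arbitrary scaling function, as stated.
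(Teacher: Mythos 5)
Your proof is correct and takes essentially the same route as the paper: bound the numerator uniformly by $\mathbf{V}_d(R)$ and show the denominator diverges, using the hypothesis near $r=0$ together with $\sigma_L(\delta)\to\infty$. The only difference is in implementation: the paper gets the divergence via Fatou's lemma applied to $\liminf_{L\to\infty}\max_{a,b\in\Ecal}\connf_L(r;a,b)$, whereas you extract explicit constants $c,\delta$ and average over the finitely many mark pairs to obtain the quantitative lower bound $\tfrac{c}{(\#\Ecal)^2}\mathbf{V}_d\left(\sigma_L(\delta)\right)$, which is an equally valid (and slightly more explicit) way to handle the maximum sitting outside the integral.
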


    \begin{proof}
        As in the previous lemmata, the numerator in \eqref{eqn:EigenValueRatioV2} is bounded by $\mathbf{V}_d\left(R\right)$, and we aim to show the denominator diverges. 

        Since $\lim_{L\to\infty}\sigma_L(r)=\infty$ for any $r>0$, $\lim_{L\to\infty}\sigma^{-1}_L(r)=0$ for any $r>0$ and 
        \begin{equation}
            \liminf_{L\to\infty}\connf_L\left(r_\star;a,b\right) = \liminf_{r\to0}\connf\left(r;a,b\right)
        \end{equation}
        for all $a,b\in\Ecal$ and $r_*>0$. Then by Fatou's lemma and our suppositions,
        \begin{multline}
            \liminf_{L\to\infty}\max_{a,b\in\Ecal}\int^\infty_0\connf_L\left(r;a,b\right)\left(\sinh r\right)^{d-1}\dd r \\\geq \int^\infty_0\left(\liminf_{L\to\infty}\max_{a,b\in\Ecal}\connf_L\left(r;a,b\right)\right)\left(\sinh r\right)^{d-1}\dd r = \infty
        \end{multline}
        as required.
    \end{proof}
    
    \begin{lemma}
        Suppose $\#\Ecal<\infty$, $\max_{a,b\in\Ecal}\int^\infty_0\connf\left(r;a,b\right)\dd r>0$, and that there exists $\varepsilon>0$ such that
            \begin{equation}
        \max_{a,b\in\Ecal}\esssup_{r<\varepsilon}\connf\left(r;a,b\right) = 0.
    \end{equation}
    Then for any scaling function \eqref{eqn:EigenValueRatioV2} holds.
    \end{lemma}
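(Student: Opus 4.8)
The plan is to argue that, in contrast to the earlier lemmas of this family where one only gets the numerator to be \emph{small}, here the numerator of \eqref{eqn:EigenValueRatioV2} is in fact \emph{identically zero} once $L$ is large, while the denominator stays strictly positive; the quotient is then literally $0$ for all large $L$, so its limit is $0$.

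First I would unpack the hypothesis $\max_{a,b\in\Ecal}\esssup_{r<\varepsilon}\connf(r;a,b)=0$: it says $\connf(r;a,b)=0$ for (a.e.) $r<\varepsilon$ and every $a,b\in\Ecal$. By the definition $\connf_L(r;a,b)=\connf\bigl(\sigma_L^{-1}(r);a,b\bigr)$ of the scaled adjacency function, this forces $\connf_L(r;a,b)=0$ whenever $\sigma_L^{-1}(r)<\varepsilon$, i.e.\ whenever $r<\sigma_L(\varepsilon)$. Since $\{\sigma_L\}_{L>0}$ is a scaling function we have $\lim_{L\to\infty}\sigma_L(\varepsilon)=\infty$, so for every $L$ large enough that $\sigma_L(\varepsilon)>R$ the integrand $\connf_L(r;a,b)(\sinh r)^{d-1}$ vanishes on $(0,R)$ for all $a,b\in\Ecal$, whence
\[
    \max_{a,b\in\Ecal}\int_0^R\connf_L(r;a,b)\left(\sinh r\right)^{d-1}\,\dd r=0 .
\]

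Next I would check that the denominator cannot vanish. Choose $a_0,b_0\in\Ecal$ with $\int_0^\infty\connf(r;a_0,b_0)\,\dd r>0$; then $\connf(\cdot;a_0,b_0)$ is positive on a set of positive Lebesgue measure, which (up to a null set) lies in $[\varepsilon,\infty)$. Consequently $\connf_L(\cdot;a_0,b_0)=\connf\bigl(\sigma_L^{-1}(\cdot);a_0,b_0\bigr)$ is positive on a set of positive measure contained in $[\sigma_L(\varepsilon),\infty)$, and since $(\sinh r)^{d-1}>0$ for $r>0$ (indeed $\sinh r\ge\sinh\sigma_L(\varepsilon)>0$ there), we get
\[
    \max_{a,b\in\Ecal}\int_0^\infty\connf_L(r;a,b)\left(\sinh r\right)^{d-1}\,\dd r
    \;\ge\; \int_0^\infty\connf_L(r;a_0,b_0)\left(\sinh r\right)^{d-1}\,\dd r \;>\;0 .
\]
Combining the two displays, the ratio in \eqref{eqn:EigenValueRatioV2} equals $0$ for all sufficiently large $L$, so its limit is $0$, as required.

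I do not expect a genuine obstacle here — this is the most elementary lemma of the group, precisely because the numerator is eventually exactly zero rather than merely $\LandauBigO{1}$, so no Fatou/integrability input is needed. The only point that a careful write-up should handle is the passage of the Lebesgue-almost-everywhere statements (the vanishing of $\connf$ near the origin, and the positivity used for the denominator) through the change of variables $r\mapsto\sigma_L^{-1}(r)$; this is automatic whenever the scaling function — equivalently its inverse — carries Lebesgue-null sets to Lebesgue-null sets, which holds for every scaling function that actually arises (length-linear, volume-linear, or any absolutely continuous $\sigma_L$), and in any case the degenerate alternative is simply that the scaled model has (essentially) no edges.
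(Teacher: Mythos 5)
Your proposal is correct and follows essentially the same route as the paper: for $L$ large enough that $\sigma_L(\varepsilon)>R$ the numerator of \eqref{eqn:EigenValueRatioV2} vanishes identically, while the assumption $\max_{a,b\in\Ecal}\int_0^\infty\connf(r;a,b)\,\dd r>0$ keeps the denominator strictly positive, so the ratio is $0$ for all large $L$. Your extra remark about pushing the almost-everywhere statements through $\sigma_L^{-1}$ is a point the paper passes over silently, but it does not change the argument.
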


    \begin{proof}
        Given $R<\infty$, let $L_0=L_0(R)$ satisfy $\sigma_{L_0}\left(\varepsilon\right)> R$. Then for all $L\geq L_0$
        \begin{equation}
            \max_{a,b\in\Ecal}\int^R_0\connf_L\left(r;a,b\right)\left(\sinh r\right)^{d-1}\dd r = 0.
        \end{equation}
        The assumption $\max_{a,b\in\Ecal}\int^\infty_0\connf\left(r;a,b\right)\dd r>0$ then ensures that the denominator of \eqref{eqn:EigenValueRatioV2} is strictly positive for all $L>0$, and therefore the result follows.
    \end{proof}

One may initially (and incorrectly) expect that because scaling functions turn short edges into long edges the expected degree of a vertex will diverge. While this is true for the volume-linear and length-linear scaling functions, and for adjacency functions satisfying the condition \eqref{eqn:adjacencyNearOrigin} in Lemma~\ref{lem:adjacencyNearOrigin}, the following example shows that it is not true in general.

\begin{example}\label{expl:annulus}
    Let $\Ecal$ be a singleton (so we can neglect it from the notation) and consider the adjacency function
    \begin{equation}
        \connf\left(r\right) = \Id_{\left\{1<r<2\right\}}
    \end{equation}
    and scaling function
    \begin{equation}
        \sigma_L\left(r\right) = \begin{cases}
            Lr &\colon r\leq 1,\\
            L + a_L(r-1) &\colon 1< r\leq \frac{L}{1-a_L},\\
            r &\colon r>\frac{L}{1-a_L},
        \end{cases}
    \end{equation}
    where $a_L = o\left(\e^{-L\left(d-1\right)}\right)$.

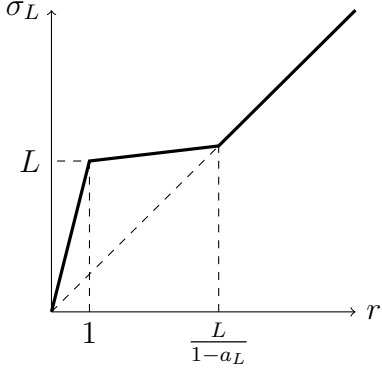
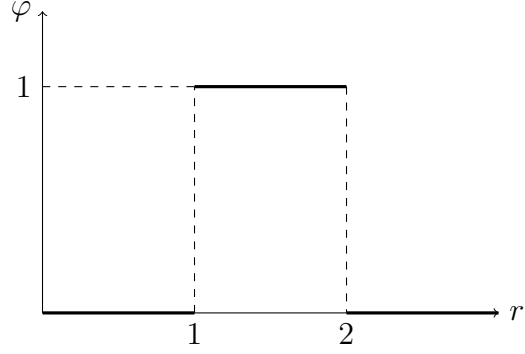
\begin{figure}
    \centering
    \begin{subfigure}{0.33\textwidth}
    \centering
    \begin{tikzpicture}
        \draw[->] (0,0) -- (0,4) node[left]{$\sigma_L$};
        \draw[->] (0,0) -- (4,0) node[right]{$r$};
        \draw[dashed] (0,0)--(4,4);
        \draw[very thick] (0,0) -- (0.5,2) -- (2.2,2.2) -- (4,4);
        \draw[dashed] (0.5,0) node[below]{$1$} -- (0.5,2) -- (0,2) node[left]{$L$};
        \draw[dashed] (2.2,0) node[below]{$\frac{L}{1-a_L}$} -- (2.2,2.2);
    \end{tikzpicture}
    \caption{Example scaling function}
    \end{subfigure}
    \hfill
    \begin{subfigure}{0.65\textwidth}
    \centering
    \begin{tikzpicture}[scale=2]
        \draw[->] (0,0) -- (0,2) node[left]{$\connf$};
        \draw[->] (0,0) -- (3,0) node[right]{$r$};
        \draw[very thick] (1,1.5) -- (2,1.5);
        \draw[very thick] (0,0) -- (1,0);
        \draw[very thick] (2,0) -- (3,0);
        \draw[dashed] (1,1.5) -- (0,1.5) node[left]{$1$};
        \draw[dashed] (1,0) node[below]{$1$} -- (1,1.5);
        \draw[dashed] (2,1.5) -- (2,0) node[below]{$2$};
    \end{tikzpicture}
    \caption{Example adjacency function}
    \end{subfigure}
    \caption{Sketches of the example scaling and adjacency functions in Example~\ref{expl:annulus}.}
    \label{fig:ExampleAnnulus}
\end{figure}

    \begin{lemma}
        For this example,
        \begin{equation}
            \lim_{L\to\infty}\mathbb{E}_{\lambda,L}\left[\#\left\{x\in\eta\colon \adja{\orig}{x}{\xi^\orig}\right\}\right] = 0.
        \end{equation}
    \end{lemma}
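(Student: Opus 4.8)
The plan is to reduce everything to an explicit one–dimensional integral via Mecke's formula and then to observe that, for large $L$, the adjacency function $\connf_L$ is supported on a very thin annulus whose (exponentially large) volume is still killed by the tiny width $a_L$. Concretely, by Lemma~\ref{lem:Mecke} and the rotational symmetry of the model,
\begin{equation}
    \mathbb{E}_{\lambda,L}\left[\#\left\{x\in\eta\colon \adja{\orig}{x}{\xi^\orig}\right\}\right] = \lambda\int_{\HypDim}\connf_L\left(\dist{x,\orig}\right)\mu\left(\dd x\right) = \lambda\mathfrak{S}_{d-1}\int^\infty_0\connf_L(r)\left(\sinh r\right)^{d-1}\dd r .
\end{equation}
The first step is therefore purely to control this integral.

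The second step is to compute $\connf_L$ explicitly. Since $\connf_L(r)=\connf(\sigma_L^{-1}(r))=\Id_{\{1<\sigma_L^{-1}(r)<2\}}=\Id_{\{\sigma_L(1)<r<\sigma_L(2)\}}$, I need the values $\sigma_L(1)$ and $\sigma_L(2)$. For $L$ sufficiently large we have $2<\tfrac{L}{1-a_L}$, so both $r=1$ and $r=2$ fall in the middle branch of $\sigma_L$ (with $\sigma_L(1)=L$ coming from either the first or the middle branch), giving $\sigma_L(1)=L$ and $\sigma_L(2)=L+a_L$. Hence, for all large $L$,
\begin{equation}
    \connf_L(r)=\Id_{\left\{L<r<L+a_L\right\}},
\end{equation}
so the support of $\connf_L$ is a spherical annulus of inner radius $L\to\infty$ and width $a_L\to 0$.

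The third step is the estimate. Using $a_L\le 1$ for large $L$ and $\sinh t\le \tfrac12 e^{t}$,
\begin{equation}
    \int^\infty_0\connf_L(r)\left(\sinh r\right)^{d-1}\dd r = \int^{L+a_L}_{L}\left(\sinh r\right)^{d-1}\dd r \le a_L\left(\sinh(L+1)\right)^{d-1}\le \frac{e^{d-1}}{2^{d-1}}\,a_L\,e^{(d-1)L}.
\end{equation}
By hypothesis $a_L=o\!\left(e^{-(d-1)L}\right)$, so the right-hand side tends to $0$; multiplying by the fixed constant $\lambda\mathfrak{S}_{d-1}$ gives the claim. I do not expect any real obstacle here: the only points requiring (minor) care are verifying which branch of the piecewise-linear $\sigma_L$ is active at $r=1,2$ for large $L$, and noting that the exponential volume growth of hyperbolic balls, $(\sinh r)^{d-1}\asymp e^{(d-1)r}$, is exactly the growth rate that $a_L$ has been chosen to beat — this is the whole point of the example, showing that "stretching short edges into long edges" need not increase (indeed here drives to zero) the expected degree.
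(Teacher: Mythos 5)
Your proposal is correct and is essentially the paper's argument: both reduce via Mecke's formula to $\lambda\mathfrak{S}_{d-1}\int_0^\infty\connf_L(r)(\sinh r)^{d-1}\dd r$ and then observe that the scaled adjacency function lives on the thin annulus from $\sigma_L(1)=L$ to $\sigma_L(2)=L+a_L$, whose exponentially large volume $\asymp \e^{(d-1)L}$ is beaten by the width $a_L=o(\e^{-(d-1)L})$. The only cosmetic difference is that you identify $\connf_L=\Id_{\{L<r<L+a_L\}}$ directly, whereas the paper performs the change of variables $r=\sigma_L(y)$ and uses $\sigma'_L\equiv a_L$ on $(1,2)$ — the same computation in different clothing.
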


    \begin{proof}
    By Mecke's formula and the substitution $r=\sigma_L(y)$,
    \begin{multline}
        \mathbb{E}_{\lambda,L}\left[\#\left\{x\in\eta\colon \adja{\orig}{x}{\xi^\orig}\right\}\right] = \lambda\mathfrak{S}_{d-1}\int^\infty_0\connf_L\left(r\right)\left(\sinh r\right)^{d-1}\dd r \\= \lambda\mathfrak{S}_{d-1}\int^2_1\sigma'_L\left(y\right)\left(\sinh \sigma_L(y)\right)^{d-1}\dd y.
    \end{multline}
    The scaling function $\sigma_L$ is simple to differentiate (almost everywhere) and find
    \begin{equation}
        \sigma'\left(r\right) = \begin{cases}
            L &\colon r\leq 1,\\
            a_L &\colon 1< r\leq \frac{L}{1-a_L},\\
            1 &\colon r>\frac{L}{1-a_L},
        \end{cases}
    \end{equation}
    Therefore
    \begin{equation}
        \int^2_1\sigma'_L\left(y\right)\left(\sinh \sigma_L(y)\right)^{d-1}\dd y = a_L\int^2_1\left(\sinh \left(L + a_L(y-1)\right)\right)^{d-1}\dd y \leq 2a_L\e^{L\left(d-1\right)}\to 0,
    \end{equation}
    where the inequality holds for sufficiently large $L$ (so $a_L\leq \frac{\log 2}{d-1}$).
    \end{proof}
\end{example}

The following example demonstrates that condition \eqref{eqn:EigenValueRatioV2} does not follow from $\sigma_L$ being a scaling function, and it is indeed a required part of Assumption~\ref{assump:finitelymany}.

\begin{example}\label{expl:manyAnnulii}
    Let $\Ecal$ be a singleton (so we can neglect it from the notation) and consider the adjacency function
    \begin{equation}
        \connf\left(r\right) = \sum^\infty_{i=0}\Id_{\left\{2^{-2i-1}<r<2^{-2i}\right\}}.
    \end{equation}
    Before defining our scaling function, we define a  `dummy' (continuous) scaling function $\overline{\sigma}_L$ by its derivative. Fix $R\in\left(0,\infty\right)$ and for $L\geq R$ let
    \begin{equation}
        \overline{\sigma}'_L\left(r\right) = \begin{cases}
        L &\colon r<\frac{R}{L},\\
        1 &\colon \exists i\in\N_0 \colon r\in\left(2^{-2i-1},2^{-2i}\right)\cap\left(\frac{R}{L},1\right),\\
        2^{2i+2} &\colon \exists i\in\N_0 \colon r\in\left(2^{-2i-2},2^{-2i-1}\right)\cap\left(\frac{R}{L},1\right),\\
        1 &\colon r>1.
        \end{cases}
    \end{equation}
    Then we can choose $a_L\in\left(0,1\right)$ such that
    \begin{equation}
    \label{eqn:SizeofA_L}
        a_L = o\left(\frac{1}{\int^1_{\frac{1}{3}}\left(\sinh \overline{\sigma}_L(r)\right)^{d-1}\dd r}\right)
    \end{equation}
    as $L\to\infty$. Now for $L\geq R$ define the true (continuous) scaling function $\sigma_L$ by its derivative:
    \begin{equation}
        \sigma'_L\left(r\right) = \begin{cases}
        L &\colon r<\frac{R}{L},\\
        a_L &\colon \exists i\in\N_0 \colon r\in\left(2^{-2i-1},2^{-2i}\right)\cap\left(\frac{R}{L},1\right),\\
        2^{2i+2} &\colon \exists i\in\N_0 \colon r\in\left(2^{-2i-2},2^{-2i-1}\right)\cap\left(\frac{R}{L},1\right),\\
        1 &\colon r>1.
        \end{cases}
    \end{equation}
    Observe that $\overline{\sigma}_L(r) \geq \sigma_L(r)$ for all $r\geq 0$. The idea behind this scaling function is that for $r<\sigma_L^{-1}\left(R\right)$ the scaling function is just the normal length-linear scaling function. Then for larger $r$ the scaling function is a ``ladder" that is very shallow when $\connf(r)>0$ and very steep when $\connf(r)=0$ (the derivative for $r> 1$ is unimportant other than being $\geq 1$). The precise steepness of the steep segments is chosen so that the total increase in $\sigma_L$ on each of these segments is exactly $1$, and the dummy scaling function was introduced so that we know how shallow the shallow segments needed to be.

    Let us check that $\sigma_L$ is indeed a scaling function. It is clearly strictly increasing in $r$ and diverges as $r\to\infty$. As $L$ increases, the transition point $\frac{R}{L}$ decreases towards $0$, and therefore more and more shallow intervals are revealed. Since each steep interval adds $1$ to $\sigma_L(r)$, we have $\sigma_L(r)\to\infty$ as $L\to\infty$ for all $r>0$. The same argument also shows that $\overline{\sigma}_L$ is a scaling function and in particular that $\lim_{L\to\infty} \overline{\sigma}_L(r)=\infty$ for all $r>0$. This then shows that necessarily $a_L\to0$ as $L\to\infty$.

\begin{figure}
    \centering
    \includegraphics[width=\linewidth]{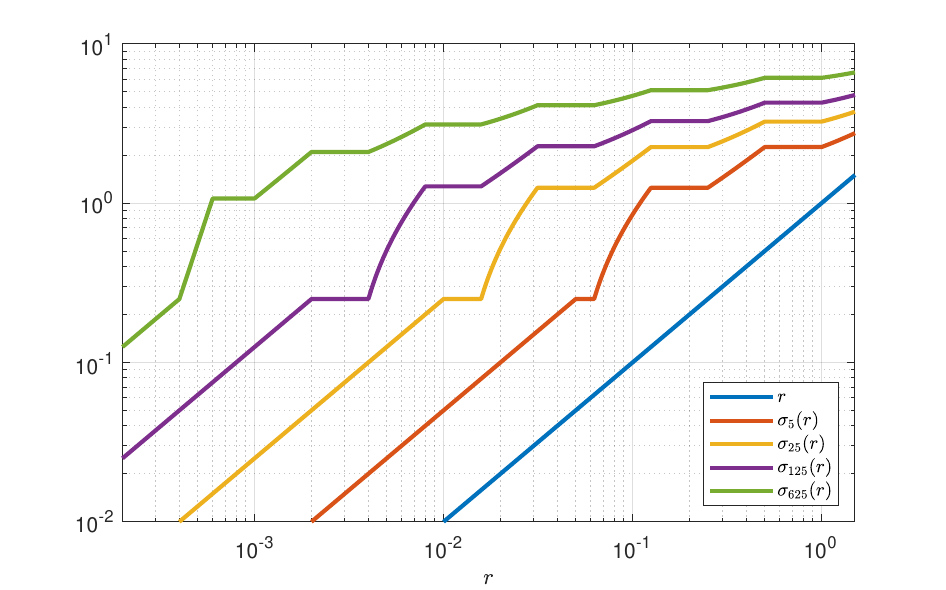}
    \caption{Plot of the scaling function $\sigma_L$ in Example~\ref{expl:manyAnnulii} for $R=0.25$, using MATLAB.}
    \label{fig:ExampleManyAnnulus}
\end{figure}

    Now let $\connf_L(r)$ be the scaled adjacency function using the true scaling function $\sigma_L$ (not the dummy scaling function $\overline{\sigma}_L$). 

    \begin{lemma}
    \label{lem:BadExample}
        For this example, \eqref{eqn:EigenValueRatioV2} does \emph{not} hold.
    \end{lemma}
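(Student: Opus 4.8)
The plan is to compute the numerator $N_L:=\int_0^R\connf_L(r)(\sinh r)^{d-1}\dd r$ and the denominator $D_L:=\int_0^\infty\connf_L(r)(\sinh r)^{d-1}\dd r$ of the ratio appearing in \eqref{eqn:EigenValueRatioV2} (the maxima over the singleton $\Ecal$ being trivial) and to show $N_L/D_L\to1$ as $L\to\infty$; this contradicts \eqref{eqn:EigenValueRatioV2} for every $R<\infty$. Throughout, recall that $\connf$ is supported in $(0,1)$ and that $\sigma_L(y)=Ly$ for $y<R/L$, so $\sigma_L^{-1}(R)=R/L$.

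Substituting $r=\sigma_L(y)$ in both integrals gives, first,
\[
N_L=\int_0^{R/L}\connf(y)\,L\,(\sinh Ly)^{d-1}\dd y=\int_0^R\connf(u/L)(\sinh u)^{d-1}\dd u,
\]
and, second, since $\connf$ vanishes outside $(0,1)$, $D_L=\int_0^1\connf(y)\,\sigma_L'(y)(\sinh\sigma_L(y))^{d-1}\dd y$; splitting at $R/L$ and using that $\sigma_L'(y)=a_L$ on $(R/L,1)\cap\{\connf=1\}$ while $\connf$ vanishes on $(R/L,1)\setminus\{\connf=1\}$, one obtains $D_L=N_L+\varepsilon_L$ with $\varepsilon_L:=a_L\int_{(R/L,1)\cap\{\connf=1\}}(\sinh\sigma_L(y))^{d-1}\dd y$.

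I would next show $\varepsilon_L\to0$. Since $\sigma_L\le\overline{\sigma}_L$ and $\overline{\sigma}_L$ is increasing, $\varepsilon_L\le a_L\int_{R/L}^1(\sinh\overline{\sigma}_L(y))^{d-1}\dd y\le a_L(\sinh\overline{\sigma}_L(1))^{d-1}$. On the other hand $\overline{\sigma}_L'\equiv1$ on $(1/2,1)$, so $\overline{\sigma}_L(y)\ge\overline{\sigma}_L(1)-\tfrac12$ there; as $\overline{\sigma}_L(1)\to\infty$ and $x\mapsto\sinh(x-\tfrac12)/\sinh x$ is increasing on $(\tfrac12,\infty)$, for $L$ large $\int_{1/3}^1(\sinh\overline{\sigma}_L(r))^{d-1}\dd r\ge\tfrac12(\sinh(\overline{\sigma}_L(1)-\tfrac12))^{d-1}\ge c_d(\sinh\overline{\sigma}_L(1))^{d-1}$ with $c_d=(\sinh\tfrac12/\sinh1)^{d-1}>0$. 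Hence $\varepsilon_L\le c_d^{-1}a_L\int_{1/3}^1(\sinh\overline{\sigma}_L(r))^{d-1}\dd r$, which tends to $0$ by the choice \eqref{eqn:SizeofA_L} of $a_L$.

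The main point is a uniform lower bound $N_L\ge c_R:=\tfrac R8\bigl(\sinh\tfrac R8\bigr)^{d-1}>0$ valid for all $L>R$. Writing $N_L=\int_{\{u\in(0,R):\connf(u/L)=1\}}(\sinh u)^{d-1}\dd u$, pick $k\ge0$ with $R/L\in[2^{-2k-2},2^{-2k})$; then $(2^{-2k-3},2^{-2k-2})$ is one of the intervals where $\connf=1$ and lies in $(0,R/L)$, so $L\cdot(2^{-2k-3},2^{-2k-2})$ is contained in the integration set above, and from $2^{-2k-2}\le R/L<2^{-2k}$ this interval has length $L2^{-2k-3}\in(R/8,R/2]$ and lies inside $(R/8,R)$, where $(\sinh u)^{d-1}\ge(\sinh\tfrac R8)^{d-1}$. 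Integrating over it gives $N_L\ge c_R$; combined with the trivial bound $N_L\le\mathbf{V}_d(R)<\infty$ this yields $N_L/D_L=1-\varepsilon_L/(N_L+\varepsilon_L)\ge1-\varepsilon_L/c_R\to1$, so $\lim_{L\to\infty}N_L/D_L=1\ne0$ and \eqref{eqn:EigenValueRatioV2} fails. The only step needing genuine care is this lower bound on $N_L$ --- locating $R/L$ among the dyadic intervals so that a full ``$\connf=1$'' block survives the rescaling near radius $R$ --- while the remaining estimates are routine manipulations of the explicit formulas for $\sigma_L$, $\overline{\sigma}_L$, and \eqref{eqn:SizeofA_L}.
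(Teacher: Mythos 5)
Your proposal is correct and follows essentially the same route as the paper: after the change of variables $r=\sigma_L(y)$ you lower-bound the numerator by an $L$-independent positive constant coming from the dyadic support of $\connf$ inside $(0,R/L)$ (your explicit block $(2^{-2k-3},2^{-2k-2})$ giving $\tfrac{R}{8}(\sinh\tfrac{R}{8})^{d-1}$ versus the paper's rearrangement toward $0$ giving $\mathbf{V}_d(R/3)$), and you kill the tail $\varepsilon_L$ exactly as the paper does, via $\sigma_L'=a_L$ on the support, $\sigma_L\le\overline{\sigma}_L$, and the choice \eqref{eqn:SizeofA_L}. The only blemish is cosmetic: in the comparison $\tfrac12(\sinh(\overline{\sigma}_L(1)-\tfrac12))^{d-1}\ge c_d(\sinh\overline{\sigma}_L(1))^{d-1}$ the constant should be $c_d=\tfrac12(\sinh\tfrac12/\sinh 1)^{d-1}$ (you dropped the factor $\tfrac12$), which does not affect the argument.
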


    \begin{proof}
        By changing variables $y=\sigma_L(r)$ (and back for the last equality) we have
        \begin{multline}
            \int^R_0\connf_L(r)\left(\sinh r\right)^{d-1}\dd r = L\int^\frac{R}{L}_0\connf(y)\left(\sinh Ly\right)^{d-1}\dd y \\
             \geq L\int^\frac{R}{3L}_0\left(\sinh Ly\right)^{d-1}\dd y = \mathbf{V}_d\left(\frac{R}{3}\right).
        \end{multline}
        In the above inequality we have used that $\left(\sinh Ly\right)^{d-1}$ is increasing in $y$ to move the intervals on which $\connf(y)> 0$ towards $0$. The total Lebesgue measure of these intervals intersected with $\left(0,\frac{R}{L}\right)$ is always greater than or equal to $\frac{R}{3L}$ (achieved when $\frac{R}{L}$ coincides with the lower bound of one of these positive intervals), and therefore $\frac{R}{3L}$ appears in the integral bound.

        We now prove that \eqref{eqn:EigenValueRatioV2} does not hold by showing that
        \begin{equation}
            \int^\infty_R\connf_L(r)\left(\sinh r\right)^{d-1}\dd r \to 0
        \end{equation}
        as $L\to \infty$. The same change of variables as above shows that
        \begin{multline}
            \int^\infty_R\connf_L(r)\left(\sinh r\right)^{d-1}\dd r = \int^\infty_\frac{R}{L}\connf(y)\sigma'_L(y)\left(\sinh \sigma_L\left(y\right)\right)^{d-1}\dd y \\\leq a_L \int^1_\frac{R}{L}\connf(y)\left(\sinh \overline{\sigma}_L\left(y\right)\right)^{d-1}\dd y,
        \end{multline}
        where in the inequality we have used that on $r>\frac{R}{L}$ the property $\connf(y)>0$ implies $\sigma'_L(y)=a_L$, that $\overline{\sigma}_L(y)\geq \sigma_L(y)$, and that $\connf(y)=0$ for $y>1$. Then since $\int^1_0\connf(y)\dd y = \frac{2}{3}$ and $\left(\sinh \overline{\sigma}_L\left(y\right)\right)^{d-1}$ is increasing in $y$, we can move the intervals of $\connf$ towards $1$ to get
        \begin{equation}
            a_L \int^1_\frac{R}{L}\connf(y)\left(\sinh \overline{\sigma}_L\left(y\right)\right)^{d-1}\dd y \leq a_L \int^1_\frac{1}{3}\left(\sinh \overline{\sigma}_L\left(y\right)\right)^{d-1}\dd y.
        \end{equation}
        The bound on $a_L$ in \eqref{eqn:SizeofA_L} then gives us the result.
    \end{proof}
\end{example}

\end{appendix}

\vspace{5mm}
\begin{acks}
The author would like to thank the Isaac Newton Institute for Mathematical Sciences, Cambridge, for support and hospitality during the programme \emph{Stochastic systems for anomalous diffusion}, where work on this paper was undertaken. This work was supported in parts by EPSRC grant EP/Z000580/1 and by NSERC of Canada. Thanks are also due to Petr Kosenko for guiding me to the spherical transform, Markus Heydenreich for their discussions about RCMs and percolation on hyperbolic spaces, and Gordon Slade and a referee for their advice on presenting the paper.
\end{acks}

\bibliography{bibliography}{}
\bibliographystyle{alpha}

\end{document}